\newtheorem{theorem}{Theorem}[section]
\newtheorem{lemma}[theorem]{Lemma}
\newtheorem{proposition}[theorem]{Proposition}
\newtheorem{corollary}[theorem]{Corollary}
\newtheorem{conjecture}[theorem]{Conjecture}
\theoremstyle{definition}
\newtheorem{definition}[theorem]{Definition}
\newtheorem{remark}[theorem]{Remark}
\numberwithin{equation}{section}
\newtheorem{assumption}[theorem]{Assumption}
\newtheorem{setting}[theorem]{Setting}
\begin{document}

\normalfont

\title{Category and Cohomology of Hodge-Iwasawa Modules}
\author{Xin Tong}

\maketitle

\begin{abstract}
\rm In this paper we study the corresponding categories and the corresponding cohomologies of the Hodge-Iwasawa modules we developed in our series papers on Hodge-Iwasawa theory. The corresponding cohomologies will be essential in the corresponding development of the contact with the corresponding Iwasawa theoretic consideration, while they are as well very crucial in the corresponding study of the corresponding deformations of local systems over general analytic spaces. We contact with some applications in analytic geometry and arithmetic geometry which all have their own interests and deserve further study for us in the future, including local systems over general analytic spaces after Kedlaya-Liu, arithmetic Riemann-Hilbert correspondence in families after Liu-Zhu, and equivariant Iwasawa theory and geometrization of equivariant Iwasawa theory after Berger-Fourquaux and Nakamura.
\end{abstract}

\newpage

\tableofcontents

\newpage

\section{Introduction}

\subsection{The Main Scope of the Discussion}

\noindent The corresponding Hodge-Iwasawa theory was studied in our papers \cite{XT1} and \cite{XT2}. We in the corresponding deformed setting discussed the corresponding relative $p$-adic Hodge theory after \cite{KP}, \cite{KL15} and \cite{KL16}. We call the theory partially Iwasawa in the corresponding sense that the corresponding deformation will reflect the corresponding Iwasawa theoretic information which was observed as in \cite{KP}. On the Hodge-theoretic side, the corresponding cohomology in the deformed setting will also reflect the corresponding towers related to quotients (even nonabelian) of fundamental groups.\\

\indent Therefore in our current consideration in this paper, we apply all we have developed to the corresponding categories and the corresponding cohomologies. For instance, very interesting application would be the corresponding pro-\'etale cohomology of general local systems over some smooth proper rigid analytic spaces as in \cite{KL3}. Definitely we expect more applications in the corresponding Iwasawa theoretic consideration. \\

\indent The other interesting things we would like to pursue in the application of our previously developed theory is the corresponding categorical and $K$-theoretic study on the corresponding $(\varphi,\Gamma)$-modules over very relative Robba rings. The corresponding categories should form some well-established abelian ones which could further have more derived enrichment (carrying some six functors).\\

\indent The second goal of this paper is also to consider the Hodge-Structure on more general spaces. Although we will not systematically consider some general framework, since it will impossible to handle very general adic spaces. To be more general than the corresponding rigid analytic spaces, we are going to consider the corresponding spaces admitting atlas of the so-called $k_\Delta$ affinoid spaces as in \cite{TC} and \cite{DFN}, where $\Delta$ is some valuation group larger than $|k^\times|$.\\

\indent We briefly mention now the corresponding structures of the paper. In the second section we are going to study the corresponding Hodge-Iwasawa modules over logarithmic towers which generalizes for instance in the mixed-characteristic case the corresponding consideration in \cite[Chapter 7]{KL16}. Then in the third section we study after that the corresponding categories and cohomologies of the corresponding Hodge-Iwasawa modules involved. Then we are going to study the corresponding cohomology of more general analytic spaces. In fourth section we considere the Hodge-Iwasawa module over rigid analytic spaces, $k_\Delta$-analytic spaces and we contact the corresponding arithmetic Riemann-Hilbert correspondence after \cite{LZ}. The fifth section is for the corresponding equivariant Iwasawa theory of de Rham $(\varphi,\Gamma)$-modules which is a equivariant version of the corresponding Iwasawa theory in \cite{Nakamura1}. In such context, we still emphasize the corresponding geometrization aspect along \cite{KP} on the reconstruction effect of the corresponding Iwasawa deformation in different contexts. Note that the story is equivariant and relative inspired by Nakamura \cite{Nakamura3}, Kedlaya-Pottharst \cite{KP}, as well as some further motivation from \cite{BF1},\cite{BF2} and \cite{FK}.\\

\subsection{Results Involved}

\indent We searched many interesting directions in this paper to apply the Hodge-Iwasawa consideration in \cite{XT1} and \cite{XT2}. We present in some form in this current introduction part of this paper some results along the main body of this paper below. First motivation comes from directly following \cite{KL16} where the relative $p$-adic Hodge theory is essentially applied in the corresponding context of rigid analytic spaces in the strictly situation. \cite{KL16} proved many very deep results around the corresponding categories of pseudocoherent relative $(\varphi,\Gamma)$-modules, namely \cite{KL16} proved that these are abelian categories although working in Banach categories does cause essential difficulties. In our situation we seek the same goal when we deform the corresponding Frobenius modules in arithmetic family. In the context and notation in \cref{proposition6.3} we have:

\begin{proposition} \mbox{\bf{(After Kedlaya-Liu, \cite[Theorem 8.10.6]{KL16})}} 
Working over period ring $\widetilde{\Pi}_{X,A}$, we have the corresponding category of $(\varphi^a,\Gamma)$-modules is abelian, which is basically compatible with the corresponding category of all the sheaves of modules over $\widetilde{\Pi}_{X,A}$ when one would like to form the corresponding kernels and cokernels. $A$ is assumed to be sousperfectoid as those considered in \cite{KH}.
\end{proposition}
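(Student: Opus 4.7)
The plan is to follow closely the strategy of Kedlaya--Liu \cite[Theorem 8.10.6]{KL16}, adapting each step to the deformed (arithmetic family) setting where the coefficient ring $A$ is sousperfectoid in the sense of \cite{KH}. Because $A$ is sousperfectoid, the period ring $\widetilde{\Pi}_{X,A}$ still enjoys the good analytic features (Banach/Fr\'echet-stable extension of scalars, acyclicity of the basic sheaves on the Fargues--Fontaine style covers, and flatness along the Frobenius $\varphi^a$) that underlie the original argument, so the infrastructure used to form kernels and cokernels transports almost verbatim. As a first step I would reduce the statement to an affinoid-local assertion: the $(\varphi^a,\Gamma)$-module structure and the sheaf-theoretic kernel/cokernel formation are both compatible with localization on $X$, so it suffices to verify the claim after base change to each affinoid in a covering of $X$, and then to glue.

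Second, given a morphism $f\colon M\to N$ of $(\varphi^a,\Gamma)$-modules over $\widetilde{\Pi}_{X,A}$, I would form $\ker(f)$ and $\operatorname{coker}(f)$ \emph{in the ambient category of $\widetilde{\Pi}_{X,A}$-modules} and then equip them with Frobenius and $\Gamma$-action by functoriality: since $\varphi^a$ and $\gamma\in\Gamma$ commute with $f$ through the semilinear data on $M$ and $N$, the induced maps descend to the kernel and factor through the cokernel uniquely. The crucial content is therefore not the existence of the semilinear action, but rather that the underlying modules remain in the correct subcategory (pseudocoherent, resp. fppc-compatible) and that the kernel/cokernel computed inside $(\varphi^a,\Gamma)$-modules genuinely coincides with the sheaf-theoretic one.

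Third, for the module-theoretic step I would adapt the two central Kedlaya--Liu inputs: (a) the closed-image / strictness result for Frobenius-equivariant morphisms in the Banach setting, where the Frobenius structure regularizes the image and forces $\operatorname{im}(f)$ to be closed in $N$, ensuring that the set-theoretic cokernel agrees with the topological one; and (b) stability of pseudocoherence under kernels and cokernels of strict morphisms. Point (a) is the place where the sousperfectoid assumption enters most visibly, because it is what guarantees flatness of the Frobenius and the acyclicity statements needed to promote a pointwise / stalkwise closedness into a global one after forming the deformation by $A$. Point (b) then goes through by applying the pseudocoherence results of \cite{KL16} (or \cite{XT1}, \cite{XT2} in the deformed version) to the strict morphism extracted from (a).

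The main obstacle I anticipate is precisely part (a): showing that $\operatorname{im}(f)$ is closed in $N$ after the arithmetic deformation by a sousperfectoid $A$. In the non-deformed case this is settled by Frobenius regularization on strict Fr\'echet $\widetilde{\Pi}_X$-modules, but once we tensor with $A$ we must control how closedness interacts with the completed tensor product $-\,\widehat{\otimes}\,A$. I would handle this by writing $A$ as a summand inside a perfectoid ring (this is essentially the defining feature of sousperfectoid rings, cf.\ \cite{KH}), reducing to the perfectoid case where the Frobenius on $A$ itself is well-behaved, and then descending the closedness back to $A$ via the splitting. Once closedness (and hence strictness of $f$) is established, the remaining verifications---preservation of pseudocoherence, compatibility with sheafification on $X$, and equality of the categorical and sheaf-theoretic kernel/cokernel---are then routine adaptations of the arguments in \cite[\S8.10]{KL16}.
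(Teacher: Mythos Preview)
Your proposal takes a genuinely different route from the paper's proof, and it is worth comparing the two.

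The paper's argument is much shorter and relies on a descent step that you do not invoke. After localizing to a smooth affinoid $X_0$ (as you also do), the paper uses the restricted toric tower $H$ over $X_0$ so that the pro-\'etale $\varphi^a$-sheaf becomes a $(\varphi^a,\Gamma)$-module over $\widetilde{\Pi}_{H,A}$. It then applies the comparison theorem established earlier (the equivalence between $(\varphi^a,\Gamma)$-modules over $\widetilde{\Pi}_{H,A}$, $\breve{\Pi}_{H,A}$, and $\Pi_{H,A}$) to transport the question to the \emph{imperfect} period ring $\breve{\Pi}^{[s,r]}_{H,A}$. The point is that this ring is coherent, so pseudocoherent modules over it coincide with finitely presented modules and automatically form an abelian category; the kernel/cokernel compatibility with the ambient sheaf category then comes for free via the equivalence. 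No direct strictness or closed-image analysis on the perfect side is needed.

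By contrast, you stay on the perfect side and try to prove closedness of the image (your step~(a)) and stability of pseudocoherence under strict maps (your step~(b)) directly over $\widetilde{\Pi}_{X,A}$. This is closer in spirit to how Kedlaya--Liu handle the undeformed case $A=\mathbb{Q}_p$ via \cite[Proposition 8.9.2]{KL16} (finitely generated $\Gamma$-modules over $\widetilde{\Pi}^{[s,r]}_H$ are automatically \'etale-stably pseudocoherent). Your proposed reduction to perfectoid $A$ via the sousperfectoid splitting is reasonable, but carrying the full ``finitely generated $\Gamma$-module $\Rightarrow$ pseudocoherent'' argument through the $A$-deformation is substantially more work than the paper's descent, and your step~(b) as stated is not automatic over a non-coherent Banach ring: kernels of strict maps between pseudocoherent modules need not be pseudocoherent in general, so you would still need an $A$-deformed analogue of \cite[Proposition 8.9.2]{KL16} rather than a generic strictness statement.

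In short: your plan is not wrong, but it reproves on the perfect side what the paper obtains in one line by descending to the coherent imperfect ring $\breve{\Pi}^{[s,r]}_{H,A}$ via the already-established categorical equivalence.
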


\indent Then following Temkin's notes \cite[Part I Chapter I]{DFN} we consider the corresponding analytic spaces which could admit an atlas consisting of all the $k_\Delta$-affinoids namely those quotients of $k_\Delta$ strictly rational localizations of Tate algebras. Over such space $X$: 

\begin{proposition} \mbox{\bf{(After Kedlaya-Liu, \cite[Theorem 8.10.6]{KL16})}} 
Working over period ring $\widetilde{\Pi}_{X,A}$, we have the corresponding category of $(\varphi^a,\Gamma)$-modules is abelian, which is basically compatible with the corresponding category of all the sheaves of modules over $\widetilde{\Pi}_{X,A}$ when one would like to form the corresponding kernel and cokernel. Here $A$ is $\mathbb{Q}_p$.
\end{proposition}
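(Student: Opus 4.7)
The plan is to reduce to the previous proposition by localizing along the atlas of $k_\Delta$-affinoids and then glueing the resulting local abelian category structures. Since $A = \mathbb{Q}_p$ is trivially sousperfectoid, the coefficient side of the hypothesis of the previous proposition is satisfied; the genuine work lies in transporting the machinery of \cite[Chapter 8]{KL16} and its deformed refinement in \cite{XT1}, \cite{XT2} from the strictly $k$-affinoid base to the $k_\Delta$-affinoid base.

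First I would fix an atlas $\{U_i\}$ consisting of $k_\Delta$-affinoids and build the relative period sheaf $\widetilde{\Pi}_{X,A}$ by glueing the corresponding local period rings $\widetilde{\Pi}_{U_i,A}$. Since each $k_\Delta$-affinoid is by definition a quotient of a strictly $k_\Delta$-rational localization of a $k_\Delta$-Tate algebra, it still carries a Banach structure, merely with norms valued in $\Delta$ rather than in $|k^\times|$. The Robba ring and Witt vector constructions therefore extend functorially, and both the Frobenius $\varphi$ and the $\Gamma$-action descend from the strictly analytic setting without modification.

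Second, on each chart $U_i$ I would apply the previous proposition to obtain an abelian category of $(\varphi^a,\Gamma)$-modules over $\widetilde{\Pi}_{U_i,A}$ whose kernels and cokernels coincide with those in the ambient module category. The crucial technical step is to verify that the proofs of pseudocoherence and the approximation lemma in \cite[Chapter 8]{KL16} depend only on the underlying Banach structure and not on the strictness of the norm; this requires rerunning the arguments in the $k_\Delta$-Banach framework, and I expect this to be the main obstacle, since standard tools like the open mapping theorem and the strong closedness of images must be reverified for norms valued in $\Delta$. Once this is in hand, the global abelian category is assembled by glueing along the sheaf property of $\widetilde{\Pi}_{X,A}$ over $k_\Delta$-rational coverings, yielding the stated compatibility with kernels and cokernels of sheaves of $\widetilde{\Pi}_{X,A}$-modules and completing the proof.
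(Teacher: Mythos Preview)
Your high-level instinct---port the machinery of \cite[Chapter 8]{KL16} to the $k_\Delta$-setting and work locally---is in line with the paper, but both your identification of the technical crux and your proposed reduction differ from how the paper actually argues, and the difference matters.

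The paper's proof of this proposition is one line: ``See the previous proposition.'' Here the previous proposition is not the rigid-analytic result from the introduction but the immediately preceding statement in Section~4.2, namely that the category $D_{\mathrm{pseudo},\widetilde{\Pi}^{[s,r]}_{X,\mathbb{Q}_p}}$ of pseudocoherent sheaves over the \emph{interval} Robba sheaf is abelian. That interval statement is in turn proved as a direct consequence of \cref{proposition6.20}, the $k_\Delta$-analogue of \cite[Proposition 8.9.2]{KL16}: any finitely generated $\Gamma$-module over $\widetilde{\Pi}_H^{[s,r]}$ is automatically \'etale-stably pseudocoherent. All of the genuine work is therefore concentrated in the earlier portion of Section~4.2, where the paper reruns the chain of \cite[Chapter 8]{KL16} arguments (desingularization via excellence of $k_\Delta$-affinoids, the noetherian induction for $\Gamma$-modules over $\overline{H}_\infty$, the structure result on $\Gamma$-stable ideals of $\widetilde{\Pi}_H^{[s,r]}$, and the decomposition of $t$) directly over a $k_\Delta$-base.

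Your proposal misplaces the obstacle. You flag the open mapping theorem and closedness of images for $\Delta$-valued norms as ``the main obstacle,'' but the paper does not engage with these functional-analytic issues at all; indeed it remarks that one may enlarge the base field to absorb $\Delta$ and thereby simplify. What actually drives the abelianness is the noetherian induction of \cref{proposition615} and \cref{proposition6.20}, which forces finitely generated $\Gamma$-modules to be pseudocoherent---this is what guarantees that kernels and cokernels of maps between pseudocoherent $(\varphi^a,\Gamma)$-modules remain pseudocoherent. Your sketch does not supply or invoke this step, so as written it does not close the argument. Finally, the ``gluing of abelian category structures'' you describe is not needed: kernels and cokernels of sheaf morphisms are computed locally, so once the local abelianness on each $k_\Delta$-affinoid chart is in hand the global statement is immediate.
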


\begin{proposition} \mbox{\bf{(After Kedlaya-Liu, \cite[Theorem 8.10.6]{KL16})}} 
Working over period ring $\widetilde{\Pi}_{X,A}$, we have the corresponding category of $(\varphi^a,\Gamma)$-modules is abelian, which is basically compatible with the corresponding category of all the sheaves of modules over $\widetilde{\Pi}_{X,A}$ when one would like to form the corresponding kernel and cokernel. Here $X$ is assumed to be smooth. $A$ is assumed to be sousperfectoid as those considered in \cite{KH}.\\
\end{proposition}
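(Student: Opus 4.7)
The plan is to bootstrap from the two preceding propositions by a local-to-global argument on $X$. First I would fix an atlas of $X$ by $k_\Delta$-affinoids $\{U_i\}$ coming from the smooth $k_\Delta$-analytic structure (as furnished by \cite[Part I Chapter I]{DFN}), and on each $U_i$ form the corresponding restricted period sheaf $\widetilde{\Pi}_{U_i,A}$. On each such local piece both hypotheses of the previous two propositions are simultaneously in force: $U_i$ is a $k_\Delta$-affinoid (as in Proposition above) and $A$ is sousperfectoid (as in the first proposition of this subsection). Hence on $U_i$ one already knows the category of $(\varphi^a,\Gamma)$-modules over $\widetilde{\Pi}_{U_i,A}$ is abelian and that kernels and cokernels agree with those computed in the ambient category of $\widetilde{\Pi}_{U_i,A}$-modules.

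Next I would promote this local abelianness to the global statement via a descent / gluing argument. Given a morphism $f: M \to N$ of $(\varphi^a,\Gamma)$-modules over $\widetilde{\Pi}_{X,A}$, on each $U_i$ one has well-defined $\ker(f|_{U_i})$ and $\mathrm{coker}(f|_{U_i})$ living inside the local category, together with their $(\varphi^a,\Gamma)$-structures inherited by functoriality. The smoothness of $X$ enters here through the standard fact that for smooth spaces the $k_\Delta$-affinoid atlas is well behaved under intersections (fiber products of smooth $k_\Delta$-affinoids are again $k_\Delta$-affinoid up to localization), so overlaps $U_i \cap U_j$ are themselves covered by objects falling under the previous propositions. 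One then checks that the local kernels and cokernels, together with their Frobenius and $\Gamma$-actions, satisfy the cocycle condition on triple overlaps, and appeals to the sheaf property of $\widetilde{\Pi}_{X,A}$-modules (which is built into the construction as in \cite{KL16}) to glue them to global objects.

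The main obstacle I anticipate is the compatibility of the pseudocoherent / Frobenius-equivariant structure under the gluing procedure: a priori the local kernels and cokernels are only $\widetilde{\Pi}_{U_i,A}$-modules, and one has to verify that the resulting glued global module is again a $(\varphi^a,\Gamma)$-module over $\widetilde{\Pi}_{X,A}$ in the sense used throughout the paper, i.e. that the pseudocoherence (or finite projectivity) descends through the sousperfectoid coefficient variation. The sousperfectoid hypothesis on $A$ is precisely what permits the perfectoid-style flatness arguments of \cite{KH} to go through on overlaps, while smoothness of $X$ ensures that the geometric side of the atlas does not introduce pathological period rings. Once these two inputs are in place, the abelianness and the compatibility with the ambient module category follow from their local counterparts without further difficulty.
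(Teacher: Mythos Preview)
There is a genuine gap in the local step. You claim that on a $k_\Delta$-affinoid $U_i$ with sousperfectoid $A$ ``both hypotheses of the previous two propositions are simultaneously in force,'' and therefore the local category is already known to be abelian. But neither proposition applies: the $k_\Delta$-analytic result is stated only for $A=\mathbb{Q}_p$, and the first proposition of the subsection is for strictly rigid analytic $X$, not for a general $k_\Delta$-affinoid. Having one hypothesis from each does not let you invoke either conclusion; you still owe a direct argument for the local abelianness over $\widetilde{\Pi}_{U_i,A}$ with general sousperfectoid $A$.

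The paper's route, by contrast, does not try to combine those two propositions at all. It simply observes that the proof of \cref{proposition6.3} goes through verbatim: on a smooth affinoid chart one sets up the restricted toric tower $H$, and the equivalence of categories established earlier identifies pseudocoherent $(\varphi^a,\Gamma)$-modules over $\widetilde{\Pi}$ with those over the imperfect ring $\breve{\Pi}_{H,A}$. The latter ring (more precisely $\breve{\Pi}^{[s,r]}_{H,A}$) is coherent, and coherence of the base ring is what forces kernels and cokernels to stay pseudocoherent. Smoothness is used exactly once, to produce the toric tower; the sousperfectoid hypothesis on $A$ is what makes the deformed period rings behave. Note also that your gluing step is unnecessary: kernels and cokernels of morphisms of sheaves already exist globally as sheaves, so the only content is the \emph{local} verification of pseudocoherence, which is precisely the point you have not supplied.
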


\indent Then we consider arithmetic family version of Higgs bundles after \cite{LZ} where a version of Riemann-Hilbert correspondence in the arithmetic setting is achieved. The idea in our situation will be definitely the corresponding version of such correspondence for general $A$-relative $(\varphi,\Gamma)$-modules $M$ in the 'geometric' setting. We also consider the corresponding equivalent version of $B$-pairs in the $A$-relative setting after \cite{KL16}. With the notation in later discussion (see \cref{proposition4.45} and \cref{proposition4.42}) we have for a de Rham $A$-relative $(\varphi,\Gamma)$-module $M$ or equivalently a de Rham $A$-relative $B$-pair $M$:

\begin{proposition} \mbox{\bf{(After Kedlaya-Liu, \cite[below Definition 10.10]{KL3})}}
Over a general rigid analyic space $X$, consider a de Rham $A$-relative $(\varphi,\Gamma)$-modules $M$ or equivalently a de Rham $A$-relative $B$-pair $M$. We then have that the corresponding each higher de Rham derived cohomology group $D^i_\mathrm{dR}(M)$ for each $i\geq 0$ is coherent sheaf over $X$. The projectivity could achieved if we assume that $X$ is smooth. $A$ is assumed to be sousperfectoid as those considered in \cite{KH}.

\end{proposition}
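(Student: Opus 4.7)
The plan is to reduce to the local situation and then invoke the $B$-pair formalism together with the coherence/abelian-category machinery established in the preceding propositions. Since coherence of a sheaf is a local property on $X$, I would first cover $X$ by affinoid subdomains and work on each piece, observing that the functors $D^i_{\mathrm{dR}}$ are compatible with restriction to a rational localization in the appropriate sense, so that it suffices to establish finite generation of the corresponding global sections over each affinoid chart.

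Next, using the equivalence between $A$-relative $(\varphi,\Gamma)$-modules and $A$-relative $B$-pairs referenced in the statement (and recalled in \cref{proposition4.42}), I would replace $M$ by its $B$-pair avatar and compute $D^i_{\mathrm{dR}}(M)$ as the $\Gamma$-cohomology of the de Rham component of the pair. This translates the problem into a Koszul-type complex computation over the relative period ring attached to $\widetilde{\Pi}_{X,A}$ in the $B_{\mathrm{dR}}$-direction, with the additional layer of the arithmetic coefficient ring $A$ behaving coherently thanks to the sousperfectoid hypothesis (as in \cite{KH}).

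The main step is then to adapt the argument of Kedlaya-Liu below Definition 10.10 of \cite{KL3} to our deformed setting: a Sen-theoretic decompletion reduces the $\Gamma$-cohomology computation to a finite complex of coherent modules over $\mathcal{O}_X(U) \widehat{\otimes} A$, and the de Rham assumption on $M$ is precisely what guarantees that this complex is quasi-isomorphic to a bounded complex of finitely generated modules. Taking cohomology yields coherence of each $D^i_{\mathrm{dR}}(M)$, and the sousperfectoid hypothesis is invoked here to ensure that base change along $A$ preserves the necessary flatness and pseudocoherence features needed for the reduction to be valid in the family setting.

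Finally, for the projectivity claim when $X$ is smooth, I would argue by specialization together with a flatness check: at each classical point of $X$, the fiber of $D^i_{\mathrm{dR}}(M)$ matches the classical de Rham cohomology of the specialized $A$-relative $(\varphi,\Gamma)$-module, and smoothness of $X$ upgrades the coherence already obtained to local freeness via a standard Tor-vanishing argument over the regular local rings $\mathcal{O}_{X,x}$. The hard part is expected to be the uniform control of the $\Gamma$-cohomology complex over the deformation ring $A$, specifically verifying that the decompletion truly yields a complex of finite Tor-dimension over $\mathcal{O}_X \widehat{\otimes} A$ when $A$ is only sousperfectoid rather than affinoid, and that this property is preserved through the passage to higher $D^i_{\mathrm{dR}}$ in the non-smooth case where only coherence (not projectivity) is claimed.
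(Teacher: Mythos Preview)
Your local reduction and Sen-theoretic decompletion outline for the smooth case is broadly in line with what the paper does: the paper reduces to the base of a restricted toric tower, passes to pseudocoherent $\Gamma$-modules over $\overline{H}_\infty\widehat{\otimes}A$, then descends to a finite level $H_k\widehat{\otimes}A$ and invokes the descent machinery of \cite[Corollary 5.9.5]{KL16} to identify the $\Gamma$-cohomology with something finite. So the smooth coherence argument is roughly correct in spirit.

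However, there are two genuine gaps. First, your projectivity argument is wrong: smoothness of $X$ does \emph{not} upgrade an arbitrary coherent sheaf to a locally free one via ``Tor-vanishing over the regular local rings $\mathcal{O}_{X,x}$'' --- a skyscraper sheaf on a smooth variety is coherent and not locally free, and regularity only gives finite projective dimension, not freeness. The actual mechanism, as the paper states, is the existence of the connection $\nabla$ on $D^i_{\mathrm{dR}}(M)$: a coherent $\mathcal{O}_X$-module equipped with an integrable connection over a smooth rigid space is automatically locally free. Your specialization/flatness sketch does not produce this.

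Second, you do not address how coherence is obtained when $X$ is \emph{not} smooth; you simply note that only coherence is claimed there. The paper handles this by resolution of singularities (Temkin's desingularization, cf.\ the discussion around \cref{remark6.6}): one pulls back to a smooth $Y\to X$, proves the result there, and pushes the coherence back down. Your proposal has no analogue of this step, and the Sen/decompletion argument you sketch genuinely requires a toric chart, which is only available in the smooth case.
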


\begin{proposition} \mbox{\bf{(After Kedlaya-Liu, \cite[below Definition 10.10]{KL3})}}
Over a general rigid analyic space $X$, consider a de Rham $A$-relative $(\varphi,\Gamma)$-modules $M$ or equivalently a de Rham $A$-relative $B$-pair $M$. We then have that the corresponding each higher de Rham derived cohomology group $E^i_\mathrm{dR}(M)$ for each $i\geq 0$ is coherent sheaf over $X$. The projectivity could achieved if we assume that $X$ is smooth. $A$ is assumed to be sousperfectoid as those considered in \cite{KH}.\\

\end{proposition}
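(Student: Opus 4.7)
The plan is to mirror the argument for $D^i_\mathrm{dR}(M)$ in the previous proposition, adapted to the variant functor $E^i_\mathrm{dR}(M)$ from \cref{proposition4.45}. Since coherence is a local property on $X$, I would first reduce to the case where $X = \mathrm{Sp}(B)$ is affinoid admitting a suitable toric chart, over which $M$ corresponds to a concrete $A$-relative $(\varphi,\Gamma)$-module, or equivalently an $A$-relative $B$-pair, over the period ring $\widetilde{\Pi}_{X,A}$. The de Rham hypothesis then gives a trivialization of $M$ after base change to the $A$-relative $B_\mathrm{dR}$-type period ring, so $E^i_\mathrm{dR}(M)$ can be computed locally as the $i$-th cohomology of an explicit Koszul-type complex modeling the continuous $\Gamma$-cohomology of this trivialization.

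First I would verify that the terms of the computing complex are pseudocoherent over $B\,\widehat{\otimes}\,A$, using that $A$ is sousperfectoid and hence the completed tensor products with the period rings behave flatly in the sense of \cite{KH}. Next I would invoke the pseudocoherence/finite generation theorem of Kedlaya-Liu, namely the analog of \cite[Theorem 8.10.6]{KL16}, to conclude that the cohomology modules of the complex are pseudocoherent and therefore furnish local coherent sheaves. These glue to a global coherent sheaf $E^i_\mathrm{dR}(M)$ on $X$ via flat base change for sousperfectoid $A$ together with the abelian category compatibility afforded by the first proposition of the excerpt. For the projectivity claim, when $X$ is smooth a coherent sheaf is locally free as soon as its fiber rank is locally constant, and I would verify rank constancy by combining the just-proved coherence statement with a base change theorem for $E^i_\mathrm{dR}$ along closed points of $X$, using smoothness to supply the requisite flatness.

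The main obstacle will be controlling pseudocoherence in the sousperfectoid, non-Noetherian base $A$: standard Noetherian coherence arguments are unavailable and must be replaced by the pseudocoherence machinery developed in \cite{KL16}. The crucial input is that completed tensor products involving $A$ preserve pseudocoherence of the $E_\mathrm{dR}$ complex, which is precisely where the sousperfectoid hypothesis does its work; one also has to check this is compatible with the Frobenius glueing along the toric chart, where the earlier constructions in \cite{XT1} and \cite{XT2} enter. A secondary difficulty concerns the projectivity part: rank constancy of $E^i_\mathrm{dR}(M)$ in families is not automatic, and one needs either a base change/semicontinuity argument for de Rham cohomology of $(\varphi,\Gamma)$-modules in arithmetic family, or a more direct local freeness argument exploiting both the smoothness of $X$ and the de Rham trivialization of $M$.
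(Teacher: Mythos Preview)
Your plan diverges from the paper's proof in a substantive way. The paper does \emph{not} re-run the argument of \cref{proposition4.42} for the $E$-functor. Instead it observes the factorization
\[
X_{\text{pro\'et}} \xrightarrow{\,g_{\text{pro\'et}}\,} X_{\text{\'et}} \xrightarrow{\,f_{\text{\'et,an}}\,} X_{\mathrm{an}},
\]
so that $f_{\text{pro\'et},*} = f_{\text{\'et,an},*}\circ g_{\text{pro\'et},*}$. Since $f_{\text{\'et,an}}$ induces an equivalence between coherent $\mathcal{O}_{X_{\text{\'et}}}$-modules and coherent $\mathcal{O}_{X_{\mathrm{an}}}$-modules, coherence (and projectivity) of $E^i_{\mathrm{dR}}(M)=R^ig_{\text{pro\'et},*}(\ldots)$ follows immediately from the already-proved coherence of $D^i_{\mathrm{dR}}(M)=R^if_{\text{pro\'et},*}(\ldots)$ in \cref{proposition4.42}. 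That is the entire argument.

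Your approach---localize, pass to a toric chart, build a Koszul-type complex, verify pseudocoherence term by term, glue---would in effect reprove \cref{proposition4.42} from scratch with $g_{\text{pro\'et}}$ in place of $f_{\text{pro\'et}}$. That is not wrong in spirit, but it is unnecessary duplication: the whole point of separating $D$ and $E$ is that they differ only by an equivalence of sites on the target, so one reduces to the other for free. Your proposal misses this structural shortcut.

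There is also a genuine gap in your projectivity argument. You propose to deduce local freeness from fibrewise rank constancy plus a base-change theorem for $E^i_{\mathrm{dR}}$ along closed points; no such base-change result is available in this setting, and establishing one in the sousperfectoid $A$-relative context would be a nontrivial project in itself. The paper instead (already in the proof of \cref{proposition4.42}) obtains projectivity from the existence of the integrable connection $\nabla$ on $D_{\mathrm{dR}}$: a coherent sheaf with connection on a smooth space is automatically locally free. This is both shorter and avoids any appeal to base change.
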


\indent The other corresponding application we want to search for while again along some idea proposed in \cite{KP} applications in equivariant Iwasawa theory. We choose to consider equivariant version of the Iwasawa theory of de Rham $(\varphi,\Gamma)$-module $M$. 

\begin{remark}
We believe (while motivated by \cite{KP}) our idea should eventually produce more than cyclotomic consideration by deforming Berger's $(\varphi,\nabla,\Gamma)$-modules over pro-\'etale site which could absorb the $\Gamma$-action (which could be reconstructed through the Iwasawa deformation).
\end{remark}

\begin{definition} \mbox{\bf{(After Perrin-Riou and Nakamura, \cite[Definition 3.7]{Nakamura1})}} 
With the notation in \cref{section5.1} especially the corresponding abelian Fr\'echet-Stein space we are working on, we have the following derived big Perrin-Riou-Nakamura Exponential map:
\begin{displaymath}
\mathrm{Exp}^{\Pi^\infty(\Gamma'),\bullet}_{\mathcal{F}(M),m}: R^\bullet\Gamma_\mathrm{sheaf}(\varprojlim\mathcal{F}(\Theta_{\mathrm{Ber,dif}}(M))_p) \rightarrow R^\bullet\Gamma_\mathrm{sheaf}(\varprojlim\mathcal{F}(M)_p),
\end{displaymath}
which is corresponding to the one for $(\varphi,\Gamma)$-modules:
\begin{displaymath}
\mathrm{Exp}^{\Pi^\infty(\Gamma'),\bullet}_{M,m}: R^\bullet\Gamma_{\varphi,\Gamma}(\varprojlim \Theta_{\mathrm{Ber,dif}}(M)_p) \rightarrow R^\bullet\Gamma_{\varphi,\Gamma}(\varprojlim M_p).
\end{displaymath}
\end{definition}

\indent We actually conjectured that the corresponding this equivariant Perrin-Riou-Nakamura map in algebraic way produces the equivariant $p$-adic $L$-functions (in some not necessarily explicit way)  which relate directly to the corresponding characteristic ideals. See \cref{section5.2}.\\

\subsection{Future Study}

There are many interesting possible extensions of the current paper. We would like to study more in the future, although here we mainly emphasize the corresponding Iwasawa theoretic aspects. The first scope of topics we would like to study further is the relative $p$-adic Hodge theory in rigid family and the arithmetic Riemann-Hilbert correspondence in rigid family for $B$-pairs after \cite{LZ}, \cite{TT} and \cite{Shi}, along the corresponding foundation we established here.\\

The second scope of the topics we would like to study in the future is to extend the corresponding discussion to more general analytic spaces. In fact here we do not go beyond \cite[Chapter 8]{KL16}, although we touched the corresponding $k_\Delta$-analytic spaces as those considered in Temkin's lecture notes \cite[Part I Chapter I]{DFN}. Namely we still restrict ourselves to the corresponding varieties covered by strictly or non-strictly affinoids. \\

The Iwasawa consideration we considered here is actually also worthwhile to be amplified and to be noncommutativized. Certainly we want to study noncommutative deformation of the sheaves in the future not only in all kinds of Iwasawa theories, but also we would like to study the noncommutative deformation of the corresponding the relative $p$-adic Hodge theory in rigid family and the arithmetic Riemann-Hilbert correspondence in rigid family for $B$-pairs after \cite{LZ}, \cite{TT} and \cite{Shi} in the sense of many existing noncommutative geometric contexts.

\newpage

\section{Logarithmic Hodge-Iwasawa Structures in Mixed-Characteristic Case}\label{section3}

\subsection{$\Gamma$-Modules over Ramified Towers}

\begin{setting}
Let $E$ be $\mathbb{Q}_p$. We use the corresponding $\pi$ to denote the corresponding uniformizer in a uniform notation. Let $A$ be an affinoid algebra over the corresponding field $E$ in our current situation defined through some strict quotient from the corresponding Tate algebras.
\end{setting}

\indent We first consider the corresponding standard ones for $(P,P^+)$ a perfectoid adic Banach uniform pair over $\mathcal{O}_E$, containing the perfectoid field $\mathbb{Q}_p(\zeta_{p^\infty})^\wedge$.

\begin{setting}
In the mixed characteristic situation, we consider the setting in the following fashion:
\begin{align}
H_0&=P\{x_1/t_1,...,x_k/t_k,t_1/y_1,...,t_\ell/y_\ell\},\\
H_0^+&=P^+\{x_1/t_1,...,x_k/t_k,t_1/y_1,...,t_\ell/y_\ell\},\\
H_n&=P\{(x_1/t_1)^{1/p^n},...,(x_k/t_k)^{1/p^n},(t_1/y_1)^{1/p^n},...,(t_\ell/y_\ell)^{1/p^n}\},\\
n&=0,1,..., 0<k\leq\ell.	
\end{align}		
And here the corresponding quantities $x_i,y_j,i=1,...,k,j=1,...,\ell$ could be allowed to be real numbers.
\end{setting}

\begin{lemma} \mbox{\bf{(Kedlaya-Liu \cite[Lemma 7.3.3]{KL16})}}
The corresponding standard ramified toric towers defined as above are weakly decompleting.	
\end{lemma}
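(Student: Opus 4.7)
The strategy is to directly mirror the argument of Kedlaya--Liu \cite[Lemma 7.3.3]{KL16} in our notation. The key observation is that the ramified toric tower above is obtained from a standard unramified toric tower, namely the one obtained by only adjoining $p$-power roots of $x_i/y_j$-type Laurent variables, by a finite sequence of rational localizations encoded by the inequalities $|x_i| \leq |t_i|$ and $|t_j| \leq |y_j|$. Since the weakly decompleting property is preserved under rational localization in the framework of \cite[Chapter 5]{KL16}, it suffices to verify the condition on the ambient unramified toric tower and then transfer it along the localization step.

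First I would make the decompletion explicit. At each finite level $n$, let $\Psi_n$ denote the subring of $H_n$ generated over $P$ by the indicated $p^n$-th roots together with their (negative) Laurent inverses, and set $\Psi := \bigcup_{n \geq 0} \Psi_n$. The $\pi$-adic completion of $\Psi$ recovers $\widehat{\bigcup_n H_n}$, and the Frobenius action raises each $p^n$-th root to its $p$-th power, stabilizing $\Psi$. This gives the natural candidate for the decompletion, and reduces the question to the two defining quantitative conditions.

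Second I would verify the two conditions for weakly decompleting in the sense of \cite[Definition 5.2.1 or analogous]{KL16}: density of the decompletion $\Psi$ in the perfectoid completion (immediate from the construction), and the uniform approximation estimate that there exists a constant $c > 0$ such that every element of the perfectoid completion admits, at each level $n$, an approximation in $\Psi_n$ whose error is controlled by $c$ times a Gauss-norm quantity involving the $n$-th inverse Frobenius iterate. Over the tilt, the analogous ramified toric tower is manifestly weakly decompleting because the perfection is obtained by a transparent monomial manipulation, and the estimate can be verified by direct inspection of monomial expansions.

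The main obstacle will be the bookkeeping needed to transport the characteristic-$p$ estimate through the tilting correspondence in the presence of the ramification parameters $t_i$, which are not perfect in $P$. To handle this, one uses that the assumption $P \supseteq \mathbb{Q}_p(\zeta_{p^\infty})^{\wedge}$ supplies a compatible system of $p$-power roots of $\pi$, so that the denominator terms in $(x_i/t_i)^{1/p^n}$ and $(t_j/y_j)^{1/p^n}$ admit well-controlled Teichmüller lifts of their tilts. The rest is then formal: combine the tilted estimate with the invariance of the weakly decompleting property under rational localization, exactly as in \cite[Lemma 7.3.3]{KL16}, whose proof carries over verbatim to our slightly enlarged family of generators.
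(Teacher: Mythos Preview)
The paper's own proof consists entirely of the single line ``See \cite[Lemma 7.3.3]{KL16}.'' In other words, the author does not reprove the statement at all: it is quoted verbatim from Kedlaya--Liu and the reader is referred back to the original source. Your proposal therefore already goes well beyond what the paper does; any correct sketch would count as ``more than the paper'' here.

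That said, a brief comment on the content of your outline. Your reduction step---realizing the ramified tower as a rational localization of an unramified toric tower and then invoking stability of the weakly decompleting condition under such localizations---is not quite how \cite[Lemma 7.3.3]{KL16} proceeds, and the stability claim itself would need to be justified rather than asserted. The argument in \cite{KL16} works more directly: one identifies the reduction $R_H$ explicitly as a Laurent-type polynomial ring over $\overline{P}^\flat$ in the reduced variables, checks by hand that its perfection surjects onto $\widetilde{R}_H$ under the spectral norm, and verifies the required norm comparison (the ``$c$-estimate'') by a monomial-by-monomial computation on the tilt. Your tilting step and the explicit decompletion $\Psi$ are in the right spirit, but the localization detour is unnecessary and introduces an unproved lemma. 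If you want a self-contained argument, it is cleaner to follow the direct computation in \cite{KL16} rather than factor through a preservation-under-localization principle.
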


\begin{proof}
See \cite[Lemma 7.3.3]{KL16}.	
\end{proof}

\indent The corresponding tower is not finite \'etale, so the corresponding machinery in \cite[Chapter 5]{KL16} needs to be modified, which is already done in the corresponding context in \cite[Chapter 7]{KL16}. As in \cite[Chapter 7, Section 3, Section 4]{KL16} one can also consider direct semilinear $\Gamma$-action and define the corresponding $\Gamma$-modules to be certain ones carrying such semilinear action.

\indent  When we in some situations have the corresponding topological group $\Gamma$ (just as in the corresponding logarithmic setting we are considering here) we can simply consider the corresponding definition as in \cite[Definition 7.3.5]{KL16}:

\begin{definition} \mbox{\bf{(After Kedlaya-Liu \cite[Definition 7.3.5]{KL16})}}
In our current situation, we consider the corresponding action of the corresponding group $\Gamma$ in our situation, namely the $\ell$-fold product of the corresponding additive group $\mathbb{Z}_p$ on the corresponding period rings taking the form of $*_{H,A}$. We now define the corresponding $\Gamma$-modules in the finite projective, pseudocoherent or fpd setting to the corresponding modules over the corresponding period rings as above carrying the corresponding action coming from the group $\Gamma$ which is assumed to as in \cite[Definition 7.3.5]{KL16} particularly semilinear. 	
\end{definition}

\begin{remark}
Again the deformation only happens over the rings in mixed-characteristic situation.	
\end{remark}

\begin{proposition} \mbox{\bf{(Kedlaya-Liu \cite[Lemma 7.3.6]{KL16})}} \label{proposition4.8}
Let $\Gamma_n:=p^n\Gamma$ for $n\geq 0$, then we have in our situation the corresponding result that the corresponding complex $C^\bullet(\Gamma_n,\overline{\varphi}^{-1}R_H/R_H)$ is strict exact.
\end{proposition}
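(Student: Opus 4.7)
The plan is to translate the continuous cochain complex $C^\bullet(\Gamma_n, \overline{\varphi}^{-1}R_H/R_H)$ into a Koszul complex and decompose the coefficient module character-by-character. Since $\Gamma\cong\mathbb{Z}_p^\ell$ with a fixed set of topological generators $\gamma_1,\dots,\gamma_\ell$, the subgroup $\Gamma_n=(p^n\mathbb{Z}_p)^\ell$ is topologically generated by $\gamma_1^{p^n},\dots,\gamma_\ell^{p^n}$. Consequently the continuous cochain complex is canonically isomorphic to the Koszul complex on the commuting operators $\gamma_i^{p^n}-1$, and the problem reduces to producing a bounded contracting homotopy on that Koszul complex.

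I would then give an orthonormal decomposition of $\overline{\varphi}^{-1}R_H/R_H$ indexed by the nonzero residue classes of $p$-adic fractional exponents in the generators $(x_i/t_i), (t_j/y_j)$ of the tower, following the construction already used to establish the weak decompleting property. Each summand is a rank-one topological $P$-module on which every $\gamma_i$ acts by an explicit $p$-power root of unity determined by the cyclotomic character of the tower, so the decomposition is automatically $\Gamma$-stable and the Koszul complex decomposes into a Banach direct sum of rank-one Koszul subcomplexes, with the $R_H$-part carved out precisely as the zero-index summand.

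For each remaining nonzero summand, at least one operator $\gamma_i^{p^n}-1$ acts as multiplication by $\zeta-1$ for a nontrivial $p$-power root of unity $\zeta$. Such scalars are invertible in $P$ and their inverses have norm bounded by $p^{1/(p-1)}$, since $|\zeta_{p^m}-1|_p \geq p^{-1/(p-1)}$ for every $m\geq 1$. Selecting the first such invertible operator in each summand yields the standard Koszul contracting homotopy, and the uniform bound on its norm allows the individual homotopies to assemble into a bounded contracting homotopy on the full complex. Existence of a bounded contracting homotopy is exactly the definition of strict exactness in the Banach category.

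The principal obstacle is keeping the norm estimates uniform across the infinite direct sum decomposition, and verifying that the eigenspace decomposition is genuinely orthonormal for the Banach norm inherited from $\overline{\varphi}^{-1}R_H$ rather than merely algebraically direct. Both points are essentially the technical substance already established in the decompletion step underlying \cite[Lemma 7.3.3]{KL16}; with them in hand the combinatorial argument sketched above is precisely the strategy of \cite[Lemma 7.3.6]{KL16}, and our role is only to check that it transports verbatim to the ramified toric tower $(H_n)$ set up in the present section.
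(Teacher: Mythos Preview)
Your sketch is correct and is precisely the argument behind \cite[Lemma 7.3.6]{KL16}, which is all the paper invokes here (its proof is just the citation). One small caveat: the module $\overline{\varphi}^{-1}R_H/R_H$ lives in characteristic $p$, so the eigenvalues by which $\gamma_i$ acts on the monomial summands are not literally $p$-power roots of unity in $P$ but their images on the tilt side, i.e.\ powers of $\overline{\epsilon}\in P^{\flat}$; the uniform lower bound you need is on $|\overline{\epsilon}^{a}-1|$ for $a\in p^{-1}\mathbb{Z}\setminus\mathbb{Z}$, which transfers from the mixed-characteristic estimate you wrote and yields the same bounded contracting homotopy. With that adjustment your Koszul-plus-eigenspace-decomposition argument is exactly the one in Kedlaya--Liu.
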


\begin{proof}
See \cite[Lemma 7.3.6]{KL16}.
\end{proof}

\begin{proposition} \mbox{\bf{(Kedlaya-Liu \cite[Lemma 5.6.4]{KL16})}} \label{proposition4.9}
Let $\Gamma_n:=p^n\Gamma$ for $n\geq 0$, then we have in our situation the corresponding result that the corresponding complex $C^\bullet(\Gamma_n,\overline{R}_H/R_H)$ is strict exact.
\end{proposition}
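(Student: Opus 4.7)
The plan is to bootstrap from \cref{proposition4.8} using the natural exhaustive filtration
\begin{equation*}
R_H \;\subset\; \overline{\varphi}^{-1}R_H \;\subset\; \overline{\varphi}^{-2}R_H \;\subset\; \cdots \;\subset\; \overline{R}_H = \bigcup_{m\geq 0}\overline{\varphi}^{-m}R_H
\end{equation*}
of $\overline{R}_H$ by Frobenius pre-images of $R_H$, together with Frobenius transport of the base case. First I would pass to the short exact sequences
\begin{equation*}
0 \longrightarrow \overline{\varphi}^{-m}R_H/R_H \longrightarrow \overline{\varphi}^{-(m+1)}R_H/R_H \longrightarrow \overline{\varphi}^{-(m+1)}R_H/\overline{\varphi}^{-m}R_H \longrightarrow 0,
\end{equation*}
which are strict exact since the inclusions are split as maps of Banach modules by the grading coming from fractional powers appearing in the definition of $H_n$; this splitting is Frobenius-equivariant only up to a shift, which is the point that lets the Frobenius act on the right-hand quotient.

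The key observation is that the right-hand quotient is, after applying $\overline{\varphi}^{m}$, strictly isomorphic as a $\Gamma_n$-Banach module (modulo the reindexing $\Gamma_n \leftrightarrow \Gamma_{n-m}$ dictated by the semilinear action) to $\overline{\varphi}^{-1}R_H/R_H$. Hence by \cref{proposition4.8} (applied with the shifted index in place of $n$), the continuous cochain complex of the right-hand term is strict exact. Combining this with an induction on $m$ and the long exact sequence attached to the above short exact sequence of Banach $\Gamma_n$-modules, I obtain that $C^\bullet(\Gamma_n,\overline{\varphi}^{-m}R_H/R_H)$ is strict exact for every $m\geq 0$, with a uniform control on the quantitative splittings of the cochain differentials as $m$ varies (here one has to keep track of the operator norms produced at each step, but the Frobenius transport keeps them bounded).

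It then remains to promote this uniform strict exactness to strict exactness for the direct limit $\overline{R}_H/R_H = \varinjlim_m \overline{\varphi}^{-m}R_H/R_H$. Since the transition maps in the filtration are strict isometric inclusions with Banach-space splittings, the continuous cochain functor $C^\bullet(\Gamma_n,-)$ commutes with the direct limit in the appropriate bornological/LF sense, and strict exactness is preserved under such filtered colimits provided one has the uniform bounds from the previous step. The main obstacle I anticipate is exactly this last point: strict exactness is not a colimit-stable notion in general Banach categories, so the real work lies in producing splittings of the differentials at each finite stage whose norms do not blow up as $m\to\infty$, which in turn forces one to be careful with the quantitative version of the base case. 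Once this uniformity is established, taking the colimit (and completing if necessary, exploiting weak decompletion from \cite[Lemma 7.3.3]{KL16}) concludes the proof.
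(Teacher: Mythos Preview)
The paper's own proof is simply the one-line citation ``See \cite[Lemma 5.6.4]{KL16}'', so there is no argument to compare against beyond the original Kedlaya--Liu proof. Your filtration-and-Frobenius-transport strategy is indeed the standard route and is essentially how the result is obtained in \cite{KL16}.

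One point deserves correction, though it works in your favour. You write that applying $\overline{\varphi}^{m}$ to the successive quotient $\overline{\varphi}^{-(m+1)}R_H/\overline{\varphi}^{-m}R_H$ forces a reindexing $\Gamma_n \leftrightarrow \Gamma_{n-m}$. This is not so: the Frobenius on $R_H$ is the $p$-power map in characteristic $p$, and $\Gamma$ acts by ring automorphisms, so $\overline{\varphi}$ commutes with the $\Gamma$-action on the nose. Hence $\overline{\varphi}^{m}$ furnishes a genuine $\Gamma_n$-equivariant (and norm-controlled) isomorphism with $\overline{\varphi}^{-1}R_H/R_H$, and \cref{proposition4.8} applies with the \emph{same} $n$ at every stage. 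This removes the obstruction you were worried about when $m>n$, and it is also what makes the uniformity of the splitting constants automatic: each graded piece has exactly the same quantitative strict-exactness bound as the base case, independently of $m$. With that in hand, the passage to the colimit is unproblematic, and the appeal to weak decompletion at the end is not needed for this step.
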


\begin{proof}
See \cite[Lemma 5.6.4]{KL16}. 
\end{proof}

\begin{lemma} \mbox{\bf{(After Kedlaya-Liu \cite[Corollary 5.6.5]{KL16})}} \label{lemma4.10}
We need to fix some suitable $r_0>0$ as in \cite[Corollary 5.6.5]{KL16} and for the corresponding radii $s,r$ in $(0,r_0]$ we will have the corresponding complexes $C^\bullet(\Gamma_n,\varphi^{-1}\Pi^{\mathrm{int},r/p}_{H}/\Pi^{\mathrm{int},r}_{H})_A$ and $C^\bullet(\Gamma_n,\widetilde{\Pi}^{\mathrm{int},r}_{H}/\Pi^{\mathrm{int},r}_{H})_A$ are strict exact. Moreover we have that the corresponding complexes $C^\bullet(\Gamma_n,\varphi^{-1}\Pi^{[s/p,r/p]}_{H}/\Pi^{[s,r]}_{H})_A$ and $C^\bullet(\Gamma_n,\widetilde{\Pi}^{[s/p,r/p]}_{H}/\Pi^{[s,r]}_{H})_A$ are then in our situation strict exact as well.  	
\end{lemma}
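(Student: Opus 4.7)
The plan is to reduce the statement to the two preceding propositions by two layers of dévissage: first, a $p$-adic filtration to pass from the residue period ring $R_H$ (where strict exactness is supplied by \cref{proposition4.8} and \cref{proposition4.9}) up to the integral rings $\Pi^{\mathrm{int},r}_H$; and second, a flat base change to incorporate the affinoid deformation coefficient $A$. Before any of this, one fixes $r_0 > 0$ exactly as in \cite[Corollary 5.6.5]{KL16}, small enough that every radius that appears is well inside the range where the $p$-power filtrations behave compatibly with the Gauss-type seminorms on the period rings of the ramified toric tower.

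Concretely, I would equip the quotient $\varphi^{-1}\Pi^{\mathrm{int},r/p}_{H}/\Pi^{\mathrm{int},r}_{H}$ (and analogously $\widetilde{\Pi}^{\mathrm{int},r}_{H}/\Pi^{\mathrm{int},r}_{H}$) with its $p$-adic filtration and identify the associated graded, up to a uniform shift of seminorms, with $\overline{\varphi}^{-1}R_H/R_H$ (resp.\ $\overline{R}_H/R_H$). Applying \cref{proposition4.8} and \cref{proposition4.9} to each graded piece yields strict exactness of $C^\bullet(\Gamma_n,-)$ graded-wise, and a geometric-series bounding of successive cocycle corrections promotes this to strict exactness at the filtered level, since the objects are $p$-adically complete. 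The analytic variants $\varphi^{-1}\Pi^{[s/p,r/p]}_{H}/\Pi^{[s,r]}_{H}$ and $\widetilde{\Pi}^{[s/p,r/p]}_{H}/\Pi^{[s,r]}_{H}$ are handled by the same mechanism, using that for $s,r\in(0,r_0]$ the two Gauss seminorms bounding the interval behave uniformly under the filtration. Finally, I would tensor through the coefficient $A$: since $A$ is an $E$-affinoid obtained as a strict quotient of a Tate algebra and the complexes involved are complete Banach complexes, the completed base change $-\widehat{\otimes}_E A$ preserves strict exactness by the standard open-mapping argument used throughout \cite[Chapters 5--7]{KL16}, so the $A$-version follows from the $A=E$ version.

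The main obstacle is keeping the dévissage \emph{strict} rather than merely exact, i.e.\ tracking uniform bounds on primitives in terms of cocycles through both the $p$-adic completion and the Fréchet-type limit that defines the $[s,r]$ rings. This requires splitting bounds at the graded level that do not deteriorate with $n$ in $\Gamma_n=p^n\Gamma$; these are exactly what \cref{proposition4.8} and \cref{proposition4.9} deliver, because the $p^n$-rescaling of $\Gamma$ is already built into those statements, and it is this uniformity that lets the geometric series in $p$ be summed without destroying the norm estimates. Ensuring compatibility between the two filtrations (integral $p$-adic versus interval Gauss seminorms) at the boundary cases $s=r$ and on the overlaps of radii is the only place where one genuinely uses the bound $r\leq r_0$ from \cite[Corollary 5.6.5]{KL16}.
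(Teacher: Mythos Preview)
Your outline is correct and is essentially the argument underlying the citation: the paper itself does not give an independent proof but simply refers to \cite[Corollary 5.6.5]{KL16}, whose mechanism is exactly the $p$-adic d\'evissage you describe, reducing to the residue-level strict exactness of \cref{proposition4.8} and \cref{proposition4.9} and then passing to the analytic interval rings. The only addition in the present setting is the affinoid coefficient $A$, and your final step (completed flat base change preserves strict exactness of Banach complexes) is the expected and correct way to handle it.
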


\begin{proof}
See \cite[Corollary 5.6.5]{KL16}.	
\end{proof}

\begin{lemma} \mbox{\bf{(After Kedlaya-Liu \cite[Lemma 5.6.6]{KL16})}} \label{lemma4.11}
Let $M$ be a $\Gamma$-module defined over the period rings involved below. Then one can find a sufficiently large integer $\ell^*\geq 0$ such that we have that for any $\ell\geq \ell^*$, the following complexes are correspondingly strict exact: 
\begin{align}
M\otimes_{R_H} C^\bullet(\Gamma_n,\overline{\varphi}^{-\ell-1}R_H/\overline{\varphi}^{-\ell}R_H)&, M\otimes_{R_H} C^\bullet(\Gamma_n,\overline{R}_H/\overline{\varphi}^{-\ell}R_H),\\
M\otimes_{\Pi^{\mathrm{int},r}_{H,A}} C^\bullet(\Gamma_n,\varphi^{-(\ell+1)}\Pi^{\mathrm{int},r/p^{\ell+1}}_{H}/\varphi^{-\ell}\Pi^{\mathrm{int},r/p^\ell}_{H})_A&, M\otimes_{\Pi^{\mathrm{int},r}_{H}} C^\bullet(\Gamma_n,\widetilde{\Pi}^{\mathrm{int},r}_{H}/\varphi^{-\ell}\Pi^{\mathrm{int},r/p^\ell}_{H})_A\\ 
M\otimes_{\Pi_{H,A}} C^\bullet(\Gamma_n,\varphi^{-(\ell+1)}\Pi^{[s/p^{\ell+1},r/p^{\ell+1}]}_{H}/\varphi^{-\ell}\Pi^{[s/p^{\ell},r/p^{\ell}]}_{H})_A&, M\otimes_{\Pi^{\mathrm{int},r}_{H}} C^\bullet(\Gamma_n,\widetilde{\Pi}_{H}/\varphi^{-\ell}\Pi^{[s/p^{\ell},r/p^{\ell}]}_{H})_A.
\end{align}

\end{lemma}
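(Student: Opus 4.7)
The plan is to bootstrap from the ring-level strict exactness already recorded in \cref{proposition4.8}, \cref{proposition4.9} and \cref{lemma4.10}, by (i) using Frobenius pullback to identify the successive quotients $\varphi^{-(\ell+1)}(\cdots)/\varphi^{-\ell}(\cdots)$ with Frobenius twists of the $\ell=0$ quotients that have already been treated, and (ii) transferring the resulting strict exactness across the tensor product with the $\Gamma$-module $M$ by exploiting the Frobenius structure on $M$ and the good flatness properties of the period ring pairs.

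First I would handle the single-step quotients $\overline{\varphi}^{-\ell-1}R_H/\overline{\varphi}^{-\ell}R_H$ and $\varphi^{-(\ell+1)}\Pi^{\mathrm{int},r/p^{\ell+1}}_{H}/\varphi^{-\ell}\Pi^{\mathrm{int},r/p^\ell}_{H}$, and similarly for the $[s,r]$-variants. Since $\overline{\varphi}$ is bijective on $\overline{R}_H$, applying $\overline{\varphi}^{\ell}$ is a $\Gamma$-equivariant isomorphism from the first quotient onto $\overline{\varphi}^{-1}R_H/R_H$, so \cref{proposition4.8} gives strict exactness after tensoring over $R_H$ through the Frobenius. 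The analogous twist handles the $\Pi^{\mathrm{int}}$ and $\Pi^{[s,r]}$ complexes via \cref{lemma4.10}. The ``telescope'' quotients $\overline{R}_H/\overline{\varphi}^{-\ell}R_H$ and $\widetilde{\Pi}^{\mathrm{int},r}_{H}/\varphi^{-\ell}\Pi^{\mathrm{int},r/p^\ell}_{H}$ are then obtained by a finite filtration whose successive graded pieces are the single-step quotients just controlled, together with \cref{proposition4.9}; strict exactness of complexes is stable under finite extensions provided the connecting maps have closed image, which is built into the ``strict'' terminology.

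Next I would incorporate the $\Gamma$-module $M$. In the finite projective setting the tensor product is a direct summand of a finite free tensor product, so strict exactness is immediate from the ring-level statements. In the pseudocoherent and fpd cases one picks, locally on a sufficiently fine cover, a projective resolution of bounded length, and deduces the statement for $M$ from the finite free case by a hypercohomology / spectral sequence argument; this is where the threshold $\ell^*$ enters, since one needs $\ell$ large enough that the Frobenius pullback on the coefficient side dominates the presentation depth of $M$. The Frobenius structure $\varphi^*M \xrightarrow{\sim} M$ is used precisely to absorb the iterated Frobenius shifts $\varphi^{-\ell}$ appearing in the coefficient rings into the module $M$, thereby reducing, after taking $\ell \geq \ell^*$, to the finite projective scenario already controlled.

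The main obstacle will be the pseudocoherent case, because strict exactness is genuinely a Banach-space condition and tensoring in the completed sense need not preserve closed images when the module is merely pseudocoherent. The technical heart of the argument, following \cite[Lemma 5.6.6]{KL16}, is to show that for sufficiently large $\ell^*$ the norms induced on $M \otimes (\cdots)$ are equivalent to the natural quotient norms; this is what forces the hypothesis $\ell \geq \ell^*$, and it is established by iterating Frobenius to contract the annuli until the Banach-module approximations match the $\Gamma$-cohomological ones.
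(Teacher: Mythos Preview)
Your overall architecture matches the paper's proof, which simply says the result follows from \cref{proposition4.8}, \cref{proposition4.9}, \cref{lemma4.10} together with the argument of \cite[Lemma 5.6.6]{KL16}; your sketch is essentially an unpacking of that reference. The reduction of the single-step quotients to the $\ell=0$ case via Frobenius twist, the filtration argument for the ``telescope'' quotients, and the direct-summand trick in the finite projective case are all correct and are exactly what \cite{KL16} does.

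There is, however, a genuine gap in your treatment of the pseudocoherent and fpd cases. You invoke an isomorphism $\varphi^*M \xrightarrow{\sim} M$ to ``absorb the iterated Frobenius shifts'' into $M$, but in the statement $M$ is only a $\Gamma$-module, not a $(\varphi,\Gamma)$-module: no Frobenius structure on $M$ is assumed (the $(\varphi,\Gamma)$-modules appear only later, in the next subsection). So this step is unavailable, and your explanation of why the threshold $\ell^*$ exists does not go through as written. In \cite[Lemma 5.6.6]{KL16} the role of $\ell^*$ is different: one fixes a finite presentation (or bounded projective resolution) of $M$ over the base period ring, and then uses that the iterated Frobenius on the \emph{coefficient} side improves the norm bounds on the splitting maps for the ring-level complexes; once $\ell$ is large enough relative to the constants coming from the chosen presentation of $M$, these bounds are strong enough to force strict exactness after tensoring. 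In other words, the contraction happens entirely on the ring side and is played off against fixed data attached to $M$, not against any Frobenius on $M$ itself. Reworking your third and fourth paragraphs along these lines will close the gap.
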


\begin{proof}
This is the corresponding corollary of \cref{proposition4.8}, \cref{proposition4.9} and \cref{lemma4.10}, along the corresponding argument in \cite[Lemma 5.6.6]{KL16}. 	
\end{proof}

\begin{theorem}\mbox{\bf{(After Kedlaya-Liu \cite[Lemma 5.6.9]{KL16})}} \label{theorem4.12}
The corresponding base change functor from the ring $\breve{R}_H$ to $\widetilde{R}_H$ is in our situation an equivalence between the corresponding $\Gamma$-modules.	 The corresponding base change functor from the ring $\breve{\Pi}^{\mathrm{int}}_{H,A}$ to $\widetilde{\Pi}^\mathrm{int}_{H,A}$ is in our situation an equivalence between the corresponding $\Gamma$-modules. The corresponding base change functor from the ring $\breve{\Pi}_{H,A}$ to $\widetilde{\Pi}_{H,A}$ is in our situation an equivalence between the corresponding $\Gamma$-modules.
\end{theorem}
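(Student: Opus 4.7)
The plan is to mimic the proof of \cite[Lemma 5.6.9]{KL16} in our three parallel settings, using \cref{lemma4.11} as the essential input. The three statements are entirely analogous in structure: the argument for $\breve{R}_H \to \widetilde{R}_H$ uses the first family of complexes from \cref{lemma4.11}, while the arguments for $\breve{\Pi}^{\mathrm{int}}_{H,A} \to \widetilde{\Pi}^{\mathrm{int}}_{H,A}$ and $\breve{\Pi}_{H,A} \to \widetilde{\Pi}_{H,A}$ use the second and third families respectively. Consequently, I will sketch only the first case, with the understanding that the others follow by swapping in the corresponding rows of \cref{lemma4.11}.

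For fully faithfulness, given two $\Gamma$-modules $M_1, M_2$ over $\breve{R}_H$, one compares $\mathrm{Hom}_{\Gamma}(M_1, M_2)$ with $\mathrm{Hom}_{\Gamma}(M_1 \otimes_{\breve{R}_H} \widetilde{R}_H, M_2 \otimes_{\breve{R}_H} \widetilde{R}_H)$. Both are computed as $H^0$ of continuous cochain complexes on $\Gamma$, and the cone of the base change map is built from complexes of the form $M_1^\vee \otimes M_2 \otimes C^\bullet(\Gamma_n, \widetilde{R}_H/\overline{\varphi}^{-\ell}R_H)$. These are strict exact by \cref{lemma4.11} for all $\ell$ sufficiently large; passing to an open subgroup $\Gamma_n$ as in \cite[Lemma 5.6.9]{KL16} and using the standard Hochschild--Serre induction up to $\Gamma$ yields the isomorphism on Hom-groups.

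For essential surjectivity, starting from a $\Gamma$-module $\widetilde{M}$ over $\widetilde{R}_H$ of the required type (finite projective, pseudocoherent, or fpd), the plan is to construct inductively a compatible system of $\Gamma$-stable lattices $M_\ell$ over $\overline{\varphi}^{-\ell}R_H$ and set $M := \varinjlim_\ell M_\ell$; since $\breve{R}_H = \bigcup_\ell \overline{\varphi}^{-\ell}R_H$ in the relevant sense, this produces the desired descent. At each step the obstruction to promoting $M_\ell$ to an $\overline{\varphi}^{-\ell-1}R_H$-structure lives in $H^1(\Gamma_n, \widetilde{M} \otimes (\overline{\varphi}^{-\ell-1}R_H/\overline{\varphi}^{-\ell}R_H))$, which vanishes strictly by \cref{lemma4.11} for $\ell \geq \ell^*$. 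The strict exactness simultaneously supplies the transition maps between successive approximations, so the colimit construction produces a well-defined $\breve{R}_H$-module with semilinear $\Gamma$-action.

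The main obstacle will be handling the pseudocoherent and fpd cases uniformly with the finite projective one, since one must verify that the inductively constructed lattices remain in the correct category at every stage, in particular that the finiteness conditions are preserved through the tensor products with the quotients $\overline{\varphi}^{-\ell-1}R_H/\overline{\varphi}^{-\ell}R_H$ and through the inverse/direct limit operations. Here strictness (and not just exactness) of the complexes in \cref{lemma4.11} is essential, as it controls the Banach/Fr\'echet topologies of the approximating lattices. Once fully faithfulness and essential surjectivity are both in hand, the quasi-inverse to the base change functor is precisely the $\Gamma$-equivariant descent just constructed, and the remaining two clauses of the theorem follow by the identical argument with the second and third families of complexes in \cref{lemma4.11} taking the place of the first.
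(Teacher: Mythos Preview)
Your treatment of fully faithfulness matches the paper's: both reduce to the strict exactness of the complexes in \cref{lemma4.11} (the paper's cross-reference is garbled but points to the same input).

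For essential surjectivity, however, the paper follows a different route, and your inductive scheme has a gap. The paper does \emph{not} build $M$ level by level via $H^1$-obstructions. Instead it first invokes \cite[Lemma 5.6.8]{KL16} as a separate black box to descend the underlying module $\widetilde{M}$ (with no reference to $\Gamma$) to some $\overline{\varphi}^{-\ell}R_H$. Only after this descent is in hand does one compare the original $\Gamma$-cochains on $\widetilde{M}$ with those transported from the descended module; the difference is a $1$-cochain of small norm, and the strict exactness from \cref{lemma4.11} is used as a successive-approximation device to absorb this small error and produce a genuine $\Gamma$-action on the descended module. So the logic is ``descend the module, then repair the action'' rather than ``lift both simultaneously through a tower''.

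Your version is missing the base case: you never say how the first $\Gamma$-stable lattice $M_{\ell^*}$ over $\overline{\varphi}^{-\ell^*}R_H$ is produced, and this is exactly what \cite[Lemma 5.6.8]{KL16} supplies. Moreover, the claim that ``the obstruction to promoting $M_\ell$ to an $\overline{\varphi}^{-\ell-1}R_H$-structure lives in $H^1(\Gamma_n, \widetilde{M}\otimes(\overline{\varphi}^{-\ell-1}R_H/\overline{\varphi}^{-\ell}R_H))$'' is not accurate as stated: extending the \emph{module} structure is not governed by that cohomology group (it is a descent/flatness question, not a group-cohomology one), and even for the $\Gamma$-action the relevant coefficients would be the endomorphisms of the module tensored with the quotient, not $\widetilde{M}$ itself. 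If you want to salvage your outline, insert \cite[Lemma 5.6.8]{KL16} to obtain the initial descent of the bare module, and then recast the remaining step as the norm-correction argument the paper describes rather than as an abstract $H^1$ vanishing.
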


\begin{proof}
We briefly mention the corresponding argument since this is just analog of the corresponding argument for the proof of the corresponding result \cite[Lemma 5.6.9]{KL16}. The corresponding fully faithfulness comes from the \cref{lemma3.10}. Then we apply the corresponding \cite[Lemma 5.6.8]{KL16} to descend the corresponding modules over the bigger rings with accent $\widetilde{*}$ to the base rings after some Frobenius action. Then the corresponding differentials on the original modules over the bigger rings could be made have norms just a small quantity twist away from the corresponding differentials on the corresponding base changes from the corresponding base rings after Frobenius twists. Then to finish we apply the corresponding \label{lemma3.10} to modify this quantity to finish the proof as in \cite[Lemma 5.6.9]{KL16}.
\end{proof}

\indent For the relative logarithmic toric towers, we consider the corresponding following mixed characteristic situation. We consider the following tower $(H,H^+)$:
\begin{align}
H_0&=\mathbb{Q}_p\{x_1/t_1,...,x_k/t_k,t_1/y_1,...,t_\ell/y_\ell\},\\
H_0^+&=H_0^\circ,\\
H_n&=\mathbb{Q}_p(\zeta_{p^n})\{(x_1/t_1)^{1/p^n},...,(x_k/t_k)^{1/p^n},(t_1/y_1)^{1/p^n},...,(t_k/y_k)^{1/p^n}\}, n=0,1,....	
\end{align}

The corresponding parameters $x_i,y_i$ are in $p^\mathbb{Q}$, and $\ell\geq k\geq 0$. The corresponding tower is again not Galois. Again when we in some situations have the corresponding topological group $\Gamma$ (just as in the corresponding logarithmic setting we are considering here) we can simply consider the corresponding definition as in \cite[Definition 7.3.5]{KL16}:

\begin{definition} \mbox{\bf{(After Kedlaya-Liu \cite[Definition 7.3.5]{KL16})}}
In our current situation, we consider the corresponding action of the corresponding group $\Gamma$ in our situation, namely the semidirect product taking the form of $\mathbb{Z}^\times_p\ltimes \mathbb{Z}_p^\ell$ on the corresponding period rings taking the form of $*_{H,A}$. We now define the corresponding $\Gamma$-modules in the finite projective, pseudocoherent or fpd setting to the corresponding modules over the corresponding period rings as above carrying the corresponding action coming from the group $\Gamma$ which is assumed to as in \cite[Definition 7.3.5]{KL16} particularly semilinear. 	
\end{definition}

\begin{proposition} \mbox{\bf{(After Kedlaya-Liu \cite[Lemma 7.3.6]{KL16})}} 
Let $\Gamma_n:=p^n\Gamma$ for $n\geq 0$, then we have in our situation the corresponding result that the corresponding complex $C^\bullet(\Gamma_n,\overline{\varphi}^{-1}R_H/R_H)$ is strict exact.
\end{proposition}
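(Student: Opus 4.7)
The plan is to adapt the Koszul-style weight-space argument used in the proof of \cite[Lemma 7.3.6]{KL16}, accommodating the new fact that in the relative logarithmic setting the group is the semidirect product $\Gamma = \mathbb{Z}_p^\times \ltimes \mathbb{Z}_p^\ell$ rather than the purely additive $\mathbb{Z}_p^\ell$. First I would exploit the explicit description of $R_H$ and $\overline{\varphi}^{-1}R_H$ to write $\overline{\varphi}^{-1}R_H/R_H$ as a topological direct sum of one-dimensional weight pieces indexed by the fractional monomial exponents in the generators $x_i/t_i$ and $t_j/y_j$; the quotient captures precisely those exponents whose $p$-denominators strictly exceed what is allowed in $R_H$ itself.

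Next I would compute the continuous cohomology of $\Gamma_n = p^n\Gamma$ piece by piece, via a Hochschild--Serre reduction: first along the normal translation subgroup $p^n\mathbb{Z}_p^\ell$, then along the cyclotomic quotient coming from $p^n\mathbb{Z}_p^\times$. On each weight piece, a topological generator of the $i$-th additive factor acts by multiplication by a nontrivial $p$-power root of unity $\zeta_{p^m}^{a_i}$, so $1-\zeta_{p^m}^{a_i}$ is invertible with an explicit valuation bound, and the associated Koszul complex is strictly exact with a controlled contracting homotopy. Averaging over the $\mathbb{Z}_p^\times$-orbits, which permute weights while preserving the $p$-adic valuation of the differences $1-\zeta$ that appear, transfers this strict exactness to the remaining $\mathbb{Z}_p^\times$-step.

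The main obstacle will be this last step: combining the pointwise bounded inverses into a single bounded operator on the completed direct sum, so that what one obtains is genuine strict exactness rather than only exactness in the abelian category. I expect to overcome this by taking $n$ large enough that the $\Gamma_n$-action on every nontrivial weight stays uniformly away from the identity, which is exactly the role of the shift from $\Gamma$ to $\Gamma_n = p^n\Gamma$ in the hypothesis. The semidirect-product twist will not affect the norm estimates in an essential way, because the $\mathbb{Z}_p^\times$-action preserves the weight decomposition up to a norm-preserving permutation of weights, so the bounds produced in the abelian stage descend through the Hochschild--Serre picture without loss. Once this is in place, the argument runs parallel to that of \cite[Lemma 7.3.6]{KL16} verbatim.
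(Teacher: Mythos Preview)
Your proposal is far more detailed than what the paper actually does: the paper's proof is simply ``See \cref{proposition4.8}'', which in turn reads ``See \cite[Lemma 7.3.6]{KL16}''. In other words, the paper treats the relative (semidirect-product) case as an immediate consequence of the standard ramified case, deferring the content entirely to Kedlaya--Liu. Your sketch is essentially a reconstruction of what \cite[Lemma 7.3.6]{KL16} does, together with a Hochschild--Serre reduction to absorb the extra $\mathbb{Z}_p^\times$ factor, and in that sense it is on the right track and compatible with the paper's intent.

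There is one point where your reasoning is slightly off. You write that you will ``take $n$ large enough that the $\Gamma_n$-action on every nontrivial weight stays uniformly away from the identity'', and you identify this as ``exactly the role of the shift from $\Gamma$ to $\Gamma_n$''. This is backwards: enlarging $n$ moves $p^n\gamma$ \emph{closer} to the identity, not farther, so it makes the relevant quantities $1-\zeta$ smaller, not more invertible. The statement is asserted for every $n\geq 0$, and the uniformity of the bound does not come from a choice of $n$. It comes instead from the structure of the quotient $\overline{\varphi}^{-1}R_H/R_H$: the weights that survive have exponents with $p$-denominator bounded by a single extra factor of $p$ beyond $R_H$, so modulo integers there are only finitely many weight types, and on each of them $1-\zeta$ has a definite nonzero valuation. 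That finiteness is what gives a bounded contracting homotopy on the completed direct sum, for any fixed $n$. Once you correct this, the rest of your outline (weight decomposition, Koszul exactness on each piece, compatibility with the $\mathbb{Z}_p^\times$-twist via norm-preserving permutation of weights) goes through and matches the argument one would extract from \cite{KL16}.
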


\begin{proof}
See \cref{proposition4.8}.
\end{proof}

\begin{proposition} \mbox{\bf{(After Kedlaya-Liu \cite[Lemma 5.6.4]{KL16})}} 
Let $\Gamma_n:=p^n\Gamma$ for $n\geq 0$, then we have in our situation the corresponding result that the corresponding complex $C^\bullet(\Gamma_n,\overline{R}_H/R_H)$ is strict exact.
\end{proposition}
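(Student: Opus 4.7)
The plan is to mirror the argument of Kedlaya--Liu \cite[Lemma 5.6.4]{KL16}, which already handles the analogous statement in the non-relative logarithmic situation (our \cref{proposition4.9}), and to verify that the new features of the present setting---the semidirect product shape $\Gamma = \mathbb{Z}_p^\times \ltimes \mathbb{Z}_p^\ell$ and the enlargement of the layers by the cyclotomic fields $\mathbb{Q}_p(\zeta_{p^n})$---do not break the quantitative estimates.

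First I would decompose $\Gamma_n$ as the semidirect product of a cyclotomic piece $\Gamma_n^{\mathrm{cyc}} \subset \mathbb{Z}_p^\times$ and a normal toric/Kummer piece $\Gamma_n^{\mathrm{tor}} \cong (p^n\mathbb{Z}_p)^\ell$, and use the associated two-step filtration on the continuous cochain complex (equivalently, the Hochschild--Serre spectral sequence). This reduces the desired strict exactness to two separate statements: strict exactness of the Koszul complex of $\Gamma_n^{\mathrm{tor}}$ on $\overline{R}_H/R_H$, and strict exactness of the Koszul complex of $\Gamma_n^{\mathrm{cyc}}$ on the resulting (strict) cohomology.

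Next I would establish the toric part by expanding $\overline{R}_H$ as a topological direct sum of monomial eigenspaces indexed by weights $\alpha \in \mathbb{Z}[1/p]^\ell/\mathbb{Z}^\ell$. The submodule $R_H$ is precisely the integer-weight part, so $\overline{R}_H/R_H$ is supported on nontrivial weights; on each such weight space a topological generator $\gamma_i$ of the $i$-th $\mathbb{Z}_p$-factor acts by a root of unity $\zeta^{\alpha_i}$ for which $\zeta^{\alpha_i}-1$ is bounded below in norm uniformly for fixed level $\Gamma_n$. Division by $\gamma_i - 1$ on each nontrivial weight then produces an explicit Koszul contracting homotopy with uniform operator norm, whence strict exactness. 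The residual cyclotomic layer acts only on the weight-zero part, which vanishes in the quotient, so the outer spectral sequence collapses and the result follows.

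The hard part, and the real content hidden in the reference to \cite[Lemma 5.6.4]{KL16}, is the simultaneous control of the contracting-homotopy norms: one must verify that the Koszul homotopy for $\Gamma_n^{\mathrm{tor}}$ is compatible with the $\Gamma_n^{\mathrm{cyc}}$-action up to a bounded defect, so that the filtration argument upgrades from ordinary exactness to strict exactness. The twisting of the toric weights by the cyclotomic character under conjugation in $\mathbb{Z}_p^\times \ltimes \mathbb{Z}_p^\ell$ is the key complication; it is absorbed by the observation that this conjugation only permutes the weight spaces, without disturbing the lower bound on $|\zeta^{\alpha_i} - 1|$. Once this is in hand, the passage from the standard toric case to the logarithmic ramified tower is merely notational.
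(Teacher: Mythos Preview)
Your Hochschild--Serre reduction is the right shape, but the claim that ``the weight-zero part \dots\ vanishes in the quotient'' is false for the relative tower, and this is where the argument breaks. In the relative tower $H_n=\mathbb{Q}_p(\zeta_{p^n})\{T_i^{1/p^n}\}$ the base $\mathbb{Q}_p$ is \emph{not} perfectoid; the cyclotomic direction is itself part of the tower, so $R_H$ is imperfect in that direction as well. Concretely, $R_H$ contains $\overline{\pi}$ (the tilt of $\zeta_p-1$) but not $\overline{\pi}^{1/p}$, and elements such as $\overline{\pi}^{1/p}$ survive in $\overline{R}_H/R_H$ while carrying toric weight zero. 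On this piece $\Gamma_n^{\mathrm{tor}}$ acts trivially, so your toric Koszul homotopy does nothing: the inner cohomology $H^q(\Gamma_n^{\mathrm{tor}},\overline{R}_H/R_H)$ does \emph{not} vanish in all degrees, and the outer spectral sequence does not collapse for free.

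The repair is straightforward but must be stated: after the toric step you are left with (binomial copies of) the toric-weight-zero part of the quotient, which is essentially $\overline{R}_{K}/R_{K}$ for the cyclotomic tower over $\mathbb{Q}_p$ tensored with integral toric monomials. On this the cyclotomic factor $\Gamma_n^{\mathrm{cyc}}\subset\mathbb{Z}_p^\times$ acts via $\overline{\pi}\mapsto (1+\overline{\pi})^{\chi(\gamma)}-1$, and on each non-integral $\overline{\pi}$-exponent one again has a uniform lower bound on $\gamma-1$, giving the needed bounded contracting homotopy. This is exactly the $\ell=0$ instance of \cite[Lemma 5.6.4]{KL16}. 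With that second step in place your filtration argument goes through; without it the proof is incomplete. (The paper itself simply refers back to \cref{proposition4.9} and hence to \cite[Lemma 5.6.4]{KL16}, so there is no independent argument in the text to compare against.)
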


\begin{proof}
See \cref{proposition4.9}.
\end{proof}

\begin{lemma} \mbox{\bf{(After Kedlaya-Liu \cite[Corollary 5.6.5]{KL16})}} 
We need to fix some suitable $r_0>0$ as in \cite[Corollary 5.6.5]{KL16} and for the corresponding radii $s,r$ in $(0,r_0]$ we will have the corresponding complexes $C^\bullet(\Gamma_n,\varphi^{-1}\Pi^{\mathrm{int},r/p}_{H}/\Pi^{\mathrm{int},r}_{H})_A$ and $C^\bullet(\Gamma_n,\widetilde{\Pi}^{\mathrm{int},r}_{H}/\Pi^{\mathrm{int},r}_{H})_A$ are strict exact. Moreover we have that the corresponding complexes $C^\bullet(\Gamma_n,\varphi^{-1}\Pi^{[s/p,r/p]}_{H}/\Pi^{[s,r]}_{H})_A$ and $C^\bullet(\Gamma_n,\widetilde{\Pi}^{[s/p,r/p]}_{H}/\Pi^{[s,r]}_{H})_A$ are then in our situation strict exact as well.  	
\end{lemma}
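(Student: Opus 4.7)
The plan is to mimic the strategy of \cite[Corollary 5.6.5]{KL16}, using the two propositions immediately preceding this lemma (strict exactness of $C^\bullet(\Gamma_n, \overline{\varphi}^{-1}R_H/R_H)$ and of $C^\bullet(\Gamma_n, \overline{R}_H/R_H)$) as the residue-level inputs. The first step would be to reduce the integral-level claim to the residue-level claim. The modules $\varphi^{-1}\Pi^{\mathrm{int},r/p}_H / \Pi^{\mathrm{int},r}_H$ and $\widetilde{\Pi}^{\mathrm{int},r}_H / \Pi^{\mathrm{int},r}_H$ each carry a natural $\Gamma$-equivariant $p$-adic filtration whose associated graded pieces are, up to an appropriate Frobenius twist, direct sums of shifted copies of $\overline{\varphi}^{-1}R_H/R_H$ and $\overline{R}_H/R_H$ respectively. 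Strict exactness would then descend from the graded pieces to the filtered complex by a standard $p$-adic completeness argument, provided the cochain complexes at each graded piece are strict exact with norm estimates that are uniform in the graded index, which is exactly what the two preceding propositions supply.

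Next, for the Banach versions involving $\Pi^{[s/p,r/p]}_H$ and $\widetilde{\Pi}^{[s/p,r/p]}_H$, I would argue by a simultaneous comparison of the two Gauss norms $\lambda^s$ and $\lambda^r$ to reduce to the integral case just established. Concretely, each element of $\Pi^{[s,r]}_{H,A}$ admits a bounded decomposition as a combination of elements coming from $\Pi^{\mathrm{int},r'}_{H,A}$ for appropriate $r' \in (0, r_0]$; one tracks the cocycle/coboundary bookkeeping under such a presentation and uses strictness at the integral level together with completeness of the Banach structure to deduce strict exactness at the annulus level. This is the same mechanism that upgrades \cite[Corollary 5.6.5]{KL16} from $\Pi^{\mathrm{int}}$-coefficients to $\Pi^{[s,r]}$-coefficients.

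The main obstacle, relative to \cref{lemma4.10}, is that $\Gamma$ is now the semidirect product $\mathbb{Z}_p^\times \ltimes \mathbb{Z}_p^\ell$ rather than the purely additive group $\mathbb{Z}_p^\ell$. The hard part will therefore be verifying that the norm and continuity estimates used in the cocycle bookkeeping are preserved under the twist by the $\mathbb{Z}_p^\times$-factor, which acts on the toric coordinates $(x_i/t_i)^{1/p^n}$ and $(t_j/y_j)^{1/p^n}$ through the cyclotomic character. I would handle this by interpreting $\Gamma_n = p^n\Gamma$ as a sufficiently small congruence subgroup, so that the $\mathbb{Z}_p^\times$-part is already close enough to the identity for the usual Tate-style differencing lemmas to apply without any loss in the quantitative estimates; this is a minor adjustment of the setup in \cite[Definition 7.3.5]{KL16}. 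Once these points are in place, both halves of the statement follow exactly as in \cite[Corollary 5.6.5]{KL16}, with the residue-level strict exactness supplied by the two propositions above.
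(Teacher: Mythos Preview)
Your proposal is correct and follows essentially the same approach as the paper: the paper's own proof is simply ``See \cref{lemma4.10}'', which in turn defers to \cite[Corollary 5.6.5]{KL16}, so you have in fact spelled out more of the mechanism than the paper does. Your observation about the semidirect-product structure of $\Gamma$ and the passage to congruence subgroups $\Gamma_n$ is the only genuinely new point relative to \cref{lemma4.10}, and your handling of it is the expected one.
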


\begin{proof}
See \cref{lemma4.10}.	
\end{proof}

\begin{lemma} \mbox{\bf{(After Kedlaya-Liu \cite[Lemma 5.6.6]{KL16})}} 
Let $M$ be a $\Gamma$-module defined over the period rings involved below. Then one can find a sufficiently large integer $\ell^*\geq 0$ such that we have that for any $\ell\geq \ell^*$, the following complexes are correspondingly strict exact: 
\begin{align}
M\otimes_{R_H} C^\bullet(\Gamma_n,\overline{\varphi}^{-\ell-1}R_H/\overline{\varphi}^{-\ell}R_H)&, M\otimes_{R_H} C^\bullet(\Gamma_n,\overline{R}_H/\overline{\varphi}^{-\ell}R_H),\\
M\otimes_{\Pi^{\mathrm{int},r}_{H,A}} C^\bullet(\Gamma_n,\varphi^{-(\ell+1)}\Pi^{\mathrm{int},r/p^{\ell+1}}_{H}/\varphi^{-\ell}\Pi^{\mathrm{int},r/p^\ell}_{H})_A&, M\otimes_{\Pi^{\mathrm{int},r}_{H}} C^\bullet(\Gamma_n,\widetilde{\Pi}^{\mathrm{int},r}_{H}/\varphi^{-\ell}\Pi^{\mathrm{int},r/p^\ell}_{H})_A\\ 
M\otimes_{\Pi_{H,A}} C^\bullet(\Gamma_n,\varphi^{-(\ell+1)}\Pi^{[s/p^{\ell+1},r/p^{\ell+1}]}_{H}/\varphi^{-\ell}\Pi^{[s/p^{\ell},r/p^{\ell}]}_{H})_A&, M\otimes_{\Pi^{\mathrm{int},r}_{H}} C^\bullet(\Gamma_n,\widetilde{\Pi}_{H}/\varphi^{-\ell}\Pi^{[s/p^{\ell},r/p^{\ell}]}_{H})_A.
\end{align}

\end{lemma}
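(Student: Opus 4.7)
The proof mirrors that of \cref{lemma4.11} above, which the author established by combining \cref{proposition4.8}, \cref{proposition4.9} and \cref{lemma4.10} with the argument of Kedlaya-Liu's Lemma 5.6.6. In the present logarithmic toric tower setting the same three inputs are now available from the three preceding results of this subsection: the two preceding propositions give strict exactness of $C^\bullet(\Gamma_n, \overline{\varphi}^{-1}R_H/R_H)$ and $C^\bullet(\Gamma_n, \overline{R}_H/R_H)$, and the preceding lemma gives strict exactness of the annular coefficient complexes at Frobenius depth one, for both $\Pi^{\mathrm{int},r}$ and $\Pi^{[s,r]}$ variants.

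The plan is then to proceed in three steps. First I would promote each coefficient complex from level $0$ to level $\ell$ by pulling back along $\overline{\varphi}^{-\ell}$ (resp. $\varphi^{-\ell}$): since Frobenius is an isometric automorphism of the relevant period rings and commutes with the $\Gamma_n$-action in the required semilinear sense, the shifted complexes $C^\bullet(\Gamma_n, \overline{\varphi}^{-\ell-1}R_H/\overline{\varphi}^{-\ell}R_H)$ and their annular counterparts remain strict exact with bounds independent of $\ell$. Second, the quotients $\overline{R}_H/\overline{\varphi}^{-\ell}R_H$ and $\widetilde{\Pi}^{\mathrm{int},r}_{H}/\varphi^{-\ell}\Pi^{\mathrm{int},r/p^\ell}_H$ (and likewise on single annuli) fit into finite filtrations whose graded pieces are exactly the shifted complexes from the first step, so iterated use of the short exact sequences
$$
0 \to \overline{\varphi}^{-\ell}R_H/\overline{\varphi}^{-\ell-1}R_H \to \overline{R}_H/\overline{\varphi}^{-\ell-1}R_H \to \overline{R}_H/\overline{\varphi}^{-\ell}R_H \to 0
$$
transfers strict exactness from the graded pieces to the full quotient, and similarly for the $\Pi$-variants.

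Third I would tensor with $M$. Because $M$ is finite projective, pseudocoherent or fpd, the underlying complex is automatically exact; what genuinely needs the hypothesis $\ell \geq \ell^\ast$ is strict exactness. As in Kedlaya-Liu, $M$ is equipped with a compatible Banach norm on which $\Gamma_n$ acts by operators whose deviation from their scalar counterparts is controlled geometrically in both $n$ and the Frobenius depth $\ell$. Choosing $\ell^\ast$ large enough that these error terms are uniformly dominated by the splitting constants produced in the first two steps yields strict exactness of the tensored complex; a single $\ell^\ast$ works for all six variants because the splitting constants in the first two steps are uniform in the radii once $r \in (0, r_0]$ has been fixed.

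The main obstacle I expect is precisely this uniformity of splitting constants under the tensor product with $M$. In the present logarithmic toric setting the group $\Gamma = \mathbb{Z}_p^\times \ltimes \mathbb{Z}_p^\ell$ is a nontrivial semidirect product and the action is genuinely ramified along the divisors $t_i$, so one must verify that the cyclotomic factor $\mathbb{Z}_p^\times$ does not spoil the geometric decay needed to absorb the norm deformation coming from $M$. Once this uniform estimate is secured, reduction to the cases of free $M$ via a local presentation, followed by application of the previously established splittings, finishes the argument in the same fashion as the proof of \cref{lemma4.11}.
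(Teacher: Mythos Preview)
Your proposal is correct and follows essentially the same approach as the paper: the paper's proof is simply ``See \cref{lemma4.11}'', and you have correctly identified that the argument reduces to the three preceding results of the subsection together with the argument of \cite[Lemma 5.6.6]{KL16}, just as \cref{lemma4.11} did for the standard ramified tower. Your additional elaboration of the three-step mechanism (Frobenius shift, filtration, tensoring with $M$) and your remark on the semidirect product structure go well beyond what the paper records, but they are consistent with the intended reduction.
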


\begin{proof}
See \cref{lemma4.11}.	
\end{proof}

\begin{theorem}\mbox{\bf{(After Kedlaya-Liu \cite[Lemma 5.6.9]{KL16})}} 
The corresponding base change functor from the ring $\breve{R}_H$ to $\widetilde{R}_H$ is in our situation an equivalence between the corresponding $\Gamma$-modules.	 The corresponding base change functor from the ring $\breve{\Pi}^{\mathrm{int}}_{H,A}$ to $\widetilde{\Pi}^\mathrm{int}_{H,A}$ is in our situation an equivalence between the corresponding $\Gamma$-modules. The corresponding base change functor from the ring $\breve{\Pi}_{H,A}$ to $\widetilde{\Pi}_{H,A}$ is in our situation an equivalence between the corresponding $\Gamma$-modules.
\end{theorem}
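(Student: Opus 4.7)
The plan is to mirror the proof of \cref{theorem4.12}, since all the strict exactness ingredients (the three preceding propositions/lemmas recalled just above) have been reestablished in the semidirect-product $\Gamma = \mathbb{Z}_p^\times \ltimes \mathbb{Z}_p^\ell$ setting. The only novelty is book-keeping the non-normal direction from $\mathbb{Z}_p^\times$, so I would set up the argument in a way that treats the semilinear action through the normal filtration $\Gamma_n = p^n\Gamma$, as in the body of \cref{proposition4.8}--\cref{lemma4.11}.

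First, I would establish fully faithfulness of each base change. For this I would invoke \cref{lemma3.10} exactly as in \cref{theorem4.12}: given two $\Gamma$-modules $M, N$ over the smaller ring ($\breve{R}_H$, $\breve{\Pi}^{\mathrm{int}}_{H,A}$, or $\breve{\Pi}_{H,A}$), the comparison of $\mathrm{Hom}_\Gamma(M,N)$ before and after tilde-completion is controlled by the strict exactness of the quotient complexes $C^\bullet(\Gamma_n, \widetilde{R}_H/\breve{R}_H)$ (and the analogous $\Pi^{\mathrm{int}}$- and $\Pi$-versions). These quotients are built from successive layers $\varphi^{-\ell-1}(\cdot)/\varphi^{-\ell}(\cdot)$ and $\widetilde{*}/\varphi^{-\ell}(*)$ appearing in the previous lemma, so fully faithfulness reduces to a telescoping argument.

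Second, for essential surjectivity, I would take a $\Gamma$-module $\widetilde{M}$ over the tilde ring and apply \cite[Lemma 5.6.8]{KL16} to descend it, along a suitable Frobenius twist $\varphi^{-\ell}$, to a module $M_\ell$ over the smaller ring. The key uniform-estimate step is that, after choosing $\ell \geq \ell^*$ from the previous lemma, the differentials of the $\Gamma$-action on $\widetilde{M}$ differ from those induced on $\varphi^{-\ell}$-base changes from the smaller ring by operators whose norms can be made arbitrarily small. I would then iterate this construction and apply \cref{lemma3.10} once more to turn this small-norm discrepancy into an honest isomorphism of $\Gamma$-modules, producing a descent datum over $\breve{R}_H$ (resp.\ $\breve{\Pi}^{\mathrm{int}}_{H,A}$, $\breve{\Pi}_{H,A}$) whose base change is canonically isomorphic to $\widetilde{M}$.

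The main obstacle I anticipate is tracking the semidirect-product structure carefully: one has to verify that the descent datum produced by the Frobenius-twist/small-norm procedure is equivariant not only for the normal $\mathbb{Z}_p^\ell$ factor but also for the $\mathbb{Z}_p^\times$ factor acting by multiplication on the exponents of $(x_i/t_i)^{1/p^n}$ and $(t_j/y_j)^{1/p^n}$. This is handled in \cite[Lemma 5.6.9]{KL16} by exploiting that $\varphi$ commutes with the entire $\Gamma$-action and that the norm estimates on the layers of the tower are $\Gamma$-uniform; in particular, passing to $\Gamma_n$ for $n$ large makes the $\mathbb{Z}_p^\times$ direction continuous enough to fit the same contraction-mapping scheme. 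Once this compatibility is checked, the three statements of the theorem follow in parallel, with the $R_H$ case being the model, the $\Pi^{\mathrm{int}}$ case requiring only a radius parameter $r$, and the $\Pi$ case obtained by glueing the $\Pi^{[s,r]}$ variants over all $s,r \in (0,r_0]$.
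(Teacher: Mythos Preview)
Your proposal is correct and follows essentially the same approach as the paper. In fact, the paper's proof of this statement is simply ``See \cref{theorem4.12}'', so your write-up is considerably more detailed than what appears there; the extra care you take with the semidirect-product bookkeeping is not discussed in the paper at all, which treats the relative case as a straight citation of the standard case.
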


\begin{proof}
See \cref{theorem4.12}.
\end{proof}

\begin{remark} \label{remark2.17}
The corresponding statements for the mixed characteristic version of the relative log towers are established in \cite[Chapter 7.3, Chapter 7.4]{KL16} without touching the corresponding large coefficient $A$ here, therefore for more detailed discussion see \cite[Chapter 7.3, Chapter 7.4]{KL16}. We should mention that the corresponding \cite[Chapter 7.3, Chapter 7.4]{KL16} target at the corresponding finite projective objects, essentially in the generality above we should come across essentially similar difficulties to possible ones in \cite{KL16} when one would like to consider the pseudocoherent sheaves. Things in our current section are basically the complementary discussion to the discussion we made in \cite{XT2} for the unramified towers.\\  	
\end{remark}


\subsection{$(\varphi,\Gamma)$-Modules}

\indent Now we study the corresponding $\Gamma$-modules carrying further Frobenius action, but we will again mainly focus on the corresponding finite projective objects (see \cref{remark2.17}), which is parallel to the corresponding consideration in \cite{KL16}. First recall the following definition:

\begin{setting} \mbox{\bf{(After Kedlaya-Liu \cite[Definition 7.5.3]{KL16})}} \label{setting2.18}
We will consider the corresponding Kummer \'etale topology and the corresponding pro-Kummer \'etale topology considered in \cite[Definition 7.5.3]{KL16}. Let $(H,H^+)$ be either the standard ramified toric tower or the relative ramified toric tower we considered as obave. Here we recall that a covering of the adic space $\mathrm{Spa}(H_0,H_0^+)$ is called \'etale if it is so after considering the corresponding pullback along for some integer $v\geq 0$ the map $T_1\mapsto T_1^{p^{-v}},...,T_\ell\mapsto T_\ell^{p^{-v}}$. And we recall that a covering of the adic space $\mathrm{Spa}(H_0,H_0^+)$ is called finite \'etale if it is so after considering the corresponding pullback along for some integer $v\geq 0$ the map $T_1\mapsto T_1^{p^{-v}},...,T_\ell\mapsto T_\ell^{p^{-v}}$. And we recall that a covering of the adic space $\mathrm{Spa}(H_0,H_0^+)$ is called faithfully finite \'etale if it is so after considering the corresponding pullback along for some integer $v\geq 0$ the map $T_1\mapsto T_1^{p^{-v}},...,T_\ell\mapsto T_\ell^{p^{-v}}$.	
\end{setting}

\begin{definition}\mbox{\bf{(After Kedlaya-Liu \cite[Definition 5.7.2]{KL16})}}
Consider the corresponding framework in the previous subsection. Consider the corresponding period rings in our situation (with the corresponding notations in \cite{XT1} and \cite{XT2}):
\begin{align}
\Omega^{\mathrm{int}}_{H,A},\breve{\Omega}_{H,A}^{\mathrm{int}},\widehat{\Omega}_{H,A}^{\mathrm{int}},\widetilde{\Omega}_{H,A}^{\mathrm{int}},\\
\Pi^{\mathrm{int}}_{H,A},\breve{\Pi}_{H,A}^{\mathrm{int}},\widehat{\Pi}_{H,A}^{\mathrm{int}},\widetilde{\Pi}_{H,A}^{\mathrm{int}},\\
\Pi^{\mathrm{bd}}_{H,A},\breve{\Pi}_{H,A}^{\mathrm{bd}},\widehat{\Pi}_{H,A}^{\mathrm{bd}},\widetilde{\Pi}_{H,A}^{\mathrm{bd}},\\
\Pi_{H,A},\breve{\Pi}_{H,A},\widetilde{\Pi}_{H,A},
\end{align}
we have the corresponding notions of finite projective or pseudocoherent or finite projective dimension $\Gamma$-modules, where we can then add the corresponding semilinear actions as in \cite[Definition 5.7.2]{KL16} to define the corresponding $(\varphi,\Gamma)$-modules satisfying the corresponding condition on the corresponding Frobenius pullbacks. Then furthermore we can define the same objects for those rings having some radius $r>0$ of interval $[s,r]\in (0,\infty)$ as in \cite[Definition 5.7.2]{KL16}, again satisfying the corresponding Frobenius pullback condition taking the general form:
\begin{align}
&\varphi^*\Delta \otimes_{*_{H,A}^{r/p}} *_{{H,A}}^{r/p} \overset{\sim}{\rightarrow} \Delta \otimes *_{{H,A}}^{r/p},\\
&\varphi^*\Delta \otimes_{*_{H,A}^{[s/p,r/p]}} *_{H,A}^{[s,r/p]} \overset{\sim}{\rightarrow} \Delta \otimes_{*_{H,A}^{[s,r]}} *_{H,A}^{[s,r/p]}.	
\end{align}
\end{definition}

%
%

\begin{proposition} \mbox{\bf{(After Kedlaya-Liu \cite[Theorem 5.7.4]{KL16})}} \label{proposition2.20}
Consider the following period rings in the rational setting:
\begin{align}
\Omega_H,\breve{\Omega}_H,\widehat{\Omega}_H,\widetilde{\Omega}_H,\\
\Pi^{\mathrm{bd}}_H,\breve{\Pi}_H^{\mathrm{bd}},\widehat{\Pi}_H^{\mathrm{bd}},\widetilde{\Pi}_H^{\mathrm{bd}},\\
\Pi_H,\breve{\Pi}_H,\widetilde{\Pi}_H.	
\end{align}
Then the corresponding rational finite projective modules over these rings carrying the corresponding $(\varphi,\Gamma)$-structures. For the deformed we look at the corresponding rings in the following:
\begin{align}
\Pi_{H,A},\breve{\Pi}_{H,A}
,\widetilde{\Pi}_{H,A}.	
\end{align}	
Then we have the corresponding parallel results as well.
\end{proposition}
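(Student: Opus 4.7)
The plan is to deduce \cref{proposition2.20} from the $\Gamma$-module equivalences established in \cref{theorem4.12} by (a) extending from integral to rational period rings by inverting $p$, (b) verifying compatibility with the Frobenius operator, and (c) running the same machinery in the deformed setting with coefficient ring $A$.

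First I would observe that for each of the pairs $\breve{R}_H \to \widetilde{R}_H$, $\breve{\Pi}^{\mathrm{int}}_{H,A} \to \widetilde{\Pi}^{\mathrm{int}}_{H,A}$, and $\breve{\Pi}_{H,A} \to \widetilde{\Pi}_{H,A}$, \cref{theorem4.12} already supplies a base-change equivalence of $\Gamma$-module categories. Since $\varphi$ acts compatibly on both sides and the Frobenius-pullback datum $\varphi^*\Delta \otimes_{*_{H,A}^{r/p}} *_{H,A}^{r/p} \xrightarrow{\sim} \Delta \otimes *_{H,A}^{r/p}$ is expressed purely in terms of module tensor products, the forgetful functor from $(\varphi,\Gamma)$-modules to $\Gamma$-modules is compatible with base change. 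Upgrading the $\Gamma$-equivalence to a $(\varphi,\Gamma)$-equivalence then amounts to checking that the Frobenius structure is preserved and reflected, which is a formal consequence of fully faithfulness on the $\Gamma$-level together with the fact that $\varphi$ is an intrinsic endomorphism of each period ring.

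Second, to pass from the integral period rings (with superscripts $\mathrm{int}$ and $\mathrm{bd}$) to the rational rings in the statement, I would simply invert $p$; since base change preserves finite projectivity and commutes with inverting $p$, the rational equivalence follows formally. For the rings carrying a radius $r > 0$ or an interval $[s,r] \subset (0,\infty)$, I would use the standard glueing recipe: a finite projective $(\varphi,\Gamma)$-module over $\Pi_H$ is equivalent to a compatible sequence $\{\Delta^{[s_n,r_n]}\}$ for a decreasing sequence of radii, interpolated by the Frobenius-pullback isomorphism between consecutive radii. The equivalence on each fixed interval reduces to the previously treated integral/rational cases after sufficiently many Frobenius twists, exactly as in \cite[Theorem 5.7.4]{KL16}. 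In the deformed setting over $A$, the argument is formally identical once one knows the strict exactness statements of \cref{lemma4.10} and \cref{lemma4.11} remain valid after tensoring with $A$, which is already built into the way those lemmas are stated.

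The main obstacle I anticipate is the bookkeeping that controls the interaction between the semilinear $\Gamma$-action on the ramified/logarithmic tower, where $\Gamma$ is $\mathbb{Z}_p^\ell$ or $\mathbb{Z}_p^\times \ltimes \mathbb{Z}_p^\ell$, and the Frobenius-induced change of radius: a priori the isomorphism relating radii $r$ and $r/p$ does not commute with the $\Gamma$-action on the nose, and one must appeal to the Tate--Sen style estimates underlying \cref{theorem4.12} (together with \cref{lemma4.11}) to dominate the resulting perturbation terms. Once those error terms are absorbed in the correct Banach/Fr\'echet norms, the remainder of the proof is a formal glueing and base-change exercise.
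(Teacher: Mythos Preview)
The paper's own proof of this proposition is a bare citation: it says ``See \cite[Theorem 5.7.4]{KL16}'' and nothing more. So there is no internal argument to compare against; the paper simply defers the entire content to Kedlaya--Liu.

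Your proposal, by contrast, attempts to sketch the mechanism behind that reference. The outline you give --- lift the $\Gamma$-module equivalence of \cref{theorem4.12} to $(\varphi,\Gamma)$-modules by Frobenius compatibility, invert $p$ to pass from integral to rational rings, and glue over radii/intervals --- is indeed the backbone of the argument in \cite[Theorem 5.7.4]{KL16}, and is consistent with how the paper invokes \cref{theorem4.12} elsewhere. One point your sketch does not explicitly address is the ``vertical'' comparison across ring types, i.e.\ between the $\Omega$-rings, the bounded Robba rings $\Pi^{\mathrm{bd}}$, and the full Robba rings $\Pi$: these are not related merely by inverting $p$ or changing accents, and in \cite{KL16} this step involves additional input (slope-theoretic or \'etale-descent arguments) beyond the Tate--Sen machinery you cite. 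Since the proposition's statement is itself somewhat elliptical (``the corresponding parallel results''), and since the paper itself offers no details, this is not a gap relative to the paper --- but if you were writing out the argument in full you would need to supply that piece separately.
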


\begin{proof}
See \cite[Theorem 5.7.4]{KL16}.	
\end{proof}

\begin{remark}
One can certainly compare the corresponding $(\varphi,\Gamma)$-modules above with the corresponding Frobenius sheaves over the Kummer pro-\'etale topology in \label{setting2.18} as in \cite{KL16} by adding additional effective conditions on the objects.
\end{remark}


\newpage

\section{Hodge-Iwasawa Modules}

\subsection{The Categories of Hodge-Iwasawa Modules}

\noindent We recall the corresponding Hodge-Iwasawa modules we defined in the previous papers \cite{XT1} and \cite{XT2}. We use our notations in \cite{XT2} for the corresponding tower and the corresponding period rings.

\begin{setting}
We use the notation $(H,H^+)$ to denote the corresponding tower in our situation. Here we assume that $(H,H^+)$ is finite \'etale, noetherian, weakly-decompleting, decompleting, Galois with the Galois topological group $\Gamma$. We also then follow the corresponding setting of \cite[Hypothesis 3.1.1]{KL16}. And fix some $a>0$ positive integer.	
\end{setting}

\begin{setting}
Recall that we have the corresponding period rings in the following, defined in \cite{KL15} and \cite{KL16}:
\begin{displaymath}
\widetilde{\Omega}_H^\mathrm{int},\widetilde{\Omega}_H,\widetilde{\Pi}_H^{\mathrm{int},r},	\widetilde{\Pi}_H^{\mathrm{int}},\widetilde{\Pi}_H^{\mathrm{bd},r},	\widetilde{\Pi}_H^{\mathrm{bd}},\widetilde{\Pi}_H^{r},	\widetilde{\Pi}_H^{I},\widetilde{\Pi}_H^\infty,	\widetilde{\Pi}_H
\end{displaymath}
form the corresponding group of perfect period rings along the tower $(H,H^+)$, here $r>0$ is some radius while $I$ corresponds to some closed interval. Then as in \cite[Definition 5.2.1]{KL16} we have various imperfection of the corresponding rings listed above. We mainly need the corresponding Robba rings in the following fashion:
\begin{displaymath}
\breve{\Pi}_H^{r},	\breve{\Pi}_H^{I},\breve{\Pi}_H^\infty,	\breve{\Pi}_H,	
\end{displaymath}
and 
\begin{displaymath}
{\Pi}_H^{r},	{\Pi}_H^{I}, \Pi_H^\infty,	{\Pi}_H.	
\end{displaymath}
\end{setting}

\begin{setting}
The Hodge-Iwasawa modules we considered and studied are basically over the corresponding deformed version of the corresponding period rings we recalled above over some affinoid algebra $A$ over the base field $E$ as in \cite{XT2}:
\begin{displaymath}
\breve{\Pi}_{H,A}^{r},	\breve{\Pi}_{H,A}^{I},\breve{\Pi}_{H,A}^\infty,	\breve{\Pi}_{H,A},	
\end{displaymath}
and 
\begin{displaymath}
{\Pi}_{H,A}^{r},	{\Pi}_{H,A}^{I}, \Pi_{H,A}^\infty,	{\Pi}_{H,A},	
\end{displaymath}	
and
\begin{displaymath}
\widetilde{\Pi}_{H,A}^{r},	\widetilde{\Pi}_{H,A}^{I},\widetilde{\Pi}_{H,A}^\infty,	\widetilde{\Pi}_{H,A}.
\end{displaymath}

\end{setting}

\indent Recall that we have the following corresponding key categories involved in our study (\cite{XT2}). First we consider the corresponding finite projective objects for any suitable $s,r$ such that $0\leq s\leq r/p^{ah}$ (for any $A$ such that the corresponding rings building the corresponding Fargues-Fontaine curve namely the perfect Robba rings with respect to some intervals are sheafy as adic rings):

\noindent A. Look at the corresponding space $\mathrm{Spa}(H_0,H_0^+)$ which is the corresponding base of the tower, then we have the corresponding category of all the Frobenius sheaves over the pro-\'etale site of this space over the sheaf of ring $\widetilde{\Pi}_{\mathrm{Spa}(H_0,H_0^+)_\text{pro\'et},A}$. Here the Frobenius will be chosen in our situation to be $\varphi^a$;\\
\noindent B. We look at the same space as in the previous item, and we look at the corresponding category of the Frobenius bundles over the corresponding sheaf of ring $\widetilde{\Pi}_{\mathrm{Spa}(H_0,H_0^+)_\text{pro\'et},A}$;\\
\noindent C. We look at the same base space and then the corresponding sheaf of ring $\widetilde{\Pi}^{[s,r]}_{\mathrm{Spa}(H_0,H_0^+)_\text{pro\'et},A}$ for $s\leq r/p^{ah}$, we then have the category of the corresponding finite projective Frobenius modules over the period rings in this setting;\\
\noindent D. We then have the corresponding finite projective modules over the period ring ${\widetilde{\Pi}}_{H,A}$, carrying the corresponding action coming from $(\varphi^a,\Gamma)$;\\
\noindent E. We then have the corresponding finite projective bundles over the period ring ${\widetilde{\Pi}}_{H,A}$, carrying the corresponding action coming from $(\varphi^a,\Gamma)$;\\
\noindent F. We then have the corresponding finite projective modules over the period ring ${\widetilde{\Pi}}^{[s,r]}_{H,A}$, carrying the corresponding action coming from $(\varphi^a,\Gamma)$, $0\leq s\leq r/p^{ah}$;\\
\noindent G. We then have the corresponding finite projective modules over the period ring ${{\Pi}}_{H,A}$, carrying the corresponding action coming from $(\varphi^a,\Gamma)$;\\
\noindent H. We then have the corresponding finite projective bundles over the period ring ${{\Pi}}_{H,A}$, carrying the corresponding action coming from $(\varphi^a,\Gamma)$;\\
\noindent I. We then have the corresponding finite projective modules over the period ring ${{\Pi}}^{[s,r]}_{H,A}$, carrying the corresponding action coming from $(\varphi^a,\Gamma)$, $0\leq s\leq r/p^{ah}$;\\
\noindent J. We then have the corresponding finite projective modules over the period ring ${\breve{\Pi}}_{H,A}$, carrying the corresponding action coming from $(\varphi^a,\Gamma)$;\\
\noindent K. We then have the corresponding finite projective bundles over the period ring ${\breve{\Pi}}_{H,A}$, carrying the corresponding action coming from $(\varphi^a,\Gamma)$;\\
\noindent L. We then have the corresponding finite projective modules over the period ring ${\breve{\Pi}}^{[s,r]}_{H,A}$, carrying the corresponding action coming from $(\varphi^a,\Gamma)$, $0\leq s\leq r/p^{ah}$;\\
\noindent M. We have the corresponding adic version of the corresponding Fargues-Fontaine curve $FF_{\overline{H}'_\infty,A}$. And we can consider the corresponding category of $\Gamma$-equivariant quasi-coherent locally free sheaves over this adic version of Fargues-Fontaine curve, and we assume the corresponding $\Gamma$-equivariant action to be semilinear and continuous over each $\Gamma$-equivariant affinoid subspace.

\begin{remark}
One has to be careful when one would like to define the objects in $A,B,C$ which could be compared to the other objects. One needs to consider the corresponding locally finite projective objects instead of locally finite free objects.	
\end{remark}

\indent Then we have:

\begin{proposition}\mbox{\bf{(See \cite[Proposition 5.44]{XT2}, after Kedlaya-Liu \cite[Theorem 5.7.4]{KL16})}}
The categories described above are equivalent to each other.	
\end{proposition}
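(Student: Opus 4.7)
The plan is to organize the thirteen categories into three layers and establish the equivalences within each layer before gluing them together. The innermost layer consists of the finite projective $(\varphi^a,\Gamma)$-modules on closed intervals $[s,r]$ (categories C, F, I, L), the Frobenius bundles obtained by gluing (B, E, H, K), and the finite projective $(\varphi^a,\Gamma)$-modules on the full Fréchet-Stein Robba rings (D, G, J). The pro-étale sheaf category A and the Fargues-Fontaine category M constitute the outer geometric wrapping around this core.

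First I would establish the ascent/glueing equivalence between interval modules and Frobenius bundles. In each of the three formulations (perfect $\widetilde{*}$, imperfect $\breve{*}$, and decompleted $*$), an object of a Frobenius-bundle category is by definition a compatible family of finite projective modules on intervals with $s\le r/p^{ah}$, glued by Frobenius pullback isomorphisms; that such a family descends to a single finite projective module over the corresponding Fréchet-Stein ring is precisely the $A$-deformed version of Kedlaya-Liu \cite[Theorem 5.7.4]{KL16} established in \cite[Proposition 5.44]{XT2} (the input cited in the statement). For the pro-étale sheaf categories A and C, I would apply Galois descent along the tower $(H,H^+)$: since $(H,H^+)$ is Galois with group $\Gamma$, noetherian, and decompleting, pro-étale sheaves of finite projective modules over $\widetilde{\Pi}_{\mathrm{Spa}(H_0,H_0^+)_\text{pro\'et},A}$ are equivalent to $\Gamma$-equivariant finite projective modules over $\widetilde{\Pi}_{H,A}$, and similarly on intervals, yielding A $\simeq$ D and C $\simeq$ F.

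Second I would compare the perfect, imperfect, and decompleted formulations. \cref{theorem4.12} gives directly the equivalence between $\Gamma$-modules over $\breve{\Pi}_{H,A}$ and over $\widetilde{\Pi}_{H,A}$ (and likewise for the interval and integral rings). Adjoining a compatible $\varphi^a$-structure is formal because the base change commutes with Frobenius, which yields the equivalences D $\simeq$ J, E $\simeq$ K, F $\simeq$ L. The further equivalence of $\Pi_{H,A}$- and $\breve{\Pi}_{H,A}$-versions (G $\simeq$ J, H $\simeq$ K, I $\simeq$ L) is handled through finite étale descent along the finite quotients of $\Gamma$, exactly as in \cite[Theorem 5.7.4]{KL16} and \cref{proposition2.20}. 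Finally, the Fargues-Fontaine category M is identified with the $\Gamma$-equivariant refinement of the Frobenius bundle category B, since $FF_{\overline{H}'_\infty,A}$ is by construction glued from $\mathrm{Spa}(\widetilde{\Pi}^{[s,r]}_{\overline{H}'_\infty,A})$ modulo the $\varphi^a$-action, so a $\Gamma$-equivariant quasi-coherent locally free sheaf on it amounts precisely to a compatible system of $\Gamma$-equivariant finite projective interval modules with Frobenius glue.

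The main obstacle is not conceptual but technical: with the affinoid coefficient $A$, one must verify that gluing across intervals preserves \emph{local} finite projectivity (rather than finite freeness, as emphasized in the remark preceding the proposition) and that the Frobenius and $\Gamma$-actions remain jointly coherent through every descent step. This is controlled by the strict-exactness statements of the previous section (\cref{lemma4.10}, \cref{lemma4.11}) together with the sheafiness assumptions on the perfect interval rings built into the setting, all preserved after completed tensor with $A$.
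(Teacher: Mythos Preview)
Your proposal is correct and follows precisely the strategy encoded in the cited references \cite[Proposition 5.44]{XT2} and \cite[Theorem 5.7.4]{KL16}; the paper itself offers no independent argument beyond these citations, and your layered decomposition (interval modules $\leftrightarrow$ bundles $\leftrightarrow$ full Robba modules, then perfect $\leftrightarrow$ imperfect $\leftrightarrow$ decompleted, then the pro-\'etale and Fargues--Fontaine wrappings) is exactly how those references are organized.

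One small caveat on your internal references: \cref{theorem4.12} and \cref{proposition2.20} sit in Section~\ref{section3}, which treats the \emph{logarithmic} (non-Galois) towers, whereas the proposition at hand is stated in the setting of a finite \'etale \emph{Galois} decompleting tower. The statements you need (the $\breve{*}\leftrightarrow\widetilde{*}$ equivalence for $\Gamma$-modules and the comparison of the various $(\varphi,\Gamma)$-module categories) hold in that setting as well, but they come directly from \cite[Lemma 5.6.9, Theorem 5.7.4]{KL16} and their $A$-deformed analogues in \cite{XT2}, not from the logarithmic-tower results you cite. This is only a labeling issue, not a gap in the argument.
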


\indent Then we consider the corresponding pseudocoherent objects, where we further assume that the corresponding ring $R_H$ is now $F$-(finite projective) (for any $A$ such that the corresponding rings building the corresponding Fargues-Fontaine curve namely the perfect Robba rings with respect to some intervals are sheafy as adic rings).

\noindent A. Look at the corresponding space $\mathrm{Spa}(H_0,H_0^+)$ which is the corresponding base of the tower, then we have the corresponding category of all the Frobenius sheaves over the pro-\'etale site of this space over the sheaf of ring $\widetilde{\Pi}_{\mathrm{Spa}(H_0,H_0^+)_\text{pro\'et},A}$. Here the Frobenius will be chosen in our situation to be $\varphi^a$;\\
\noindent B. We look at the same space as in the previous item, and we look at the corresponding category of the Frobenius bundles over the corresponding sheaf of ring $\widetilde{\Pi}_{\mathrm{Spa}(H_0,H_0^+)_\text{pro\'et},A}$;\\
\noindent C. We look at the same base space and then the corresponding sheaf of ring $\widetilde{\Pi}^{[s,r]}_{\mathrm{Spa}(H_0,H_0^+)_\text{pro\'et},A}$ for $s\leq r/p^{ah}$, we then have the category of the corresponding pseudocoherent Frobenius modules over the period rings in this setting;\\
\noindent D. We then have the corresponding pseudocoherent modules over the period ring ${\widetilde{\Pi}}_{H,A}$, carrying the corresponding action coming from $(\varphi^a,\Gamma)$;\\
\noindent E. We then have the corresponding pseudocoherent bundles over the period ring ${\widetilde{\Pi}}_{H,A}$, carrying the corresponding action coming from $(\varphi^a,\Gamma)$;\\
\noindent F. We then have the corresponding pseudocoherent modules over the period ring ${\widetilde{\Pi}}^{[s,r]}_{H,A}$, carrying the corresponding action coming from $(\varphi^a,\Gamma)$, $0\leq s\leq r/p^{ah}$;\\
\noindent G. We then have the corresponding pseudocoherent modules over the period ring ${{\Pi}}_{H,A}$, carrying the corresponding action coming from $(\varphi^a,\Gamma)$;\\
\noindent H. We then have the corresponding pseudocoherent bundles over the period ring ${{\Pi}}_{H,A}$, carrying the corresponding action coming from $(\varphi^a,\Gamma)$;\\
\noindent I. We then have the corresponding pseudocoherent modules over the period ring ${{\Pi}}^{[s,r]}_{H,A}$, carrying the corresponding action coming from $(\varphi^a,\Gamma)$, $0\leq s\leq r/p^{ah}$;\\
\noindent J. We then have the corresponding pseudocoherent modules over the period ring ${\breve{\Pi}}_{H,A}$, carrying the corresponding action coming from $(\varphi^a,\Gamma)$;\\
\noindent K. We then have the corresponding pseudocoherent bundles over the period ring ${\breve{\Pi}}_{H,A}$, carrying the corresponding action coming from $(\varphi^a,\Gamma)$;\\
\noindent L. We then have the corresponding pseudocoherent modules over the period ring ${\breve{\Pi}}^{[s,r]}_{H,A}$, carrying the corresponding action coming from $(\varphi^a,\Gamma)$, $0\leq s\leq r/p^{ah}$;\\
\noindent M. We have the corresponding adic version of the corresponding Fargues-Fontaine curve $FF_{\overline{H}'_\infty,A}$. And we can consider the corresponding category of pseudocoherent sheaves over this adic version of Fargues-Fontaine curve.

\begin{remark}
One has to be careful when one would like to define the objects in $A,B,C$ which could be compared to the other objects. One needs to consider the corresponding locally pseudocoherent objects instead of locally pseudocoherent objects. Moreover in this current setting locally we have to impose $p$-adic functional analytic conditions on the pseudocoherent objects when one would like to do the localization in different topologies.
\end{remark}

\indent Then we have:

\begin{proposition}\mbox{\bf{(See \cite[Proposition 5.51]{XT2}, after Kedlaya-Liu \cite[Theorem 5.9.4]{KL16})}}
The categories described above are equivalent to each other.	
\end{proposition}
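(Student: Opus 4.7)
The plan is to adapt the proof of the finite projective case (\cref{proposition2.20}) to the pseudocoherent setting, following closely the template of \cite[Theorem 5.9.4]{KL16}. I would proceed in four stages.

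First, I would establish the module-versus-bundle comparisons (D vs. E, G vs. H, J vs. K, and the pro-\'etale analogues A, B, C). The module-to-bundle direction is by base change to each interval $[s,r]$ with $0 < s \leq r/p^{ah}$; the bundle-to-module direction is by taking global sections with respect to the Fr\'echet-Stein structure on the target. The key technical point is that pseudocoherence is preserved under the relevant completed tensor products, which uses the $F$-(finite projective) hypothesis on $R_H$ together with the functional-analytic conditions flagged in the remark preceding the statement.

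Second, I would handle the interval-to-global comparisons (F to D, I to G, L to J). Given a pseudocoherent module $M$ over $*^{[s,r]}_{H,A}$ carrying a compatible Frobenius isomorphism $\varphi^{a*}M \otimes_{*^{[s/p^{ah},r/p^{ah}]}_{H,A}} *^{[s,r/p^{ah}]}_{H,A} \cong M \otimes_{*^{[s,r]}_{H,A}} *^{[s,r/p^{ah}]}_{H,A}$, one propagates $M$ to modules over $*^{[s/p^{an}, r/p^{an}]}_{H,A}$ for all $n \geq 0$ via iterated Frobenius pullback, and then glues these across the overlaps to obtain a pseudocoherent module over the full Robba ring. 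Preservation of pseudocoherence across this gluing is the chief technical input here.

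Third, I would deduce the imperfect-to-perfect equivalences (G vs. D, J vs. D, I vs. F, L vs. F, etc.). This is where \cref{theorem4.12} does the real work: the base change functors between the accented period rings are already known to be equivalences on $\Gamma$-modules. One promotes this to the pseudocoherent $(\varphi^a,\Gamma)$-module level by first reducing via Stage 2 to the interval case, and then invoking the strict-exact complex machinery of \cref{lemma4.10} and \cref{lemma4.11}: full faithfulness follows from strict exactness of the difference complexes, while essential surjectivity follows by adapting the Frobenius-twisted descent of \cite[Lemma 5.6.8]{KL16} to the pseudocoherent setting. Finally, categories A, B, C are identified with the local ones by pro-\'etale descent for pseudocoherent sheaves, and M is matched by recognizing $FF_{\overline{H}'_\infty, A}$ as the quotient of the $\varphi^a$-equivariant picture, as in the finite projective case treated in \cite{XT2}.

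The main obstacle is precisely the pseudocoherence tracking: unlike finite projective modules, pseudocoherent modules do not base change transparently across the completed tensor products appearing at each gluing step, and one must verify that the Frobenius-pullback gluing of Stage 2 produces a genuinely pseudocoherent object rather than merely a coherent one in some weak sense. The crucial new input, absent from the finite projective argument, is a relative analog of \cite[Lemma 5.9.3]{KL16} over the deformed ring $A$ guaranteeing that pseudocoherence is preserved under the Fr\'echet-Stein completions; this is where the $F$-(finite projective) hypothesis on $R_H$ and the sheafiness hypothesis on the perfect Robba rings are both essential, and is the step that would require the most careful functional-analytic bookkeeping.
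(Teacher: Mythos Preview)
The paper gives no proof here; it simply cites \cite[Proposition 5.51]{XT2} (which in turn follows \cite[Theorem 5.9.4]{KL16}). Your four-stage outline is the standard template of that Kedlaya--Liu argument, so substantively you are reproducing what the citation points to, and the identification of pseudocoherence-tracking through completed tensor products as the chief new difficulty (absent in the finite projective case) is exactly right.

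One correction worth making: your internal cross-references are to the wrong section. The proposition you are proving sits in the setting of Section~3, where $(H,H^+)$ is a general finite \'etale decompleting Galois tower, whereas \cref{proposition2.20}, \cref{theorem4.12}, \cref{lemma4.10}, and \cref{lemma4.11} all live in Section~2 and are stated for the ramified (logarithmic) toric towers, which are not Galois. The finite projective analogue you want to adapt is the proposition citing \cite[Proposition 5.44]{XT2}, not \cref{proposition2.20}, and the $\Gamma$-module descent you invoke in Stage~3 should be the version for decompleting towers from \cite[Lemma 5.6.9]{KL16} (or its $A$-deformed analogue in \cite{XT2}) rather than \cref{theorem4.12}. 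The underlying Kedlaya--Liu machinery is identical in both settings, so this is a bookkeeping issue rather than a mathematical gap, but as written your references point to the wrong hypotheses on the tower.
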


\begin{proposition} \mbox{\bf{(After Kedlaya-Liu \cite[Theorem 5.9.4]{KL16})}}
The categories above are abelian.	
\end{proposition}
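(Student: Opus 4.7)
The plan is to reduce the statement to a convenient model via the equivalence of categories established in the preceding proposition, and then verify the abelian property in that model. Concretely, I would work with model (M), the category of pseudocoherent sheaves on the adic Fargues--Fontaine curve $FF_{\overline{H}'_\infty,A}$ equipped with a continuous semilinear $\Gamma$-action on each $\Gamma$-stable affinoid. By the equivalence the Frobenius action is already absorbed into the sheaf-theoretic data, so it suffices to show (i) that the category of pseudocoherent sheaves on $FF_{\overline{H}'_\infty,A}$ is abelian, and (ii) that the $\Gamma$-action is preserved under the formation of kernels and cokernels taken in this larger category.

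For step (i) I would follow the strategy of \cite[Theorem 5.9.4]{KL16}: cover the curve by the spectra of the relevant Robba-type rings $\widetilde{\Pi}^{[s,r]}_{H,A}$ and reduce the problem to checking that, over each such Banach ring, the category of pseudocoherent modules is abelian. The key input is that when $R_H$ is $F$-(finite projective) and the rings building the Fargues--Fontaine curve are sheafy as adic rings (the running hypothesis under which the preceding equivalence was stated), pseudocoherence is preserved under the formation of kernels and cokernels over each interval ring. Kernels present no difficulty because pseudocoherence descends to finitely generated submodules under appropriate noetherian-style hypotheses; the cokernels require showing that a finitely presented quotient of a pseudocoherent module remains pseudocoherent, which reduces to a strict exactness input of the type recorded in Lemmas~\ref{lemma4.10}--\ref{lemma4.11} above, now in the $A$-deformed variant.

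For step (ii) I would verify that the $(\varphi^a,\Gamma)$-structure is preserved by the universal property of the kernel and cokernel in the ambient category of sheaves of $\widetilde{\Pi}_{H,A}$-modules. Since $\varphi^a$ and the $\Gamma$-action are morphisms of sheaves and the isomorphisms defining the Frobenius pullback condition as in \cref{proposition2.20} are functorial, kernels and cokernels inherit canonical Frobenius isomorphisms. The continuity and semilinearity on each $\Gamma$-stable affinoid piece follow formally by restricting the induced maps. This then upgrades the abelian structure on pseudocoherent sheaves to an abelian structure on the $\Gamma$-equivariant Frobenius-equipped objects.

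The principal obstacle will be establishing that the cokernel of a morphism of pseudocoherent objects remains pseudocoherent after deformation by the affinoid coefficient ring $A$. In \cite[Theorem 5.9.4]{KL16} this is handled by a careful use of the Banach-category structure together with the strict exactness of certain cohomological complexes; in our setting the presence of $A$ means we must invoke the deformed versions of the strict exactness statements (the $A$-subscripted complexes of \cref{lemma4.10} and \cref{lemma4.11}), and ensure that the approximation arguments that give pseudocoherence of the quotient still function uniformly in the $A$-direction. Once this is secured, the remaining gluing across the cover of $FF_{\overline{H}'_\infty,A}$ and the compatibility with the $\Gamma$-action proceed as in the reference.
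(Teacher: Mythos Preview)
Your approach is more complicated than needed and misses the simple key point that the paper exploits. The paper's entire proof is one line: under the standing hypothesis that $R_H$ is $F$-(finite projective), the imperfect interval ring $\breve{\Pi}^{[s,r]}_{H,A}$ is \emph{coherent}, and over any coherent ring the category of pseudocoherent (i.e.\ finitely presented) modules is abelian. The equivalence in the preceding proposition then transports this abelian structure to all of the other models simultaneously.

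By contrast, you choose to work in model (M) on the Fargues--Fontaine curve, whose affinoid charts are the \emph{perfect} rings $\widetilde{\Pi}^{[s,r]}_{H,A}$. These rings are not coherent, so you cannot conclude directly that kernels of maps of pseudocoherent modules are pseudocoherent; this is precisely the obstacle that descent to the imperfect ring is designed to bypass. The strict exactness statements you cite (\cref{lemma4.10}, \cref{lemma4.11}) are decompletion inputs used to prove the equivalence of categories, not statements about preservation of pseudocoherence under cokernels, so they do not close the gap you identify as the ``principal obstacle.'' There are deeper results in \cite[Chapter 8]{KL16} (mirrored later in this paper as \cref{proposition615} and \cref{proposition6.20}) that do give automatic pseudocoherence of finitely generated $\Gamma$-modules over perfect period rings, but those are proved for specific toric towers over rigid analytic bases, not in the abstract tower setting of this section; invoking them here would be circular or at least out of scope. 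The clean fix is simply to switch models: use the equivalence to land in category (L) or (I) over $\breve{\Pi}^{[s,r]}_{H,A}$ or $\Pi^{[s,r]}_{H,A}$, observe coherence, and you are done.
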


\begin{proof}
By our assumption the corresponding ring $\breve{\Pi}^{[s,r]}_{H,A}$ is coherent.	
\end{proof}

\indent At least in the corresponding finite projective situation, we also have some comparison on the corresponding objects in the logarithmic setting:

\begin{proposition} \mbox{\bf{(After Kedlaya-Liu \cite[Lemma 5.4.11]{KL16})}}
Let $(H,H^+)$ be the one of the corresponding logarithmic towers we encountered in the previous section, we then have the following categories are basically equivalent:\\
\noindent A. Look at the corresponding space $\mathrm{Spa}(H_0,H_0^+)$ which is the corresponding base of the tower, then we have the corresponding category of all the Frobenius sheaves over the pro-\'etale site of this space over the sheaf of ring $\widetilde{\Pi}_{\mathrm{Spa}(H_0,H_0^+)_\text{prok\'et},A}$. Here the Frobenius will be chosen in our situation to be $\varphi^a$;\\
\noindent B. We look at the same space as in the previous item, and we look at the corresponding category of the Frobenius bundles over the corresponding sheaf of ring $\widetilde{\Pi}_{\mathrm{Spa}(H_0,H_0^+)_\text{prok\'et},A}$;\\
\noindent C. We look at the same base space and then the corresponding sheaf of ring $\widetilde{\Pi}^{[s,r]}_{\mathrm{Spa}(H_0,H_0^+)_\text{prok\'et},A}$ for $s\leq r/p^{ah}$, we then have the category of the corresponding finite projective Frobenius modules over the period rings in this setting.\\
\end{proposition}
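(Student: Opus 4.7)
The plan is to follow the strategy used in \cite[Lemma 5.4.11]{KL16} in the pro-\'etale setting, now adapted to the pro-Kummer-\'etale site by exploiting the logarithmic towers $(H,H^+)$ constructed in the previous section. The backbone is to funnel everything through the intermediate category of finite projective $(\varphi^a,\Gamma)$-modules over $\widetilde{\Pi}^{[s,r]}_{H,A}$ (respectively $\widetilde{\Pi}_{H,A}$), and then to show that this intermediate category agrees with both C (sheaf-level modules over an interval) and with A, B (Frobenius sheaves and bundles on the pro-Kummer-\'etale site). Throughout, I would treat the log setting as locally reducible to the smooth setting after applying the Kummer pullback $T_i\mapsto T_i^{1/p^v}$ as in \cref{setting2.18}.

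First I would establish the C-to-intermediate equivalence. Given a finite projective Frobenius module over $\widetilde{\Pi}^{[s,r]}_{\mathrm{Spa}(H_0,H_0^+)_\text{prok\'et},A}$, evaluation on the pro-object $(H_n,H_n^+)_{n\geq 0}$ produces a finite projective $(\varphi^a,\Gamma)$-module over $\widetilde{\Pi}^{[s,r]}_{H,A}$. Conversely, starting from such a $(\varphi^a,\Gamma)$-module, I would descend it down the tower by invoking \cref{theorem4.12} (or its direct logarithmic analogue proved in the same section) to transfer between the perfect rings carrying accents $\widetilde{\;}$ and $\breve{\;}$, combined with the strict exactness provided by \cref{lemma4.11} to guarantee that the resulting presheaf actually satisfies the sheaf condition on the pro-Kummer-\'etale coverings. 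The Kummer covers that ramify along the log divisor are exactly the ones the tower $(H,H^+)$ has been designed to trivialize, so the descent ultimately reduces to the familiar \'etale descent after applying a map of the form $T_i\mapsto T_i^{1/p^v}$.

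Next I would pass from C to B by extending along the Frobenius: a finite projective Frobenius module over $\widetilde{\Pi}^{[s,r]}_{\mathrm{Spa}(H_0,H_0^+)_\text{prok\'et},A}$ automatically lifts to a compatible family over every interval $[s/p^{na},r/p^{na}]$ via iterated Frobenius pullback, which is precisely the defining data of a Frobenius bundle. Gluing across overlapping intervals and passing to the union of annuli yields the transition to A. The equivalence B$\leftrightarrow$A in the log setting then follows exactly as in the unramified case treated in \cite[Proposition 5.44]{XT2}, since the interval-gluing mechanism through the Frobenius eigenvalue identification is insensitive to whether the base topology is pro-\'etale or pro-Kummer-\'etale.

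The main obstacle I expect is the verification of the sheaf condition over pro-Kummer-\'etale coverings that are not \'etale on the nose: one must check that a $(\varphi^a,\Gamma)$-module over $\widetilde{\Pi}^{[s,r]}_{H,A}$ glues correctly along a genuinely ramified Kummer cover. This is where the Frobenius structure enters crucially, since it relates the behavior on the ramified slice to the behavior on an \'etale slice obtained after applying $\varphi^{-v}$. I would handle this by reducing the sheaf condition on a Kummer cover $T_i\mapsto T_i^{1/p^v}$ to the \'etale sheaf condition on its $\varphi^v$-pullback, at which point the argument collapses to the one already given by Kedlaya-Liu in \cite[Lemma 5.4.11]{KL16}, supplemented with the $\Gamma$-module equivalence of \cref{theorem4.12} to absorb the semilinear descent data.
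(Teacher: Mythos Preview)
Your approach is workable but takes a considerably longer route than the paper. The paper's proof is essentially one line: log perfectoid affinoids form a basis of the pro-Kummer-\'etale site (this is \cite[Chapter 5.3]{DLLZ2}), so sheaves in categories A, B, C are determined by their values on perfectoid objects, and on perfectoid objects the equivalence A$\leftrightarrow$B$\leftrightarrow$C is exactly the content of \cite[Lemma 5.4.11]{KL16} with no logarithmic complications remaining. In particular, the ``main obstacle'' you isolate---verifying the sheaf condition across genuinely ramified Kummer covers---never arises, because the basis result lets you work exclusively with log perfectoid test objects where Kummer covers are already trivialized.

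By contrast, you funnel everything through the intermediate category of $(\varphi^a,\Gamma)$-modules over $\widetilde{\Pi}^{[s,r]}_{H,A}$, invoking \cref{theorem4.12} and \cref{lemma4.11} for the descent. This is not wrong, but it conflates the present proposition (A$\leftrightarrow$B$\leftrightarrow$C, purely a statement about sheaves on the site) with the \emph{next} proposition in the paper (the comparison of A--C with the tower-level categories D--L). The paper separates these: A$\leftrightarrow$B$\leftrightarrow$C needs only the basis fact from \cite{DLLZ2}, while the link to $(\varphi^a,\Gamma)$-modules is handled afterwards via \cref{proposition2.20} and \cite[Proposition 5.44]{XT2}. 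Your strategy would recover both at once, at the cost of the explicit Kummer-cover bookkeeping you anticipate; the paper's strategy buys a clean modular separation and avoids that bookkeeping entirely.
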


\begin{proof}
This is because the corresponding log perfectoid affinoids form a basis of the corresponding pro-Kummer \'etale site (see the corresponding development in \cite[Chapter 5.3]{DLLZ2}).
\end{proof}

\begin{proposition} \mbox{\bf{(After Kedlaya-Liu \cite[Theorem 5.7.4]{KL16})}} Let $(H,H^+)$ be the one of the corresponding logarithmic towers we encountered in the previous section. Then we have the following categories of Hodge-Iwasawa modules are equivalent:\\
\noindent D. We then have the corresponding finite projective modules over the period ring ${\widetilde{\Pi}}_{H,A}$, carrying the corresponding action coming from $(\varphi^a,\Gamma)$;\\
\noindent E. We then have the corresponding finite projective bundles over the period ring ${\widetilde{\Pi}}_{H,A}$, carrying the corresponding action coming from $(\varphi^a,\Gamma)$;\\
\noindent F. We then have the corresponding finite projective modules over the period ring ${\widetilde{\Pi}}^{[s,r]}_{H,A}$, carrying the corresponding action coming from $(\varphi^a,\Gamma)$, $0\leq s\leq r/p^{ah}$;\\
\noindent G. We then have the corresponding finite projective modules over the period ring ${{\Pi}}_{H,A}$, carrying the corresponding action coming from $(\varphi^a,\Gamma)$;\\
\noindent H. We then have the corresponding finite projective bundles over the period ring ${{\Pi}}_{H,A}$, carrying the corresponding action coming from $(\varphi^a,\Gamma)$;\\
\noindent I. We then have the corresponding finite projective modules over the period ring ${{\Pi}}^{[s,r]}_{H,A}$, carrying the corresponding action coming from $(\varphi^a,\Gamma)$, $0\leq s\leq r/p^{ah}$;\\
\noindent J. We then have the corresponding finite projective modules over the period ring ${\breve{\Pi}}_{H,A}$, carrying the corresponding action coming from $(\varphi^a,\Gamma)$;\\
\noindent K. We then have the corresponding finite projective bundles over the period ring ${\breve{\Pi}}_{H,A}$, carrying the corresponding action coming from $(\varphi^a,\Gamma)$;\\
\noindent L. We then have the corresponding finite projective modules over the period ring ${\breve{\Pi}}^{[s,r]}_{H,A}$, carrying the corresponding action coming from $(\varphi^a,\Gamma)$, $0\leq s\leq r/p^{ah}$.\\
\end{proposition}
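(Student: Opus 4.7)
The plan is to establish the equivalences in three layers, following the strategy of Kedlaya-Liu \cite[Theorem 5.7.4]{KL16} and using the logarithmic-tower analogues of the preparatory results proved in \cref{section3}. For the first layer, equivalences between interval-ring and full Robba-ring objects (F with D, I with G, L with J), I would, given a finite projective $(\varphi^a,\Gamma)$-module $M$ over an interval ring $*^{[s,r]}_{H,A}$ with $0\le s\le r/p^{ah}$, use the Frobenius isomorphism to glue $M$ together with its successive Frobenius pullbacks on overlapping annuli, producing a compatible family on an exhausting collection of closed annuli and hence a module over the corresponding full Robba ring with an induced $(\varphi^a,\Gamma)$-action; the inverse functor is restriction. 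Compatibility of the $\Gamma$-action is automatic since $\Gamma$ acts uniformly across intervals in the log tower.

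For the second layer, passage between the perfect rings (D, E, F) and the imperfect rings (G through L), I would invoke the base-change equivalences from $\breve{\Pi}_{H,A}$ and from $\Pi_{H,A}$ to $\widetilde{\Pi}_{H,A}$ at the level of $\Gamma$-modules, established in the logarithmic setting as the analogue of \cref{theorem4.12}. These promote to equivalences on $(\varphi^a,\Gamma)$-modules because $\varphi^a$ preserves each of the three rings and commutes with base change, so the Frobenius structure descends for free. The interval versions (I with L with F, and similarly for the module/bundle counterparts) follow by combining this descent with the Frobenius gluing from the first layer.

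For the third layer, the equivalence between module and bundle formulations (D with E, G with H, J with K), a bundle over the full Robba ring is by definition a compatible family of finite projective modules on a cofinal system of intervals $[s,r]$ with the usual transition isomorphisms. Given such a family equipped with a $(\varphi^a,\Gamma)$-structure, the Frobenius isomorphism again supplies the overlap data needed to glue to a global finite projective module, and this is the content of the Kiehl-type argument of \cite[Chapters 4--5]{KL16}, whose hypotheses are available thanks to \cref{proposition2.20}. The main obstacle is the second layer: because the logarithmic tower is not finite \'etale and $\Gamma$ takes the form $\mathbb{Z}_p^\times \ltimes \mathbb{Z}_p^\ell$, the descent of \cite[Lemma 5.6.9]{KL16} relies on strict exactness of the $\Gamma$-cochain complexes of suitable quotients of period rings. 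This is precisely the input verified in the previous section, with additional analytic care needed because of the deformation coefficient $A$.
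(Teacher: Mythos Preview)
Your proposal is correct and follows essentially the same three-layer decomposition as the paper: the accent comparison among $\Pi$, $\breve{\Pi}$, $\widetilde{\Pi}$; the interval-versus-full-Robba passage via Frobenius gluing; and the module-versus-bundle identification. The only organizational difference is that the paper invokes \cref{proposition2.20} directly for the accent comparison (which already handles the $(\varphi^a,\Gamma)$-structure) rather than passing through the $\Gamma$-module equivalence of \cref{theorem4.12} and then adding the Frobenius by hand, and then dispatches the remaining comparisons by citing \cite[Theorem 5.7.4]{KL16} and \cite[Proposition 5.44]{XT2}.
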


\begin{proof}
The vertical comparison along changing the corresponding ring with respect to the accents was established in \cref{proposition2.20}. Then to compare the corresponding objects in the remaining cases see \cite[Theorem 5.7.4]{KL16} and \cite[Proposition 5.44]{XT2}.	
\end{proof}

\indent In Iwaswa theory, the corresponding aspects of Hodge-Iwasawa theory will usually happen over some Fr\'echet-Stein algebras. Here we will use some geometric language which is different from the situation we considered in our previous work. We now first consider the corresponding commutative setting. We use the notation $X$ to denote a general Stein space:
\begin{displaymath}
X:= \bigcup_{n} X_n	
\end{displaymath}
regarded as a general quasi-Stein adic space in the corresponding context of \cite[Chapter 2.6]{KL16}. Then we have the corresponding sheaf will be organized in the following sense:
\begin{align}
\mathcal{O}_X:=\varprojlim_{n} \mathcal{O}_{X_n}.	
\end{align}

Therefore we will correspondingly consider the period rings over the corresponding spaces taking the form of $X$ as above. We assume each $\mathcal{O}_{X_n}$ satisfies the corresponding assumption as that for the ring $A$ as discussed above. 

\begin{definition}
For any ring $R$ (commutative) which could be written as the corresponding inverse system $\varprojlim_n R_n$. We call the projective system of modules over $R$ as a projective system $\{\mathcal{M}_n\}_n$ of all modules $M_n$ over $R_n$ for each $n\geq 0$.	
\end{definition}

\noindent A. Look at the corresponding space $\mathrm{Spa}(H_0,H_0^+)$ which is the corresponding base of the tower, then we have the corresponding category of all the projective systems of the Frobenius sheaves over the pro-\'etale site of this space over the sheaf of ring $\widetilde{\Pi}_{\mathrm{Spa}(H_0,H_0^+)_\text{pro\'et},\varprojlim_n\mathcal{O}_{X_n}}$. Here the Frobenius will be chosen in our situation to be $\varphi^a$;\\
\noindent B. We look at the same space as in the previous item, and we look at the corresponding category of all the projective systems of the Frobenius bundles over the corresponding sheaf of ring $\widetilde{\Pi}_{\mathrm{Spa}(H_0,H_0^+)_\text{pro\'et},\varprojlim_n\mathcal{O}_{X_n}}$;\\
\noindent C. We look at the same base space and then the corresponding sheaf of ring $\widetilde{\Pi}^{[s,r]}_{\mathrm{Spa}(H_0,H_0^+)_\text{pro\'et},\varprojlim_n\mathcal{O}_{X_n}}$ for $s\leq r/p^{ah}$, we then have the category of the projective systems of  corresponding finite projective Frobenius modules over the period rings in this setting;\\
\noindent D. We then have the corresponding projective systems of  finite projective modules over the period ring ${\widetilde{\Pi}}_{H,\varprojlim_n\mathcal{O}_{X_n}}$, carrying the corresponding action coming from $(\varphi^a,\Gamma)$;\\
\noindent E. We then have the corresponding projective systems of finite projective bundles over the period ring ${\widetilde{\Pi}}_{H,\varprojlim_n\mathcal{O}_{X_n}}$, carrying the corresponding action coming from $(\varphi^a,\Gamma)$;\\
\noindent F. We then have the corresponding projective systems of finite projective modules over the period ring ${\widetilde{\Pi}}^{[s,r]}_{H,\varprojlim_n\mathcal{O}_{X_n}}$, carrying the corresponding action coming from $(\varphi^a,\Gamma)$;\\
\noindent G. We then have the corresponding projective systems of finite projective modules over the period ring ${{\Pi}}_{H,\varprojlim_n\mathcal{O}_{X_n}}$, carrying the corresponding action coming from $(\varphi^a,\Gamma)$;\\
\noindent H. We then have the corresponding projective systems of finite projective bundles over the period ring ${{\Pi}}_{H,\varprojlim_n\mathcal{O}_{X_n}}$, carrying the corresponding action coming from $(\varphi^a,\Gamma)$;\\
\noindent I. We then have the corresponding projective systems of finite projective modules over the period ring ${{\Pi}}^{[s,r]}_{H,\varprojlim_n\mathcal{O}_{X_n}}$, carrying the corresponding action coming from $(\varphi^a,\Gamma)$;\\
\noindent J. We then have the corresponding projective systems of finite projective modules over the period ring ${\breve{\Pi}}_{H,\varprojlim_n\mathcal{O}_{X_n}}$, carrying the corresponding action coming from $(\varphi^a,\Gamma)$;\\
\noindent K. We then have the corresponding projective systems of finite projective bundles over the period ring ${\breve{\Pi}}_{H,\varprojlim_n\mathcal{O}_{X_n}}$, carrying the corresponding action coming from $(\varphi^a,\Gamma)$;\\
\noindent L. We then have the corresponding projective systems of finite projective modules over the period ring ${\breve{\Pi}}^{[s,r]}_{H,\varprojlim_n\mathcal{O}_{X_n}}$, carrying the corresponding action coming from $(\varphi^a,\Gamma)$;\\
\noindent M. We have the corresponding adic version of the corresponding Fargues-Fontaine curve $FF_{\overline{H}'_\infty,\varprojlim_n\mathcal{O}_{X_n}}$. And we can consider the corresponding category of the projective systems of $\Gamma$-equivariant quasi-coherent locally free sheaves over this adic version of Fargues-Fontaine curve, and we assume the corresponding $\Gamma$-equivariant action to be semilinear and continuous over each $\Gamma$-equivariant affinoid subspace.\\

\indent Then we have:

\begin{proposition}\mbox{\bf{(See \cite[Theorem 4.11]{XT2}, after Kedlaya-Liu \cite[Theorem 5.7.4]{KL16})}}
The categories described above are equivalent to each other. 	
\end{proposition}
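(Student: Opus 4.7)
The plan is to reduce the statement to the level-wise (single-coefficient) equivalences already in hand, and then promote them to equivalences of categories of projective systems by checking naturality in the coefficient ring. First, by the assumption that each $\mathcal{O}_{X_n}$ satisfies the same ring-theoretic hypotheses that were imposed on $A$ above (so that the perfect Robba rings building the Fargues-Fontaine curve are sheafy), I would invoke the two previous propositions of this section, namely \cite[Proposition 5.44]{XT2} together with \cite[Theorem 5.7.4]{KL16}, applied with $A$ replaced by $\mathcal{O}_{X_n}$ for each fixed index $n$. This yields, level by level, an equivalence between the thirteen categories A--M when formed with single-coefficient $\mathcal{O}_{X_n}$.

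Next, I would verify that all the functors realizing these level-wise equivalences are natural in the coefficient algebra. Concretely, the relevant functors are: base change along the tower of inclusions $\breve{\Pi}^{[s,r]}_{H,\bullet}\hookrightarrow \Pi^{[s,r]}_{H,\bullet}\hookrightarrow \widetilde{\Pi}^{[s,r]}_{H,\bullet}$; the global sections functor from Frobenius sheaves on the pro-étale site of $\mathrm{Spa}(H_0,H_0^+)$; the glueing functor from the interval-$[s,r]$ rings to the full Robba rings and to the Fargues-Fontaine curve $FF_{\overline{H}'_\infty,\bullet}$; and the obvious forgetful functors. Each of these constructions is functorial in the coefficient algebra and, since all tensor products involved are completed at the appropriate level, commutes with the transition maps $\mathcal{O}_{X_{n+1}}\to\mathcal{O}_{X_n}$ of the quasi-Stein presentation.

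Third, I would assemble the pieces: objects in each projective-system category are, by definition, compatible collections $\{\mathcal{M}_n\}_n$ indexed by $n$ together with compatibility morphisms, and the morphisms between two such systems are compatible families of morphisms. Applying the level-wise equivalence termwise produces an object in the target projective-system category (the naturality check ensures the new transition morphisms are well defined), and applying the level-wise inverse functor termwise supplies a quasi-inverse. Since fully faithfulness and essential surjectivity are verified at each index $n$, they pass at once to the projective-system level.

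The main obstacle will be the naturality step, and in particular the item M on the Fargues-Fontaine side: one must argue that the reconstruction of a $\Gamma$-equivariant locally free sheaf on $FF_{\overline{H}'_\infty,\varprojlim_n\mathcal{O}_{X_n}}$ from its layers on $FF_{\overline{H}'_\infty,\mathcal{O}_{X_n}}$ is compatible with the glueing from interval rings used in items C, F, I, L, and further with the pro-étale descent used in items A, B. Granting the functoriality of the Fargues-Fontaine and pro-étale constructions in the coefficient algebra, which is built into their definitions in \cite{XT2}, the argument is essentially the termwise one; the only subtlety is to confirm that the glueing data of an interval-level module over $\varprojlim_n \widetilde{\Pi}^{[s,r]}_{H,\mathcal{O}_{X_n}}$ matches the projective system of interval-level modules over each $\widetilde{\Pi}^{[s,r]}_{H,\mathcal{O}_{X_n}}$, which follows from the exactness of $\varprojlim$ on the quasi-Stein cover.
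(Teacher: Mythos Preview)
Your proposal is correct and matches the paper's approach: the paper gives no explicit proof here but simply cites \cite[Theorem 4.11]{XT2} and \cite[Theorem 5.7.4]{KL16}, which amounts precisely to your strategy of applying the single-coefficient equivalence (Proposition 3.5/\cite[Proposition 5.44]{XT2}) with $A=\mathcal{O}_{X_n}$ at each level and then passing to projective systems by naturality of the base-change, glueing, and pro-\'etale descent functors in the coefficient ring. Your explicit identification of the naturality check (especially for item M on the Fargues--Fontaine side) spells out what the citation leaves implicit.
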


\indent For the pseudocoherent setting we have the following categories:

\noindent A. Look at the corresponding space $\mathrm{Spa}(H_0,H_0^+)$ which is the corresponding base of the tower, then we have the corresponding category of all the projective systems of the pseudocoherent Frobenius sheaves over the pro-\'etale site of this space over the sheaf of ring $\widetilde{\Pi}_{\mathrm{Spa}(H_0,H_0^+)_\text{pro\'et},\varprojlim_n\mathcal{O}_{X_n}}$. Here the Frobenius will be chosen in our situation to be $\varphi^a$;\\
\noindent B. We look at the same space as in the previous item, and we look at the corresponding category of all the projective systems of the pseudocoherent Frobenius bundles over the corresponding sheaf of ring $\widetilde{\Pi}_{\mathrm{Spa}(H_0,H_0^+)_\text{pro\'et},\varprojlim_n\mathcal{O}_{X_n}}$;\\
\noindent C. We look at the same base space and then the corresponding sheaf of ring $\widetilde{\Pi}^{[s,r]}_{\mathrm{Spa}(H_0,H_0^+)_\text{pro\'et},\varprojlim_n\mathcal{O}_{X_n}}$ for $s\leq r/p^{ah}$, we then have the category of the projective systems of corresponding pseudocoherent Frobenius modules over the period rings in this setting;\\
\noindent D. We then have the corresponding projective systems of  pseudocoherent modules over the period ring ${\widetilde{\Pi}}_{H,\varprojlim_n\mathcal{O}_{X_n}}$, carrying the corresponding action coming from $(\varphi^a,\Gamma)$;\\
\noindent E. We then have the corresponding projective systems of pseudocoherent bundles over the period ring ${\widetilde{\Pi}}_{H,\varprojlim_n\mathcal{O}_{X_n}}$, carrying the corresponding action coming from $(\varphi^a,\Gamma)$;\\
\noindent F. We then have the corresponding projective systems of pseudocoherent modules over the period ring ${\widetilde{\Pi}}^{[s,r]}_{H,\varprojlim_n\mathcal{O}_{X_n}}$, carrying the corresponding action coming from $(\varphi^a,\Gamma)$;\\
\noindent G. We then have the corresponding projective systems of pseudocoherent modules over the period ring ${{\Pi}}_{H,\varprojlim_n\mathcal{O}_{X_n}}$, carrying the corresponding action coming from $(\varphi^a,\Gamma)$;\\
\noindent H. We then have the corresponding projective systems of pseudocoherent bundles over the period ring ${{\Pi}}_{H,\varprojlim_n\mathcal{O}_{X_n}}$, carrying the corresponding action coming from $(\varphi^a,\Gamma)$;\\
\noindent I. We then have the corresponding projective systems of pseudocoherent modules over the period ring ${{\Pi}}^{[s,r]}_{H,\varprojlim_n\mathcal{O}_{X_n}}$, carrying the corresponding action coming from $(\varphi^a,\Gamma)$;\\
\noindent J. We then have the corresponding projective systems of pseudocoherent modules over the period ring ${\breve{\Pi}}_{H,\varprojlim_n\mathcal{O}_{X_n}}$, carrying the corresponding action coming from $(\varphi^a,\Gamma)$;\\
\noindent K. We then have the corresponding projective systems of pseudocoherent bundles over the period ring ${\breve{\Pi}}_{H,\varprojlim_n\mathcal{O}_{X_n}}$, carrying the corresponding action coming from $(\varphi^a,\Gamma)$;\\
\noindent L. We then have the corresponding projective systems of pseudocoherent modules over the period ring ${\breve{\Pi}}^{[s,r]}_{H,\varprojlim_n\mathcal{O}_{X_n}}$, carrying the corresponding action coming from $(\varphi^a,\Gamma)$;\\
\noindent M. We have the corresponding adic version of the corresponding Fargues-Fontaine curve $FF_{\overline{H}'_\infty,\varprojlim_n\mathcal{O}_{X_n}}$. And we can consider the corresponding category of the projective systems of $\Gamma$-equivariant pseudocoherent sheaves over this adic version of Fargues-Fontaine curve, and we assume the corresponding $\Gamma$-equivariant action to be semilinear and continuous over each $\Gamma$-equivariant affinoid subspace.\\

\indent Then we have:
\begin{proposition}\mbox{\bf{(See \cite[Theorem 4.11]{XT2}, after Kedlaya-Liu \cite[Theorem 5.9.6]{KL16})}}
The categories described above are equivalent to each other. And they are actually abelian.\\	
\end{proposition}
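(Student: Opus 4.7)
The plan is to reduce the statement to the non-projective-system pseudocoherent case (the earlier proposition cited as \cite[Theorem 5.9.6]{KL16} and \cite[Proposition 5.51]{XT2}) and then to propagate the equivalence through the inverse system termwise. Concretely, for each fixed index $n\geq 0$, the coefficient ring $\mathcal{O}_{X_n}$ is by assumption of the type for which the pseudocoherent version of the equivalence holds. Therefore at each level $n$ the categories (A)--(M), with $\varprojlim_n\mathcal{O}_{X_n}$ replaced by $\mathcal{O}_{X_n}$, are already equivalent. The equivalences are realized by the natural base-change/globalization functors between the various period rings, and since these functors are induced by ring homomorphisms that are manifestly compatible with the transition maps $\mathcal{O}_{X_{n+1}}\rightarrow \mathcal{O}_{X_n}$, they assemble into equivalences between the corresponding categories of projective systems. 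This step is essentially formal from the termwise case.

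Next I would verify the descent and gluing arguments remain valid in the projective-system framework. The subtle step (as already flagged in the remark for (A), (B), (C)) is to impose the correct local pseudocoherence plus $p$-adic functional analytic conditions at each stage $n$, and then to check that these local conditions are preserved under the transition morphisms of the projective system. For the Frobenius-sheaf categories on the pro-\'etale site this amounts to combining the termwise acyclicity and glueing statements used in the proof of \cite[Theorem 5.9.6]{KL16} with the fact that the coefficient sheaves $\widetilde{\Pi}_{\ast,\varprojlim_n\mathcal{O}_{X_n}}$ are by definition assembled levelwise; no new cocycle is needed beyond the termwise one.

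For the abelian statement, the key input is the coherence of each $\breve{\Pi}^{[s,r]}_{H,\mathcal{O}_{X_n}}$ (which is precisely what ensured the earlier abelianness claim for a single affinoid $A$). Given this, the category of pseudocoherent $(\varphi^a,\Gamma)$-modules at level $n$ is abelian by the same argument as in the cited proof, with kernels and cokernels formed as coherent modules together with their induced $(\varphi^a,\Gamma)$-structure. Abelianness then lifts to projective systems by forming kernels and cokernels levelwise, since the transition maps preserve the coherent/pseudocoherent structure.

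The main obstacle I expect is purely technical: controlling the compatibility of the $p$-adic functional-analytic conditions (completeness, strictness of exact sequences) across both the interval parameters $[s,r]$ and the index $n$ of the Stein exhaustion simultaneously. In practice one handles this by fixing a cofinal sequence of intervals, checking strict exactness at each level via \cref{lemma4.10} and \cref{lemma4.11} uniformly in $n$, and then passing to the inverse limit using the Mittag-Leffler behaviour of the system $\{\mathcal{O}_{X_n}\}_n$ guaranteed by the quasi-Stein structure, which rules out the vanishing-of-$\mathrm{lim}^1$ pathologies that would otherwise obstruct the abelian structure.
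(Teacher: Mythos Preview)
Your proposal is correct and takes essentially the same approach as the paper: the paper gives no explicit argument beyond the citations to \cite[Theorem 4.11]{XT2} and \cite[Theorem 5.9.6]{KL16}, together with the earlier one-line proof of abelianness via coherence of $\breve{\Pi}^{[s,r]}_{H,A}$, and your termwise reduction is precisely what those references amount to.

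One minor remark: your final paragraph about $\varprojlim^1$ and Mittag-Leffler is unnecessary here. The categories in question are categories of \emph{projective systems} as such (see the Definition immediately preceding the list of categories A--M), not categories of their inverse limits. Kernels and cokernels in a category of projective systems are formed termwise by definition, so abelianness follows immediately from the termwise abelianness at each level $n$; no derived-limit control is needed.
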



\subsection{The Cohomologies of Hodge-Iwasawa Modules}

\indent We then define the corresponding Herr style cohomology in our current deformed setting for the corresponding. Note that the corresponding cohomology over the corresponding Fr\'echet-Stein algebras are very important in the study of Iwasawa conjectures and the corresponding $p$-adic Tamagawa number conjectures.

\indent First keep the following setting:

\begin{setting}
We use the notation $(H,H^+)$ to denote the corresponding tower in our situation. Here we assume that $(H,H^+)$ is finite \'etale, noetherian, weakly-decompleting, decompleting, Galois with the Galois topological group $\Gamma$. We also then follow the corresponding setting of \cite[Hypothesis 3.1.1]{KL16}. And fix some $a>0$ positive integer.	
\end{setting}

\noindent A. Look at the corresponding space $\mathrm{Spa}(H_0,H_0^+)$ which is the corresponding base of the tower, then we have the corresponding category of all the Frobenius sheaves over the pro-\'etale site of this space over the sheaf of ring $\widetilde{\Pi}_{\mathrm{Spa}(H_0,H_0^+)_\text{pro\'et},A}$. Here the Frobenius will be chosen in our situation to be $\varphi^a$;\\
\begin{definition} \mbox{\bf{(After Kedlaya-Liu, \cite[Definition 4.4.4]{KL16})}} For the category in the above (and the corresponding setting in pseudocoherent situation) we define the corresponding complex $C^\bullet_{\varphi}(M)$ of any object $\mathcal{M}$ as the corresponding hypercomplex of the following complex for just the Frobenius operator:
\[
\xymatrix@R+0pc@C+0pc{
0\ar[r] \ar[r] \ar[r] &\mathcal{M} \ar[r]^{\varphi^a-1} \ar[r] \ar[r]  & \mathcal{M} \ar[r] \ar[r] \ar[r]&0.
}
\]	
\end{definition}

\noindent B. We look at the same space as in the previous item, and we look at the corresponding category of the Frobenius bundles over the corresponding sheaf of ring $\widetilde{\Pi}_{\mathrm{Spa}(H_0,H_0^+)_\text{pro\'et},A}$;\\

\noindent C. We look at the same base space and then the corresponding sheaf of ring $\widetilde{\Pi}^{[s,r]}_{\mathrm{Spa}(H_0,H_0^+)_\text{pro\'et},A}$ for $s\leq r/p^{ah}$, we then have the category of the corresponding finite projective Frobenius modules over the period rings in this setting;

\begin{definition} \mbox{\bf{(After Kedlaya-Liu, \cite[Definition 4.4.4]{KL16})}} For the three categories in the above (and the corresponding setting in pseudocoherent situation) we define the corresponding complex $C^\bullet_{\varphi}(M)$ of any object $\mathcal{M}$ as the corresponding hypercomplex of the following complex for just the Frobenius operator:
\[
\xymatrix@R+0pc@C+0pc{
0\ar[r] \ar[r] \ar[r] &\mathcal{M} \ar[r] \ar[r] \ar[r]  & \mathcal{M}\otimes_{\widetilde{\Pi}^{[s,r]}_{\mathrm{Spa}(H_0,H_0^+)_\text{pro\'et},A}}\widetilde{\Pi}^{[s,r/p^{ah}]}_{\mathrm{Spa}(H_0,H_0^+)_\text{pro\'et},A} \ar[r] \ar[r] \ar[r]&0.
}
\]

\end{definition}

\noindent D. We then have the corresponding finite projective modules over the period ring ${\widetilde{\Pi}}_{H,A}$, carrying the corresponding action coming from $(\varphi^a,\Gamma)$;\\
\noindent E. We then have the corresponding finite projective bundles over the period ring ${\widetilde{\Pi}}_{H,A}$, carrying the corresponding action coming from $(\varphi^a,\Gamma)$;\\
\noindent F. We then have the corresponding finite projective modules over the period ring ${\widetilde{\Pi}}^{[s,r]}_{H,A}$, carrying the corresponding action coming from $(\varphi^a,\Gamma)$;\\
\noindent G. We then have the corresponding finite projective modules over the period ring ${{\Pi}}_{H,A}$, carrying the corresponding action coming from $(\varphi^a,\Gamma)$;\\
\noindent H. We then have the corresponding finite projective bundles over the period ring ${{\Pi}}_{H,A}$, carrying the corresponding action coming from $(\varphi^a,\Gamma)$;\\
\noindent I. We then have the corresponding finite projective modules over the period ring ${{\Pi}}^{[s,r]}_{H,A}$, carrying the corresponding action coming from $(\varphi^a,\Gamma)$;\\
\noindent J. We then have the corresponding finite projective modules over the period ring ${\breve{\Pi}}_{H,A}$, carrying the corresponding action coming from $(\varphi^a,\Gamma)$;\\
\noindent K. We then have the corresponding finite projective bundles over the period ring ${\breve{\Pi}}_{H,A}$, carrying the corresponding action coming from $(\varphi^a,\Gamma)$;\\
\noindent L. We then have the corresponding finite projective modules over the period ring ${\breve{\Pi}}^{[s,r]}_{H,A}$, carrying the corresponding action coming from $(\varphi^a,\Gamma)$;\\

\begin{definition} \mbox{\bf{(After Kedlaya-Liu, \cite[Definition 5.7.9]{KL16})}} For the categories (except for the corresponding bundles) in the above (and the corresponding setting in pseudocoherent situation) we define the corresponding complex $C^\bullet_{\varphi,\Gamma}(\mathcal{M})$ of any object $\mathcal{M}$ as the corresponding totalization of the following complex:
\[
\xymatrix@R+0pc@C+0pc{
0\ar[r] \ar[r] \ar[r] &C^\bullet_{\Gamma}(\mathcal{M})\ar[r]^{\varphi^a-1} \ar[r] \ar[r]  &C^\bullet_{\Gamma}(\mathcal{M}) \ar[r] \ar[r] \ar[r]&0.
}
\]	
or 
\[
\xymatrix@R+0pc@C+0pc{
0\ar[r] \ar[r] \ar[r] &C^\bullet_{\Gamma}(\mathcal{M})\ar[r]^{\varphi^a-1} \ar[r] \ar[r]  &C^\bullet_{\Gamma}(\mathcal{M})\otimes \Delta \ar[r] \ar[r] \ar[r]&0,
}
\]
respectively for different situations in the above where $\Delta\in \{{\breve{\Pi}}^{[s,r/p^{ah}]}_{H,A},{{\Pi}}^{[s,r/p^{ah}]}_{H,A},{\widetilde{\Pi}}^{[s,r/p^{ah}]}_{H,A}\}$.
\end{definition}

\noindent M. We have the corresponding adic version of the corresponding Fargues-Fontaine curve $FF_{\overline{H}'_\infty,A}$. And we can consider the corresponding category of $\Gamma$-equivariant quasi-coherent locally free sheaves over this adic version of Fargues-Fontaine curve (with the deformation from the algebra $A$), and we assume the corresponding $\Gamma$-equivariant action to be semilinear and continuous over each $\Gamma$-equivariant affinoid subspace.

\begin{definition} \mbox{\bf{(After Kedlaya-Liu, \cite{KL16})}} For the categories in the above we define the corresponding complex $C^\bullet_{\Gamma}(\mathcal{M})$ of any object $\mathcal{M}$ as the corresponding hypercomplex of the complex $C^\bullet_{\Gamma}(\mathcal{M})$.	 In pseudocoherent situation, we have the parallel definition.
\end{definition}

\begin{remark}
Although we will not basically repeat the corresponding construction of the cohomology as above, but all the definitions on the corresponding cohomologies could be translated to the corresponding construction for the corresponding logarithmic setting and the corresponding Fr\'echet-Stein context we considered seriously above.	
\end{remark}

\indent Based on the corresponding comparison theorem above we can actually consider the corresponding comparison between the corresponding cohomologies. The corresponding statements will take the following forms:

\begin{proposition} \mbox{\bf{(After Kedlaya-Liu, \cite[Theorem 5.7.10]{KL16})}}
Suppose we have a corresponding finite projective module $\mathcal{M}$ over $\Pi_{H,A}$ carrying the corresponding action of $(\varphi^a,\Gamma)$, and we use the notation $\breve{\mathcal{M}}$ and $\widetilde{M}$ to be the corresponding base change of $\mathcal{M}$ from the original base ring to the ring being one of $\breve{\Pi}_{H,A}$ and $\widetilde{\Pi}_{H,A}$ respectively. Then we have the following three complexes are mutually quasi-isomorphic:
\begin{align}
C_{\varphi,\Gamma}^\bullet(\mathcal{M}),C_{\varphi,\Gamma}^\bullet(\breve{\mathcal{M}}),C_{\varphi,\Gamma}^\bullet(\widetilde{\mathcal{M}}).
\end{align}

\end{proposition}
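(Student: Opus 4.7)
The plan is to reduce the three-way comparison to two separate base-change comparisons, $\Pi_{H,A} \hookrightarrow \breve{\Pi}_{H,A}$ and $\breve{\Pi}_{H,A} \hookrightarrow \widetilde{\Pi}_{H,A}$, and in each case show that the natural map on $(\varphi^a,\Gamma)$-complexes is a quasi-isomorphism. Since $C^\bullet_{\varphi,\Gamma}(\mathcal{M})$ is by definition the totalization of the two-term complex
$$C^\bullet_\Gamma(\mathcal{M}) \xrightarrow{\varphi^a-1} C^\bullet_\Gamma(\mathcal{M}),$$
and since the ring inclusions commute with both $\Gamma$ and $\varphi^a$, a standard mapping-cone argument reduces the claim to showing that each of $C^\bullet_\Gamma(\mathcal{M}) \to C^\bullet_\Gamma(\breve{\mathcal{M}})$ and $C^\bullet_\Gamma(\breve{\mathcal{M}}) \to C^\bullet_\Gamma(\widetilde{\mathcal{M}})$ is a quasi-isomorphism.

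For each comparison I would argue via the short exact sequence of coefficient rings. Tensoring $0 \to \Pi_{H,A} \to \breve{\Pi}_{H,A} \to \breve{\Pi}_{H,A}/\Pi_{H,A} \to 0$ with the finite-projective module $\mathcal{M}$ (so exactness is preserved) and passing to $\Gamma$-cohomology, the assertion reduces to strict acyclicity of
$$C^\bullet_\Gamma\bigl(\mathcal{M} \otimes_{\Pi_{H,A}} \breve{\Pi}_{H,A}/\Pi_{H,A}\bigr),$$
and similarly for the second comparison with quotient $\widetilde{\Pi}_{H,A}/\breve{\Pi}_{H,A}$. These quotient objects admit natural filtrations whose graded pieces are precisely the objects controlled by \cref{lemma4.11}: at each finite stage, complexes like $M \otimes C^\bullet(\Gamma_n,\varphi^{-(\ell+1)}\Pi^{[s/p^{\ell+1},r/p^{\ell+1}]}_{H}/\varphi^{-\ell}\Pi^{[s/p^\ell,r/p^\ell]}_{H})_A$ are strictly exact once $\ell$ is sufficiently large. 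Organizing these finite-stage vanishings into the full quotient then produces the desired acyclicity of $\Gamma$-cohomology.

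The main technical obstacle is exactly this passage to the limit in $\ell$: strict exactness at each finite stage does not automatically yield vanishing of the $\Gamma$-cohomology of the full colimit without quantitative control on the operator norms of the boundary maps as $\ell \to \infty$. This is the same difficulty that powers \cref{theorem4.12}, and I would handle it by the same mechanism: invoke Kedlaya-Liu's \cite[Lemma 5.6.8]{KL16} to descend cycles over the large ring to cycles over the small ring up to a controlled Frobenius twist, then iterate and apply \cref{lemma4.10} to shrink the residual error below any prescribed norm. Once the descent is established at the level of cycles and boundaries the $\varphi^a$-equivariance of every intermediate identification is automatic, so the resulting quasi-isomorphisms of $\Gamma$-cohomology complexes lift through the mapping cone of $\varphi^a - 1$ to quasi-isomorphisms of the totalized $(\varphi^a,\Gamma)$-complexes, which is the conclusion we want.
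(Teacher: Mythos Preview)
The paper does not supply its own argument for this proposition; it is stated as a direct consequence of \cite[Theorem 5.7.10]{KL16} with no proof environment. So there is nothing in the paper to compare your sketch against beyond the implicit appeal to Kedlaya--Liu's original argument.

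Your overall strategy --- reduce via the mapping cone to comparing $C^\bullet_\Gamma$ over the three rings, then control the quotients using the filtration by $\varphi^{-\ell}$-translates and \cref{lemma4.11} --- is the right shape and is essentially how \cite[Theorem 5.7.10]{KL16} proceeds. But there is one point where your account is imprecise. You correctly note that \cref{lemma4.11} only gives strict exactness of the graded pieces for $\ell \geq \ell^*$, yet you then frame the remaining difficulty purely as a \emph{large-$\ell$ convergence} problem. That is only half of it: the finitely many bottom graded pieces $\mathcal{M}\otimes(\varphi^{-(\ell+1)}\Pi/\varphi^{-\ell}\Pi)$ for $\ell < \ell^*$ are not covered by \cref{lemma4.11} at all, so the acyclicity of $C^\bullet_\Gamma(\mathcal{M}\otimes\breve{\Pi}_{H,A}/\Pi_{H,A})$ does not follow from the filtration argument as you have stated it.

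What closes this gap in \cite{KL16} is precisely the $\varphi$-module structure on $\mathcal{M}$ (not merely the ambient Frobenius on the rings): the isomorphism $\varphi^{a*}\mathcal{M}\cong\mathcal{M}$ lets one replace $\mathcal{M}$ by an equivalent model for which the threshold $\ell^*$ may be taken to be zero, after which \cref{lemma4.11} covers every graded piece. Your last paragraph gestures toward Frobenius twists via \cite[Lemma 5.6.8]{KL16}, but that lemma is about descent of modules, not about shifting the cohomological threshold for a fixed $\mathcal{M}$; the mechanism you actually need is the $\varphi$-equivariance of $\mathcal{M}$ itself. Once that is made explicit, the rest of your outline goes through.
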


\begin{proposition} \mbox{\bf{(After Kedlaya-Liu, \cite[Theorem 5.7.10]{KL16})}}
Suppose we have a corresponding finite projective module $\mathcal{M}$ over $\Pi_{H,\varprojlim_n\mathcal{O}_{X_n}}$ carrying the corresponding action of $(\varphi^a,\Gamma)$, and we use the notation $\breve{\mathcal{M}}$ and $\widetilde{M}$ to be the corresponding base change of $\mathcal{M}$ from the original base ring to the ring being one of $\breve{\Pi}_{H,\varprojlim_n\mathcal{O}_{X_n}}$ and $\widetilde{\Pi}_{H,\varprojlim_n\mathcal{O}_{X_n}}$ respectively. Then we have the following three complexes are mutually quasi-isomorphic:
\begin{align}
C_{\varphi,\Gamma}^\bullet(\mathcal{M}),C_{\varphi,\Gamma}^\bullet(\breve{\mathcal{M}}),C_{\varphi,\Gamma}^\bullet(\widetilde{\mathcal{M}}).
\end{align}

\end{proposition}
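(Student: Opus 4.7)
The plan is to reduce the Fréchet--Stein version to the affinoid version established in the immediately preceding proposition, via a level-by-level argument along the exhaustion $X=\bigcup_n X_n$, and then to glue the level-wise quasi-isomorphisms by taking a derived inverse limit. In more detail, I first invoke the projective-system incarnation of the category of Hodge--Iwasawa modules over $\varprojlim_n\mathcal{O}_{X_n}$ recalled earlier in this section (the projective-system version of \cite[Theorem 4.11]{XT2}) to identify $\mathcal{M}$ with a compatible family $\{\mathcal{M}_n\}_n$ of finite projective $(\varphi^a,\Gamma)$-modules over $\Pi_{H,\mathcal{O}_{X_n}}$, and similarly for $\breve{\mathcal{M}}$ and $\widetilde{\mathcal{M}}$. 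Under this identification the Herr-style complex $C^\bullet_{\varphi,\Gamma}(\mathcal{M})$ is computed as $\varprojlim_n C^\bullet_{\varphi,\Gamma}(\mathcal{M}_n)$, and analogously for $\breve{\mathcal{M}}$ and $\widetilde{\mathcal{M}}$ (each level being built from the Koszul complex on generators of $\Gamma$ together with $\varphi^a-1$).

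Next, for each fixed $n$ the affinoid coefficient ring $\mathcal{O}_{X_n}$ is of the type admitted by the previous proposition, so we obtain a zig-zag of quasi-isomorphisms
\begin{align*}
C^\bullet_{\varphi,\Gamma}(\mathcal{M}_n)\xrightarrow{\sim} C^\bullet_{\varphi,\Gamma}(\breve{\mathcal{M}}_n)\xrightarrow{\sim} C^\bullet_{\varphi,\Gamma}(\widetilde{\mathcal{M}}_n),
\end{align*}
induced by the natural base-change maps along the accent inclusions, and these quasi-isomorphisms are compatible with the transition maps of the projective system because the base changes $\Pi_{H,\mathcal{O}_{X_{n+1}}}\to\Pi_{H,\mathcal{O}_{X_n}}$ commute with the accent enlargements (equivalently, because the equivalence of categories is natural in the coefficient ring).

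It then remains to pass to the inverse limit over $n$. This is where the main content lies: one must show that $R^1\varprojlim_n$ vanishes on each of the three projective systems of complexes (equivalently, on their cohomology in each degree), so that the quasi-isomorphisms at finite level propagate to quasi-isomorphisms on $\varprojlim_n$. The standard way to obtain this is to verify a topological Mittag--Leffler property: the transition maps $\mathcal{M}_{n+1}\otimes_{\Pi_{H,\mathcal{O}_{X_{n+1}}}}\Pi_{H,\mathcal{O}_{X_n}}\to \mathcal{M}_n$ are isomorphisms by the Fréchet--Stein nature of $\{\mathcal{O}_{X_n}\}_n$, so the induced transition maps on the Herr complexes have dense image when restricted to bounded-radius subcomplexes, and the same holds after base change to the $\breve{\phantom{X}}$ and $\widetilde{\phantom{X}}$ variants because those base changes are faithfully flat and continuous. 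Combined with the finite projectivity, which guarantees that the cohomology groups at each level are finitely presented over $\mathcal{O}_{X_n}$, this yields the vanishing of $R^1\varprojlim_n$ on cohomology.

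The main obstacle is exactly this $R^1\varprojlim_n$-vanishing step: while the level-wise quasi-isomorphism is essentially formal from the cited affinoid case and the functoriality of the Herr construction, controlling the derived limit requires genuine input from the Fréchet--Stein structure, namely that the projective system $\{\mathcal{M}_n\}_n$ together with the group-cohomology and Frobenius differentials behaves compatibly enough with the topology on each $\Pi_{H,\mathcal{O}_{X_n}}$ to satisfy a topological Mittag--Leffler condition. Once this is in place, the quasi-isomorphism of the three limit complexes is immediate, and the statement follows exactly as in the proof of the affinoid case, mirroring the argument of \cite[Theorem 5.7.10]{KL16}.
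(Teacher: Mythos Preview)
The paper gives no proof of this proposition; it is simply stated as an ``After Kedlaya--Liu'' analog of \cite[Theorem 5.7.10]{KL16}, just like the immediately preceding affinoid version. The implicit argument is the one you identify in your first two paragraphs: by the paper's own definition, a module over $\Pi_{H,\varprojlim_n\mathcal{O}_{X_n}}$ \emph{is} a projective system $\{\mathcal{M}_n\}_n$ of modules over the $\Pi_{H,\mathcal{O}_{X_n}}$, so the Herr complex is likewise a projective system of complexes, and the affinoid proposition applied at each level $n$ gives the quasi-isomorphisms compatibly with the transition maps. That is the whole content the paper intends.

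Your third and fourth paragraphs go beyond this. Because the paper works throughout in the category of projective systems (not in genuine modules over the limit ring), the quasi-isomorphism in the statement is most naturally read level-wise, and no passage to an honest inverse limit is needed; in particular the paper never confronts an $R^1\varprojlim$ issue. Your derived-limit argument is therefore extra, and as written it is not fully justified: the claim that the level-$n$ cohomology groups are finitely presented over $\mathcal{O}_{X_n}$ is a nontrivial finiteness statement that you have not proved here (it would require input of the type in \cite{KPX} or \cite[Theorem 5.7.11]{KL16}), and the ``dense image on bounded-radius subcomplexes'' step is asserted rather than checked. None of this is a defect relative to the paper, which simply does not address the point; but if you want the stronger statement about actual limits you should flag that additional finiteness input explicitly rather than fold it into a Mittag--Leffler sketch.
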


\indent As in \cite{KL16} starting from a corresponding finite projective module carrying the corresponding $(\varphi^a,\Gamma)$-structure, one can compare the corresponding cohomologies of modules over the full Robba rings with those of those modules over some Robba rings with respect to some intervals.

\begin{proposition}\mbox{\bf{(After Kedlaya-Liu, \cite[Theorem 5.7.11]{KL16})}}
Descend $\mathcal{M}$ which is a finite projective module over $\Pi_{H,A}$ carrying the action of $(\varphi^a,\Gamma)$ to the corresponding Robba ring $\Pi^{r_0}_{H,A}$ for some specific positive radius $r_0>0$. Then as in the previous proposition we consider the corresponding base change of the module $\mathcal{M}$ to the corresponding Robba rings $\breve{\Pi}^{[s,r]}_{H,A}$ for $0<s\leq r\leq r_0$ with that $s\leq r/p^{ah}$, and to the corresponding Robba rings $\widetilde{\Pi}^{[s,r]}_{H,A}$. We denote these two modules by $\breve{\mathcal{M}}$ and $\widetilde{\mathcal{M}}$. Then for sufficiently large integer $m\geq 0$ suppose we consider the following three complexes:
\[
\xymatrix@R+0pc@C+0pc{
0\ar[r] \ar[r] \ar[r] &C^\bullet_{\Gamma}(\breve{\mathcal{M}}_{[s,r]})\ar[r]^{\varphi^a-1} \ar[r] \ar[r]  &C^\bullet_{\Gamma}(\breve{\mathcal{M}}_{[s,r/p^{ah}]}) \ar[r] \ar[r] \ar[r]&0,
}
\]
\[
\xymatrix@R+0pc@C+0pc{
0\ar[r] \ar[r] \ar[r] &C^\bullet_{\Gamma}(\widetilde{\mathcal{M}}_{[s,r]})\ar[r]^{\varphi^a-1} \ar[r] \ar[r]  &C^\bullet_{\Gamma}(\widetilde{\mathcal{M}}_{[s,r/p^{ah}]}) \ar[r] \ar[r] \ar[r]&0,
}
\]
\[
\xymatrix@R+0pc@C+0pc{
0\ar[r] \ar[r] \ar[r] &\varphi^{-am}C^\bullet_{\Gamma}(\mathcal{M}_{[s/p^{ahm},r/p^{ahm}]})\ar[r]^{\varphi^a-1} \ar[r] \ar[r]  &\varphi^{-am}C^\bullet_{\Gamma}(\mathcal{M}_{[s/p^{ahm},r/p^{ah(m+1)}]}) \ar[r] \ar[r] \ar[r]&0.
}
\]
Then we have that these three complexes and the corresponding complex $C^\bullet_{\varphi,\Gamma}(\mathcal{M})$ are quasi-isomorphic to each other.	
\end{proposition}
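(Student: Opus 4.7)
The plan is to reduce the four-way comparison to a single quasi-isomorphism between an interval-type complex and the full Robba-ring complex $C^\bullet_{\varphi,\Gamma}(\mathcal{M})$, and then to establish this quasi-isomorphism by a Frobenius-contraction argument modeled on \cite[Theorem 5.7.10-5.7.11]{KL16}. First I would observe that pairwise quasi-isomorphisms among the three explicit interval complexes in the statement follow from the equivalences of $(\varphi^a,\Gamma)$-module categories between $\Pi^{[s,r]}_{H,A}$, $\breve{\Pi}^{[s,r]}_{H,A}$ and $\widetilde{\Pi}^{[s,r]}_{H,A}$ established in \cref{theorem4.12} and the preceding proposition. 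The third complex is written with a Frobenius twist $\varphi^{-am}$ precisely because on the non-perfect side the interval $[s,r]$ must be pulled back to $[s/p^{ahm}, r/p^{ahm}]$ before the Frobenius pullback isomorphism furnished by the $(\varphi^a,\Gamma)$-structure provides the needed identification; once this bookkeeping is done these three complexes coincide up to a natural isomorphism of Frobenius and $\Gamma$ structures.

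Next I would choose the cleanest representative, namely the perfect version over $\widetilde{\Pi}_{H,A}$, and write down the obvious map from $C^\bullet_{\varphi,\Gamma}(\widetilde{\mathcal{M}})$ to
\[
\bigl[C^\bullet_\Gamma(\widetilde{\mathcal{M}}_{[s,r]}) \xrightarrow{\varphi^a - 1} C^\bullet_\Gamma(\widetilde{\mathcal{M}}_{[s,r/p^{ah}]})\bigr]
\]
induced by the restriction maps $\widetilde{\Pi}_{H,A} \to \widetilde{\Pi}^{[s,r]}_{H,A}$ and $\widetilde{\Pi}_{H,A} \to \widetilde{\Pi}^{[s,r/p^{ah}]}_{H,A}$. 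To prove this map is a quasi-isomorphism I would analyze its mapping cone via the short exact sequences
\[
0 \to \widetilde{\Pi}_{H,A} \to \widetilde{\Pi}^{[s,r]}_{H,A} \oplus \widetilde{\Pi}^{[s,r/p^{ah}]}_{H,A} \to \widetilde{\Pi}^{[s,r/p^{ah}]}_{H,A} \to 0
\]
tensored with $\widetilde{\mathcal{M}}$, reducing the claim to showing that on the pieces supported outside the chosen interval the operator $\varphi^a - 1$ acts bijectively (with bounded inverse after the twist). This uses that in the Robba-ring setting $\varphi^a$ is strictly contracting or expanding on radii, so the geometric series for $(\varphi^a - 1)^{-1}$ converges on the relevant annulus — exactly the ingredient available for sufficiently large $m$.

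The main obstacle is to carry this contraction argument through in the $A$-deformed, $\Gamma$-equivariant Fr\'echet setting rather than on the scalar Robba ring alone. The invertibility of $\varphi^a - 1$ on complementary annuli must be established in a way that is $A$-linear and compatible with the $\Gamma$-action; more importantly, one must bootstrap from scalars to the Koszul/continuous cochain complex $C^\bullet_\Gamma$, preserving strict exactness throughout. This is precisely where \cref{lemma4.10} and \cref{lemma4.11} enter: they provide strict exactness of $\Gamma$-cochain complexes for the successive Frobenius-pullback quotients $\varphi^{-(\ell+1)}\Pi^{[s/p^{\ell+1},r/p^{\ell+1}]}_{H}/\varphi^{-\ell}\Pi^{[s/p^{\ell},r/p^{\ell}]}_{H}$ after tensoring with $M$, which is exactly the filtration controlling the cone above. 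Assembling these strict exact sequences via totalization (and using the Frobenius pullback isomorphism of $\mathcal{M}$ to telescope the $\varphi^{-am}$ shift in the third complex) produces the desired quasi-isomorphism and completes the proof along the lines of \cite[Theorem 5.7.11]{KL16}.
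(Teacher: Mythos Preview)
Your proposal is correct and follows essentially the same approach as the paper: the paper's own proof consists solely of the citation ``See \cite[Theorem 5.7.11]{KL16}'', and your sketch is precisely an elaboration of that argument in the $A$-deformed setting, invoking the same strict-exactness inputs (\cref{lemma4.10}, \cref{lemma4.11}) and the Frobenius-contraction mechanism on complementary annuli. One minor caution: the displayed short exact sequence you wrote is not quite the right shape (the full Robba ring is an inverse limit over shrinking intervals rather than a subring of a fixed $\widetilde{\Pi}^{[s,r]}_{H,A}$), but the intended filtration-and-telescoping argument you describe afterwards is the correct one from \cite{KL16}.
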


\begin{proof}
See \cite[Theorem 5.7.11]{KL16}.	
\end{proof}

\begin{remark}
We want to mention here that actually the corresponding context should work in general for the tower which is finite \'etale but not actually Galois. Although in our previous work \cite{XT2} we only considered the corresponding towers which are required to be Galois with respect to some Galois topological group $\Gamma$.	
\end{remark}

\newpage

\section{Applications to General Analytic Spaces}

\subsection{Contact with Rigid Analytic Spaces}

\indent We now consider the corresponding context of more global geometric objects, over smooth proper rigid analytic spaces. We certainly will consider very general coefficients namely the corresponding affinoid algebras in the same framework. The two considerations happen to carry the same context coming from the corresponding rigid analytic spaces, but more strictly speaking they will interact in certain situations such as when we look at some nonabelian quotient of the profinite fundamental groups.


\indent We basically consider the following context, where $X$ will be a corresponding smooth proper rigid analytic space over some analytic field $K$. Here we assume that the corresponding analytic field $K$ is containing the $p$-adic number field $\mathbb{Q}_p$. The affinoid algebra $A$ over $\mathbb{Q}_p$ in the following discussion is assumed to be sousperfectoid as those considered in \cite{KH}. Then we can consider the following category.

\begin{definition}\mbox{\bf{(After Kedlaya-Liu, \cite[Definition 8.10.1]{KL16})}}
We use the corresponding notation $D_{\mathrm{pseudo},\widetilde{\Pi}^{[s,r]}_{X,A}}$ ($0<s\leq r/p^{a}$) to denote the corresponding category of all the pseudocoherent $\widetilde{\Pi}^{[s,r]}_{X,A}$ sheaves, which is regarded as a subcategory of the corresponding category of all the corresponding sheaves over $\widetilde{\Pi}^{[s,r]}_{X,A}$, in the corresponding pro-\'etale topology.	
\end{definition}

\indent Also we have the following category of all the $(\varphi^a,\Gamma)$ over the space $X$ as in the above:

\begin{definition}\mbox{\bf{(After Kedlaya-Liu, \cite[Definition 8.10.5]{KL16})}}
We use the corresponding notation $D_{\mathrm{pseudo},\varphi^a,\widetilde{\Pi}_{X,A}}$ to denote the corresponding category of all the pseudocoherent $\widetilde{\Pi}_{X,A}$ sheaves carrying the corresponding Frobenius operator $\varphi^a$, which is regarded as a subcategory of the corresponding category of all the corresponding sheaves over $\widetilde{\Pi}_{X,A}$, in the corresponding pro-\'etale topology.	
\end{definition}

\begin{proposition} \mbox{\bf{(After Kedlaya-Liu, \cite[Theorem 8.10.6]{KL16})}} \label{proposition6.3}
The corresponding category $D_{\mathrm{pseudo},\varphi^a,\widetilde{\Pi}_{X,A}}$ is abelian, where the corresponding kernels and cokernels are compatible with the corresponding category of all the sheaves of $\widetilde{\Pi}_{X,A}$-modules over the corresponding pro-\'etale site.	
\end{proposition}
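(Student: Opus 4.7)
The plan is to reduce the global statement to a local one over the period rings studied in Section 3, then invoke the abelian structure already established there, and finally glue the local kernels and cokernels using flatness along pro-\'etale refinements.

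First, I would cover $X$ by affinoid perfectoid subdomains $U = \mathrm{Spa}(H_0,H_0^+)$ arising from toric towers $(H,H^+)$ satisfying the standing hypotheses of Section 3, so that on each such $U$ the sheaf $\widetilde{\Pi}_{X,A}|_U$ computes as the deformed perfect Robba ring $\widetilde{\Pi}_{H,A}$. The equivalence of categories recalled in the pseudocoherent setting (see the proposition following \cite[Theorem 5.9.4]{KL16} and \cite[Proposition 5.51]{XT2}) then identifies the restriction of $D_{\mathrm{pseudo},\varphi^a,\widetilde{\Pi}_{X,A}}$ to $U$ with the category of pseudocoherent $(\varphi^a,\Gamma)$-modules over $\widetilde{\Pi}_{H,A}$. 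By the local abelianness proposition proved earlier (\textit{The categories above are abelian}, after \cite[Theorem 5.9.4]{KL16}), kernels and cokernels exist in this local category, and they coincide with the module-theoretic kernels and cokernels computed inside all $\widetilde{\Pi}_{H,A}$-modules, because $\widetilde{\Pi}^{[s,r]}_{H,A}$ is coherent in the sousperfectoid setting.

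Second, I would promote this local picture to the pro-\'etale sheaf level. Given a morphism $f \colon \mathcal{M} \to \mathcal{N}$ in $D_{\mathrm{pseudo},\varphi^a,\widetilde{\Pi}_{X,A}}$, define $\mathcal{K}$ and $\mathcal{Q}$ as the sheaf-theoretic kernel and cokernel in the ambient category of $\widetilde{\Pi}_{X,A}$-modules. The key point is to show these remain pseudocoherent and carry a compatible Frobenius structure. Pseudocoherence descends and ascends under the flat base change along the intervals $[s,r] \to [s,r/p^a]$ used to define the Frobenius structure, so $\varphi^{a,*}\mathcal{K} \simeq \mathcal{K}$ and likewise for $\mathcal{Q}$ as a consequence of the left exactness (respectively right exactness) of $\varphi^{a,*}$, coming from flatness of the Frobenius on the deformed Robba rings in the sousperfectoid setting. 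The pseudocoherence of $\mathcal{K}$ and $\mathcal{Q}$ on each local chart is then the content of the local abelianness, and these properties glue along overlaps because they can be checked on the stalks of a pro-\'etale covering by perfectoid affinoids.

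Third, I would verify that the resulting $\mathcal{K}$ and $\mathcal{Q}$ satisfy the categorical kernel/cokernel universal property in $D_{\mathrm{pseudo},\varphi^a,\widetilde{\Pi}_{X,A}}$, not merely in the ambient sheaf category. This is immediate once the objects are shown to live in $D_{\mathrm{pseudo},\varphi^a,\widetilde{\Pi}_{X,A}}$, since any test morphism out of $\mathcal{K}$ (resp.\ into $\mathcal{Q}$) factors uniquely at the sheaf level and the factorization automatically commutes with $\varphi^a$ by functoriality. The compatibility with the ambient category of all $\widetilde{\Pi}_{X,A}$-sheaves is then a tautology from the construction.

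The main obstacle I anticipate is the second step: ensuring that pseudocoherence is preserved both under the Frobenius pullback and under gluing along the pro-\'etale covering. Pseudocoherence is a somewhat delicate property in the non-noetherian Banach setting of \cite[Chapter 8]{KL16}, and the argument for its stability under sheaf-theoretic kernels and cokernels really depends on the coherence of $\widetilde{\Pi}^{[s,r]}_{X,A}$ under the sousperfectoid hypothesis on $A$, together with the Frobenius flatness between intervals. Once these two technical inputs are in hand, the rest of the argument is a formal transfer of \cite[Theorem 8.10.6]{KL16} to the $A$-deformed setting.
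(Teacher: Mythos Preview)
Your overall strategy matches the paper's: localize on $X$ to a smooth affinoid chart, realize the pro-\'etale Frobenius sheaves as $(\varphi^a,\Gamma)$-modules attached to a toric tower, and then invoke the local abelianness already established in Section~3. The paper's proof is exactly this, stated in two sentences.

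There is, however, a genuine misidentification in your argument that you should correct. You repeatedly attribute the local abelianness to the claim that $\widetilde{\Pi}^{[s,r]}_{H,A}$ (and hence $\widetilde{\Pi}^{[s,r]}_{X,A}$) is coherent under the sousperfectoid hypothesis on $A$. This is not what is known and not what is used. The perfect Robba rings $\widetilde{\Pi}^{[s,r]}_{H,A}$ are not asserted to be coherent anywhere in the paper or in \cite{KL16}; the coherent ring is the \emph{imperfect} one $\breve{\Pi}^{[s,r]}_{H,A}$. The local abelianness proposition you cite (after \cite[Theorem 5.9.4]{KL16}) is proved precisely by passing through the equivalence of categories to the $\breve{\Pi}$-side and using coherence there. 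The paper's proof makes this passage explicit: it says one compares to $(\varphi^a,\Gamma)$-modules over $\breve{\Pi}_{H,A}$, ``which implies the corresponding result since this ring is coherent.'' Your second and fourth paragraphs, as written, rest on the wrong coherence statement; once you replace $\widetilde{\Pi}^{[s,r]}$ by $\breve{\Pi}^{[s,r]}$ and invoke the comparison theorems to transfer, the argument goes through.

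A smaller point: you describe the cover of $X$ as being by ``affinoid perfectoid subdomains $U=\mathrm{Spa}(H_0,H_0^+)$.'' The base $(H_0,H_0^+)$ of a toric tower is the rigid analytic affinoid chart of $X$, not a perfectoid object; the perfectoid cover in the pro-\'etale site is the completed limit $\widetilde{H}_\infty$ at the top of the tower. This does not affect the logic but the terminology should be fixed.
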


\begin{proof}
Locally the corresponding object in this category will be $\varphi^a$-sheaf in the corresponding category $D_{\mathrm{pseudo},\varphi^a,\widetilde{\Pi}_{X_0,A}}$ where $X_0$ is a smooth affinoid subspace. We then deduce the corresponding $\Gamma$-action will be realized by the corresponding Galois group of the corresponding toric tower. Then by the comparison we established in the above, we can have the chance to directly compare this to the corresponding $(\varphi^a,\Gamma)$-module over the ring $\breve{\Pi}_{H,A}$ for some restricted tower $H$, which implies the corresponding result since this ring is coherent. 	
\end{proof}

\subsection{Contact with $k_\Delta$-Analytic Spaces}

\indent In this section we are going to consider the corresponding $k_\Delta$-analytic space in the sense of \cite[Part I Chapter I]{DFN}. The corresponding $\Delta$ was denoted by $H$ in \cite[Part I Chapter I]{DFN}, namely a commutative multiplication group satisfying the corresponding relation:
\begin{displaymath}
|k^\times|\subset H \subset \mathbb{R}^\times_+.	
\end{displaymath}
Here for $k$ we assume that this is an analytic field of characteristic $0$. Then we are going to use the notation $\Delta$ to denote a corresponding commutative multiplicative group such that we have the following:
\begin{displaymath}
|k^\times|\subset \Delta \subset \mathbb{R}^\times_+.	
\end{displaymath}

\indent Recall from \cite[Part I Chapter I.3]{DFN} we have the corresponding notation of $k_\Delta$-affinoid algebras and $k_\Delta$-affinoid spaces. These are basically the corresponding non-strictly generalization of the corresponding strictly affinoid algebras and the corresponding spectrum. For instance the corresponding $k_\Delta$-affinoid algebras are defined to be those taking the corresponding form coming form the admissible quotient from some algebra taking the form of:
\begin{displaymath}
k\{T_1/x_1,...,T_d/x_d\}	
\end{displaymath}
for $x_i\in \Delta$ and $i=1,...,d$. Then we have the corresponding notion from this of the affinoid spaces from some suitable geometric point of view, we will use the language of adic spaces as in \cite{KL16}. So we define:

\begin{definition}\mbox{\bf{(After Temkin, \cite[Part I Chapter I.3]{DFN})}}
We define the corresponding $k_\Delta$-affinoid spaces as the corresponding adic space associated some $k_\Delta$-affinoid algebra $R$. Note that this is endowed with the structure sheaf $\mathcal{O}_{\mathrm{Spa}(R,R^+)}$. 
\end{definition}

\indent Then as in \cite[Part I Chapter I]{DFN} we can define the corresponding general $k_\Delta$-analytic spaces as in the following:

\begin{definition}\mbox{\bf{(After Temkin, \cite[Part I Chapter I.3]{DFN})}} 
We define a general $k_\Delta$-analytic space to be an adic space $X$ which could be covered by an atlas forming by the corresponding $k_\Delta$-analytic affinoid spaces. 
	
\end{definition}

\indent We would like to study the corresponding properties of the corresponding sheaves over a general $k_\Delta$-analytic space $X$ as in \cite{KL16}. And certainly eventually we contact with the corresponding Hodge-Iwasawa theory. But we would like to further make some further more detailed discussion around the current generalized context.

\begin{remark}
We should mention that we reproduce many key arguments following \cite[Chapter 8]{KL16} in the following presentation of the parallel results to \cite[Chapter 8]{KL16}. However, if one really wants to work with some larger base field (by taking the valuation group even up to $\mathbb{R}_+^\times$), then one should be able to simplified many arguments below.	
\end{remark}

\begin{remark} \label{remark6.6}
The corresponding desingularization result from Temkin (as in the context of \cite[Remark 8.1.2]{KL16}) in our situation namely we can desingularize a corresponding $k_\Delta$-affinoid algebra $A$ by the corresponding excellence of this algebra as in \cite[Part I Chapter I Fact 3.1.2.1]{DFN}. Namely in the category of schemes we can choose a map with smooth domain $Y\rightarrow \mathrm{Spec}A$ which will have the corresponding analytification in the categories of all the $k_\Delta$-affinoid algebras, again with smooth domain.	
\end{remark}

\indent Then we consider the corresponding correspondence with the Ax-Sen-Tate style result in our context by using the corresponding pro-\'etale site. So we use the notation $f_\text{pro\'et}$ to denote the corresponding morphism $f_\text{pro\'et}:X_\text{pro\'et}\rightarrow X$.

\begin{proposition} \mbox{\bf{(After Kedlaya-Liu, \cite[Theorem 8.2.3]{KL16})}}
Suppose we have that the $k_\Delta$-analytic space $X$ is seminormal. Then we have the corresponding isomorphism $\mathcal{O}_X\overset{\sim}{\rightarrow}f_{\text{pro\'et},*}\widehat{\mathcal{O}}_X$. On the other hand this isomorphism will also imply that the corresponding space $X$ is essentially seminormal.	
\end{proposition}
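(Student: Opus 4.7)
The plan is to proceed in two stages, following the overall architecture of \cite[Theorem 8.2.3]{KL16} but adapting it to the $k_\Delta$-setting via a base-change trick that reduces the non-strict situation to the strict one. First I would observe that both the formation of $f_{\text{pro\'et},*}\widehat{\mathcal{O}}_X$ and the property of seminormality are local for the analytic topology on $X$. Hence I may assume $X = \mathrm{Spa}(R,R^+)$ for a $k_\Delta$-affinoid algebra $R$, obtained as an admissible quotient of $k\{T_1/x_1,\ldots,T_d/x_d\}$ with $x_i \in \Delta$.

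Next I would choose a complete non-archimedean field extension $k'/k$ with $\Delta \subset |k'^\times|$, so that $R' := R\widehat{\otimes}_k k'$ becomes a strict $k'$-affinoid algebra. The key point here is that the natural morphism $\mathrm{Spa}(R',R'^+) \to \mathrm{Spa}(R,R^+)$ is a pro-\'etale cover after passing to a suitable perfectoid-style enlargement (analogous to the toric pro-\'etale covers constructed in \cite[Chapter 8.2]{KL16}), and seminormality is preserved under such faithfully flat base change using the excellence provided by \cref{remark6.6}. Over $\mathrm{Spa}(R',R'^+)$ one is now in the strictly $k'$-analytic setting, and the original \cite[Theorem 8.2.3]{KL16} together with the Ax--Sen--Tate argument for perfectoid pro-\'etale covers yields $\mathcal{O}_{X'} \xrightarrow{\sim} f'_{\text{pro\'et},*}\widehat{\mathcal{O}}_{X'}$.

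To conclude the forward direction, I would then perform faithfully flat descent along $k \to k'$: because the pro-\'etale topology is compatible with such base change and both sides of the comparison are quasi-coherent in an appropriate sense, taking the $\mathrm{Gal}(k'/k)$-invariants (or, more invariantly, using descent along the cover) recovers $\mathcal{O}_X \xrightarrow{\sim} f_{\text{pro\'et},*}\widehat{\mathcal{O}}_X$. For the converse, I would argue as in \cite[Theorem 8.2.3]{KL16}: the sheaf $f_{\text{pro\'et},*}\widehat{\mathcal{O}}_X$ always contains the seminormalization of $\mathcal{O}_X$, because a function on $X$ that becomes regular after pulling back to a perfectoid pro-\'etale cover and whose square and cube lie in $\mathcal{O}_X$ already lies in $\mathcal{O}_X$ if the equality $\mathcal{O}_X = f_{\text{pro\'et},*}\widehat{\mathcal{O}}_X$ holds; this forces the ring $R$ to coincide with its seminormalization locally, giving essential seminormality.

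The main obstacle I anticipate is the descent step: one has to verify that $f_{\text{pro\'et},*}\widehat{\mathcal{O}}_X$ is compatible with the base change $k \to k'$ in the appropriate way, and that the $k_\Delta$-affinoid structure is detected correctly by the cover. A secondary difficulty is checking that the toric pro-\'etale covers built in the strict case in \cite[Chapter 8.2]{KL16} have natural counterparts using the non-strict Tate algebras $k\{T_i/x_i\}$; here one would want to extract $p^n$-th roots of the generators $T_i/x_i$ and verify that the resulting tower remains within the $k_\Delta$-analytic category (or enlarges $\Delta$ in a controlled way), so that the perfectoid limit computing $\widehat{\mathcal{O}}_X$ can be formed without leaving the framework of \cite[Part I Chapter I]{DFN}.
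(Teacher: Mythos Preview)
Your approach is genuinely different from the paper's. The paper does \emph{not} reduce the $k_\Delta$-affinoid situation to the strict case by enlarging the base field; instead it reruns the argument of \cite[Theorem 8.2.3]{KL16} directly in the $k_\Delta$-setting. Concretely, the paper first treats the case where $k$ is perfectoid and $X$ is smooth affinoid by observing that the restricted toric towers (with the radii $x_i,y_j$ now allowed to be arbitrary elements of $\Delta$) work verbatim; it then removes the perfectoidness hypothesis on $k$ by passing to a perfectoidization $k_1$ of $k$ and using the equalizer sequence $X \to X_{k_1} \rightrightarrows X_{k_2}$ with $k_2 = (k_1\widehat{\otimes}_k k_1)^u$; finally it handles a general seminormal $X$ via Temkin's desingularization (\cref{remark6.6}) by choosing a smooth $h:Y\to X$ and using the identification $\mathcal{O}_X \overset{\sim}{\to} h_*\mathcal{O}_Y$ coming from seminormality. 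The converse is obtained by comparing $X$ to its seminormalization $X^{\mathrm{seminor}}$.

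Your strategy of base-changing to a field $k'$ with $\Delta\subset |k'^\times|$ and then invoking \cite[Theorem 8.2.3]{KL16} as a black box is economical, but the descent step is more fragile than you suggest. The extension $k\to k'$ that enlarges the value group (e.g.\ by adjoining an element of prescribed norm $r\in\Delta\setminus|k^\times|$) is essentially never \'etale, pro-\'etale, or Galois, so the phrases ``pro-\'etale cover after passing to a suitable perfectoid-style enlargement'' and ``$\mathrm{Gal}(k'/k)$-invariants'' do not apply as stated. What one would actually need is a faithfully-flat descent argument along $k\to k'$ together with a verification that the formation of $f_{\text{pro\'et},*}\widehat{\mathcal{O}}_X$ commutes with this base change; this is exactly the kind of equalizer argument the paper carries out for the perfectoidization $k\to k_1$, and it is not automatic. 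If you can justify that compatibility, your route would go through, and it has the advantage of treating \cite[Theorem 8.2.3]{KL16} as a black box rather than reproving it; the paper's route trades that for avoiding the base-change compatibility question by working intrinsically in the $k_\Delta$-category from the start.
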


\begin{proof}
We follow the proof of \cite[Theorem 8.2.3]{KL16}, which proves the smooth situation first and then proves the corresponding general cases by considering the corresponding desingularization. If $k$ is perfectoid and the space $X$ is smooth and affinoid, which as in \cite[Theorem 8.2.3]{KL16} could be directly proved by relating to the corresponding restricted toric towers, then the corresponding result follows from \cite[Theorem 8.2.3]{KL16}. Then to remove the corresponding current assumption on the prefectoidness we just consider the corresponding prefectoidization $k_1$ of $k$ and the corresponding uniform completion of $k_2:=k_1\otimes_k k_1$, and we consider the base changes of $X$ (affinoid) to the corresponding $k_1$ and $k_2$ to form the following short exact sequence:
\[
\xymatrix@R+0pc@C+0pc{
0\ar[r] \ar[r] \ar[r] &X    \ar[r] \ar[r] \ar[r]  &X\otimes k_1 \ar[r] \ar[r] \ar[r] & X\otimes k_2.
}
\]
Then as in \cite[Theorem 8.2.3]{KL16} one further reduces to the corresponding situation around the fields. Finally to consider the general space $X$ we consider the corresponding desingularization as in \cref{remark6.6} to consider some smooth space $Y$ which maps under $h$ say to $X$, then by considering the corresponding pullbacks of perfectoid subdomain one can basically up to such desingularization compare to have the desired isomorphism by the smooth case as above. After this one considers the fact that seminormality implies that we have the isomorphism $\mathcal{O}_{X}\overset{\sim}{\rightarrow} h_* \mathcal{O}_{Y}$, which finishes the proof in this case. Then conversely if we have the corresponding isomorphism in the statement of the proposition, then we choose the corresponding seminormalization $X^{\text{seminor}}$ of $X$ which then finishes the corresponding proof as in \cite[Theorem 8.2.3]{KL16}.

\end{proof}

\begin{proposition}\mbox{\bf{(After Kedlaya-Liu, \cite[Corollary 8.2.4]{KL16})}}
When we consider the corresponding vector bundle $V$ over $X$ which is assumed to be seminormal, we will have then the corresponding isomorphism:
\begin{displaymath}
V\overset{\sim}{\longrightarrow}f_{\text{pro\'et},*}f^*_{\text{pro\'et}}V.	
\end{displaymath}

\end{proposition}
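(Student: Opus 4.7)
The plan is to reduce the statement to the previous proposition by working locally, using the fact that a vector bundle is locally free. First I would choose an analytic cover $\{U_i\}$ of $X$ such that each restriction $V|_{U_i}$ admits a trivialization $V|_{U_i} \cong \mathcal{O}_{U_i}^{n_i}$ of some finite rank $n_i$. Since seminormality is a local property, each $U_i$ remains seminormal as a $k_\Delta$-affinoid open of $X$, and the previous proposition applies to each $U_i$ separately, yielding isomorphisms $\mathcal{O}_{U_i} \overset{\sim}{\rightarrow} f_{\mathrm{pro\acute{e}t},*}\widehat{\mathcal{O}}_{U_i}$.

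Next I would verify that $f^*_{\mathrm{pro\acute{e}t}}$ (taken in the appropriate completed sense, so that $f^*_{\mathrm{pro\acute{e}t}}\mathcal{O}_X = \widehat{\mathcal{O}}_X$ on the pro-\'etale site) commutes with taking finite direct sums, so that on each trivializing open one obtains
\begin{displaymath}
f^*_{\mathrm{pro\acute{e}t}} V|_{U_i} \;\cong\; \widehat{\mathcal{O}}_{X,\mathrm{pro\acute{e}t}}^{\,n_i}\big|_{U_i}.
\end{displaymath}
Applying $f_{\mathrm{pro\acute{e}t},*}$ and invoking the previous proposition coordinate-wise gives
\begin{displaymath}
f_{\mathrm{pro\acute{e}t},*}f^*_{\mathrm{pro\acute{e}t}}V\big|_{U_i} \;\cong\; \bigl(f_{\mathrm{pro\acute{e}t},*}\widehat{\mathcal{O}}_X\bigr)^{n_i}\big|_{U_i} \;\cong\; \mathcal{O}_{U_i}^{n_i} \;\cong\; V|_{U_i},
\end{displaymath}
so that the natural adjunction map $V \to f_{\mathrm{pro\acute{e}t},*}f^*_{\mathrm{pro\acute{e}t}}V$ restricts to an isomorphism over each $U_i$.

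Finally I would glue: since the natural map is canonical and its restriction to each $U_i$ is an isomorphism, it is an isomorphism of sheaves on $X$. Here the key point is that the pushforward $f_{\mathrm{pro\acute{e}t},*}$ commutes with restriction to open subsets of the analytic topology on $X$, which is immediate from the definition of the pro-\'etale site localized at $U_i$, combined with the compatibility of the adjunction morphism with such restrictions.

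The main obstacle I anticipate is the bookkeeping around the completion built into $f^*_{\mathrm{pro\acute{e}t}}$: one needs to check that pullback-then-complete commutes with finite direct sums and with restriction to an open subspace, and that the identification $f^*_{\mathrm{pro\acute{e}t}}\mathcal{O}_X = \widehat{\mathcal{O}}_X$ used implicitly in the previous proposition is the correct interpretation here. Granting this (which is standard in the $k_\Delta$-analytic setup following \cite[Part I Chapter I.3]{DFN} and mirrors the argument in \cite[Corollary 8.2.4]{KL16}), the corollary is otherwise a direct local-to-global consequence of the preceding Ax-Sen-Tate-type proposition.
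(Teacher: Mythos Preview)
Your argument is correct and is essentially the same as the paper's approach: the paper's proof is the single line ``From the previous proposition,'' and what you have written is precisely the standard unpacking of that line---trivialize $V$ locally, apply the Ax--Sen--Tate isomorphism $\mathcal{O}_X \overset{\sim}{\rightarrow} f_{\text{pro\'et},*}\widehat{\mathcal{O}}_X$ componentwise, and glue via the canonical adjunction map.
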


\begin{proof}
From the previous proposition.	
\end{proof}

\begin{proposition}\mbox{\bf{(After Kedlaya-Liu, \cite[Lemma 8.2.7]{KL16})}}
Let $X$ be seminormal, and consider any coherent subsheaf of any locally finite free $\mathcal{O}_X$-module $V$, which is denoted by $W$. We then have the following identity:
\begin{displaymath}
W=f_{\text{pro\'et},*}\mathrm{Image}(f^*_{\text{pro\'et}}W\rightarrow f^*_{\text{pro\'et}}V).	
\end{displaymath}
\end{proposition}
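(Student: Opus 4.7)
My plan is to establish the two containments separately, using the two preceding propositions of this subsection as input. Since both sides of the claimed identity are subsheaves of $V$, equality is a local statement on $X$, so I first reduce to the case where $X$ is a seminormal $k_\Delta$-affinoid space and $V \cong \mathcal{O}_X^n$ is free, so that $W$ is identified with a coherent $\mathcal{O}_X$-submodule of $\mathcal{O}_X^n$.

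The inclusion $W \subseteq f_{\text{pro\'et},*}\mathrm{Image}(f^*_{\text{pro\'et}}W \to f^*_{\text{pro\'et}}V)$ is formal. Composing the adjunction unit $W \to f_{\text{pro\'et},*}f^*_{\text{pro\'et}}W$ with the pushforward of the surjection $f^*_{\text{pro\'et}}W \twoheadrightarrow \mathrm{Image}(f^*_{\text{pro\'et}}W \to f^*_{\text{pro\'et}}V)$ yields a canonical map from $W$ into $f_{\text{pro\'et},*}\mathrm{Image}$, and its further composition with the inclusion $f_{\text{pro\'et},*}\mathrm{Image} \hookrightarrow f_{\text{pro\'et},*}f^*_{\text{pro\'et}}V = V$ (via the second proposition of this subsection) recovers the tautological inclusion $W \hookrightarrow V$, so $W$ lands inside $f_{\text{pro\'et},*}\mathrm{Image}$ as required.

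For the reverse inclusion I would exploit the short exact sequence $0 \to W \to V \to Q \to 0$ with coherent quotient $Q := V/W$. Because the pro-\'etale pullback is right exact, the subsheaf $\mathrm{Image}(f^*_{\text{pro\'et}}W \to f^*_{\text{pro\'et}}V)$ coincides with $\ker(f^*_{\text{pro\'et}}V \to f^*_{\text{pro\'et}}Q)$. Applying the left-exact functor $f_{\text{pro\'et},*}$ and identifying $f_{\text{pro\'et},*}f^*_{\text{pro\'et}}V$ with $V$ via the preceding proposition, the right-hand side of the claimed identity becomes $\ker(V \to f_{\text{pro\'et},*}f^*_{\text{pro\'et}}Q)$ as a subsheaf of $V$. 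Hence the reverse inclusion is equivalent to the injectivity of the adjunction unit $Q \to f_{\text{pro\'et},*}f^*_{\text{pro\'et}}Q$ for the coherent module $Q$, and the entire statement is reduced to verifying this single injectivity.

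This injectivity is the principal obstacle, because the preceding propositions were stated only for $\mathcal{O}_X$ and for locally free sheaves, not for arbitrary coherent sheaves. My plan to overcome it is to work pro-\'etale locally on a perfectoid cover $U \to X$ of the sort produced by the restricted toric tower techniques invoked in the proof of the first proposition of this subsection; there the evaluation of $\widehat{\mathcal{O}}_X$ on $U$ is a perfectoid $\mathcal{O}_X(U)$-algebra which one can argue is faithfully flat, first handling the smooth case directly and then descending to the general seminormal case via the desingularization of \cref{remark6.6} and the seminormality identity $\mathcal{O}_X \overset{\sim}{\to} h_*\mathcal{O}_Y$ used in the proof of the first proposition. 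Presenting $Q$ locally as the cokernel of a morphism between finite free $\mathcal{O}_X$-modules and tracking a hypothetical section of $Q$ killed by the unit, one lifts it to the free module and finds that the lift falls into the image of the kernel after base change along this faithfully flat extension; faithful flatness then forces the lift into the kernel already, so the original section of $Q$ was zero, which is the required injectivity.
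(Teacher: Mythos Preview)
Your reduction is exactly the paper's: write $W=\ker(V\to V')$ with $V'=V/W$, identify $\mathrm{Image}(f^*_{\text{pro\'et}}W\to f^*_{\text{pro\'et}}V)$ with $\ker(f^*_{\text{pro\'et}}V\to f^*_{\text{pro\'et}}V')$ by right exactness, push forward, and invoke the isomorphism $V\overset{\sim}{\to} f_{\text{pro\'et},*}f^*_{\text{pro\'et}}V$ for the locally free $V$. The paper stops there, leaving the injectivity of the unit $V'\to f_{\text{pro\'et},*}f^*_{\text{pro\'et}}V'$ implicit; you correctly isolate this as the real content and propose to resolve it via faithful flatness of a perfectoid cover, which is indeed the right mechanism and is precisely the content of the later proposition citing \cite[Lemma 8.3.3]{KL16}.

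The one soft spot in your sketch is the passage from smooth to seminormal for faithful flatness itself: the identity $\mathcal{O}_X\overset{\sim}{\to} h_*\mathcal{O}_Y$ along a resolution $h$ does not by itself transport faithful flatness of $\mathcal{O}_Y\to\widehat{\mathcal{O}}_Y(U_Y)$ down to $\mathcal{O}_X\to\widehat{\mathcal{O}}_X(U_X)$, since pushforward along $h$ does not interact cleanly with flatness and the perfectoid covers upstairs and downstairs are not related in the way you would need. You should instead invoke \cite[Lemma 8.3.3]{KL16} directly for the seminormal affinoid base (as the paper does a few propositions later); with that substitution your argument is complete and in fact more explicit than the paper's own proof.
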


\begin{proof}
As in \cite[Lemma 8.2.7]{KL16}, we basically realize the corresponding coherent subsheaf as the kernel of a map taking the form of $V\rightarrow V'$. Then we have that:
\[
\xymatrix@R+0pc@C+0pc{
0\ar[r] \ar[r] \ar[r] &\mathrm{Image}(f^*_{\text{pro\'et}}W\rightarrow f^*_{\text{pro\'et}}V)    \ar[r] \ar[r] \ar[r]  &f^*_{\text{pro\'et}}V \ar[r] \ar[r] \ar[r] & f^*_{\text{pro\'et}}V'.
}
\]
is exact sequence. Then the previous proposition directly show the corresponding identity after taking the corresponding pushforward along the canonical morphism for the corresponding pro-\'etale site of $X$.
\end{proof}

\begin{proposition}\mbox{\bf{(After Kedlaya-Liu, \cite[Lemma 8.2.8]{KL16})}} Now we consider the following correspondence. Consider a corresponding locally finite free $\mathcal{O}_X$-module $V$ over $X$ which is assumed to be seminormal, and also consider the corresponding locally finite free $\mathcal{O}_X$-sheaf $f^*_{\text{pro\'et}}V$. Then the corresponding functor $W\rightarrow \mathrm{Image}(f^*_{\text{pro\'et}}W\rightarrow f^*_{\text{pro\'et}}V)$ will establish an equivalence between the coherent subsheaves of $V$ and the coherent subsheaves of its pullbacks.
	
\end{proposition}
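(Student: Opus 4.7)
The plan is to build an explicit inverse to the pullback-image functor and to derive the equivalence from the identity established in the previous proposition (after \cite[Lemma 8.2.7]{KL16}), which gives the crucial formula $W = f_{\text{pro\'et},*}\mathrm{Image}(f^*_{\text{pro\'et}}W \to f^*_{\text{pro\'et}}V)$ for any coherent subsheaf $W \subseteq V$. That identity is exactly what will make the candidate inverse well-defined.

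First I would verify that the functor $F \colon W \mapsto \mathrm{Image}(f^*_{\text{pro\'et}}W \to f^*_{\text{pro\'et}}V)$ is injective on the set of coherent subsheaves of $V$. Indeed, if $F(W_1) = F(W_2)$ as subsheaves of $f^*_{\text{pro\'et}}V$, then applying $f_{\text{pro\'et},*}$ and invoking the previous proposition forces $W_1 = W_2$. This reduces faithfulness of $F$ to an immediate consequence of that identity. Next I would construct the candidate inverse $G$ by setting $G(U) := f_{\text{pro\'et},*}U$, viewed as a subsheaf of $V$ through the isomorphism $V \overset{\sim}{\longrightarrow} f_{\text{pro\'et},*}f^*_{\text{pro\'et}}V$ supplied by the corollary after \cite[Corollary 8.2.4]{KL16}. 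The relation $G \circ F = \mathrm{id}$ is then once again just a restatement of the previous proposition.

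The main obstacle is essential surjectivity: given a coherent subsheaf $U \subseteq f^*_{\text{pro\'et}}V$, one must show that $G(U) = f_{\text{pro\'et},*}U$ is actually a \emph{coherent} $\mathcal{O}_X$-submodule of $V$ and that $F(G(U)) = U$. The strategy would be to work locally on $X$, reducing to the smooth $k_\Delta$-affinoid situation via Temkin's desingularization (\cref{remark6.6}) together with the seminormality hypothesis, exactly as was done in the proof of the Ax-Sen-Tate style proposition earlier in this subsection. Over a smooth $k_\Delta$-affinoid $X_0$, the pro-\'etale cover by the restricted toric tower gives $f^*_{\text{pro\'et}}$ the structure of a faithfully flat pullback in the appropriate sense, so faithfully flat descent applies to any finite presentation of the coherent subsheaf $U$. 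This simultaneously yields a coherent descent $W_0 \subseteq V|_{X_0}$ of $U|_{X_0}$ and the identity $F(W_0) = U|_{X_0}$.

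Finally I would glue these local descents into a global coherent subsheaf $W \subseteq V$. The already-established faithfulness of $F$ ensures that the candidate local pieces agree on overlaps, since two coherent subsheaves with the same pullback image must coincide. Applying the previous proposition once more to the resulting global $W$ gives $W = f_{\text{pro\'et},*}U = G(U)$ and $F(W) = U$, so $F \circ G = \mathrm{id}$ as well, completing the equivalence. The only real difficulty is the faithfully flat descent step in the smooth affinoid case, where one must carefully leverage the perfectoid toric tower machinery to make sense of pro-\'etale descent of coherence in the $k_\Delta$-analytic setting.
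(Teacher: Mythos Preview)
Your treatment of full faithfulness (via the identity $W=f_{\text{pro\'et},*}\mathrm{Image}(f^*_{\text{pro\'et}}W\to f^*_{\text{pro\'et}}V)$ from the previous proposition) and of the smooth affinoid case (via descent along the restricted toric tower, i.e.\ the analogue of \cite[Lemma 8.2.6]{KL16}) matches the paper's argument. The divergence is in how essential surjectivity is handled when $X$ is only seminormal.

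The paper does \emph{not} reduce the non-smooth case to the smooth case by desingularization. Instead it runs a noetherian iteration directly on $X$: after reducing to $V=\mathcal{O}_X$, given a coherent ideal $B\subset\widehat{\mathcal{O}}_X$ one constructs a decreasing chain $A_1\supseteq A_2\supseteq\cdots$ of coherent ideals of $\mathcal{O}_X$ whose pullbacks contain $B$; at each step one chooses a finite free cover $C_i\twoheadrightarrow A_i$, pulls $B$ back to a coherent subsheaf $B_i\subset f^*_{\text{pro\'et}}C_i$, descends $B_i$ inside the finite free $C_i$ (this is the already-established case), and pushes its image into $\mathcal{O}_X$ to obtain $A_{i+1}$. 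Noetherianity of $\mathcal{O}_X$ forces the chain to stabilize, and at that point $B$ coincides with the pullback-image of $A_i$.

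Your proposed route has a gap at exactly this point. You write that one ``reduces to the smooth $k_\Delta$-affinoid situation via Temkin's desingularization \ldots\ exactly as was done in the proof of the Ax--Sen--Tate style proposition'', but desingularization produces a proper birational map $h:Y\to X$ with $Y$ smooth, not an open cover of $X$ by smooth affinoids; so ``working locally on $X$'' and invoking $h$ are two different things and cannot be combined as you describe. In the earlier Ax--Sen--Tate argument the use of $h$ is special: seminormality gives $\mathcal{O}_X\overset{\sim}{\to}h_*\mathcal{O}_Y$, and one only needs to identify $f_{\text{pro\'et},*}\widehat{\mathcal{O}}_X$. Here, by contrast, you would have to (i) pull $U\subset f^*_{\text{pro\'et}}V$ back along $h$, (ii) descend on $Y$, (iii) push the resulting coherent subsheaf of $h^*V$ forward to $X$, and then (iv) verify both that the pushforward lands as a coherent subsheaf of $V$ and that applying $F$ recovers $U$. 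None of (iii)--(iv) is addressed, and these steps are not formal; the paper's iterative argument sidesteps them entirely by never leaving $X$.
\qed
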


\begin{proof}
One can finish the proof as in the proof of \cite[Lemma 8.2.8]{KL16}. To recall the argument, we briefly present the proof for the readers. The corresponding fully faithfulness comes from the previous proposition. For the essential surjectivity as in \cite[Lemma 8.2.8]{KL16} we first consider the corresponding smooth case, which could be then reduced to the \cite[Lemma 8.2.6]{KL16} by relating to the corresponding restricted toric tower. In general the argument is by considering any decreasing sequence of ideal of $\mathcal{O}_X$ whose pullback will basically contain some given ideal $B$ in $\widehat{\mathcal{O}}_X$. We denote this sequence by $\{A_i\}_i$. Then for each $A_i$ choose some finite free $C_i$ which maps to $A_i$, and one can basically consider the corresponding inverse image of $B$ in the pullback of $C_i$ which could be denoted by $B_i$, which corresponds to by the previous smooth case a coherent subsheaf of the $C_i$. Now considering further pushforwarding this into $\widehat{\mathcal{O}}_X$ we will have $B_{i+1}$. Now we have if $A_{i+1}=A_i$ for some $i\geq 1$ then we have that $B=B_i$. However this will eventually be the case by the corresponding noetherian property. 	
\end{proof}

\begin{proposition}\mbox{\bf{(After Kedlaya-Liu, \cite[Proposition 8.2.9, Corollary 8.2.10, Corollary 8.2.11]{KL16})}}
A. When we have that $X$ is seminormal, we will have that the corresponding functor $f^*_{\text{pro\'et}}$ establishes embedding from the category of coherent sheaves over $\mathcal{O}_X$ to the category of pseudocoherent sheaves over $\widehat{\mathcal{O}}_X$, which is fully faithful and exact. B. And for any coherent sheaf $F$ over any analytic $k_\Delta$-space, then the tensoring with $F$ will then establish a functor on the category of all the pseudocoherent sheaves over $\widehat{\mathcal{O}}_X$ which is an endofunctor, here $X$ could be allowed to be more general by considering the corresponding seminormalization. C. Over $X$ which is again assumed to be seminormal, then we have the injective map $f^*_{\text{pro\'et}}f_{\text{pro\'et},*}V\hookrightarrow V$ from any locally finite free sheaf $V$ over $\widehat{O}_X$.  	
\end{proposition}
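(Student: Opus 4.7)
The plan is to deduce all three statements from the Ax-Sen-Tate isomorphism $\mathcal{O}_X \overset{\sim}{\to} f_{\text{pro\'et},*}\widehat{\mathcal{O}}_X$ and the subsheaf equivalence established in the preceding two propositions, following the template of \cite[Propositions 8.2.9--8.2.11]{KL16} but adapted to the $k_\Delta$-analytic setting of \cite[Part I Chapter I]{DFN}.

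For part A, I would prove fully faithfulness by representing morphisms via their graphs. A morphism $F_1 \to F_2$ of coherent sheaves corresponds to the coherent subsheaf of $F_1 \oplus F_2$ whose projection to the first summand is an isomorphism. The preceding proposition identifies coherent subsheaves of $F_1 \oplus F_2$ bijectively with coherent subsheaves of its pro-\'etale pullback, and the ``is a graph'' condition is both preserved and reflected, since it can be tested on the projections, which commute with $f^*_{\text{pro\'et}}$. For exactness, right exactness is automatic. For left exactness, given an injection $F_1 \hookrightarrow F_2$ of coherent sheaves, I would embed $F_2$ locally into a locally finite free sheaf $V$ and apply the preceding proposition to the composite inclusion $F_1 \hookrightarrow V$, producing an injection $f^*_{\text{pro\'et}}F_1 \hookrightarrow f^*_{\text{pro\'et}}V$ that factors through $f^*_{\text{pro\'et}}F_2$, from which monomorphism cancellation gives the desired injectivity.

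For part B, I would use that coherent sheaves are locally finitely presented, so that locally one has an exact sequence $\mathcal{O}_X^{m} \to \mathcal{O}_X^{n} \to F \to 0$. Tensoring a pseudocoherent $\widehat{\mathcal{O}}_X$-sheaf $M$ with this sequence yields $M^{\oplus m} \to M^{\oplus n} \to M \otimes f^*_{\text{pro\'et}}F \to 0$. Since the category of pseudocoherent sheaves is closed under finitely presented cokernels (this is essentially the defining finite-resolvability condition to arbitrarily high order), the tensor product remains pseudocoherent. For a general $k_\Delta$-analytic $X$ I would pass through the seminormalization morphism $X^{\text{seminor}} \to X$ and invoke the seminormal case already established, using that the completed structure sheaf on the pro-\'etale site is insensitive to the seminormalization.

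For part C, the map in question is the counit of the adjunction $(f^*_{\text{pro\'et}}, f_{\text{pro\'et},*})$. Injectivity is local on the pro-\'etale site; over a suitable smooth affinoid I would relate $V$ to restricted toric tower period rings as in the proof of the Ax-Sen-Tate proposition, and bound the kernel of the counit coherently so that the Ax-Sen-Tate isomorphism forces it to vanish. The hard step is part C: unlike for vector bundles descending from $\mathcal{O}_X$, a locally finite free sheaf over $\widehat{\mathcal{O}}_X$ need not descend, so injectivity of the counit must be pinned down by a local coherent-kernel argument rather than pure adjunction yoga, and this requires tracking the $k_\Delta$-affinoid local model carefully through the desingularization provided in \cref{remark6.6}.
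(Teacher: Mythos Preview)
The paper's own proof is a bare citation to \cite[Proposition 8.2.9, Corollary 8.2.10, Corollary 8.2.11]{KL16}, so your sketch already goes beyond what the paper supplies; in spirit it is the same approach, namely transplanting the KL16 arguments to the $k_\Delta$-setting via the preceding subsheaf correspondence and the Ax--Sen--Tate isomorphism.

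That said, there is a genuine gap in your Part A. For left exactness you propose to ``embed $F_2$ locally into a locally finite free sheaf $V$'', but coherent sheaves over an affinoid are in general \emph{quotients} of finite free modules, not submodules; any coherent sheaf with torsion cannot embed in a free module at all. The preceding proposition (the analog of \cite[Lemma 8.2.8]{KL16}) only controls coherent subsheaves of a locally finite free $V$, so it does not apply directly to $F_1 \hookrightarrow F_2$. Your graph argument for full faithfulness has the same defect: it invokes the subsheaf correspondence for $F_1 \oplus F_2$, which is not locally finite free. The repair is to resolve first: present $F_2$ locally as $\mathcal{O}_X^n/K$, lift $F_1 \subset F_2$ to $K \subset K' \subset \mathcal{O}_X^n$, and apply the subsheaf correspondence to $K$ and $K'$ inside the free module. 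Alternatively, and closer to the actual KL16 route, exactness is a consequence of faithful flatness of $L \to \overline{H}_\infty$ over a perfectoid tower, which the paper records separately as the analog of \cite[Lemma 8.3.3]{KL16}; once you have flatness, both exactness and the graph argument go through without needing to embed anything into a free module.
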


\begin{proof}
See \cite[Proposition 8.2.9, Corollary 8.2.10, Corollary 8.2.11]{KL16}.	
\end{proof}

\indent Then building on the basic facts from \cite[Lemma 8.2.12, Corollary 8.2.15]{KL16} on the corresponding restricted toric towers, we can now study the corresponding categories of the corresponding $\mathcal{O}_X$-sheaves.

\begin{setting}
We now assume we are working over $X=\mathrm{Spa}(L,L^+)$ which is seminormal and affinoid. And consider $(H_\bullet,H_\bullet^+)$ which is assumed to be a perfectoid and finite \'etale tower over the base $X$. Take any ideal $l\subset L$ we can form the corresponding quotient $\overline{L}:=L/JL$ and $\overline{\Omega}:=\overline{H}_\infty/J\overline{H}_\infty$.
\end{setting}

\indent We first have the following:

\begin{proposition}\mbox{\bf{(After Kedlaya-Liu, \cite[Lemma 8.3.3]{KL16})}}
The map $L\rightarrow \overline{H}_\infty$ is faithfully flat. 
\end{proposition}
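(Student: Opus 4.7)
The plan is to decompose the map $L \to \overline{H}_\infty$ as the composite $L \to H_\infty := \varinjlim_n H_n \to \overline{H}_\infty$, where $\overline{H}_\infty$ is the uniform (perfectoid) completion of $H_\infty$, and to check faithful flatness of each stage separately. Since the tower $(H_\bullet,H_\bullet^+)$ is assumed perfectoid and finite \'etale over $X = \mathrm{Spa}(L,L^+)$, each $L \to H_n$ is finite \'etale; as it arises from an \'etale covering of $X$ in the sense used in \cite[Chapter 3]{KL16}, each such map is faithfully flat. A filtered colimit of faithfully flat modules is faithfully flat, so $L \to H_\infty$ is faithfully flat with no further work.

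The real content is upgrading from $H_\infty$ to $\overline{H}_\infty$. I would argue integrally: fix a pseudo-uniformizer $\varpi \in L^+$, and observe that $\overline{H}_\infty^+$ is the $\varpi$-adic completion of $H_\infty^+$. Because the tower is perfectoid, $H_\infty^+$ is $\varpi$-torsion-free and uniform, so its $\varpi$-adic completion is flat over it; inverting $\varpi$ gives flatness of $\overline{H}_\infty$ over $H_\infty$. For faithfulness, for any maximal ideal $\mathfrak{m} \subset L$ the quotient $\overline{H}_\infty / \mathfrak{m}\overline{H}_\infty$ is the uniform completion of $H_\infty / \mathfrak{m} H_\infty$, which is nonzero because $L \to H_\infty$ is faithfully flat and the uniform completion of a nonzero uniform Banach ring remains nonzero (its spectral seminorm stays nontrivial).

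The hard part will be the completion step, since standard commutative-algebra arguments about flatness of completions rely on noetherianness, which fails in this perfectoid setting. I expect to invoke the Kedlaya-Liu machinery directly: the exactness and flatness properties of the uniform completion functor on reasonable classes of perfectoid Banach modules, together with the control of $\varpi$-torsion provided by the perfectoid hypothesis on the tower. This is essentially the content of \cite[Lemma 8.3.3]{KL16}. The transfer to the $k_\Delta$-affinoid setting is essentially formal: faithful flatness is a local algebraic condition, insensitive to the valuation group $\Delta$, and the seminormality and affinoid hypotheses on $X$ let one reduce to the input estimates in \cite{KL16} after enlarging the base field if needed (cf.\ the perfectoidization step $k \rightsquigarrow k_1$ used in the preceding propositions).
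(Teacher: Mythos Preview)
Your proposal is correct and aligns with the paper's approach: the paper simply cites \cite[Lemma 8.3.3]{KL16} with no further argument, and your sketch unpacks precisely the strategy underlying that lemma (factor through the uncompleted colimit, then handle the perfectoid completion), ultimately deferring the analytically delicate completion step to the same reference. One minor caution: the blanket assertion that ``a filtered colimit of faithfully flat modules is faithfully flat'' is not literally true in general; what makes it work here is that the transition maps $H_n \to H_{n+1}$ are themselves faithfully flat, so the maps $M \otimes_L H_n \to M \otimes_L H_{n+1}$ are injective and a nonzero term cannot die in the colimit.
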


\begin{proof}
See \cite[Lemma 8.3.3]{KL16}.	
\end{proof}

\begin{proposition}\mbox{\bf{(After Kedlaya-Liu, \cite[Lemma 8.3.4]{KL16})}}
$\Gamma$-modules over the quotient $\overline{\Omega}$ satisfy the following property. The finitely generatedness of any $M$ of such corresponding modules as such could be promoted to be finite projective one if one works over $\overline{\Omega}_e$ with $M_e$, where $e$ exists as an element in $L$ which is not mapped to zero divisors in the quotients $\overline{L}$ and $\overline{\Omega}$.   	
\end{proposition}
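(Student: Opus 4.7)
The plan is to reduce the problem to a generic freeness statement over the noetherian ring $\overline{L}$ via $\Gamma$-descent from $\overline{\Omega}$, and then transfer finite projectivity back along the faithfully flat base change $\overline{L}\to\overline{\Omega}$ supplied by the previous proposition. Throughout I would work in parallel with the template of \cite[Lemma 8.3.4]{KL16}, adapting the arguments from the strictly affinoid case to the $k_\Delta$-affinoid context via \cref{remark6.6} where needed.

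First I would exploit that $(H_\bullet,H_\bullet^+)$ is perfectoid and finite \'etale over $X=\mathrm{Spa}(L,L^+)$ with Galois group $\Gamma$, and that the $\Gamma$-action on $M$ is semilinear and continuous. A Galois/almost descent argument (as in \cite[Lemma 8.3.4]{KL16}, together with the faithful flatness of $L\to\overline{H}_\infty$ from the preceding proposition, which also gives faithful flatness of $\overline{L}\to\overline{\Omega}$) then produces a finitely generated $\overline{L}$-module $M_0:=M^\Gamma$ together with a $\Gamma$-equivariant isomorphism $M_0\otimes_{\overline{L}}\overline{\Omega}\overset{\sim}{\to}M$. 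Faithful flatness is used here both to recover $M$ after base change and to transfer finite generation from $M$ to $M_0$.

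Second, since $\overline{L}$ is a quotient of the $k_\Delta$-affinoid algebra $L$ and hence noetherian, the module $M_0$ satisfies generic freeness: there exists an element $e\in L$ whose image in $\overline{L}$ avoids every associated prime of $\overline{L}$ and of $M_0$, and such that $(M_0)_e$ is finite projective over $\overline{L}_e$. Because $\overline{L}\to\overline{\Omega}$ is faithfully flat, associated primes of $\overline{\Omega}$ restrict to associated primes of $\overline{L}$, so the same $e$ may be chosen to avoid zero divisors in $\overline{\Omega}$ as well, guaranteeing the required property of $e$ stated in the proposition. Then, as finite projectivity is preserved by flat base change, the isomorphism $M_e\cong (M_0)_e\otimes_{\overline{L}_e}\overline{\Omega}_e$ yields that $M_e$ is finite projective over $\overline{\Omega}_e$.

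The main obstacle is the descent step: one must verify that the semilinear continuous $\Gamma$-action on $M$ is sufficiently rigid to guarantee that $M^\Gamma$ is finitely generated over $\overline{L}$ and that the canonical comparison map $M^\Gamma\otimes_{\overline{L}}\overline{\Omega}\to M$ is an isomorphism. This is where the perfectoid and weakly decompleting nature of the tower enters essentially, along with the seminormality hypothesis on $X$ used implicitly through the preceding propositions; the residual subtleties specific to the $k_\Delta$-setting are absorbed by the excellence and desingularizability recorded in \cref{remark6.6}, so that the noetherian generic-freeness invocation remains available after we pass to $\overline{L}$.
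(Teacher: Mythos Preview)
Your approach has a genuine gap at the descent step, and it is not the approach of the paper or of \cite[Lemma 8.3.4]{KL16}. You assume that the $\Gamma$-module $M$ over $\overline{\Omega}$ descends to a finitely generated $\overline{L}$-module $M_0=M^\Gamma$ with $M_0\otimes_{\overline{L}}\overline{\Omega}\overset{\sim}{\to}M$, but this is precisely what is \emph{not} available at this point. The extension $\overline{L}\to\overline{\Omega}$ is not finite Galois; $\Gamma$ is profinite and the action is only topological, so classical Galois descent does not apply to arbitrary finitely generated semilinear modules. In fact, the very next proposition (\cref{proposition615}, after \cite[Proposition 8.3.5]{KL16}) is devoted to showing that such $M$ are \'etale-stably pseudocoherent, and the lemma under discussion is an \emph{input} to that argument. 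Invoking descent of $M$ here is therefore circular. Your reference to \cite[Lemma 8.3.4]{KL16} for this descent is also a misreading: that lemma does not descend the module.

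The paper's proof sidesteps module descent entirely by working with Fitting ideals. The Fitting ideals $\mathrm{Fitt}_k(M)\subset\overline{\Omega}$ are automatically $\Gamma$-stable because $M$ carries a semilinear $\Gamma$-action, and $\Gamma$-stable ideals of $\overline{\Omega}$ are extended from $\overline{L}$ (this uses only the much weaker fact that $\overline{\Omega}^\Gamma=\overline{L}$ together with faithful flatness, not descent of arbitrary modules). One then works over the noetherian ring $\overline{L}$: for each minimal prime $\mathfrak{p}$ of $\overline{L}$, take the least $k$ with $\mathrm{Fitt}_k\not\subset\mathfrak{p}$, pick $e_{\mathfrak{p}}\in\mathrm{Fitt}_k\setminus\mathfrak{p}$, and combine these (together with separating elements $e_{\mathfrak{p}\neq\mathfrak{q}}$) into a single $e\in L$ avoiding all minimal primes. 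After inverting $e$, the Fitting ideals become trivial at each generic point, forcing $M_e$ to be finite projective over $\overline{\Omega}_e$. This is the ``generic freeness'' you were aiming for, but obtained directly from the Fitting-ideal stratification rather than via a descent of $M$ that is not yet known.
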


\begin{proof}
The corresponding proof is basically parallel to \cite{KL16}. To achieve so we consider the corresponding construction as in the following. First we consider the corresponding minimal ideals of $L$ containing the prescribed ideal $l$. Then for each of such ideal $\mathfrak{p}$ (finitely many) we look at the Fitting ideal $\mathrm{Fitt}_k$ no contained in this prime (for minimized such $k$), choose any element in $\mathrm{Fitt}_k-\mathfrak{p}$, then this will do the job that the corresponding base change of $M$ to $(L/l)_{e_\mathfrak{p}}$ will be finite projective the reasonable quotient. Then to integrate all the information along all such minimal primes we consider for each pair $\mathfrak{p}\neq \mathfrak{q}$ and choose any element $e_{\mathfrak{p}\neq \mathfrak{q}}$. Finally just put $e$ to be $\sum_{\mathfrak{p}}e_\mathfrak{p}\prod_{\mathfrak{p}\neq \mathfrak{q}}e_{\mathfrak{p}\neq \mathfrak{q}}$.	
\end{proof}

\indent The following will be crucial in the understanding of the corresponding category of all the $\widehat{\mathcal{O}}_X$-sheaves over some $k_\Delta$-analytic space $X$.

\begin{proposition}\mbox{\bf{(After Kedlaya-Liu, \cite[Proposition 8.3.5]{KL16})}} \label{proposition615}
Suppose over the ring $\overline{H}_\infty$, we have a finite module which is carrying additionally the structure of $\Gamma$-module. Denote this by $M$. Then this is automatically pseudocoherent and more importantly \'etale-stably pseudocoherent.  
\end{proposition}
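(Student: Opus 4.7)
The plan is to apply Noetherian induction on the ideal $J \subseteq L$ and to reduce at each step to the preceding proposition. Since $L$ is a Noetherian affinoid algebra, the collection of ideals of $L$ containing $J$ satisfies the ascending chain condition, so such an induction is available; the base case in which $\overline{L}$ is the zero ring is vacuous.

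For the inductive step, the preceding proposition produces an element $e \in L$ that is a non-zero-divisor on $\overline{L}$ (and hence, by the faithful flatness of $L \to \overline{H}_\infty$ established just above, also on $\overline{\Omega}$), such that $M_e$ is finite projective over $\overline{\Omega}_e$. Finite projective modules are tautologically pseudocoherent and \'etale-stably so, which handles $M$ on the open locus where $e$ is invertible. On the closed complement, one examines $M/eM$: this is a finitely generated $\Gamma$-equivariant module over the quotient $\overline{H}_\infty/(J+(e))\overline{H}_\infty$, and since the ideal $J+(e)$ strictly contains $J$, the inductive hypothesis furnishes its pseudocoherence (and \'etale-stable pseudocoherence).

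With both $M_e$ pseudocoherent over $\overline{\Omega}_e$ and $M/eM$ pseudocoherent over $\overline{\Omega}/e\overline{\Omega}$, a gluing argument via the exact sequence obtained from multiplication by $e$ on $M$, together with the finite generation of $M$ itself, produces a pseudocoherent presentation of $M$ over $\overline{\Omega}$. The \'etale-stable refinement is inherited for free because every ingredient in the argument---the Noetherian property of $L$, the faithful flatness of $L \to \overline{H}_\infty$, the preceding proposition, and localization at $e$---is functorial under finite \'etale base change of the perfectoid tower, so the same induction runs verbatim after any such base change.

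The main obstacle I anticipate is the gluing step itself: to show that local pseudocoherence on the open subset defined by $e$ and pseudocoherence of the quotient $M/eM$ together suffice to produce a global pseudocoherent resolution of $M$. One must handle any $e$-torsion in $M$ separately (as a finitely generated submodule annihilated by a power of $e$, it is a $\Gamma$-module over a strictly smaller quotient and so again falls under the induction). Beyond this, the core content is an application of the Kedlaya-Liu template from the proof of their Proposition 8.3.5, which records the appropriate exactness and finiteness properties of pseudocoherence under such Mayer-Vietoris-type short exact sequences.
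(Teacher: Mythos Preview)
Your outline follows the same Noetherian induction template as the paper (which in turn recalls \cite[Proposition 8.3.5]{KL16}): induct on $J$, invoke the preceding proposition to obtain $e$, handle the localization $M_e$ and the quotient $M/eM$ separately, and treat the $e$-torsion via the induction hypothesis. Two points deserve sharpening, however.

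First, the order of the torsion argument matters. You assert that the $e$-torsion submodule $C \subset M$ is finitely generated, but over the non-Noetherian ring $\overline{\Omega}$ this is not automatic from the finite generation of $M$. The paper (following \cite{KL16}) instead first shows that $M/C$ is pseudocoherent: one has $(M/C)_e = M_e$ finite projective, $(M/C)[e] = 0$, and $(M/C)/e(M/C)$ pseudocoherent by induction, so \cite[Lemma 1.5.8]{KL16} applies. Only then does the finite presentation of $M/C$ force $C$ to be finitely generated (as the image in $M$ of the kernel of a finite free cover of $M/C$), after which one filters $C$ by powers of $e$ and invokes induction on each graded piece.

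Second, your treatment of the \'etale-stable upgrade is too quick. The paper inserts an intermediate step for \emph{strict} pseudocoherence: take a finite free cover $F \to M$, complete the kernel, apply the pseudocoherence argument to the resulting quotient, and use \cite[Corollary 1.2.11]{KL16} to conclude the kernel was already complete. For the \'etale-stable conclusion the paper does not rely on abstract functoriality but on the concrete fact that a finite \'etale algebra over $\overline{H}_\infty$ arises from some finite level $H_n$ of the tower, so one may rerun the argument there. Your claim that ``the same induction runs verbatim after any such base change'' is morally in this direction but should be grounded in that tower-factorization statement rather than asserted as formal.
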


\begin{proof}
This is a nontrivial result around the $\Gamma$-modules. We recall the argument in \cite[Proposition 8.3.5]{KL16} in the following, which will prove our result in our current generalized context. First the ideal is using the corresponding noetherian reduction along some prescribed ideal $l\subset L$. As in \cite[Proposition 8.3.5]{KL16} we reduce ourselves to the situation where $\overline{L}$ is reduced and connected. To perform the argument, one first chooses some element $e\in L$ as in the previous proposition such that the statement of the previous proposition hold. Then consider all the corresponding $e$-torsions over $L$ which is denoted by $C$. After applying the corresponding induction hypothesis we have for $M/C$ that the corresponding $M/C/eM/C$ is then pseudocoherent (not known to be strictly or so on). And consider the fact that $(M/C)_e=M_e$ and $M/C[e]$ is trivial, we have then as in \cite[Proposition 8.3.5, Lemma 1.5.8]{KL16}	that $M_C$ is then pseudocoherent (again not known to be strictly or so on). But then this implies the torsion part is then finitely generated, which is killed by some power of $e$ up to some power $h$ for instance. Then repeat considering using the corresponding induction hypothesis we can show that the whole module $M$ is then pseudocoherent (again not known to be strictly or so on) by considering the quotient $e^{x-1}C/e^{x}C$ for $x=0,...,h$. Then to consider the corresponding strictly pseudocoherence, we play with then the kernel along some finite free cover of $M$, which is assumed to be linear over the ring $L$. Then consider kernel and complete the kernel, then apply the corresponding previous paragraph to show that the quotient of $M$ by this kernel is pseudocoherent, which by \cite[Corollary 1.2.11]{KL16} shows that this kernel is actually complete initially, which shows that the corresponding strictly pseudocoherent (not known to be \'etale-stably pseudocoherent). To finish relate the \'etale map out of the period ring to some step in the tower, which shows by the previous argument above we are done.
\end{proof}

\indent We now choose to consider the corresponding parallel discussion as above but for the Robba ring taking the form of $\widetilde{\Pi}_{H}$ and the corresponding related Robba rings. The order of our discussion is different from \cite{KL16}, where we will study the corresponding categories of pseudocoherent sheaves over $\widehat{\mathcal{O}}_X$ and $\widetilde{\Pi}_{X}$  together after we finish more concrete local study. Consider at this moment $X$ which is affinoid defined over a perfectoid field $k$, and consider $(H_\bullet,H_\bullet^+)$ a finite \'etale perfectoid tower as in \cite{KL16}.

\begin{proposition} \mbox{\bf{(After Kedlaya-Liu, \cite[Lemma 8.7.3]{KL16})}}
Consider the ring $\widetilde{\Pi}^{[s,r]}_{H}$ and consider any $\Gamma$-stable nonzero ideal of this ring. Then we have that this is basically containing some nonzero element in $\widetilde{\Pi}_{H}$.	
\end{proposition}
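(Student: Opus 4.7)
The plan is to adapt the argument of Kedlaya--Liu \cite[Lemma 8.7.3]{KL16} to the present generalized setting. Fix a nonzero $\Gamma$-stable ideal $I \subset \widetilde{\Pi}^{[s,r]}_{H}$ and choose an arbitrary nonzero $x \in I$. The first step is to analyze the spectral profile of $x$ via the Gauss norms $|x|_t$ for $t \in [s,r]$; by multiplying $x$ by a unit or by an appropriate power of a pseudo-uniformizer of the base, we can normalize this profile so that $x$ has a controlled ``leading term'' in the generalized power series presentation of $\widetilde{\Pi}^{[s,r]}_{H}$.

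The core construction is to produce a nonzero element of $I$ whose convergence extends to the intervals needed for membership in $\widetilde{\Pi}_{H}$. The idea is to exploit $\Gamma$-stability of $I$ together with the Frobenius pullback: since $\varphi$ sends $\widetilde{\Pi}^{[s,r]}_H$ into $\widetilde{\Pi}^{[s/p,r/p]}_H$, its negative iterates $\varphi^{-n}(x)$ take values in Robba rings over progressively wider intervals. One then forms a weighted sum (or trace-type product) over a finite orbit $\Gamma/\Gamma_n$, say of the form $y := \sum_{\gamma \in \Gamma/\Gamma_n} c_\gamma\, \gamma(\varphi^{-n}(x))$ or $y := \prod_{\gamma} \gamma(x)$, and uses the semilinear continuity of the $\Gamma$-action together with the norm estimates on $\widetilde{\Pi}^{[s,r]}_H$ to check that the series/product converges in a ring of the form $\widetilde{\Pi}^{r'}_H$ for some $r' > 0$, hence in $\widetilde{\Pi}_H$.

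The main obstacle will be showing that the constructed $y$ is nonzero. Here one proceeds as in \cite[Lemma 8.7.3]{KL16}: choose $x$ at the outset to be of minimal complexity with respect to a suitable valuation-theoretic invariant (for instance minimizing the Newton-polygon slope data at the endpoint $t = r$), then observe that the averaging or product operation preserves the leading-term data. Vanishing of $y$ would then force a nontrivial relation among the translates $\gamma(x)$, which contradicts a generic-orbit condition derived from the faithful flatness of $L \to \overline{H}_\infty$ and the perfectoid structure of $\overline{H}_\infty$ already used in \cref{proposition615}.

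Once $y$ has been produced in $\widetilde{\Pi}_{H}$ with $y \neq 0$, its membership in $I$ is automatic from the $\Gamma$-stability (and Frobenius compatibility, via multiplication against a suitable cofactor to bring the Frobenius pullback back inside $I$) of the ideal. This yields the desired nonzero element of $I \cap \widetilde{\Pi}_{H}$ and completes the proof along the lines of \cite[Lemma 8.7.3]{KL16}.
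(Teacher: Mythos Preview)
Your proposal has a genuine technical gap and also diverges structurally from the argument the paper has in mind.

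The central problem is your use of Frobenius. The hypothesis is only that $I$ is $\Gamma$-stable; there is no $\varphi$-stability assumption, so there is no reason for $\varphi^{-n}(x)$ (or any multiple of it) to lie in $I$. Moreover, the direction is wrong: since $\varphi$ maps $\widetilde{\Pi}^{[s,r]}_H$ into $\widetilde{\Pi}^{[s/p,r/p]}_H$, the inverse $\varphi^{-n}$ lands in $\widetilde{\Pi}^{[sp^n,rp^n]}_H$, i.e.\ on intervals moving \emph{away} from $0$, not on wider intervals approaching the full open disc. But membership in $\widetilde{\Pi}_H = \bigcup_{r'>0}\widetilde{\Pi}^{r'}_H$ requires convergence on the full punctured disc $(0,r']$ for some $r'$, which is exactly the opposite of what your construction achieves. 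The vague ``cofactor'' remark at the end does not repair this. Likewise, the nonvanishing argument appeals to a ``generic-orbit condition'' and to faithful flatness of $L\to\overline{H}_\infty$ without saying how those inputs would actually rule out cancellation in your averaged element $y$; this step is not a proof.

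The paper's proof is organized differently and more simply. It proceeds in two stages: first, for a restricted toric tower the statement is literally \cite[Lemma 8.7.3]{KL16}, so one defers to that argument without reproducing it. Second, for a general finite \'etale perfectoid tower $(H_\bullet,H_\bullet^+)$ over the affinoid $X$, one passes to the quotient $\widetilde{\Pi}^{[s,r]}_H/I$ and regards it as a sheaf of $\widetilde{\Pi}^{[s,r]}_X$-modules on the pro-\'etale site; locally on $X$ one is then in the restricted toric situation, and the first case applies. Your proposal does not recognize or use this reduction-to-the-toric-chart step, which is the actual content of the present proposition beyond the cited lemma.
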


\begin{proof}
Since over a corresponding restricted toric tower this is just the same as in the proof of \cite[Lemma 8.7.3]{KL16} we will not repeat it. And then one can follow the strategy to tackle more general situation by looking at the corresponding $\widetilde{\Pi}^{[s,r]}_X$-sheaf of module associated to the quotient of $\widetilde{\Pi}^{[s,r]}_H$ by the ideal given. Then by applying the previous case one can finish the proof.
\end{proof}

\begin{proposition} \mbox{\bf{(After Kedlaya-Liu, \cite[Corollary 8.7.4]{KL16})}}
Consider the ring $\widetilde{\Pi}^{[s,r]}_{H}$ and consider any $\Gamma$-module which is now assumed to be finite generated over this ring. Then we can find an element such that inverting this element over the base ring the module will be finite projective.	
\end{proposition}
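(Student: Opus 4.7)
The plan is to adapt the Fitting-ideal strategy from the earlier $\Gamma$-module proposition over $\overline{\Omega}$, substituting the preceding proposition (that every nonzero $\Gamma$-stable ideal of $\widetilde{\Pi}^{[s,r]}_H$ contains a nonzero element coming from the smaller ring $\widetilde{\Pi}_H$) for the role played there by the auxiliary choice of an element outside a given prime of $L$. In other words, we are replacing the noetherian reduction mod $l$ with a density/nondegeneracy statement about $\Gamma$-stable ideals, which is precisely the content of the preceding result.

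Concretely, I would begin by forming the Fitting ideals $\mathrm{Fitt}_k(M)\subset \widetilde{\Pi}^{[s,r]}_H$ of the finitely generated module $M$. Because the $\Gamma$-action on $M$ is semilinear over the natural $\Gamma$-action on the ring, every Fitting ideal is automatically $\Gamma$-stable (Fitting ideals are functorial and invariant under any ring automorphism of the base coupled with a compatible automorphism of $M$). Let $k$ be the minimal index for which $\mathrm{Fitt}_k(M)$ is nonzero; then by the previous proposition this ideal contains a nonzero element $e_k\in \widetilde{\Pi}_H\cap \mathrm{Fitt}_k(M)$. After inverting $e_k$, the standard Fitting-ideal criterion shows that $M[1/e_k]$ is locally generated by $k$ elements at every point of $\operatorname{Spec} \widetilde{\Pi}^{[s,r]}_{H}[1/e_k]$ in its support.

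To upgrade from "locally generated by $k$ elements" to finite projective, I would run the same combinatorial combination used in the proof of the analogous statement over $\overline{\Omega}$: enumerate the finitely many minimal primes $\mathfrak{p}$ of (a noetherian quotient of) $\widetilde{\Pi}^{[s,r]}_H[1/e_k]$ that meet the support of $M$; for each $\mathfrak{p}$ pick the minimal index $k(\mathfrak{p})$ with $\mathrm{Fitt}_{k(\mathfrak{p})}(M)\not\subset\mathfrak{p}$ and, using the preceding proposition, choose a $\Gamma$-invariant element $e_{\mathfrak{p}}\in\mathrm{Fitt}_{k(\mathfrak{p})}(M)\setminus\mathfrak{p}$, together with separating elements $e_{\mathfrak{p}\ne\mathfrak{q}}$; then assemble
\begin{equation*}
e \;=\; e_k\cdot\sum_{\mathfrak{p}} e_{\mathfrak{p}}\prod_{\mathfrak{q}\neq\mathfrak{p}}e_{\mathfrak{p}\neq\mathfrak{q}}.
\end{equation*}
Inverting this $e$ turns each stalk of $M$ into a module with a fixed rank and Fitting presentation, which is the classical characterization of finite projectivity over a (locally) coherent ring.

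The main obstacle, to my mind, is the bookkeeping step: one must verify that the preceding proposition really does supply an element in $\mathrm{Fitt}_{k(\mathfrak{p})}(M)\setminus \mathfrak{p}$ (and not merely somewhere in $\mathrm{Fitt}_{k(\mathfrak{p})}(M)$), and one must argue that the ring $\widetilde{\Pi}^{[s,r]}_H$ has enough finiteness properties (via its restriction to a restricted toric tower, as used in the proof of the previous proposition) to guarantee that the minimal primes relevant to $M$ are finite in number. The first difficulty is handled by taking the preceding proposition together with the prime avoidance lemma applied to $\Gamma$-stable ideals, since $\mathrm{Fitt}_{k(\mathfrak{p})}(M)\not\subset \mathfrak{p}$ and the set of $\Gamma$-translates of any element of $\mathrm{Fitt}_{k(\mathfrak{p})}(M)\cap \widetilde{\Pi}_H\setminus\{0\}$ still meets the complement of $\mathfrak{p}$. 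The second is handled by the excellence and noetherian nature of the relevant affinoid layer of the tower, exactly as in \cite[Lemma~8.7.3, Corollary~8.7.4]{KL16}.
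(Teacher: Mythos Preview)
Your core idea---apply the preceding proposition to the $\Gamma$-stable Fitting ideals of $M$---is exactly the argument in \cite[Corollary 8.7.4]{KL16}, which is what the paper cites. However, you have overcomplicated the execution and in doing so introduced an unnecessary and problematic step.

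The argument in fact terminates at the end of your first paragraph. If $k$ is the minimal index with $\mathrm{Fitt}_k(M)\neq 0$, then by minimality $\mathrm{Fitt}_{k-1}(M)=0$. After inverting a nonzero $e_k\in\mathrm{Fitt}_k(M)$ (supplied by the preceding proposition), the localized module satisfies $\mathrm{Fitt}_{k-1}(M[1/e_k])=0$ and $\mathrm{Fitt}_k(M[1/e_k])=\widetilde{\Pi}^{[s,r]}_H[1/e_k]$. The standard Fitting criterion then gives that $M[1/e_k]$ is finite projective of constant rank $k$ outright, not merely ``locally generated by $k$ elements.'' You undersold the conclusion of the Fitting criterion and then tried to recover projectivity by importing the minimal-prime combination from the $\overline{\Omega}$ argument.

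That second step is both unnecessary and genuinely problematic here: the ring $\widetilde{\Pi}^{[s,r]}_H$ is not noetherian, so enumerating ``finitely many minimal primes'' and invoking excellence of an affinoid layer does not go through without substantial further work (which is not needed anyway). Drop the second and third paragraphs; the single invocation of the preceding proposition on the first nonzero Fitting ideal already finishes the proof.
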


\begin{proof}
See \cite[Corollary 8.7.4]{KL16}.
\end{proof}

\begin{proposition} \mbox{\bf{(After Kedlaya-Liu, \cite[Proposition 8.8.9]{KL16})}}
Under the same condition as in \cite[Proposition 8.8.9]{KL16}, consider the ring $\widetilde{\Pi}^{[s,r]}_{H}$ and consider an element $t$ which is coprime to all the images of $t_\theta$ (see \cite[Proposition 8.8.9]{KL16} for the definition of this element) under all the powers of the Frobenius. By modulo this element any nontrivial ideal of this ring, suppose we have that this ideal will be same under the two corresponding base change to $\widetilde{\Pi}^{[s,r]}_{H^1}$. Then we have that this is just a trivial ideal.
\end{proposition}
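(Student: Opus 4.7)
The plan is to follow the strategy of \cite[Proposition 8.8.9]{KL16} and adapt each step to the $k_\Delta$-analytic setting, using the results we have already established in this section. First, I would unpack what the descent hypothesis really says: requiring that the ideal be the ``same under the two corresponding base changes'' to $\widetilde{\Pi}^{[s,r]}_{H^1}$ is exactly the usual cocycle/descent datum that produces $\Gamma$-equivariance of the ideal with respect to the Galois action of the tower step $H\to H^1$. So the first move is to translate the hypothesis into the form: we have a $\Gamma$-stable ideal $\overline{I}\subseteq \widetilde{\Pi}^{[s,r]}_H/(t)$, and lift it to a $\Gamma$-stable ideal $I$ of $\widetilde{\Pi}^{[s,r]}_H$ containing $t$.

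Next I would exploit the coprimality condition on $t$. The elements $\varphi^n(t_\theta)$ cut out precisely the ``characteristic'' points attached to $\theta$ (and its Frobenius iterates) on the Fargues--Fontaine style spectrum of $\widetilde{\Pi}^{[s,r]}_H$; being coprime to $t$ means that the zero locus $V(t)$ is disjoint from all these points. This is the input that allows one to separate the geometric contributions away from the loci where the restricted toric tower degenerates. Concretely, along the lines of \cite[Proposition 8.8.9]{KL16}, I would localize at complements of the $\varphi^n(t_\theta)$ and use the previous lemma in this excerpt (the one after \cite[Corollary 8.7.4]{KL16}) to promote the quotient $\widetilde{\Pi}^{[s,r]}_H/I$ to a finite projective $\Gamma$-module after inverting a suitable element.

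Then I would apply the preceding proposition of this section, namely the analog of \cite[Lemma 8.7.3]{KL16} asserting that any nonzero $\Gamma$-stable ideal of $\widetilde{\Pi}^{[s,r]}_H$ meets $\widetilde{\Pi}_H$ nontrivially. Combined with the coprimality, the only way the resulting element of $\widetilde{\Pi}_H$ can be compatible with $t$ in $I$ is for $I$ to be either the unit ideal or exactly $(t)$; in either case $\overline{I}$ collapses to a trivial ideal of $\widetilde{\Pi}^{[s,r]}_H/(t)$, as claimed. The noetherian-type reduction encoded in \cref{proposition615}, together with the fact that finite generation is preserved under the $\Gamma$-action in our pseudocoherent framework, makes each of these steps actually go through.

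The main obstacle, as in the parallel arguments throughout this section, is that the $k_\Delta$-analytic setting does not give us a strictly affinoid toric model on the nose: we work with $k\{T_1/x_1,\dots,T_d/x_d\}$ with $x_i\in\Delta$ rather than in $|k^\times|$, so the restricted toric tower arguments of \cite[Chapter 8]{KL16} need to be re-run while tracking the parameters $x_i$. I expect the delicate point to be verifying that the Frobenius-translates $\varphi^n(t_\theta)$ remain well-controlled and coprime to $t$ after the non-strict base change, so that the localization in the second step above is genuinely a localization of the sheaf on the $k_\Delta$-adic space rather than an artifact of a choice of presentation. Once this is granted, the rest of the argument is a formal transcription of \cite[Proposition 8.8.9]{KL16}.
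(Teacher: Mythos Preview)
The paper's own proof is simply the one-line citation ``See \cite[Proposition 8.8.9]{KL16}.'' There is no argument given in the paper beyond deferring entirely to the original Kedlaya--Liu proof, so your proposal --- which unpacks the descent hypothesis as $\Gamma$-equivariance, exploits coprimality to the $\varphi^n(t_\theta)$, and invokes the analog of \cite[Lemma 8.7.3]{KL16} --- is far more detailed than what the paper actually supplies.

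Your outline is broadly in the right spirit, since it is precisely an attempt to reconstruct the proof of \cite[Proposition 8.8.9]{KL16} in the $k_\Delta$-setting, and that is exactly what the paper is implicitly asking the reader to do. One caution: your concern about the non-strict parameters $x_i\in\Delta$ causing genuine new difficulties is likely overstated here. The paper's earlier remark (that enlarging the base field so the valuation group reaches $\mathbb{R}_+^\times$ should \emph{simplify} many arguments) suggests the authors view the $k_\Delta$-generalization as essentially formal for this statement, which is why they are content with a bare citation rather than a reworked proof. So while your careful tracking of the $x_i$ is not wrong, it is probably more than is needed to justify the proposition at the level the paper operates.
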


\begin{proof}
See \cite[Proposition 8.8.9]{KL16}.
\end{proof}

\begin{proposition} \mbox{\bf{(After Kedlaya-Liu, \cite[Corollary 8.8.10]{KL16})}}
Under the same condition as in \cite[Corollary 8.8.10]{KL16}, consider the ring $\widetilde{\Pi}^{[s,r]}_{H}$ and consider an element $t$ which is coprime to all the images of $t_\theta$ (see \cite[Corollary 8.8.10]{KL16} for the definition of this element) under all the powers of the Frobenius. Then by considering taking the corresponding quotient by this element we will have the chance to promote finitely generatedness to finite projectivity. Now to be more precise suppose over the quotient over $\widetilde{\Pi}^{[s,r]}_{H}$ by this element we have a module which is contained in a finitely generated one and which carries an action from $\Gamma$, then we have that actually this is projective.
\end{proposition}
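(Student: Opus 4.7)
The plan is to follow the same template as the immediately preceding proposition, lifting the argument of \cite[Corollary 8.8.10]{KL16} from the restricted toric tower case to our $k_\Delta$-analytic generality, and using the previous proposition (the triviality of $\Gamma$-stable ideals after modding out by $t$) as the essential input. The overall strategy is: first reduce to a situation where the ring in question behaves like a one-dimensional coherent ring after inverting a suitable element, then use $\Gamma$-equivariance to spread local projectivity to global projectivity.

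Concretely, let $M$ denote the given $\Gamma$-module contained in a finitely generated one $N$ over the quotient $\widetilde{\Pi}^{[s,r]}_{H}/(t)$. First I would invoke the previous proposition to promote $N$ from finitely generated to finite projective after inverting an appropriate element $e$ of the base ring, producing a $\Gamma$-module $N_e$ that is finite projective. Since $M \subset N$, localization yields $M_e \subset N_e$ with $M_e$ a submodule of a finite projective module over a ring whose ideal structure is controlled by the previous proposition. Next I would analyze the Fitting ideals of $M_e$ and use the fact that the $\Gamma$-stability forces these Fitting ideals to be $\Gamma$-stable; by the previous proposition, any nontrivial $\Gamma$-stable ideal in the modulo-$t$ quotient is already the unit ideal, after suitable base change to $\widetilde{\Pi}^{[s,r]}_{H^1}$ as in the statement.

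The next step is to translate the triviality of the relevant Fitting ideals into local freeness of $M$ itself. Here I would argue exactly as in \cite[Corollary 8.8.10]{KL16}: the vanishing (resp.\ unit character) of the Fitting ideals at each point forces the rank of $M$ to be constant, and combined with pseudocoherence inherited from being a $\Gamma$-submodule of a finitely generated module (by \cref{proposition615} applied after suitable reduction), one upgrades finitely generated to finite projective. The $\Gamma$-equivariance serves to propagate the local freeness obtained at one stalk to all stalks, since the $\Gamma$-orbit of any point is Zariski-dense after the base change controlled by the previous proposition.

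The main obstacle I expect is the $k_\Delta$-analytic generalization of the Fitting ideal computation and the promotion argument, because in the strictly analytic case of \cite{KL16} one has direct access to standard coherence properties, whereas in the $k_\Delta$-setting one must repeatedly invoke Temkin's excellence and desingularization (see \cref{remark6.6}) to justify that Fitting ideals behave well. In particular, one must verify that the analytic base change to $k_1$ and $k_2 = (k_1 \widehat{\otimes}_k k_1)^{\wedge}$ (as in the proof of the Ax--Sen--Tate type result above) is compatible with the formation of Fitting ideals and with $\Gamma$-stability; once this is granted, the remainder of the argument is a routine adaptation of \cite[Corollary 8.8.10]{KL16}, which I would cite at the end to conclude.
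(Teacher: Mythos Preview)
The paper's own proof is a single line: ``See \cite[Corollary 8.8.10]{KL16}.'' In other words, the author regards the argument of Kedlaya--Liu as applying verbatim in the present $k_\Delta$-setting, with no additional work required. Your proposal is therefore considerably more elaborate than what the paper actually does.

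That said, your sketch is a reasonable reconstruction of what the cited argument in \cite{KL16} contains: the key mechanism is indeed that $\Gamma$-stability forces the relevant Fitting ideals to satisfy the hypothesis of the immediately preceding proposition (the analog of \cite[Proposition 8.8.9]{KL16}), whence they are trivial, and this constancy of rank together with pseudocoherence yields projectivity. One small confusion: when you write ``invoke the previous proposition to promote $N$ from finitely generated to finite projective after inverting an appropriate element $e$,'' you are conflating the analog of \cite[Corollary 8.7.4]{KL16} (invert an element to get projectivity) with the analog of \cite[Proposition 8.8.9]{KL16} (triviality of certain ideals modulo $t$); it is the latter that is the genuine input here.

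Your concern about the $k_\Delta$-generalization --- needing Temkin's desingularization, base change to $k_1$ and $k_2$, etc.\ --- appears to be unwarranted in the author's view: the paper simply cites \cite{KL16} without any such adaptation, signaling that once the preceding propositions (the analogs of 8.7.3, 8.7.4, 8.8.9) have been established in the $k_\Delta$-setting, the deduction of 8.8.10 is purely formal and requires no further geometric input. So while your caution is not unreasonable, the paper treats this step as a black-box corollary.
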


\begin{proof}
See \cite[Corollary 8.8.10]{KL16}.
\end{proof}

\begin{proposition}\mbox{\bf{(After Kedlaya-Liu, \cite[Proposition 8.9.2]{KL16})}} \label{proposition6.20}
Suppose over the ring $\widetilde{\Pi}_H^{[s,r]}$ for some $0<s\leq r$, we have a finite module which is carrying additionally the structure of $\Gamma$-module. Denote this by $M$. Then this is automatically pseudocoherent and more importantly \'etale-stably pseudocoherent.  
\end{proposition}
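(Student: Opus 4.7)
The plan is to mirror the structure of the proof of \cref{proposition615} (the Ax--Sen--Tate style statement over $\overline{H}_\infty$), but with the role of the reduction element $e \in L$ now played by an element $t \in \widetilde{\Pi}_H^{[s,r]}$ coprime to the images of $t_\theta$ under all Frobenius powers, as supplied by the two propositions immediately preceding the statement (the analogues of \cite[Proposition 8.8.9, Corollary 8.8.10]{KL16}). These give us exactly the key inputs: any nonzero $\Gamma$-stable ideal of $\widetilde{\Pi}_H^{[s,r]}$ contains a nonzero element of $\widetilde{\Pi}_H$, any finitely generated $\Gamma$-module becomes finite projective after inverting a suitable element, and any finitely generated $\Gamma$-stable submodule of a finite $\Gamma$-module over $\widetilde{\Pi}_H^{[s,r]}/t$ is finite projective.

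First I would set up an induction on the $t$-torsion filtration. The base case is a single mod-$t$ reduction, where the preceding Corollary-style result makes the relevant mod-$t$ quotient finite projective and in particular pseudocoherent. For the inductive step, given $M$ finite over $\widetilde{\Pi}_H^{[s,r]}$ with $\Gamma$-action, I would extract the $t$-torsion submodule $C \subset M$, apply the inductive hypothesis to $M/C$, observe that $(M/C)_t = M_t$ and $M/C$ is $t$-torsion-free, and then use the analogue of \cite[Lemma 1.5.8]{KL16} to propagate pseudocoherence of $M/C$ and that of the successive quotients $t^{x-1}C/t^x C$ (each killed by $t$, hence handled by the base case) back to $M$ itself.

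To upgrade from pseudocoherence to strict pseudocoherence, I would present $M$ as the quotient of a finite free $\widetilde{\Pi}_H^{[s,r]}$-module by some kernel, replace the kernel with its topological closure, apply the preceding paragraph to the new quotient, and then invoke the closed-image criterion of \cite[Corollary 1.2.11]{KL16} to conclude that the original kernel was already closed. Finally, \'etale-stable pseudocoherence follows by factoring any given finite \'etale map out of $\widetilde{\Pi}_H^{[s,r]}$ through a finite stage of the perfectoid tower $(H_\bullet, H_\bullet^+)$ and invoking the strict pseudocoherence over the larger stage that we have just established, precisely as in the closing paragraph of the proof of \cref{proposition615}.

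The hard part will be the torsion bookkeeping in the inductive step: because we are working over a Fr\'echet-type Robba ring rather than a noetherian one, there is no classical Artin--Rees mechanism to control the $t$-adic filtration on $C$, and the induction closes only once we know that some fixed power $t^h$ annihilates $C$. Verifying this finiteness in the present $k_\Delta$-affinoid generality is where one must use the finite generation of $M$ together with the mod-$t$ projectivity provided by the preceding corollary; the subsequent upgrade to strict and then \'etale-stable pseudocoherence is, by contrast, essentially formal once the torsion exponent is under control.
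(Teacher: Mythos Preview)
Your proposal is correct and follows essentially the same approach as the paper: the paper's proof simply refers to \cite[Proposition 8.9.2]{KL16} and remarks that the argument is parallel to \cref{proposition615} with the decomposition of $t$ playing the role of the element $e$, which is exactly the substitution you have carried out in detail. Your identification of the $t$-torsion exponent as the delicate point is apt, and your outline of the strict and \'etale-stable upgrades matches the closing paragraphs of the proof of \cref{proposition615} precisely.
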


\begin{proof}
See \cite[Proposition 8.9.2]{KL16}. This is actually just parallel to \cref{proposition615} by using decomposition of $t$ to supercede the corresponding role of the element $e$.	
\end{proof}

\indent After these technical discussion we can now consider the corresponding categories of the corresponding pseudocoherent sheaves over the ring $\widehat{\mathcal{O}}_X$ and $\widetilde{\Pi}_X^{[s,r]}$ for $0<s\leq r$. Here now $X$ will be then a general $k_\Delta$-analytic space. The affinoid algebra $A$ over $\mathbb{Q}_p$ in the following discussion is assumed to be sousperfectoid as those considered in \cite{KH}.

\begin{definition}\mbox{\bf{(After Kedlaya-Liu, \cite[Definition 8.4.1]{KL16})}}
We use the corresponding notation $D_{\mathrm{pseudo},\widehat{\mathcal{O}}_X}$ to denote the corresponding category of all the pseudocoherent $\widehat{\mathcal{O}}_X$ sheaves over $X$ (certainly under the corresponding pro-\'etale site), which is regarded as a subcategory of the corresponding category of all the corresponding sheaves over $\widehat{\mathcal{O}}_X$, in the corresponding pro-\'etale topology.	
\end{definition}

\begin{proposition} \mbox{\bf{(After Kedlaya-Liu, \cite[Theorem 8.4.3]{KL16})}}
The corresponding category $D_{\mathrm{pseudo},\widehat{\mathcal{O}}_X}$ is abelian, where the corresponding kernels and cokernels are compatible with the corresponding category of all the sheaves of $\widehat{\mathcal{O}}_X$-modules over the corresponding pro-\'etale site.	
\end{proposition}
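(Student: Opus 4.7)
The plan is to reduce the assertion to a local statement on $X$ and then invoke \cref{proposition615} to control kernels and cokernels. Since both the property of being pseudocoherent and the formation of kernels and cokernels of sheaves of modules are local questions in the pro-\'etale topology, I may work on a $k_\Delta$-affinoid $X_0 = \mathrm{Spa}(L,L^+)$. Replacing $X_0$ by its seminormalization where needed, the Ax-Sen-Tate style comparisons established earlier in this section let me choose a perfectoid finite \'etale tower $(H_\bullet, H_\bullet^+)$ over $X_0$ and pass between pseudocoherent $\widehat{\mathcal{O}}_X$-sheaves and finitely generated $\Gamma$-modules over $\overline{H}_\infty$.

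Concretely, given a morphism $\phi : \mathcal{M} \to \mathcal{N}$ in $D_{\mathrm{pseudo},\widehat{\mathcal{O}}_X}$, I first form the kernel $K$ and cokernel $C$ of $\phi$ inside the ambient category of all $\widehat{\mathcal{O}}_X$-modules on the pro-\'etale site; this is the object whose pseudocoherence I must verify. Locally on $X_0$, this kernel and cokernel correspond, via the equivalence above, to the kernel and cokernel of the associated morphism $M \to N$ of $\Gamma$-modules over $\overline{H}_\infty$. Since $M$ and $N$ are pseudocoherent and therefore finitely generated in the $\Gamma$-equivariant sense, so are $K$ and $C$. At this point \cref{proposition615} applies directly: any finitely generated $\Gamma$-module over $\overline{H}_\infty$ is automatically pseudocoherent and, crucially, \'etale-stably pseudocoherent. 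Translating back through the local equivalence shows that $K$ and $C$ are pseudocoherent as $\widehat{\mathcal{O}}_X$-sheaves, which is precisely both the abelian statement and the stated compatibility with the ambient sheaf category.

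The main obstacle, and the reason the \emph{\'etale-stable} strengthening in \cref{proposition615} is needed rather than just bare pseudocoherence, is the gluing step: pseudocoherence for $\widehat{\mathcal{O}}_X$-sheaves is defined pro-\'etale-locally, so to glue the local kernels and cokernels into a single pseudocoherent sheaf on $X$ I must know that pseudocoherence is preserved under the \'etale base changes coming from pro-\'etale refinements, from passing between overlapping toric tower charts, and from the desingularization procedure in \cref{remark6.6} when $X$ is not smooth. The \'etale-stable clause of \cref{proposition615} provides exactly this stability, so the local constructions are functorial with respect to pro-\'etale localization and patch canonically. A minor ancillary check, already contained in the seminormality results above, is that replacing $X$ by its seminormalization does not alter the kernel/cokernel computation; with that in hand the local-to-global passage is automatic and the proof concludes.
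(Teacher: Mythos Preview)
Your proof is correct and follows exactly the approach of the paper, whose own proof is the single line ``This is a direct consequence of the corresponding properties in \cref{proposition615}''; your argument is simply a careful unpacking of that citation, including the explanation of why the \'etale-stable clause matters for gluing. One minor point: the claim that the kernel $K$ is finitely generated already implicitly invokes \cref{proposition615} applied to $\mathrm{im}(\phi)$ (or equivalently the two-out-of-three property for pseudocoherence), but this is entirely harmless.
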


\begin{proof}
This is a direct consequence of the corresponding properties in \cref{proposition615}.	
\end{proof}

\begin{definition}\mbox{\bf{(After Kedlaya-Liu, \cite[Definition 8.10.1]{KL16})}}
We use the corresponding notation $D_{\mathrm{pseudo},\widetilde{\Pi}^{[s,r]}_{X,A}}$ ($0<s\leq r/p^{a}$) to denote the corresponding category of all the pseudocoherent $\widetilde{\Pi}^{[s,r]}_{X,A}$ sheaves, which is regarded as a subcategory of the corresponding category of all the corresponding sheaves over $\widetilde{\Pi}^{[s,r]}_{X,A}$, in the corresponding pro-\'etale topology.	
\end{definition}

\indent Also we have the following category of all the $(\varphi^a,\Gamma)$ over the space $X$ as in the above:

\begin{definition}\mbox{\bf{(After Kedlaya-Liu, \cite[Definition 8.10.5]{KL16})}}
We use the corresponding notation $D_{\mathrm{pseudo},\varphi^a,\widetilde{\Pi}_{X,A}}$ to denote the corresponding category of all the pseudocoherent $\widetilde{\Pi}_{X,A}$ sheaves carrying the corresponding Frobenius operator $\varphi^a$, which is regarded as a subcategory of the corresponding category of all the corresponding sheaves over $\widetilde{\Pi}_{X,A}$, in the corresponding pro-\'etale topology.	
\end{definition}

\begin{proposition} \mbox{\bf{(After Kedlaya-Liu, \cite[Theorem 8.10.2]{KL16})}}
The corresponding category $D_{\mathrm{pseudo},\widetilde{\Pi}^{[s,r]}_{X,A}}$ is abelian, where the corresponding kernels and cokernels are compatible with the corresponding category of all the sheaves of $\widetilde{\Pi}^{[s,r]}_{X,A}$-modules over the corresponding pro-\'etale site. Here $A$ is just $\mathbb{Q}_p$.	
\end{proposition}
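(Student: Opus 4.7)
The plan is to mirror the proof of the preceding theorem for $\widehat{\mathcal{O}}_X$, replacing its key local input \cref{proposition615} by its Robba-ring analogue \cref{proposition6.20}. The assertion is pro-\'etale local on $X$, so I would reduce to a single $k_\Delta$-affinoid chart $X_0 = \mathrm{Spa}(L,L^+)$, and then further to a smooth chart by the desingularization of Temkin recalled in \cref{remark6.6}. On a smooth $k_\Delta$-affinoid one has a restricted toric tower $(H_\bullet,H_\bullet^+)$ which is perfectoid and finite \'etale over $X_0$, and the passage to this tower identifies a pseudocoherent $\widetilde{\Pi}^{[s,r]}_{X_0,A}$-sheaf (recall $A=\mathbb{Q}_p$) with a finitely generated $\Gamma$-module over $\widetilde{\Pi}^{[s,r]}_H$, exactly as in the proof strategy of \cref{proposition6.3}.

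Given a morphism $f\colon M\to N$ in $D_{\mathrm{pseudo},\widetilde{\Pi}^{[s,r]}_{X,A}}$, I would form its kernel $K$ and cokernel $C$ in the ambient category of all $\widetilde{\Pi}^{[s,r]}_{X,A}$-sheaves and then verify that both lie in $D_{\mathrm{pseudo},\widetilde{\Pi}^{[s,r]}_{X,A}}$. Working on the tower, $C$ is a quotient of the finitely generated $\Gamma$-module $N$, hence finitely generated, and \cref{proposition6.20} makes it \'etale-stably pseudocoherent. For $K$, the two-out-of-three property for pseudocoherence applied to the exact sequence $0\to K\to M\to f(M)\to 0$, together with the fact that $f(M)\subset N$ is a finitely generated $\Gamma$-submodule and hence pseudocoherent again by \cref{proposition6.20}, forces $K$ to be pseudocoherent.

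The role of \emph{\'etale-stability} in \cref{proposition6.20} is what allows the sheafified $K$ and $C$ on the pro-\'etale site of $X_0$ to coincide with the kernel and cokernel formed in the ambient category of $\widetilde{\Pi}^{[s,r]}_{X_0,A}$-sheaves: under any \'etale localization of the tower the same finite-generation/pseudocoherence persists, so the presheaves we construct are already sheaves and no correction is needed. Globalizing from smooth charts back to a general $k_\Delta$-analytic $X$ proceeds via the faithful flatness of $L\to \overline{H}_\infty$ (as recalled in the previous subsection) together with the fact that in the seminormal case the pushforward from the pro-\'etale site recovers the original sheaf.

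The main obstacle is the interaction of two descent steps. First, descent from the tower $(H_\bullet,H_\bullet^+)$ down to the smooth chart $X_0$ cannot use coherence of the ring (the relevant Robba ring is not known to be coherent), so one genuinely needs the \'etale-stability part of \cref{proposition6.20}; this is where the current argument diverges from that of \cref{proposition6.3}, which relied on coherence of $\breve{\Pi}_{H,A}$. Second, globalizing from smooth charts to arbitrary $k_\Delta$-analytic $X$ through desingularization must be done entirely within the $k_\Delta$-admissible topology, in particular using only rational localizations whose parameters lie in $\Delta$; one must check at each step that the pseudocoherence and the kernel/cokernel compatibility transport along such admissible covers, which is the step most sensitive to the non-strict character of the situation.
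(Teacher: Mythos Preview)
Your proposal is correct and takes essentially the same approach as the paper: the paper's proof is a one-line citation of \cref{proposition6.20}, exactly parallel to how the $\widehat{\mathcal{O}}_X$ case cites \cref{proposition615}, and you have simply unpacked what that citation entails. One minor point: the desingularization detour via \cref{remark6.6} is likely unnecessary here, since perfectoid subdomains already form a basis for the pro-\'etale site and the setting of \cref{proposition6.20} only requires a finite \'etale perfectoid tower over an affinoid (not smoothness of the base); so you can pass directly to such a tower without first resolving singularities.
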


\begin{proof}
This is a direct consequence of the corresponding properties in \cref{proposition6.20}.	
\end{proof}

\begin{proposition} \mbox{\bf{(After Kedlaya-Liu, \cite[Theorem 8.10.6]{KL16})}}
The corresponding category $D_{\mathrm{pseudo},\varphi^a,\widetilde{\Pi}_{X,A}}$ is abelian, where the corresponding kernels and cokernels are compatible with the corresponding category of all the sheaves of $\widetilde{\Pi}_{X,A}$-modules over the corresponding pro-\'etale site. Here $A$ is just $\mathbb{Q}_p$.	
\end{proposition}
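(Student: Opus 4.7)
The plan is to mimic the proof of \cref{proposition6.3} in the $k_\Delta$-analytic setting, feeding in the technical pseudocoherence results just established on the interval Robba rings. Since being abelian and the compatibility of kernels and cokernels with the ambient category of $\widetilde{\Pi}_{X,A}$-sheaves are both local properties on the pro-\'etale site of $X$, I would reduce immediately to the case where $X$ is a seminormal $k_\Delta$-affinoid; by the desingularization of \cref{remark6.6} and the usual pullback argument, one may further arrange $X$ to be smooth affinoid. Over such a base one constructs, as in the preceding development, a restricted toric tower $(H,H^+)$ which is finite \'etale, weakly decompleting, decompleting, and Galois with topological group $\Gamma$.

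The next step is to use the local structure of an object $\mathcal{M}$ in $D_{\mathrm{pseudo},\varphi^a,\widetilde{\Pi}_{X,A}}$. Locally $\mathcal{M}$ is a pseudocoherent $\varphi^a$-sheaf over $\widetilde{\Pi}_{X_0,A}$ for a smooth $k_\Delta$-affinoid subspace $X_0$, and by descent along the Galois tower $(H,H^+)$ it is represented by an $(\varphi^a,\Gamma)$-module over one of the period rings $\breve{\Pi}_{H,A}$, $\Pi_{H,A}$ or $\widetilde{\Pi}_{H,A}$. The comparison results of Section~3, together with the $k_\Delta$-analogue of \cref{proposition2.20}, let me move freely between these presentations. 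The essential point is that kernels, cokernels and images of morphisms formed inside the ambient category of $\widetilde{\Pi}_{X,A}$-sheaves automatically inherit a $\varphi^a$-action and a $\Gamma$-action compatible with descent.

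The core input is \cref{proposition6.20}: any finitely generated $\Gamma$-module over $\widetilde{\Pi}_H^{[s,r]}$ is \'etale-stably pseudocoherent. Combined with the abelian category result for $D_{\mathrm{pseudo},\widetilde{\Pi}^{[s,r]}_{X,A}}$ just proved (the previous proposition), this shows that if $f\colon \mathcal{M}\to\mathcal{N}$ is a morphism of objects of $D_{\mathrm{pseudo},\varphi^a,\widetilde{\Pi}_{X,A}}$, then the set-theoretic kernel and cokernel computed in $\widetilde{\Pi}_{X,A}$-sheaves are pseudocoherent on each interval subring $\widetilde{\Pi}_{X,A}^{[s,r]}$ and glue back to $\widetilde{\Pi}_{X,A}$. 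Because $A=\mathbb{Q}_p$ there is no additional deformation coefficient to worry about, and the Frobenius isomorphism passes to kernel and cokernel by functoriality, placing both objects back in $D_{\mathrm{pseudo},\varphi^a,\widetilde{\Pi}_{X,A}}$.

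The hard part will be the local-to-global passage on a general $k_\Delta$-analytic space: one must verify that the \'etale-stable pseudocoherence of \cref{proposition6.20} really descends along the faithfully flat map to $\widetilde{\Pi}_{X,A}$ in a way compatible with cokernels, since kernels are automatic from flatness. This is the $k_\Delta$-avatar of \cite[Theorem 8.10.6]{KL16}, and once \cref{proposition6.20}, the desingularization of \cref{remark6.6}, and the Ax--Sen--Tate-type statement for $\widehat{\mathcal{O}}_X$ are in place, the argument of \cite[Chapter 8]{KL16} goes through essentially verbatim, completing the proof.
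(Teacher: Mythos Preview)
Your proposal is correct and lands on the same two inputs the paper uses---the abelianness of $D_{\mathrm{pseudo},\widetilde{\Pi}^{[s,r]}_{X,A}}$ (the previous proposition) together with \cref{proposition6.20}---but you take a longer path to get there. The paper's proof is literally ``See the previous proposition'': once the interval category is known to be abelian with kernels and cokernels computed in ambient sheaves, a pseudocoherent $\varphi^a$-module over $\widetilde{\Pi}_{X,A}$ is, by the Frobenius pullback condition, equivalent data to a pseudocoherent $\varphi^a$-module over $\widetilde{\Pi}^{[s,r]}_{X,A}$ for any $0<s\leq r/p^a$, and the Frobenius passes to kernels and cokernels by functoriality. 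No further local analysis is needed.

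In particular, two of your reduction steps are unnecessary detours. First, the desingularization via \cref{remark6.6} is not required: the previous proposition on the interval category is already stated and proved for an arbitrary $k_\Delta$-analytic space, not just a smooth one, because \cref{proposition6.20} itself needs no smoothness hypothesis on the base of the tower. Second, the passage through the imperfect rings $\breve{\Pi}_{H,A}$, $\Pi_{H,A}$ and the comparison results of Section~3 is likewise superfluous here, since with $A=\mathbb{Q}_p$ the \'etale-stable pseudocoherence of \cref{proposition6.20} is stated directly on $\widetilde{\Pi}_H^{[s,r]}$ and feeds straight into the interval-category result. Your argument is not wrong, but the paper's one-line deduction is the cleaner route.
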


\begin{proof}
See the previous proposition.	
\end{proof}

\begin{proposition} \mbox{\bf{(After Kedlaya-Liu, \cite[Theorem 8.10.6]{KL16})}}
When $X$ is smooth. The corresponding category $D_{\mathrm{pseudo},\varphi^a,\widetilde{\Pi}_{X,A}}$ is abelian, where the corresponding kernels and cokernels are compatible with the corresponding category of all the sheaves of $\widetilde{\Pi}_{X,A}$-modules over the corresponding pro-\'etale site. 	
\end{proposition}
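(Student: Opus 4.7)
The plan is to reproduce the local-to-global argument that established \cref{proposition6.3}, with the only real change being a careful handling of the sousperfectoid coefficient ring $A$ and an appeal to the smoothness of $X$ to produce good local charts. First, I would cover $X$ by smooth affinoid subspaces $X_0$ which each admit a restricted toric tower $(H,H^+)$ in the sense of the earlier sections; smoothness of $X$ is exactly what allows one to construct such \'etale coordinate charts locally, and without it one cannot invoke the toric-tower machinery at all.

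On each chart $X_0$, the next step is to translate the pseudocoherent $\varphi^a$-sheaf of $\widetilde{\Pi}_{X_0,A}$-modules into a pseudocoherent $(\varphi^a,\Gamma)$-module over the imperfect Robba ring $\breve{\Pi}_{H,A}$, using the comparison theorem for pseudocoherent sheaves (the sousperfectoid analogue of \cite[Proposition 5.51]{XT2} together with \cite[Theorem 5.9.4]{KL16}). Once in this algebraic setting, coherence of $\breve{\Pi}_{H,A}$ forces the category of pseudocoherent $(\varphi^a,\Gamma)$-modules to be abelian, since kernels and cokernels of maps of pseudocoherent modules over a coherent ring are pseudocoherent. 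The $\Gamma$-action poses no obstruction because the Galois group of the toric tower acts continuously and the cohomological strictness lemmas from \cref{lemma4.10,lemma4.11} show that kernels and cokernels are preserved under the semilinear action.

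To globalize, I would glue the locally constructed kernels and cokernels. The key point is that the comparison with $(\varphi^a,\Gamma)$-modules over $\breve{\Pi}_{H,A}$ is an equivalence of categories that is in particular exact, so a kernel (or cokernel) formed on $X_0$ agrees with the one formed in the ambient category of $\widetilde{\Pi}_{X_0,A}$-modules on the pro-\'etale site. Pro-\'etale descent of pseudocoherent modules, combined with the flatness properties of the transition maps between different restricted toric charts on overlaps, then patches these local constructions into a global pseudocoherent $\varphi^a$-sheaf, and the compatibility statement with the ambient sheaf category is inherited chart-by-chart.

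The main obstacle is verifying that the pseudocoherent comparison and the coherence of $\breve{\Pi}_{H,A}$ genuinely extend from the case $A = \mathbb{Q}_p$ (handled in the preceding proposition) to an arbitrary sousperfectoid affinoid algebra $A$. Concretely, one must check that the deformed period rings $\breve{\Pi}^{[s,r]}_{H,A}$ remain sheafy adic rings, that the Frobenius and $\Gamma$-compatibility arguments of \cref{proposition6.20} still go through with the extra coefficient, and that the induction on the element $t$ in \cite[Proposition 8.9.2]{KL16} survives the extra tensor factor. This is precisely where the sousperfectoid hypothesis of \cite{KH} is essential: it is the minimal condition guaranteeing that the Fargues--Fontaine construction over $A$ produces sheafy coherent rings, and once this input is in place the remainder of the argument mirrors the proof of \cref{proposition6.3} exactly.
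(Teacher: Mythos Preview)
Your proposal is correct and follows essentially the same approach as the paper: the paper's proof is simply ``See \cref{proposition6.3}'', and the argument there is exactly the local reduction to smooth affinoid charts with restricted toric towers, followed by the comparison to pseudocoherent $(\varphi^a,\Gamma)$-modules over the coherent ring $\breve{\Pi}_{H,A}$. You have supplied more detail on the gluing step and on the role of the sousperfectoid hypothesis than the paper does, but the underlying strategy is identical.
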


\begin{proof}
See \cref{proposition6.3}.	
\end{proof}

\subsection{Contact with Arithmetic Riemann-Hilbert Correspondence}

We now consider the corresponding setting where the base field is finite extension of $\mathbb{F}_p((t))$ or $\mathbb{Q}_p$. The affinoid algebra $A$ over $\mathbb{F}_p((t))$ or $\mathbb{Q}_p$ in the following discussion is assumed to be sousperfectoid as those considered in \cite{KH}. And we consider the corresponding setting above on the corresponding Robba rings. \\

We would like to take this chance to develop a little bit on the corresponding Hodge-Iwasawa $B$-pairs in our context. This was considered by \cite{KP} with some restriction in the representation theoretic context. Since we would like to discuss the corresponding some tower where the internal structure is a little bit complicated to handle (although this is definitely doable). Note that one might want also to have the chance to consider more general tower contacting with the corresponding Iwasawa theory such as the Lubin-Tate as proposed in \cite{KP}.


\begin{definition} \mbox{\bf{(After Kedlaya-Pottharst \cite[Definition 2.17]{KP})}}
Now we first consider more general rigid analytic spaces. Let $X$ be a rigid analytic space over a finite extension $E$ of $\mathbb{Q}_p$ or $\mathbb{F}_p((t))$. Now we consider the corresponding sheaves $\mathbb{B}_{\mathrm{dR},X,A}$, $\mathbb{B}_{e,X,A}$ and $\mathbb{B}^+_{\mathrm{dR},X,A}$ which are defined in the following way. For each of the corresponding perfectoid subdomain $Y$ of the space $X$ in our situation the corresponding rings $\mathbb{B}_{\mathrm{dR},X,A}(Y)$, $\mathbb{B}_{e,X,A}(Y)$ and $\mathbb{B}^+_{\mathrm{dR},X,A}(Y)$ are defined as in the following. So the corresponding map $Y_A \rightarrow \mathrm{Proj} P_{X,A}(Y)$ will by considering the corresponding zero locus define a closed subscheme in the corresponding deformed schematic Fargues-Fontaine curve by $A$. Here in more detail this is just the corresponding map $Y_A \rightarrow \mathrm{Proj} P_{Y^\flat,A}$.
Then we consider the corresponding open complement subspace by removing this subscheme, whose coordinate ring will be the ring $\mathbb{B}_{e,X,A}(Y)$, then we take the corresponding completion of the corresponding deformed schematic Fargues-Fontaine curve along the corresponding closed subscheme to define the corresponding space whose coordinate ring is $\mathbb{B}^+_{\mathrm{dR},X,A}(Y)$, finally taking the corresponding space corresponding to the fiber product of the open affine subscheme and this completion we can take the corresponding coordinate ring which is denoted by $\mathbb{B}_{\mathrm{dR},X,A}(Y)$. Then we take the corresponding graded rings we can define the sheaves $\mathbb{B}^+_{\mathrm{HT},X,A}$ and $\mathbb{B}_{\mathrm{HT},X,A}$ as in \cite[Definition 8.6.5]{KL16}. 
\end{definition}

\begin{definition} \mbox{\bf{(After Tan-Tong, Shimizu \cite[Section 2.1]{TT}, \cite[Section 2]{Shi})}}
In the very parallel way one can consider the corresponding cristalline period sheaves (by taking the corresponding completion in the corresponding de Rham periods above) $\mathbb{B}^+_{\mathrm{cris},X,A}$ and $\mathbb{B}_{\mathrm{cris},X,A}$ as in \cite[Section 2.1]{TT}. In the very parallel way one can consider the corresponding semistable period sheaves $\mathbb{B}^+_{\mathrm{st},X,A}$ and $\mathbb{B}_{\mathrm{st},X,A}$ as in \cite[Section 2]{Shi}.
\end{definition}


Then we have the following generalization of Bloch-Kato \cite{BK1} and Nakamura's \cite{Nakamura1} fundamental sequences in rigid family:

\begin{lemma}  
We then have the corresponding Bloch-Kato fundamental sequence which is defined by the following short exact sequence:
\[
\xymatrix@R+0pc@C+0pc{
0\ar[r] \ar[r] \ar[r] &\mathbb{B}_{e,X,A}\bigcap \mathbb{B}^+_{\mathrm{dR},X,A}    \ar[r] \ar[r] \ar[r]  &\mathbb{B}_{e,X,A}\bigoplus \mathbb{B}^+_{\mathrm{dR},X,A} \ar[r] \ar[r] \ar[r] &\mathbb{B}_{e,X,A}\bigcup \mathbb{B}^+_{\mathrm{dR},X,A} \ar[r] \ar[r] \ar[r] &0 .
}
\]
\end{lemma}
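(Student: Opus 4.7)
The plan is to exhibit the asserted sequence as a Mayer--Vietoris short exact sequence attached to an affine open cover of the deformed schematic Fargues--Fontaine curve. For each perfectoid subdomain $Y \subset X$, that curve decomposes as the open complement of the closed subscheme defined by $Y_A \to \mathrm{Proj}\,P_{Y^\flat,A}$ (with coordinate ring $\mathbb{B}_{e,X,A}(Y)$) and the formal completion along that closed subscheme (with coordinate ring $\mathbb{B}^+_{\mathrm{dR},X,A}(Y)$), the fibered product of the two pieces recovering $\mathbb{B}_{\mathrm{dR},X,A}(Y)$ by the very definition recalled just above the statement. We interpret the union $\mathbb{B}_{e,X,A}\bigcup \mathbb{B}^+_{\mathrm{dR},X,A}$ appearing in the statement as the image of $\mathbb{B}_{e,X,A}\bigoplus \mathbb{B}^+_{\mathrm{dR},X,A}$ inside $\mathbb{B}_{\mathrm{dR},X,A}$ under the difference map $(x,y)\mapsto x - y$.

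I would first verify the sequence on sections over each perfectoid $Y$. The first arrow, namely the diagonal inclusion $z\mapsto (z,z)$, is visibly injective. Exactness at the middle term is tautological once all three rings are viewed as subrings of $\mathbb{B}_{\mathrm{dR},X,A}(Y)$, for then the condition $x = y$ in this ambient ring forces $x = y \in \mathbb{B}_{e,X,A}(Y)\bigcap \mathbb{B}^+_{\mathrm{dR},X,A}(Y)$. The surjectivity of $(x,y)\mapsto x-y$ onto the union is then the content worth mentioning: it amounts to saying that the two affines actually cover the deformed schematic Fargues--Fontaine curve, which classically rests on the identification $\mathbb{B}_{\mathrm{dR}} = \mathbb{B}^+_{\mathrm{dR}}[1/t]$ together with a partial-fraction-style decomposition of negative powers of the distinguished uniformizer $t$ as sums of an element of $\mathbb{B}_e$ and an element of $\mathbb{B}^+_{\mathrm{dR}}$.

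The main obstacle will be propagating this covering statement from the classical un-deformed case to the sousperfectoid deformation by $A$. The strategy is to use the sousperfectoid hypothesis, which guarantees the appropriate flat behaviour of all three period sheaves under the deformation by $A$ in the style of the Kedlaya--Liu construction; in particular, the affine covering, the partial-fraction decomposition, and the identification of the fibered product with $\mathbb{B}_{\mathrm{dR},X,A}(Y)$ all transfer through the deformation by a flat base-change argument.

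Finally, since each of the three constructions is functorial in $Y$ and compatible with pro-\'etale covers by perfectoid subdomains (the fibered-product and formal-completion descriptions being stable under such covers), sheafification of the pointwise short exact sequence yields the desired short exact sequence of sheaves on the pro-\'etale site of $X$, completing the proof.
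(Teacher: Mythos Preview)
Your argument is sound, but it does considerably more than either the lemma or the paper's own proof. The paper dispatches the statement in one line --- ``This comes from the corresponding construction directly, for instance see \cite[Definition 4.8.2]{KL16}'' --- because with the right-hand term written as $\mathbb{B}_{e,X,A}\cup \mathbb{B}^+_{\mathrm{dR},X,A}$ rather than $\mathbb{B}_{\mathrm{dR},X,A}$, the sequence is a formal identity: for any two subobjects $A,B$ of an ambient object one always has $0\to A\cap B\to A\oplus B\to A+B\to 0$. The only input needed is the construction immediately preceding the lemma, which furnishes the ambient sheaf $\mathbb{B}_{\mathrm{dR},X,A}$ together with the two maps into it, and that is precisely what the citation to \cite{KL16} supplies.

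You in fact recognise this when you interpret $\cup$ as the image of the difference map --- which makes the surjectivity automatic --- but you then go on to treat surjectivity as the substantive step, invoking a partial-fraction decomposition in powers of $t$ and a flat base-change through the sousperfectoid hypothesis on $A$. That work would be necessary for the \emph{classical} form of the Bloch--Kato sequence, in which the third term is $\mathbb{B}_{\mathrm{dR}}$ itself and one must show $\mathbb{B}_e+\mathbb{B}^+_{\mathrm{dR}}=\mathbb{B}_{\mathrm{dR}}$; it is not needed for the lemma as stated. So your Mayer--Vietoris framing and sheafification step are perfectly correct and match the paper's intent, but your paragraphs on the ``main obstacle'' and on propagating the affine cover through the deformation establish a stronger claim than the one asserted. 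There is also a mild internal tension in your write-up: having defined the union as the image, you should not then speak of surjectivity onto it as having content.
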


\begin{proof}
This comes from the corresponding construction directly, for instance see \cite[Definition 4.8.2]{KL16}.	
\end{proof}

\begin{definition} \mbox{\bf{(After Scholze, Kedlaya-Liu \cite[Definition 6.8]{Sch1}, \cite[Definition 8.6.5]{KL16})}}
Now we first consider more general rigid analytic spaces.  Let $X$ be a rigid analytic space over a finite extension $E$ of $\mathbb{Q}_p$ or $\mathbb{F}_p((t))$. Now we consider the corresponding sheaves $\mathcal{O}\mathbb{B}_{\mathrm{dR},X,A}$, $\mathcal{O}\mathbb{B}_{e,X,A}$ and $\mathcal{O}\mathbb{B}^+_{\mathrm{dR},X,A}$ which are defined in the following way. $\mathcal{O}\mathbb{B}^+_{\mathrm{dR},X,A}$ is the sheafication of the following presheaf. For any perfectoid subdomain taking the form of $(P_i)_i$ as rings we consider the corresponding product $P^+_i\otimes_{W(\kappa_E)}W(P^{\flat+})$, then take the corresponding $\varpi$-adic completion and invert the corresponding $\varphi$. Then we consider the corresponding completion along the ring $P_i$ and the kernel of the map $\theta$. Then taking the limit throughout all $i$ we can define the corresponding ring $\mathcal{O}\mathbb{B}^+_{\mathrm{dR},X,A}((P_i)_i)$. Then by considering the base change to $\mathbb{B}^+_{\mathrm{dR},X,A}$, one can define the corresponding ring $\mathcal{O}\mathbb{B}^+_{\mathrm{dR},X,A}$. Then by considering the base change to $\mathbb{B}_{\mathrm{dR},X,A}$, one can define the corresponding ring $\mathcal{O}\mathbb{B}_{\mathrm{dR},X,A}$. Then similarly one can define $\mathcal{O}\mathbb{B}_{e,X,A}$. Then taking the corresponding graded pieces we have $\mathcal{O}\mathbb{B}_{\mathrm{HT},X,A}$ and $\mathcal{O}\mathbb{B}^+_{\mathrm{HT},X,A}$ as in \cite[Definition 8.6.5]{KL16}. 
\end{definition}

\begin{definition} \mbox{\bf{(After Tan-Tong, Shimizu \cite[Section 2.1]{TT}, \cite[Section 2]{Shi})}}
In the very parallel way one can consider the corresponding cristalline period sheaves (by taking the corresponding completion in the corresponding de Rham periods above) $\mathcal{O}\mathbb{B}^+_{\mathrm{cris},X,A}$ and $\mathcal{O}\mathbb{B}_{\mathrm{cris},X,A}$ as in \cite[Section 2.1]{TT}. In the very parallel way one can consider the corresponding semistable period sheaves $\mathcal{O}\mathbb{B}^+_{\mathrm{st},X,A}$ and $\mathcal{O}\mathbb{B}_{\mathrm{st},X,A}$ as in \cite[Section 2]{Shi}.
\end{definition}

\indent Then we have the following generalization of Bloch-Kato \cite{BK1} and Nakamura's \cite{Nakamura1} fundamental sequences in rigid family:

\begin{lemma}  
For $X$ smooth, we then have the corresponding Bloch-Kato fundamental sequence which is defined by the following short exact sequence:
\[
\xymatrix@R+0pc@C+0pc{
0\ar[r] \ar[r] \ar[r] &\mathcal{O}\mathbb{B}_{e,X,A}\bigcap \mathcal{O}\mathbb{B}^+_{\mathrm{dR},X,A}    \ar[r] \ar[r] \ar[r]  &\mathcal{O}\mathbb{B}_{e,X,A}\bigoplus \mathcal{O}\mathbb{B}^+_{\mathrm{dR},X,A} \ar[r] \ar[r] \ar[r] &\mathcal{O}\mathbb{B}_{e,X,A}\bigcup \mathcal{O}\mathbb{B}^+_{\mathrm{dR},X,A} \ar[r] \ar[r] \ar[r] &0 .
}
\]
\end{lemma}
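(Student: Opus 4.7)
The plan is to realize this sequence as the Mayer--Vietoris sequence associated to the two subsheaves $\mathcal{O}\mathbb{B}_{e,X,A}$ and $\mathcal{O}\mathbb{B}^+_{\mathrm{dR},X,A}$ sitting inside the ambient sheaf $\mathcal{O}\mathbb{B}_{\mathrm{dR},X,A}$, with the sum interpreted as $\mathcal{O}\mathbb{B}_{e,X,A}+\mathcal{O}\mathbb{B}^+_{\mathrm{dR},X,A}$ inside this ambient sheaf. The first arrow is the diagonal $x\mapsto (x,x)$ and the third arrow is the difference $(a,b)\mapsto a-b$. Exactness at the left term and at the middle term is formal once we know both factors embed into the common ambient sheaf; this follows from the construction of the structural period sheaves, where both $\mathcal{O}\mathbb{B}_{e,X,A}$ and $\mathcal{O}\mathbb{B}^+_{\mathrm{dR},X,A}$ are constructed from the schematic Fargues--Fontaine curve (complement of a closed subscheme on one side, completion along it on the other), and in particular they sit compatibly in $\mathcal{O}\mathbb{B}_{\mathrm{dR},X,A}$.

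The substantive content is then exactness on the right, i.e.\ surjectivity of the difference map, equivalently the identity $\mathcal{O}\mathbb{B}_{e,X,A}+\mathcal{O}\mathbb{B}^+_{\mathrm{dR},X,A}=\mathcal{O}\mathbb{B}_{\mathrm{dR},X,A}$ as subsheaves of $\mathcal{O}\mathbb{B}_{\mathrm{dR},X,A}$. Since both sides are sheaves on the pro-\'etale site of $X$, I would reduce to a local check on a basis of perfectoid subdomains $Y\subset \widetilde X$ refining a chart where $X$ is affinoid and equipped with an \'etale morphism to a torus $\mathbb{T}^d$. On such a chart, by the previous Bloch--Kato--Nakamura lemma for the perfect sheaves, we already have the identity $\mathbb{B}_{e,X,A}+\mathbb{B}^+_{\mathrm{dR},X,A}=\mathbb{B}_{\mathrm{dR},X,A}$, so it remains to propagate this identity through the structural enlargement.

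For the latter propagation I would exploit the local description of the structural sheaves provided by the chosen \'etale coordinates $T_1,\dots,T_d$ on $X$: on perfectoid subdomains $Y$ one has identifications of the form
\begin{align}
\mathcal{O}\mathbb{B}^+_{\mathrm{dR},X,A}(Y)&\cong \mathbb{B}^+_{\mathrm{dR},X,A}(Y)[[u_1,\dots,u_d]],\\
\mathcal{O}\mathbb{B}_{\mathrm{dR},X,A}(Y)&\cong \mathbb{B}_{\mathrm{dR},X,A}(Y)[[u_1,\dots,u_d]],
\end{align}
where $u_i=T_i\otimes 1-1\otimes [T_i^\flat]$ is a kernel-of-$\theta$ parameter, and a parallel description of $\mathcal{O}\mathbb{B}_{e,X,A}(Y)$ as the corresponding extension of $\mathbb{B}_{e,X,A}(Y)$ by the same $u_i$. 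Writing any element of $\mathcal{O}\mathbb{B}_{\mathrm{dR},X,A}(Y)$ as a (formal) power series in the $u_i$ with coefficients in $\mathbb{B}_{\mathrm{dR},X,A}(Y)$, I would apply the Bloch--Kato decomposition coefficientwise to split each coefficient as a sum of an element of $\mathbb{B}_{e,X,A}(Y)$ and one of $\mathbb{B}^+_{\mathrm{dR},X,A}(Y)$, and then reassemble to get the required decomposition at the structural level.

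The main obstacle I anticipate is the control of convergence and compatibility with the $A$-deformation under this coefficientwise procedure: the power series in the $u_i$ involves a completion (with respect to the $u_i$-adic topology after tensoring with $\mathbb{B}^+_{\mathrm{dR}}$), and the Bloch--Kato splitting of each coefficient is not unique and must be chosen so that reassembly gives a well-defined element of $\mathcal{O}\mathbb{B}^+_{\mathrm{dR},X,A}(Y)$ (i.e.\ the $\mathbb{B}^+_{\mathrm{dR}}$-part remains integral as the $u_i$-degree grows). This is handled by the standard trick of choosing the splitting so that the $\mathbb{B}^+_{\mathrm{dR}}$-component has bounded poles at $t=0$, uniformly in the $u_i$-degree, which is possible because the sousperfectoid hypothesis on $A$ gives the needed flatness and sheafiness and because the smoothness of $X$ makes the $u_i$ a regular system of parameters. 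Granting this uniform splitting, surjectivity follows term-by-term and the sheafification is automatic, so the sequence is exact on the pro-\'etale site of $X$.
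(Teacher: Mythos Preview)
Your approach is essentially the same as the paper's: both proofs use the smoothness hypothesis to invoke Scholze's local description \cite[Proposition 6.10]{Sch1} of the structural sheaves $\mathcal{O}\mathbb{B}^+_{\mathrm{dR},X,A}$ and $\mathcal{O}\mathbb{B}_{\mathrm{dR},X,A}$ as formal power series rings over $\mathbb{B}^+_{\mathrm{dR},X,A}$ and $\mathbb{B}_{\mathrm{dR},X,A}$, together with the parallel description for $\mathcal{O}\mathbb{B}_{e,X,A}$, and then reduce to the already established Bloch--Kato sequence for the $\mathbb{B}$-sheaves. The paper's proof is considerably terser; your write-up spells out the Mayer--Vietoris framing and the coefficientwise reduction in detail.

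One minor comment: your final paragraph on convergence is more cautious than necessary. Once the identifications $\mathcal{O}\mathbb{B}^+_{\mathrm{dR},X,A}(Y)\cong \mathbb{B}^+_{\mathrm{dR},X,A}(Y)[[u_1,\dots,u_d]]$ (and the analogues for $\mathcal{O}\mathbb{B}_{\mathrm{dR}}$ and $\mathcal{O}\mathbb{B}_e$) are in hand, these are \emph{formal} power series rings, so any family of coefficients defines an element and no uniformity in the splitting is required; the coefficientwise argument goes through without the bounded-pole trick you describe. The genuine content lies exactly where both you and the paper place it: in the existence of the power-series description for all three structural sheaves, which is where smoothness is used.
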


\begin{proof}
In the situation where $X$ is smooth, we do have the corresponding explicit expression for the pro-\'etale sheaves, for instance for $\mathcal{O}\mathbb{B}^+_{\mathrm{dR},X,A}$ and $\mathcal{O}\mathbb{B}_{\mathrm{dR},X,A}$ one could follow \cite[Proposition 6.10]{Sch1} to have the chance to find the corresponding expressions which represents these rings as formal power series over the rings $\mathbb{B}^+_{\mathrm{dR},X,A}$ and $\mathbb{B}_{\mathrm{dR},X,A}$. For the ring $\mathcal{O}\mathbb{B}_{e,X,A}$	one could make the parallel argument we will not repeat this again.
\end{proof}

\indent Then we can discuss the corresponding $B$-pairs:

\begin{definition} \mbox{\bf{(After Kedlaya-Liu \cite[Definition 9.3.11]{KL15})}}
	We now define a $B$-pair over the corresponding triplet $(\mathbb{B}_{\mathrm{dR},X,A}, \mathbb{B}_{e,X,A}, \mathbb{B}^+_{\mathrm{dR},X,A})$ is a triplet  $(\mathbb{M}_{\mathrm{dR},X,A}, \mathbb{M}_{e,X,A}, \mathbb{M}^+_{\mathrm{dR},X,A})$ which are the corresponding finite projective objects over the corresponding rings involved above, such that that we have the following isomorphisms:
\begin{displaymath}
\mathbb{M}_{\mathrm{dR},X,A}\overset{\sim}{\longrightarrow}\mathbb{M}_{\mathrm{dR},X,A}^+\otimes_{\mathbb{B}^+_{\mathrm{dR},X,A}}\mathbb{B}_{\mathrm{dR},X,A}\overset{\sim}{\longrightarrow}\mathbb{M}_{e,X,A}^+\otimes_{\mathbb{B}_{e,X,A}} \mathbb{B}_{\mathrm{dR},X,A}.
\end{displaymath}
Similarly as in \cite[Definition 9.4.4]{KL15} we can define the corresponding cohomology of the $B$-pairs by considering the hypercohomology of the following sequence:
\[
\xymatrix@R+0pc@C+0pc{
0\ar[r] \ar[r] \ar[r] &\mathbb{M}_{e,X,A}\bigoplus \mathbb{M}^+_{\mathrm{dR},X,A}    \ar[r] \ar[r] \ar[r]  &\mathbb{M}_{\mathrm{dR},X,A} \ar[r] \ar[r] \ar[r] &0,
}
\]
which is given by subtraction from the first coordinate by the second coordinate in the first non-zero term in this sequence.

\end{definition}

\begin{remark}
The finite projective objects mean those ones which are locally finite projective. Since we are carrying the corresponding coefficient $A$ which is essentially some nontrivial deformation, therefore locally on the space $X$ we have to further glue along $A$ in order to really consider the corresponding finite projective objects which are interesting enough in our development.\\	
\end{remark}

\indent Then we can discuss the corresponding $\mathcal{O}B$-pairs:

\begin{definition} \mbox{\bf{(After Kedlaya-Liu \cite[Definition 9.3.11]{KL15})}}
	We now define a $\mathcal{O}B$-pair over the corresponding triplet $(\mathcal{O}\mathbb{B}_{\mathrm{dR},X,A}, \mathcal{O}\mathbb{B}_{e,X,A}, \mathcal{O}\mathbb{B}^+_{\mathrm{dR},X,A})$ is a triplet $(\mathbb{M}_{\mathrm{dR},X,A}, \mathbb{M}_{e,X,A}, \mathbb{M}^+_{\mathrm{dR},X,A})$ which are the corresponding finite projective objects over the corresponding rings involved above, such that that we have the following isomorphisms:
\begin{displaymath}
\mathbb{M}_{\mathrm{dR},X,A}\overset{\sim}{\longrightarrow}\mathbb{M}_{\mathrm{dR},X,A}^+\otimes_{\mathcal{O}\mathbb{B}^+_{\mathrm{dR},X,A}}\mathcal{O}\mathbb{B}_{\mathrm{dR},X,A}\overset{\sim}{\longrightarrow}\mathbb{M}_{e,X,A}^+\otimes_{\mathcal{O}\mathbb{B}_{e,X,A}} \mathcal{O}\mathbb{B}_{\mathrm{dR},X,A}.
\end{displaymath}
Similarly as in \cite[Definition 9.4.4]{KL15} we can define the corresponding cohomology of the $B$-pairs by considering the hypercohomology of the following sequence:
\[
\xymatrix@R+0pc@C+0pc{
0\ar[r] \ar[r] \ar[r] &\mathbb{M}_{e,X,A}\bigoplus \mathbb{M}^+_{\mathrm{dR},X,A}    \ar[r] \ar[r] \ar[r]  &\mathbb{M}_{\mathrm{dR},X,A} \ar[r] \ar[r] \ar[r] &0,
}
\]
which is given by subtraction from the first coordinate by the second coordinate in the first non-zero term in this sequence.

\end{definition}

We now construct the corresponding deformed geometrization of the corresponding Bloch-Kato exponentials. To do that we consider the following derived functors which are inspired by \cite[Theorem 1.5]{LZ} which could be also dated back to Nakamura's work \cite[2.1]{Nakamura2} in the nonderived situation:

\begin{definition} 
For any $B$-pair we set the corresponding derived de Rham functor $D^\bullet_{\mathrm{dR}}$ and derived Hodge-Tate functor $D^\bullet_{\mathrm{HT}}$ on the $B$-pairs as in the following.
For any $B$-pair we set the corresponding derived cristalline functor $D^\bullet_{\mathrm{cris}}$ and derived semistable functor $D^\bullet_{\mathrm{st}}$ on the $B$-pairs as in the following. Consider a $B$-pair $(\mathbb{M}_{\mathrm{dR},X,A}, \mathbb{M}_{e,X,A}, \mathbb{M}^+_{\mathrm{dR},X,A})$ we have the following functors:
\begin{displaymath}
D_{\mathrm{dR}}^\bullet(\mathbb{M}_{\mathrm{dR},X,A}):=R^\bullet f_{\text{pro\'et},*} \mathbb{M}_{\mathrm{dR},X,A}\otimes_{\mathbb{B}_{\mathrm{dR},X,A}}	 \mathcal{O}\mathbb{B}_{\mathrm{dR},X,A},
\end{displaymath}
and
\begin{displaymath}
D_{\mathrm{HT}}^\bullet(\mathbb{M}_{\mathrm{dR},X,A}):=R^\bullet f_{\text{pro\'et},*} \mathbb{M}_{\mathrm{dR},X,A}\otimes_{\mathbb{B}_{\mathrm{dR},X,A}}	 \mathcal{O}\mathbb{B}_{\mathrm{HT},X,A}
\end{displaymath}
and
\begin{displaymath}
D_{\mathrm{cris}}^\bullet(\mathbb{M}_{\mathrm{dR},X,A}):=R^\bullet f_{\text{pro\'et},*} \mathbb{M}_{\mathrm{dR},X,A}\otimes_{\mathbb{B}_{\mathrm{dR},X,A}}	 \mathcal{O}\mathbb{B}_{\mathrm{cris},X,A}
\end{displaymath}
and
\begin{displaymath}
D_{\mathrm{st}}^\bullet(\mathbb{M}_{\mathrm{dR},X,A}):=R^\bullet f_{\text{pro\'et},*} \mathbb{M}_{\mathrm{dR},X,A}\otimes_{\mathbb{B}_{\mathrm{dR},X,A}}	 \mathcal{O}\mathbb{B}_{\mathrm{st},X,A}.
\end{displaymath}
We call that the $B$-pair de Rham if we have the following isomorphism:
\begin{displaymath}
f^*_\text{pro\'et}(f_{\text{pro\'et},*} \mathbb{M}_{\mathrm{dR},X,A}\otimes_{\mathbb{B}_{\mathrm{dR},X,A}}	 \mathcal{O}\mathbb{B}_{\mathrm{dR},X,A})\otimes\mathcal{O}\mathbb{B}_{\mathrm{dR},X,A} \overset{\sim}{\rightarrow} \mathbb{M}_{\mathrm{dR},X,A}\otimes_{\mathbb{B}_{\mathrm{dR},X,A}}	 \mathcal{O}\mathbb{B}_{\mathrm{dR},X,A},
\end{displaymath}
and we call this Hodge-Tate if we have the corresponding similar isomorphism. We will use the corresponding notation $\mathbb{D}$ to denote the corresponding functor on the derived category level. And we call that the $B$-pair cristalline if we have the corresponding isomorphism:
\begin{displaymath}
f^*_\text{pro\'et}(f_{\text{pro\'et},*} \mathbb{M}_{\mathrm{dR},X,A}\otimes	 \mathcal{O}\mathbb{B}_{\mathrm{cris},X,A})\otimes\mathcal{O}\mathbb{B}_{\mathrm{cris},X,A} \overset{\sim}{\rightarrow} \mathbb{M}_{\mathrm{dR},X,A}\otimes_{\mathbb{B}_{\mathrm{dR},X,A}}	 \mathcal{O}\mathbb{B}_{\mathrm{cris},X,A}.
\end{displaymath}
And we call that the $B$-pair semi-stable if we have the corresponding isomorphism:
\begin{displaymath}
f^*_\text{pro\'et}(f_{\text{pro\'et},*} \mathbb{M}_{\mathrm{dR},X,A}\otimes	 \mathcal{O}\mathbb{B}_{\mathrm{st},X,A})\otimes\mathcal{O}\mathbb{B}_{\mathrm{st},X,A} \overset{\sim}{\rightarrow} \mathbb{M}_{\mathrm{dR},X,A}\otimes_{\mathbb{B}_{\mathrm{dR},X,A}}	 \mathcal{O}\mathbb{B}_{\mathrm{st},X,A}.
\end{displaymath}	
\end{definition}

\indent Now we address some issues around the corresponding finiteness of the construction getting involved as in the above. First when the space $X$ is smooth and proper over the corresponding base fields, in the situation where $A$ is just $\mathbb{Q}_p$ we have in this case $f_\text{pro\'et,*} \mathbb{M}_{\mathrm{dR},X,A}\otimes_{\mathbb{B}_{\mathrm{dR},X,A}}	 \mathcal{O}\mathbb{B}_{\mathrm{dR},X,A}$ of any $B$-pair is coherent sheaf over $\mathcal{O}_X$, which is the consequence of \cite[Theorem 8.6.2 (a)]{KL16}. And moreover in this case since the corresponding space is smooth as noted in \cite[Definition 10.10]{KL3} we have that the corresponding existence of the connection $\nabla$ which will ensure the corresponding coherent sheaves are also projective modules. We now first consider the situation where we have the ring $A$ not just $\mathbb{Q}_p$ in more general situation. Certainly to study the corresponding $B$-pairs it will be basically convenient to relate to the corresponding $(\varphi,\Gamma)$-modules as in the earlier work from Kedlaya-Liu and Nakamura. To do so we can first consider the following higher dimensional generalization of \cite[Theorem 2.18]{KP}:

\begin{remark}
Note that at this moment here we assume that the corresponding base fields are of 0 characteristic.	
\end{remark}

\begin{proposition} \mbox{\bf{(After Kedlaya-Pottharst \cite[Theorem 2.18]{KP})}}
We look at the two groups of objects. In the first consideration we look at the corresponding finite locally free sheaves over the corresponding $A$-deformed version of the schematic Fargues-Fontaine curves attache to any local perfectoid chart for some ring $R$. In the other second consideration we look at the corresponding $A$-deformed version of the corresponding $B$-pairs defined above but after taking the corresponding section over some perfectoid domain associated to some ring $R$. Then we have the corresponding equivalence between the corresponding categories respecting the two groups of objects.	
\end{proposition}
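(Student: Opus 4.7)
The plan is to establish the equivalence via a Beauville-Laszlo style gluing argument, following the classical \cite[Theorem 2.18]{KP} but relativized simultaneously along the perfectoid chart for $R$ and along the deformation parameter $A$. First I would observe that the $A$-deformed schematic Fargues-Fontaine curve attached to the given perfectoid chart admits a natural two-piece cover extracted directly from the construction of the period sheaves: the affine open complement $\mathrm{Spec}\,\mathbb{B}_{e,X,A}(R)$ of the distinguished divisor cut out by the map $Y_A\to\mathrm{Proj}\,P_{R^\flat,A}$, together with the formal neighborhood $\mathrm{Spf}\,\mathbb{B}^+_{\mathrm{dR},X,A}(R)$ of this divisor, the two pieces intersecting along $\mathrm{Spec}\,\mathbb{B}_{\mathrm{dR},X,A}(R)$.

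Given this cover, the forward functor sends a finite locally free sheaf $\mathcal{F}$ on the $A$-deformed curve to the triple of its sections over the three loci, which immediately assembles into an $A$-deformed $B$-pair since finite locally freeness passes compatibly through flat restriction and yields the required compatibility isomorphism in the overlap. For the inverse functor, given a $B$-pair $(\mathbb{M}_{\mathrm{dR},X,A}, \mathbb{M}_{e,X,A}, \mathbb{M}^+_{\mathrm{dR},X,A})$ equipped with its compatibility isomorphism, I would glue $\mathbb{M}_{e,X,A}$ and $\mathbb{M}^+_{\mathrm{dR},X,A}$ along their common $\mathbb{B}_{\mathrm{dR},X,A}$-overlap via Beauville-Laszlo to produce a quasi-coherent sheaf on the curve; the finite projective hypothesis on each of the three modules, combined with the fact that finite projectivity can be checked after passage to the cover, then ensures that the glued object is genuinely finite locally free.

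That the two functors are quasi-inverse reduces to a bookkeeping check on sections: starting with a $B$-pair and restricting the glued sheaf recovers the three original modules tautologically, while starting with a finite locally free sheaf and gluing its three restrictions recovers it by faithfully flat descent along the two-piece cover. The comparison with the classical unramified, non-deformed version of \cite[Theorem 2.18]{KP} is then functorial in the obvious way.

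The main obstacle will be justifying the Beauville-Laszlo descent in the $A$-relative setting: one needs the distinguished divisor to remain a Cartier divisor locally cut out by a non-zero-divisor in $\mathbb{B}^+_{\mathrm{dR},X,A}$ after deformation, one needs the rings $\mathbb{B}_{e,X,A}$, $\mathbb{B}^+_{\mathrm{dR},X,A}$, and $\mathbb{B}_{\mathrm{dR},X,A}$ to remain sheafy, and one needs base change along $A$ to preserve flatness and finite projectivity through the gluing. This is precisely where the sousperfectoid hypothesis on $A$ from \cite{KH} is essential: it guarantees the requisite sheafiness and flatness of the deformed period rings so that the classical Beauville-Laszlo argument underlying \cite[Theorem 2.18]{KP} continues to apply in the present relative situation.
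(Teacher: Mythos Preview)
Your proposal is correct and takes essentially the same approach as the paper: the paper's proof consists of the single line ``As in \cite[Theorem 2.18]{KP} this is by Beauville-Laszlo descent,'' and your argument is precisely an unpacking of that Beauville-Laszlo gluing over the two-piece cover by $\mathrm{Spec}\,\mathbb{B}_{e,X,A}(R)$ and the formal neighborhood $\mathrm{Spf}\,\mathbb{B}^+_{\mathrm{dR},X,A}(R)$. Your additional remarks on the role of the sousperfectoid hypothesis for sheafiness and flatness are reasonable elaborations beyond what the paper records, but the underlying strategy is identical.
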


\begin{proof}
As in \cite[Theorem 2.18]{KP} this is by Beauville-Laszlo descent.	
\end{proof}

\begin{remark}
\indent Then we could basically translate the corresponding results on the cohomology of $B$-pairs to those on the corresponding cohomology of the vector bundles over the Fargues-Fontaine curves and further more to those on that of the Frobenius modules over the pro-\'etale site. Then we have that this will be further related to $(\varphi,\Gamma)$-modules over the imperfect Robba rings with respect to the corresponding base spaces. So we move on to consider the corresponding $(\varphi,\Gamma)$-modules over the corresponding pro-\'etale site.	
\end{remark}

\indent Following Nakamura one can further generalize the corresponding Bloch-Kato long exact sequence which provides the corresponding exponentials:

\begin{definition} \mbox{\bf{(After Bloch-Kato-Nakamura)}}
For $X$ smooth, consider any $(\varphi,\Gamma)$-module $M$ over the pro-\'etale sheaf $\widetilde{\Pi}^\infty_{X,A}$ we then take the corresponding pro-\'etale invariance, namely we consider the following two functor with the one $D_\mathrm{dR}$:
\begin{displaymath}
D^\bullet_\mathcal{O}(M):= R^\bullet f_{\text{pro\'et},*}(M\otimes_{\widetilde{\Pi}^\infty_{X,A}} (\mathcal{O}\mathbb{B}_{e,X,A}\bigcap \mathcal{O}\mathbb{B}^+_{\mathrm{dR},X,A})),	
\end{displaymath}
\begin{displaymath}
D^\bullet_\oplus(M):= R^\bullet f_{\text{pro\'et},*}(M\otimes_{\widetilde{\Pi}^\infty_{X,A}} (\mathcal{O}\mathbb{B}_{e,X,A}\bigoplus \mathcal{O}\mathbb{B}^+_{\mathrm{dR},X,A}),	
\end{displaymath}
as well as the corresponding definitions on the derived category level:
\begin{displaymath}
\mathbb{D}^\bullet_\mathcal{O}(M):= R^\bullet f_{\text{pro\'et},*}(M\otimes_{\widetilde{\Pi}^\infty_{X,A}} (\mathcal{O}\mathbb{B}_{e,X,A}\bigcap \mathcal{O}\mathbb{B}^+_{\mathrm{dR},X,A})),	
\end{displaymath}
\begin{displaymath}
\mathbb{D}^\bullet_\oplus(M):= R^\bullet f_{\text{pro\'et},*}(M\otimes_{\widetilde{\Pi}^\infty_{X,A}} (\mathcal{O}\mathbb{B}_{e,X,A}\bigoplus \mathcal{O}\mathbb{B}^+_{\mathrm{dR},X,A}),	
\end{displaymath}

Taking the corresponding complexes induced and considering the corresponding mapping cone we have the following distinguished triangle in $D(\mathcal{O}_X\widehat{\otimes}_{\mathbb{Q}_p}A)$ (the derived category of all the $\mathcal{O}_X\widehat{\otimes}_{\mathbb{Q}_p}A$-modules):
\[
\xymatrix@R+0pc@C+0pc{
C^\bullet_\mathcal{O}(M)\ar[r] \ar[r] \ar[r] &C^\bullet_\oplus(M)    \ar[r] \ar[r] \ar[r]  &C^\bullet_\mathrm{dR}(M) \ar[r] \ar[r] \ar[r] &C^\bullet_\mathcal{O}(M)[1].
}
\]
Consider the corresponding degree $0$ and $1$ we will have geometric and arithmetic family version of the corresponding Bloch-Kato-Nakamura exponential map:
\begin{displaymath}
h^0(C^\bullet_\mathrm{dR}(M))\rightarrow h^1(C^\bullet_\mathcal{O}(M)),	
\end{displaymath}
as the map of sheaves over $\mathcal{O}_X\widehat{\otimes}_{\mathbb{Q}_p}A$. Here we assume that the corresponding $B$-pairs getting involved are de Rham. 

\end{definition}

\begin{definition} \mbox{\bf{(After Kedlaya-Liu \cite[Definition 8.6.5]{KL16})}} 
For any $\varphi$-module $M$ over the sheaf of ring $\widetilde{\Pi}^\infty_{X,A}$ we set the corresponding derived de Rham functor $D^\bullet_{\mathrm{dR}}$ and derived Hodge-Tate functor $D^\bullet_{\mathrm{HT}}$ on the $\varphi$-modules $M$ over the sheaf of ring $\widetilde{\Pi}^\infty_{X,A}$ as in the following.  For any $\varphi$-module $M$ over the sheaf of ring $\widetilde{\Pi}^\infty_{X,A}$ we set the corresponding derived cristalline functor $D^\bullet_{\mathrm{cris}}$ and derived semistable functor $D^\bullet_{\mathrm{st}}$ on the $\varphi$-modules $M$ over the sheaf of ring $\widetilde{\Pi}^\infty_{X,A}$ as in the following. Consider a $\varphi$-module $M$ over the sheaf of ring $\widetilde{\Pi}^\infty_{X,A}$ we have the following functors:
\begin{displaymath}
D_{\mathrm{dR}}^\bullet(M):=R^\bullet f_{\text{pro\'et},*} M\otimes_{\widetilde{\Pi}^\infty_{X,A}}	 \mathcal{O}\mathbb{B}_{\mathrm{dR},X,A},
\end{displaymath}
and
\begin{displaymath}
D_{\mathrm{HT}}^\bullet(M):=R^\bullet f_{\text{pro\'et},*} M \otimes_{\widetilde{\Pi}^\infty_{X,A}}	 \mathcal{O}\mathbb{B}_{\mathrm{HT},X,A}
\end{displaymath}
and
\begin{displaymath}
D_{\mathrm{cris}}^\bullet(M):=R^\bullet f_{\text{pro\'et},*} M\otimes_{\widetilde{\Pi}^\infty_{X,A}}	 \mathcal{O}\mathbb{B}_{\mathrm{cris},X,A},
\end{displaymath}
and
\begin{displaymath}
D_{\mathrm{st}}^\bullet(M):=R^\bullet f_{\text{pro\'et},*} M \otimes_{\widetilde{\Pi}^\infty_{X,A}}	 \mathcal{O}\mathbb{B}_{\mathrm{st},X,A}.
\end{displaymath}
We call that the $B$-pair de Rham if we have the following isomorphism:
\begin{displaymath}
f^*_\text{pro\'et}(f_{\text{pro\'et},*} M\otimes_{\widetilde{\Pi}^\infty_{X,A}}	 \mathcal{O}\mathbb{B}_{\mathrm{dR},X,A})\otimes\mathcal{O}\mathbb{B}_{\mathrm{dR},X,A} \overset{\sim}{\rightarrow} M \otimes_{\widetilde{\Pi}^\infty_{X,A}}	 \mathcal{O}\mathbb{B}_{\mathrm{dR},X,A},
\end{displaymath}
and we call this Hodge-Tate if we have the corresponding similar isomorphism. We will use the corresponding notation $\mathbb{D}$ to denote the corresponding functor on the derived category level. And we define the corresponding properties of being cristalline and semistable in the same parallel way.
	
\end{definition}

\begin{proposition} \label{proposition4.42}
For general rigid analytic space $X$ (note that at this moment we are in the 0 characteristic situation), the corresponding cohomology $D^i_\mathrm{dR}(M)$ is coherent sheaf over $X$ for each $i$. In the situation where the space is smooth we have the further projectivity. 
\end{proposition}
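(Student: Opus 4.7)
\medskip

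\noindent\textbf{Proof strategy.} The plan is to reduce the statement to the $A=\mathbb{Q}_p$ coherence result of Kedlaya-Liu \cite[Theorem 8.6.2(a)]{KL16} via local descent together with a sousperfectoid base-change argument, and then to upgrade coherence to projectivity in the smooth case by the connection trick already invoked in \cite[Definition 10.10]{KL3} and \cite{LZ}.

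First I would use that both coherence and projectivity are local conditions on $X$, so one may replace $X$ by an affinoid subdomain $X_0$ admitting a toric chart. On such an $X_0$ the restricted toric tower $(H,H^+)$ of Section 3 is available and the period sheaves admit concrete descriptions. I would then invoke the preceding proposition (after Kedlaya-Pottharst \cite[Theorem 2.18]{KP}) to translate between $\varphi$-modules over $\widetilde{\Pi}^\infty_{X,A}$ and $A$-relative $B$-pairs, so that the higher direct images $D^\bullet_{\mathrm{dR}}(M)$ become the hypercohomology of a Herr-style $(\varphi^a,\Gamma)$-complex over the imperfect Robba sheaves constructed in Section 3, for which an abelian-category setup is already in hand via \cref{proposition6.3} and the pseudocoherent sheaf results for $\widetilde{\Pi}^{[s,r]}_{X,A}$.

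Next I would run the Kedlaya-Liu coherence argument in the present deformed setting. For $A=\mathbb{Q}_p$ this is literally \cite[Theorem 8.6.2(a)]{KL16}. To incorporate a general sousperfectoid coefficient $A$, I would exploit the hypothesis from \cite{KH} to verify that the formation of $\mathcal{O}\mathbb{B}_{\mathrm{dR},X,A}$ and of the relevant pro-\'etale pushforwards is compatible with base change along $\mathbb{Q}_p \to A$, so that the finite generation of each $D^i_{\mathrm{dR}}(M)$ propagates from the $\mathbb{Q}_p$-case, now with $\mathcal{O}_X \widehat{\otimes}_{\mathbb{Q}_p} A$ playing the role of the coefficient sheaf.

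For the projectivity in the smooth case, I would use the connection $\nabla$ on $\mathcal{O}\mathbb{B}_{\mathrm{dR},X,A}$ whose horizontal sections recover $\mathbb{B}_{\mathrm{dR},X,A}$: this $\nabla$ induces an integrable connection on $D^i_{\mathrm{dR}}(M)$ as an $\mathcal{O}_X \widehat{\otimes}_{\mathbb{Q}_p} A$-module, and any coherent module with integrable connection over a smooth rigid analytic space is locally free after shrinking. The main obstacle I expect is the base-change compatibility of the pro-\'etale pushforward: checking that the sousperfectoid hypothesis genuinely allows one to transport the finiteness argument of \cite[Theorem 8.6.2(a)]{KL16} along $\mathbb{Q}_p \to A$ without introducing extra completion or convergence issues when tensoring with $A$ is the step that requires real care, after which the remainder of the proof is a direct translation of the Kedlaya-Liu/Liu-Zhu framework to the deformed context.
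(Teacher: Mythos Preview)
Your outline has two gaps relative to the paper's argument.

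First, the reduction step for non-smooth $X$ is missing. You pass immediately to an affinoid $X_0$ ``admitting a toric chart,'' but restricted toric towers only exist over smooth affinoids. The paper handles the general case by first applying resolution of singularities (as in \cite[below Definition 10.10]{KL3}) to reduce to the smooth situation, and only then localizing to a toric chart. Without this step your argument proves nothing when $X$ is singular.

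Second, and more seriously, the plan to deduce coherence for general $A$ by a base-change argument from the $A=\mathbb{Q}_p$ case of \cite[Theorem 8.6.2(a)]{KL16} is not the route the paper takes, and it is not clear it can be made to work. The module $M$ lives over $\widetilde{\Pi}^\infty_{X,A}$ from the start; there is no undeformed object over $\widetilde{\Pi}^\infty_{X}$ from which it arises by $-\widehat{\otimes}A$, so there is nothing to which the undeformed theorem applies. Even granting such a descent, pro-\'etale higher direct images have no reason to commute with completed tensor by $A$, and the sousperfectoid hypothesis on $A$ (which is about sheafiness of the associated adic space) does not address this. The paper instead runs the entire finiteness argument in the $A$-deformed setting: over the toric tower one identifies $M\otimes\mathcal{O}\mathbb{B}_{\mathrm{dR},X,A}$ with a pseudocoherent $\Gamma$-module over $\overline{H}_\infty\widehat{\otimes}A$, passes through $\widetilde{\Pi}_A$ and $H_\infty\widehat{\otimes}A$, and then uses the decompleting-tower machinery to descend to a finite level $H_k\widehat{\otimes}A$. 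At that stage the base ring is noetherian and \cite[Corollary 5.9.5]{KL16} identifies the pro-\'etale pushforwards with ordinary $\Gamma$-cohomology, yielding coherence directly. The projectivity in the smooth case via the connection $\nabla$ is the one part of your sketch that matches the paper.
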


\begin{proof}
This is a relative version of the result of \cite[below Definition 10.10]{KL3}. Namely we consider the corresponding resolution of singularities and reduce to the corresponding smooth case. In the corresponding smooth case we further reduce to the corresponding case for the base of a corresponding toric tower in the restricted sense. Then we relate to pseudocoherent $\Gamma$-modules over $\overline{H}_\infty\widehat{\otimes}A$ in our context, which further relates to the the corresponding pseudocoherent $\Gamma$-modules over $\widetilde{\Pi}_A$ and then over $H_\infty\widehat{\otimes}A$. Then we descend to some finite level ring $H_k\widehat{\otimes}A$ for some $k$. Then we need to consider as in \cite[Corollary 5.9.5]{KL16} that the corresponding p.f. descent will allow us to realize the corresponding $\Gamma$-cohomology as the abstract one by using the higher period rings. Then this will realize the corresponding finiteness. The projectivity follows from the existence of the connection.
\end{proof}

\begin{proposition}
For general rigid analytic space $X$ (note that at this moment we are in the 0 characteristic situation), the corresponding cohomology $D^i_\mathrm{cris}(M),D^i_\mathrm{st}(M)$ are coherent sheaves over $X$ for each $i$. In the situation where the space is smooth we have the further projectivity.	
\end{proposition}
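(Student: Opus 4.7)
The plan is to mimic the proof of \cref{proposition4.42} almost verbatim, replacing the de Rham period sheaf $\mathcal{O}\mathbb{B}_{\mathrm{dR},X,A}$ with $\mathcal{O}\mathbb{B}_{\mathrm{cris},X,A}$ and $\mathcal{O}\mathbb{B}_{\mathrm{st},X,A}$ respectively, while keeping track of the additional Frobenius (and monodromy) structure carried by these sheaves. The first step is the reduction to the smooth situation through Temkin's desingularization (as in \cref{remark6.6}), using that $f_{\text{pro\'et},*}$ commutes with the corresponding pushforward along a proper modification for coherent outputs; this is the same formal device used for the de Rham case and does not interact with the Frobenius.

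Once we are smooth and may work locally, I would choose a restricted toric chart so that the relevant $(H,H^+)$ is a perfectoid finite \'etale tower of the kind handled in \cref{proposition6.20} and \cref{proposition615}. The idea is then to translate $D^i_{\mathrm{cris}}(M)$ and $D^i_{\mathrm{st}}(M)$ into $\Gamma$-cohomology of pseudocoherent $\Gamma$-modules over the deformed period rings. Concretely, I would replace $\mathcal{O}\mathbb{B}_{\mathrm{dR},X,A}$ with its $\varphi$-completed subsheaf $\mathcal{O}\mathbb{B}_{\mathrm{cris},X,A}$ (and in the semistable case the enlargement by $\log$), and invoke the same pseudocoherence-preservation chain: from sheaves over $\mathcal{O}\mathbb{B}_{\mathrm{cris},X,A}$ down to $\Gamma$-modules over $\overline{H}_\infty\widehat{\otimes}A$, then over $\widetilde{\Pi}_{H,A}$, then over $H_\infty\widehat{\otimes}A$, descending to a finite level $H_k\widehat{\otimes}A$. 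The Frobenius on $\mathcal{O}\mathbb{B}_{\mathrm{cris}}$ and the extra monodromy operator $N$ on $\mathcal{O}\mathbb{B}_{\mathrm{st}}$ act compatibly in this descent, and the $p.f.$-descent mechanism of \cite[Corollary 5.9.5]{KL16} identifies the sheaf-theoretic cohomology with the abstract $\Gamma$-cohomology computed via higher period rings. Pseudocoherence of the result then follows from \cref{proposition6.20}; the projectivity in the smooth case is extracted in the usual way from the existence of the Gauss-Manin connection induced by $\nabla$ on $\mathcal{O}\mathbb{B}_{\mathrm{cris},X,A}$ and $\mathcal{O}\mathbb{B}_{\mathrm{st},X,A}$.

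The main obstacle I expect is the interplay between the Frobenius completion defining $\mathcal{O}\mathbb{B}_{\mathrm{cris}}$ and the deformation parameter $A$: one needs to ensure that the pseudocoherence statement of \cref{proposition6.20} remains valid after imposing the Frobenius-stable subring condition, since $\mathcal{O}\mathbb{B}_{\mathrm{cris},X,A}$ is not obtained from $\mathcal{O}\mathbb{B}_{\mathrm{dR},X,A}^+$ by inverting a single nonzero-divisor but via a more delicate convergence condition. In the semistable case there is the further complication that $\mathcal{O}\mathbb{B}_{\mathrm{st}} = \mathcal{O}\mathbb{B}_{\mathrm{cris}}[\ell]$ carries the monodromy $N$, and one must check that the kernel of $N-N_M$ at each stage remains pseudocoherent, i.e., that the monodromy equation is compatible with the descent chain. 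Both points should be handled by observing that Frobenius and $N$ act $\widetilde{\Pi}^\infty_{X,A}$-semilinearly and commute with the base change steps, so pseudocoherence is preserved at each stage by the abelianness results of \cref{proposition6.3} applied fiberwise over $A$.
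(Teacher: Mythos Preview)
Your proposal is correct and follows essentially the same approach as the paper: the paper's own proof is the single sentence ``See the proof in the previous proposition,'' i.e.\ it simply points back to \cref{proposition4.42} and asserts the argument carries over verbatim. Your write-up is in fact more detailed than the paper's, since you flag the potential issues with the Frobenius completion in $\mathcal{O}\mathbb{B}_{\mathrm{cris}}$ and the monodromy operator on $\mathcal{O}\mathbb{B}_{\mathrm{st}}$; the paper does not explicitly discuss these and treats the crystalline and semistable cases as formally identical to the de Rham one.
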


\begin{proof}
See the proof in the previous proposition.
\end{proof}

\begin{corollary}
The corresponding sheaf
\begin{center}
 $f_{\text{pro\'et},*} \mathbb{M}_{\mathrm{dR},X,A}\otimes_{\mathbb{B}_{\mathrm{dR},X,A}}	 \mathcal{O}\mathbb{B}_{\mathrm{dR},X,A}$\end{center}
over $\mathcal{O}_X\widehat{\otimes}A$ is basically a coherent sheaf.	When we have that the corresponding space $X$ is smooth we could further have the corresponding projectivity in our context.  
\end{corollary}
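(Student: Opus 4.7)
The plan is to reduce the statement for $B$-pairs to the corresponding statement for $(\varphi,\Gamma)$-modules over the sheaf of Robba rings $\widetilde{\Pi}^\infty_{X,A}$, which has already been established as \cref{proposition4.42}. First I would invoke the Beauville--Laszlo style equivalence stated in the proposition after Kedlaya--Pottharst to pass from the given $B$-pair $(\mathbb{M}_{\mathrm{dR},X,A},\mathbb{M}_{e,X,A},\mathbb{M}^+_{\mathrm{dR},X,A})$ to a finite locally free sheaf over the $A$-deformed schematic Fargues--Fontaine curve attached to a local perfectoid chart. Gluing these locally produces an equivariant object which, by the categorical comparisons assembled in the first part of Section 3, corresponds to a $\varphi$-module $M$ over $\widetilde{\Pi}^\infty_{X,A}$ on the pro-\'etale site.

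Next I would match the de Rham period functors on the two sides. By construction of $\mathcal{O}\mathbb{B}_{\mathrm{dR},X,A}$ as a completion of $\mathbb{B}_{\mathrm{dR},X,A}$-algebras along the kernel of $\theta$, and by compatibility of the chosen identifications with base change to $\mathcal{O}\mathbb{B}_{\mathrm{dR},X,A}$, one checks that
\begin{equation*}
\mathbb{M}_{\mathrm{dR},X,A}\otimes_{\mathbb{B}_{\mathrm{dR},X,A}}\mathcal{O}\mathbb{B}_{\mathrm{dR},X,A}
\;\simeq\; M\otimes_{\widetilde{\Pi}^\infty_{X,A}}\mathcal{O}\mathbb{B}_{\mathrm{dR},X,A}
\end{equation*}
as pro-\'etale sheaves of $\mathcal{O}\mathbb{B}_{\mathrm{dR},X,A}$-modules. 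Taking $R^\bullet f_{\text{pro\'et},*}$ of both sides and specializing to degree zero then identifies the sheaf in the statement with $D^0_{\mathrm{dR}}(M)$ in the sense of the definition given just before \cref{proposition4.42}.

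With this identification in hand, coherence of the sheaf over $\mathcal{O}_X\widehat{\otimes}A$ is immediate from \cref{proposition4.42} applied in the case $i=0$. For the second assertion, when $X$ is smooth the same proposition upgrades coherence to projectivity, via the connection on $\mathcal{O}\mathbb{B}_{\mathrm{dR},X,A}$ descended from the Gauss--Manin type structure (this is the mechanism referred to in \cite[below Definition 10.10]{KL3}, extended to the $A$-deformed situation by the comparisons of Section 3).

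The main obstacle I expect is the coherence of the identification between the two de Rham decompletions in the second paragraph: one must verify that the Beauville--Laszlo gluing data matches the $\mathbb{B}_{\mathrm{dR}}$-lattice $\mathbb{M}^+_{\mathrm{dR},X,A}$ with the $+$-part of the period sheaf used to build $\mathcal{O}\mathbb{B}^+_{\mathrm{dR},X,A}$, in a manner compatible with the deformation by the sousperfectoid algebra $A$. Once this is set up, the reduction to \cref{proposition4.42} is formal, and the smooth-case projectivity follows from the same connection argument used there.
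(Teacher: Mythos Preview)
Your proposal is correct and follows essentially the same route the paper intends: the paper states this as a corollary with no proof, relying on the Remark preceding \cref{proposition4.42} which explains that results for $B$-pairs are to be translated, via the Beauville--Laszlo equivalence (the Kedlaya--Pottharst proposition), to results for $\varphi$-modules over $\widetilde{\Pi}^\infty_{X,A}$, and then \cref{proposition4.42} applies directly. Your write-up simply makes explicit the identification of de Rham period functors and the degree-zero specialization that the paper leaves implicit.
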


\begin{remark}
The construction above is generalization of \cite[Definition 8.6.5]{KL16}, which is different slightly from \cite[Section 3.2]{LZ}. But we can also push things down to the \'etale site along $g_\text{pro\'et}:X_\text{pro\'et}\rightarrow X_\text{\'et}$.	
\end{remark}

\begin{definition} 
For any $B$-pair we set the corresponding derived de Rham functor $E^\bullet_{\mathrm{dR}}$ and derived Hodge-Tate functor $E^\bullet_{\mathrm{HT}}$ on the $B$-pairs as in the following.
For any $B$-pair we set the corresponding derived cristalline functor $E^\bullet_{\mathrm{cris}}$ and derived semistable functor $E^\bullet_{\mathrm{st}}$ on the $B$-pairs as in the following. Consider a $B$-pair $(\mathbb{M}_{\mathrm{dR},X,A}, \mathbb{M}_{e,X,A}, \mathbb{M}^+_{\mathrm{dR},X,A})$ we have the following functors:
\begin{displaymath}
E_{\mathrm{dR}}^\bullet(\mathbb{M}_{\mathrm{dR},X,A}):=R^\bullet g_{\text{pro\'et},*} \mathbb{M}_{\mathrm{dR},X,A}\otimes_{\mathbb{B}_{\mathrm{dR},X,A}}	 \mathcal{O}\mathbb{B}_{\mathrm{dR},X,A},
\end{displaymath}
and
\begin{displaymath}
E_{\mathrm{HT}}^\bullet(\mathbb{M}_{\mathrm{dR},X,A}):=R^\bullet g_{\text{pro\'et},*} \mathbb{M}_{\mathrm{dR},X,A}\otimes_{\mathbb{B}_{\mathrm{dR},X,A}}	 \mathcal{O}\mathbb{B}_{\mathrm{HT},X,A}
\end{displaymath}
and
\begin{displaymath}
E_{\mathrm{cris}}^\bullet(\mathbb{M}_{\mathrm{dR},X,A}):=R^\bullet g_{\text{pro\'et},*} \mathbb{M}_{\mathrm{dR},X,A}\otimes_{\mathbb{B}_{\mathrm{dR},X,A}}	 \mathcal{O}\mathbb{B}_{\mathrm{cris},X,A}
\end{displaymath}
and
\begin{displaymath}
E_{\mathrm{st}}^\bullet(\mathbb{M}_{\mathrm{dR},X,A}):=R^\bullet g_{\text{pro\'et},*} \mathbb{M}_{\mathrm{dR},X,A}\otimes_{\mathbb{B}_{\mathrm{dR},X,A}}	 \mathcal{O}\mathbb{B}_{\mathrm{st},X,A}.
\end{displaymath}
We call that the $B$-pair de Rham if we have the following isomorphism:
\begin{displaymath}
g^*_\text{pro\'et}(g_{\text{pro\'et},*} \mathbb{M}_{\mathrm{dR},X,A}\otimes_{\mathbb{B}_{\mathrm{dR},X,A}}	 \mathcal{O}\mathbb{B}_{\mathrm{dR},X,A})\otimes\mathcal{O}\mathbb{B}_{\mathrm{dR},X,A} \overset{\sim}{\rightarrow} \mathbb{M}_{\mathrm{dR},X,A}\otimes_{\mathbb{B}_{\mathrm{dR},X,A}}	 \mathcal{O}\mathbb{B}_{\mathrm{dR},X,A},
\end{displaymath}
and we call this Hodge-Tate if we have the corresponding similar isomorphism. We will use the corresponding notation $\mathbb{E}$ to denote the corresponding functor on the derived category level. And we call that the $B$-pair cristalline if we have the corresponding isomorphism:
\begin{displaymath}
g^*_\text{pro\'et}(g_{\text{pro\'et},*} \mathbb{M}_{\mathrm{dR},X,A}\otimes	 \mathcal{O}\mathbb{B}_{\mathrm{cris},X,A})\otimes\mathcal{O}\mathbb{B}_{\mathrm{cris},X,A} \overset{\sim}{\rightarrow} \mathbb{M}_{\mathrm{dR},X,A}\otimes_{\mathbb{B}_{\mathrm{dR},X,A}}	 \mathcal{O}\mathbb{B}_{\mathrm{cris},X,A}.
\end{displaymath}
And we call that the $B$-pair semi-stable if we have the corresponding isomorphism:
\begin{displaymath}
g^*_\text{pro\'et}(g_{\text{pro\'et},*} \mathbb{M}_{\mathrm{dR},X,A}\otimes	 \mathcal{O}\mathbb{B}_{\mathrm{st},X,A})\otimes\mathcal{O}\mathbb{B}_{\mathrm{st},X,A} \overset{\sim}{\rightarrow} \mathbb{M}_{\mathrm{dR},X,A}\otimes_{\mathbb{B}_{\mathrm{dR},X,A}}	 \mathcal{O}\mathbb{B}_{\mathrm{st},X,A}.
\end{displaymath}	
\end{definition}

\begin{definition} \mbox{\bf{(After Kedlaya-Liu \cite[Definition 8.6.5]{KL16})}} 
For any $\varphi$-module $M$ over the sheaf of ring $\widetilde{\Pi}^\infty_{X,A}$ we set the corresponding derived de Rham functor $E^\bullet_{\mathrm{dR}}$ and derived Hodge-Tate functor $E^\bullet_{\mathrm{HT}}$ on the $\varphi$-modules $M$ over the sheaf of ring $\widetilde{\Pi}^\infty_{X,A}$ as in the following.  For any $\varphi$-module $M$ over the sheaf of ring $\widetilde{\Pi}^\infty_{X,A}$ we set the corresponding derived cristalline functor $E^\bullet_{\mathrm{cris}}$ and derived semistable functor $E^\bullet_{\mathrm{st}}$ on the $\varphi$-modules $M$ over the sheaf of ring $\widetilde{\Pi}^\infty_{X,A}$ as in the following. Consider a $\varphi$-module $M$ over the sheaf of ring $\widetilde{\Pi}^\infty_{X,A}$ we have the following functors:
\begin{displaymath}
E_{\mathrm{dR}}^\bullet(M):=R^\bullet g_{\text{pro\'et},*} M\otimes_{\widetilde{\Pi}^\infty_{X,A}}	 \mathcal{O}\mathbb{B}_{\mathrm{dR},X,A},
\end{displaymath}
and
\begin{displaymath}
E_{\mathrm{HT}}^\bullet(M):=R^\bullet g_{\text{pro\'et},*} M \otimes_{\widetilde{\Pi}^\infty_{X,A}}	 \mathcal{O}\mathbb{B}_{\mathrm{HT},X,A}
\end{displaymath}
and
\begin{displaymath}
E_{\mathrm{cris}}^\bullet(M):=R^\bullet g_{\text{pro\'et},*} M\otimes_{\widetilde{\Pi}^\infty_{X,A}}	 \mathcal{O}\mathbb{B}_{\mathrm{cris},X,A},
\end{displaymath}
and
\begin{displaymath}
E_{\mathrm{st}}^\bullet(M):=R^\bullet g_{\text{pro\'et},*} M \otimes_{\widetilde{\Pi}^\infty_{X,A}}	 \mathcal{O}\mathbb{B}_{\mathrm{st},X,A}.
\end{displaymath}
We call that the $B$-pair de Rham if we have the following isomorphism:
\begin{displaymath}
g^*_\text{pro\'et}(g_{\text{pro\'et},*} M\otimes_{\widetilde{\Pi}^\infty_{X,A}}	 \mathcal{O}\mathbb{B}_{\mathrm{dR},X,A})\otimes\mathcal{O}\mathbb{B}_{\mathrm{dR},X,A} \overset{\sim}{\rightarrow} M \otimes_{\widetilde{\Pi}^\infty_{X,A}}	 \mathcal{O}\mathbb{B}_{\mathrm{dR},X,A},
\end{displaymath}
and we call this Hodge-Tate if we have the corresponding similar isomorphism. We will use the corresponding notation $\mathbb{E}$ to denote the corresponding functor on the derived category level. And we define the corresponding properties of being cristalline and semistable in the same parallel way.
	
\end{definition}

\begin{proposition} \label{proposition4.45}
For general rigid analytic space $X$ (note that at this moment we are in the 0 characteristic situation), the corresponding cohomology $E^i_\mathrm{dR}(M)$ for each $i$ is a coherent sheaf over $X$. In the situation where the space is smooth we have the further projectivity.	
\end{proposition}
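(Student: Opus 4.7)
The plan is to mirror essentially verbatim the strategy used for \cref{proposition4.42}, adapting at each step the target site from the analytic site $X$ to the \'etale site $X_\text{\'et}$. The only structural difference between $D^i_\mathrm{dR}(M)$ and $E^i_\mathrm{dR}(M)$ is the choice of target morphism $f_\text{pro\'et}$ versus $g_\text{pro\'et}$; since the two morphisms are related by the canonical morphism $X_\text{\'et}\to X$ along which coherent sheaves correspond to coherent analytic sheaves, it is reasonable to organize the argument so that coherence on $X$ is deduced locally from coherence of the corresponding pro-\'etale pushforward computed \'etale-locally. Concretely, I would first apply Temkin's resolution of singularities (as in \cref{remark6.6}) to reduce the coherence statement to the smooth situation, since coherence is a local property, and work \'etale-locally on a smooth affinoid chart $X_0 \subset X$.

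Next, on such a smooth affinoid chart I would choose a restricted toric tower as in the setting preceding \cref{proposition615}, so that the period sheaves $\mathcal{O}\mathbb{B}_{\mathrm{dR},X,A}$ admit concrete presentations in terms of $\widetilde{\Pi}_{H,A}$-type rings. Under this local chart the computation of $E^i_\mathrm{dR}(M)$ reduces, via the basic property that perfectoid affinoids form a basis for the pro-\'etale site and that \'etale descent is well-behaved, to computing $\Gamma$-cohomology of the corresponding pseudocoherent $\Gamma$-module $M\otimes \mathcal{O}\mathbb{B}_{\mathrm{dR}}$ over the appropriate period ring after pushforward to $X_\text{\'et}$. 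I would then invoke \cref{proposition6.20} together with \cite[Corollary 5.9.5]{KL16} (the p.f. descent already used in the proof of \cref{proposition4.42}) to descend the $\Gamma$-cohomology computation to a finite level $H_k\widehat{\otimes}A$ of the tower, at which point the $\Gamma$-cohomology is computed by an abstract finite complex of finitely generated modules, yielding coherence of each $E^i_\mathrm{dR}(M)$ after sheafification on $X_\text{\'et}$ and pushforward to $X$.

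For the projectivity statement in the smooth case, I would use the existence of the integrable connection $\nabla$ on $M\otimes_{\widetilde{\Pi}^\infty_{X,A}}\mathcal{O}\mathbb{B}_{\mathrm{dR},X,A}$, exactly as in \cite[below Definition 10.10]{KL3}: the coherent sheaves $E^i_\mathrm{dR}(M)$ acquire a canonical connection, and a coherent sheaf over a smooth rigid space equipped with an integrable connection is automatically locally free, hence finite projective over $\mathcal{O}_X\widehat{\otimes}A$.

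The main obstacle will be carrying out the reduction to the finite level of the tower in a way that is genuinely compatible with the \'etale pushforward $g_{\text{pro\'et},*}$ rather than the analytic one $f_{\text{pro\'et},*}$: one must check that the comparison of $\Gamma$-cohomology with the pro-\'etale pushforward of the relevant period sheaves, after the p.f.\ descent step, survives the replacement of the analytic target by the \'etale target. This should follow from the basic fact that the morphism $X_\text{\'et}\to X$ is quasi-compact quasi-separated with trivial higher direct images on coherent sheaves, so that no new cohomology is introduced at this last stage, but it is the one place where the proof is not purely formal from the analogous statement for $D^i_\mathrm{dR}$.
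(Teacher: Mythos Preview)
Your approach is correct but unnecessarily laborious. The paper's proof is a two-line formal deduction from \cref{proposition4.42}: one factors the morphism $f_\text{pro\'et}:X_\text{pro\'et}\to X_\mathrm{an}$ as the composite $X_\text{pro\'et}\xrightarrow{g_\text{pro\'et}} X_\text{\'et}\xrightarrow{f_{\text{\'et},\mathrm{an}}} X_\mathrm{an}$, and then uses that $f_{\text{\'et},\mathrm{an}}$ realizes an equivalence between coherent $\mathcal{O}_{X_\text{\'et}}$-modules and coherent $\mathcal{O}_{X_\mathrm{an}}$-modules. Since $f_{\text{pro\'et},*}=f_{\text{\'et},\mathrm{an},*}\circ g_{\text{pro\'et},*}$ and the already-established coherence of $D^i_\mathrm{dR}(M)=R^i f_{\text{pro\'et},*}(\cdots)$ holds, the coherence of $E^i_\mathrm{dR}(M)=R^i g_{\text{pro\'et},*}(\cdots)$ follows immediately from this equivalence, and likewise for projectivity in the smooth case.

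You in fact identify precisely this ingredient in your final paragraph---that $X_\text{\'et}\to X$ has trivial higher direct images on coherent sheaves---but you treat it as a compatibility check at the end of a repeated argument rather than as the main reduction. The paper's route avoids re-running resolution of singularities, the toric tower, and the p.f.\ descent entirely: all of that work is already packaged in \cref{proposition4.42}, and the factorization through $X_\text{\'et}$ transfers it for free. Your version would of course work, but it duplicates effort and obscures the logical dependence on the analytic-site result.
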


\begin{proof}
One looks at the following composition:
\begin{align}
X_{\text{pro\'et}}\rightarrow X_{\text{\'et}} \rightarrow X_\mathrm{an}.	
\end{align}
We call the second functor $f_{\text{\'et},\text{an}}$. In effect, the second one realizes the corresponding equivalence between the category of all the coherent $\mathcal{O}_{X_{\text{\'et}}}$-modules and the category of all the coherent $\mathcal{O}_{X_{\text{an}}}$-modules. We have that $f_{\text{pro\'et},*}=f_{\text{\'et},\text{an},*}g_{\text{pro\'et},*}$. Then the result follows from the corresponding \cref{proposition4.42}.
\end{proof}

\begin{proposition} 
For general rigid analytic space $X$ (note that at this moment we are in the 0 characteristic situation), the corresponding cohomology $E^i_\mathrm{cris}(M),E^i_\mathrm{st}(M)$ for each $i$ are coherent sheaves over $X$. In the situation where the space is smooth we have the further projectivity.	
\end{proposition}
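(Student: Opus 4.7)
The plan is to mimic almost verbatim the argument given for the de Rham version in \cref{proposition4.45}, simply swapping the period sheaves $\mathcal{O}\mathbb{B}_{\mathrm{dR},X,A}$ for $\mathcal{O}\mathbb{B}_{\mathrm{cris},X,A}$ and $\mathcal{O}\mathbb{B}_{\mathrm{st},X,A}$, and then invoke the cristalline/semistable analog of \cref{proposition4.42} (which is the proposition stated immediately after \cref{proposition4.42} and was already established by the same reduction argument).

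First I would factor the pro-\'etale-to-analytic morphism as
\begin{displaymath}
X_{\text{pro\'et}} \longrightarrow X_{\text{\'et}} \longrightarrow X_{\mathrm{an}},
\end{displaymath}
writing $g_\text{pro\'et}$ for the first arrow and $f_{\text{\'et},\text{an}}$ for the second, so that $f_{\text{pro\'et},*} = f_{\text{\'et},\text{an},*}\circ g_{\text{pro\'et},*}$ on the derived level. By definition, $E^i_\mathrm{cris}(M) = R^i g_{\text{pro\'et},*}\bigl(M\otimes_{\widetilde{\Pi}^\infty_{X,A}} \mathcal{O}\mathbb{B}_{\mathrm{cris},X,A}\bigr)$ and similarly for $E^i_\mathrm{st}(M)$. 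Pushing these further down to the analytic site via $f_{\text{\'et},\text{an},*}$ recovers exactly $D^i_\mathrm{cris}(M)$ and $D^i_\mathrm{st}(M)$, which are coherent by the cristalline/semistable version of \cref{proposition4.42}.

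Since $f_{\text{\'et},\text{an},*}$ realizes an equivalence between the categories of coherent $\mathcal{O}_{X_\text{\'et}}$-modules and coherent $\mathcal{O}_{X_\mathrm{an}}$-modules (the standard GAGA-type statement for coherent sheaves on the small \'etale site of a rigid space), the coherence of the pushforward to the analytic site pulls back to coherence of $E^i_\mathrm{cris}(M)$ and $E^i_\mathrm{st}(M)$ on $X_\text{\'et}$, which is the required statement.

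For the projectivity claim in the smooth case, I would propagate the same argument used in \cref{proposition4.42}: the existence of the connection on the cristalline/semistable period sheaves (coming from the smoothness of $X$ and the standard construction of $\nabla$ on $\mathcal{O}\mathbb{B}_{\mathrm{cris}}$ and $\mathcal{O}\mathbb{B}_{\mathrm{st}}$ exactly as for $\mathcal{O}\mathbb{B}_{\mathrm{dR}}$) forces the coherent sheaves to be locally free. The main obstacle, as in the de Rham case, is not really the cohomological reduction itself but verifying that the pseudocoherent $\Gamma$-module reduction (through the chain $\overline{H}_\infty\widehat{\otimes}A \leftrightarrow \widetilde{\Pi}_A \leftrightarrow H_\infty\widehat{\otimes}A$ and descent to a finite level $H_k\widehat{\otimes}A$) still functions when $\mathcal{O}\mathbb{B}_{\mathrm{dR}}$ is replaced by its cristalline or semistable variant; this is handled by the fact that the cristalline and semistable sheaves sit inside $\mathcal{O}\mathbb{B}_{\mathrm{dR}}$ with compatible $\Gamma$- and Frobenius-structure, so the same p.f.\ descent of \cite[Corollary 5.9.5]{KL16} applies without essential modification.
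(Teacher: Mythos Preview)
Your proposal is correct and follows exactly the approach the paper takes: the paper's own proof is simply ``See the previous proposition,'' i.e.\ rerun the argument of \cref{proposition4.45} (factor $f_{\text{pro\'et},*}=f_{\text{\'et},\text{an},*}\circ g_{\text{pro\'et},*}$, use the equivalence of coherent sheaves on the \'etale and analytic sites, and reduce to the cristalline/semistable analog of \cref{proposition4.42}). Your write-up is in fact more detailed than the paper's, but the underlying strategy is identical.
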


\begin{proof}
See the previous proposition.	
\end{proof}

\begin{corollary}
The corresponding sheaf
\begin{center}
 $g_{\text{pro\'et},*} \mathbb{M}_{\mathrm{dR},X,A}\otimes_{\mathbb{B}_{\mathrm{dR},X,A}}	 \mathcal{O}\mathbb{B}_{\mathrm{dR},X,A}$
\end{center}
over $\mathcal{O}_X\widehat{\otimes}A$ is basically a coherent sheaf.	When we have that the corresponding space $X$ is smooth we could further have the corresponding projectivity in our context. 
\end{corollary}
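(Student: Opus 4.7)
The plan is to deduce this corollary almost immediately from the preceding \cref{proposition4.45}, since the sheaf in question is exactly the degree-zero piece of $E^\bullet_{\mathrm{dR}}$ applied to the $B$-pair, translated across the equivalence between $B$-pairs and $(\varphi,\Gamma)$-modules (or, equivalently, $\varphi$-modules over $\widetilde{\Pi}^\infty_{X,A}$). First I would invoke the higher-dimensional Kedlaya--Pottharst equivalence stated above to pass from the $B$-pair $(\mathbb{M}_{\mathrm{dR},X,A}, \mathbb{M}_{e,X,A}, \mathbb{M}^+_{\mathrm{dR},X,A})$ to the associated $A$-deformed finite locally free sheaf on the Fargues--Fontaine curve, and hence to a $\varphi$-module $M$ over $\widetilde{\Pi}^\infty_{X,A}$. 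Under this equivalence one checks that the two derived pushforwards agree, so
\begin{displaymath}
g_{\text{pro\'et},*}\bigl(\mathbb{M}_{\mathrm{dR},X,A}\otimes_{\mathbb{B}_{\mathrm{dR},X,A}} \mathcal{O}\mathbb{B}_{\mathrm{dR},X,A}\bigr) \;\cong\; E^0_{\mathrm{dR}}(M).
\end{displaymath}

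Next I would apply \cref{proposition4.45} in degree $i=0$ to conclude that the right-hand side is a coherent sheaf over $\mathcal{O}_X\widehat{\otimes}A$, and that it is projective when $X$ is smooth. The latter assertion uses the existence of the connection $\nabla$ on $\mathcal{O}\mathbb{B}_{\mathrm{dR},X,A}$ in the smooth situation, which ensures that the coherent sheaf $E^0_{\mathrm{dR}}(M)$ acquires an integrable connection and hence is locally free, as already exploited in the proof of \cref{proposition4.42}.

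The main obstacle, which is essentially absorbed into \cref{proposition4.45}, is verifying that the $B$-pair to $\varphi$-module translation is compatible with the two different tensor product operations (one with $\mathcal{O}\mathbb{B}_{\mathrm{dR},X,A}$ over $\mathbb{B}_{\mathrm{dR},X,A}$, the other over $\widetilde{\Pi}^\infty_{X,A}$) at the level of derived pro-\'etale pushforward. I would handle this by working locally on a perfectoid chart $Y$ where both constructions can be spelled out explicitly in terms of modules over the completed local period rings, and then applying Beauville--Laszlo descent along the zero locus to identify the two pushforwards; once matched locally, compatibility with restriction maps between perfectoid subdomains gives a global isomorphism of sheaves on $X_{\mathrm{an}}$. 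The rest is formal: coherence and projectivity then descend from \cref{proposition4.45} with no additional work.
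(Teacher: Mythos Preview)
Your proposal is correct and matches the paper's approach: the corollary is stated without proof in the paper, being an immediate consequence of \cref{proposition4.45} applied in degree $i=0$ after translating the $B$-pair to the associated $\varphi$-module via the Kedlaya--Pottharst equivalence recalled just before. Your discussion of the compatibility of the two tensor-product descriptions via Beauville--Laszlo descent on a perfectoid chart is more explicit than anything the paper writes, but it is precisely the content absorbed into the earlier remark that one may ``translate the corresponding results on the cohomology of $B$-pairs to those on the corresponding cohomology of the vector bundles over the Fargues--Fontaine curves and furthermore to those on that of the Frobenius modules.''
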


\indent Now we consider the corresponding reconstruction process as in the philosophy of \cite{KP}. 

\begin{setting}
Namely we take the space $X$ to be smooth, and we take the corresponding $A$ to be some local chart of some abelian Fr\'echet-Stein algebra $A_\infty$ (but the local quasi-compacts are assumed to be within our sousperfectoid requirement). Take a $B$-pair $M$ now over $\mathcal{O}\mathbb{B}_{\mathrm{dR},X,A_\infty}$ namely let us require the corresponding finiteness and projectivity at the infinite level, as a family of $B$-pair over $\mathcal{O}\mathbb{B}_{\mathrm{dR},X,A_n}$ for all $n$.
\end{setting}

\begin{proposition}
Let $X$ be the space attached to $E\{T_1,...,T_n\}$, and define the functor $\mathbb{D}^\bullet_\mathrm{dR}$ as above for a family $M$ in the previous setting over $\mathcal{O}\mathbb{B}_{\mathrm{dR},X,A_\infty}$. Then we have that $\mathbb{D}^\bullet_\mathrm{dR}$ consists of coadmissible and finite projective modules over $\mathcal{O}_X\widehat{\otimes}A_\infty$ where $A_\infty$ is assumed to come from a $p$-adic Lie group as in \cite[Theorem 3.10]{Z1} (but the local quasi-compacts are assumed to be within our sousperfectoid requirement).
\end{proposition}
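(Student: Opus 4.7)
The plan is to reduce the statement to the finite-level result already established as \cref{proposition4.42} and then reassemble the pieces through the Fr\'echet-Stein formalism recalled in \cite[Theorem 3.10]{Z1}. First I would fix the Fr\'echet-Stein presentation $A_\infty=\varprojlim_n A_n$ and write $M_n$ for the restriction of the family $M$ to $\mathcal{O}\mathbb{B}_{\mathrm{dR},X,A_n}$. By the setting, each $M_n$ is a finite projective $B$-pair over a sousperfectoid base, so \cref{proposition4.42} applies and produces a coherent $\mathcal{O}_X\widehat{\otimes}A_n$-module $\mathbb{D}^i_\mathrm{dR}(M_n)$ for every $i$, which is moreover projective because $X=\mathrm{Spa}(E\{T_1,\ldots,T_n\})$ is smooth.

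Second, I would establish the base-change compatibility
\begin{displaymath}
\mathbb{D}^i_\mathrm{dR}(M_{n+1})\otimes_{A_{n+1}}A_n \;\overset{\sim}{\longrightarrow}\; \mathbb{D}^i_\mathrm{dR}(M_n)
\end{displaymath}
for all $n,i$. This rests on two inputs: the transition maps $A_{n+1}\to A_n$ coming from the Fr\'echet-Stein presentation of \cite[Theorem 3.10]{Z1} are flat, and the derived pushforward along $f_\text{pro\'et}$ commutes with flat base change on the coefficient algebra because the period sheaves $\mathcal{O}\mathbb{B}_{\mathrm{dR},X,-}$ are themselves defined by a flat deformation along $A$. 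Once this compatibility is in place the projective system $\{\mathbb{D}^i_\mathrm{dR}(M_n)\}_n$ presents finite projective modules at every finite level together with transition isomorphisms that recover lower levels after base change, which is exactly the data of a coadmissible module over the Fr\'echet-Stein algebra $\mathcal{O}_X\widehat{\otimes}A_\infty$ in the sense of Schneider-Teitelbaum. Passing to the inverse limit then defines $\mathbb{D}^i_\mathrm{dR}(M):=\varprojlim_n \mathbb{D}^i_\mathrm{dR}(M_n)$, which is coadmissible by construction; finite projectivity at the infinite level is a formal consequence, since a coadmissible module built from finite projective pieces with compatible flat transition maps is finite projective over the Fr\'echet-Stein algebra.

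The main obstacle I anticipate is the base-change compatibility in the second step, because the pro-\'etale pushforward of $M_n\otimes\mathcal{O}\mathbb{B}_{\mathrm{dR},X,A_n}$ is not obviously flat over $A_n$, so one has to upgrade the compatibility from the underived to the derived level and verify that the higher $\mathrm{Tor}$-terms vanish along the Fr\'echet-Stein tower. The way I would handle this is by invoking the projectivity statement of \cref{proposition4.42} already at each finite level (so that the cohomology sheaves are flat over $\mathcal{O}_X\widehat{\otimes}A_n$), combined with the resolution-of-singularities and restricted toric-tower reduction used in the proof of \cref{proposition4.42} to identify $\mathbb{D}^i_\mathrm{dR}(M_n)$ with the abstract $\Gamma$-cohomology of a pseudocoherent $(\varphi,\Gamma)$-module, where flat base change along $A_{n+1}\to A_n$ is standard. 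After this is secured, the remainder of the argument is a formal unwinding of the Fr\'echet-Stein formalism of \cite{Z1}.
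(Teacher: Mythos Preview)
Your reduction to the finite levels via \cref{proposition4.42}, together with the base-change compatibility you outline, is exactly how the paper obtains coadmissibility (the paper compresses this into one sentence, but your unpacking is the right content). So the first half of your proposal matches the paper.

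The gap is in the finite projectivity step. Your claim that ``a coadmissible module built from finite projective pieces with compatible flat transition maps is finite projective over the Fr\'echet--Stein algebra'' is \emph{not} a formal consequence of the Schneider--Teitelbaum formalism: coadmissible modules over a Fr\'echet--Stein algebra need not be finitely generated, and having finite projective specializations at each $A_n$ does not by itself force global finite projectivity over $\mathcal{O}_X\widehat{\otimes}A_\infty$. (Already for the open unit disc one needs a genuine theorem, not a formality, to pass from a compatible family of vector bundles on closed subdiscs to a finite free module over the Fr\'echet algebra.) The paper does not argue this way. Instead it uses an extra piece of structure you have not invoked: the de Rham functor produces a connection $\nabla_X$ on $\mathbb{D}^\bullet_{\mathrm{dR}}(M)$, and the paper appeals to Z\'abr\'adi \cite[Theorem 3.10]{Z1} --- a result about modules with connection over generalized Robba/Fr\'echet--Stein rings --- to promote the coadmissible module to a finite projective one. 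So you should replace your ``formal consequence'' sentence with the observation that $\mathbb{D}^\bullet_{\mathrm{dR}}(M)$ carries $\nabla_X$ and then cite Z\'abr\'adi's theorem for the passage to finite projectivity at the infinite level.
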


\begin{proof}
Our result above for $A_n$ for each $n=0,1,...$ shows the coadmissibility. The finite projectivity follows from the existence of the connection $\nabla_X$ and a theorem due to Z\'ab\'radi \cite[Theorem 3.10]{Z1}.	
\end{proof}

\begin{conjecture}
If the space $X$ is seminormal, then we have the same result.
\end{conjecture}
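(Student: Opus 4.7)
The plan is to reduce the seminormal case to the smooth case already handled in the previous proposition, via the standard desingularization machinery used throughout Section~4.2. Seminormality of $X$ permits a Temkin-style proper morphism $h\colon Y \to X$ with $Y$ smooth and affinoid (as in \cref{remark6.6}, invoking the excellence of $k_\Delta$-affinoid algebras). Crucially, seminormality of $X$ gives the identifications $\mathcal{O}_X \xrightarrow{\sim} h_*\mathcal{O}_Y$ and $\mathcal{O}_X \xrightarrow{\sim} f_{\text{pro\'et},*}\widehat{\mathcal{O}}_X$ established earlier in this paper, and these should extend to the deformed versions after base change with each $A_n$.

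First I would pull the family $M$ back along $h$. By functoriality of the pro-\'etale period sheaves the pullback $h^*M$ is a family of $B$-pairs over $\mathcal{O}\mathbb{B}_{\mathrm{dR},Y,A_\infty}$, and the previous proposition applies on the smooth space $Y$ to give coadmissible and finite projective $\mathbb{D}^\bullet_{\mathrm{dR}}(h^*M)$ over $\mathcal{O}_Y\widehat{\otimes}A_\infty$. Next I would descend along $h_*$: at each finite level $A_n$, \cref{proposition4.42} already gives the coherence of $D^i_{\mathrm{dR}}(M)$ over $\mathcal{O}_X\widehat{\otimes}A_n$ for general rigid $X$, so coadmissibility over the Fr\'echet-Stein algebra $A_\infty$ follows formally from the projective-system compatibility, precisely as in the smooth case. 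The geometric descent uses the projection formula together with $h_*\mathcal{O}_Y = \mathcal{O}_X$ and the base-change compatibility $h^*\mathbb{D}^\bullet_{\mathrm{dR}}(M) \simeq \mathbb{D}^\bullet_{\mathrm{dR}}(h^*M)$, the latter reducing to the analogous statement for the period sheaves on the smooth locus.

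The main obstacle will be the preservation of \textbf{finite projectivity} over $\mathcal{O}_X\widehat{\otimes}A_\infty$, since the smooth argument used the existence of the geometric connection $\nabla_X$ which is not available at non-smooth points of $X$, and $h_*$ of a projective module is in general only coherent. My proposed workaround is to observe that the horizontal structure used in the Z\'ab\'radi-type argument \cite[Theorem 3.10]{Z1} is intrinsic to the $\mathcal{O}\mathbb{B}_{\mathrm{dR}}$-level connection on the pro-\'etale site rather than to smoothness of $X$ itself, so $\mathbb{D}^\bullet_{\mathrm{dR}}(M)$ retains a residual horizontal structure (possibly with mild logarithmic poles along the singular locus) after descent. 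A refined version of Z\'ab\'radi's theorem that accommodates such singularities, combined with seminormality ensuring that such horizontal coherent sheaves descend to finite projective modules on $X$, should then close the argument. Establishing this refinement is precisely the crux of the conjecture and is expected to require a careful local analysis at the non-smooth points of $X$ together with an extension of the relevant $p$-adic Lie group methods.
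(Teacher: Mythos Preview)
The statement you are attempting to prove is explicitly labeled a \emph{Conjecture} in the paper and carries no proof there; it is posed as an open problem immediately following the proposition for the polydisc $X=\mathrm{Spa}(E\{T_1,\dots,T_n\})$. There is therefore no ``paper's own proof'' to compare your proposal against.

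That said, your write-up is honest about its own status: you correctly isolate the genuine obstruction, namely that the finite-projectivity claim in the smooth case rested on the existence of the integrable connection $\nabla_X$ together with Z\'ab\'radi's theorem, and neither ingredient survives the passage to a merely seminormal $X$. Your desingularization-and-descent outline does recover \emph{coadmissibility} (indeed \cref{proposition4.42} already gives coherence of each $D^i_{\mathrm{dR}}$ at every finite level $A_n$ without any smoothness hypothesis, so this part is not in doubt), but your proposed route to projectivity rests on two unproved assertions: (i) a base-change compatibility $h^*\mathbb{D}^\bullet_{\mathrm{dR}}(M)\simeq \mathbb{D}^\bullet_{\mathrm{dR}}(h^*M)$ for the derived de Rham functor along a resolution $h$, and (ii) a ``refined Z\'ab\'radi theorem'' allowing logarithmic poles along the singular locus. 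Neither is available in the literature cited, and you yourself flag (ii) as ``precisely the crux of the conjecture.'' So what you have written is a reasonable research plan, not a proof; the paper's author evidently reached the same impasse, which is why the statement is recorded as a conjecture rather than a proposition.
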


\begin{remark}
This would reconstruct, generalize and geometrize the corresponding Berger and Fourquaux's Lubin-Tate Iwasawa construction \cite{Berger}. And in the corresponding cyclotomic situation this also provides the corresponding generalization and geometrization of the corresponding construction after Nakamura \cite{Nakamura1}.	
\end{remark}

\newpage

\section{Applications to Equivariant Iwasawa Theory}

\subsection{Equivariant Big Perrin-Riou-Nakamura Exponential Maps} \label{section5.1}

\indent Now we establish the corresponding equivariant version of Nakamura's de Rham Iwasawa theory \cite{Nakamura1} of $(\varphi,\Gamma)$-modules over the Robba ring in the corresponding cyclotomic situation. The idea and basic philosophy come from the corresponding reconstruction process observed by the work of \cite{KPX} and \cite{KP}, namely the corresponding Fontaine's original construction using $\psi$-operator of the corresponding Iwasawa cohomology of $(\varphi,\Gamma)$-modules could be reconstructed by using the corresponding Iwasawa analytic deformation and even Frobenius sheaves in the style of Kedlaya-Liu \cite{KL15} and \cite{KL16} over the pro-\'etale sites after Scholze.\\

\indent In our previous work in \cite{T1}, we partially looked at some sort of generalized version of the picture above (although \cite{T1} is really and essentially aimed at the corresponding direction after \cite{PZ1} and \cite{CKZ}), namely instead of just deforming the corresponding $(\varphi,\Gamma)$-modules by some Iwasawa deformation over $\Pi^\infty(\Gamma)$ which is the corresponding Fr\'echet-Stein algebra attached to the group $\Gamma$, we look at more general Iwasawa deformation over $\Pi^\infty(\Gamma)\widehat{\otimes}\Pi^\infty(\Gamma')$ namely we could look at a suitable Fr\'echet-Stein algebra attached to the group $\Gamma\times \Gamma'$ such that we have that it could be written as family of affinoid algebras in rigid analytic geometry. The corresponding $(\varphi,\Gamma)$ cohomology of such modules could be regarded as the corresponding genuine $\Gamma'$-equivariant Iwasawa cohomology.\\

\indent Definitely we work in the philosophy of the corresponding Burns-Flach-Fukaya-Kato's grand picture on the so-called noncommutative Tamagawa number conjectures as in \cite{BF1}, \cite{BF2} and \cite{FK}. Nakamura's original Iwasawa theory on the $(\varphi,\Gamma)$-modules happens over the tower of Iwasawa tower. Namely the corresponding big Perrin-Riou-Nakamura exponential maps should be related to the corresponding characteristic ideals of the $(\varphi,\Gamma)$-cohomology of the corresponding Iwasawa deformation of any de Rham $(\varphi,\Gamma)$-module in some very precise way. One should regard our generalization as climbing even higher in some equivariant way. Namely the corresponding big Perrin-Riou-Nakamura exponential maps should be related to the corresponding characteristic ideals of the $(\varphi,\Gamma)$-cohomology of the corresponding higher dimensional equivariant deformation of any de Rham $(\varphi,\Gamma)$-module in some very precise way.\\

\indent We now follow \cite{Nakamura1} to establish the equivariant version of corresponding big (dual-)exponential maps after Perrin-Riou-Nakamura. This happens at the infinite level of the corresponding generalized admissible abelian tower in our setting, which generalizes the corresponding setting considered in \cite{Nakamura1}.


\begin{setting}
We consider an abelian $p$-adic Lie group $\Gamma'$ such that the Fr\'echet-Stein algebra $\Pi^\infty(\Gamma')$ in the sense of \cite{ST1} could be written as a family of affinoid algebras $\Pi^\infty(\Gamma')_k,k=0,1,...$ in the sense of rigid analytic geometry:
\begin{displaymath}
\Pi^\infty(\Gamma')\overset{\sim}{\longrightarrow} \varprojlim_k \Pi^\infty(\Gamma')_k.
\end{displaymath}
If we use the notation $X$ to denote the Fr\'echet-Stein space attached to the algebra $\Pi^\infty(\Gamma')$ which could be written as:
\begin{displaymath}
X\overset{\sim}{\longrightarrow} \varinjlim_k X_k:	
\end{displaymath}	
with:
\begin{displaymath}
\mathcal{O}_X\overset{\sim}{\longrightarrow} \varprojlim_k \mathcal{O}_{X_k}:	
\end{displaymath}
\end{setting}

\indent Now we start to consider the corresponding relative Robba rings where our Iwasawa theory happens:

\begin{definition}
Many definitions of the Robba rings are actually compatible with \cite{Nakamura1}, the only difference is that we will deform over some general algebras. We use the notations $\Pi^r_{\mathrm{an,con},K}$ and $\Pi_{\mathrm{an,con},K}$ to denote the corresponding Robba rings as in \cite[Section 2.1]{Nakamura1}. Now we consider the corresponding Robba rings $\Pi^r_{\mathrm{an,con},K,\Pi^\infty(\Gamma_K)\widehat{\otimes}\Pi^\infty(\Gamma')}$ and $\Pi_{\mathrm{an,con},K,\Pi^\infty(\Gamma_K)\widehat{\otimes}\Pi^\infty(\Gamma')}$ in the relative sense:
\begin{align}
\Pi^r_{\mathrm{an,con},K,\Pi^\infty(\Gamma_K)\widehat{\otimes}\Pi^\infty(\Gamma')}:=\Pi^r_{\mathrm{an,con},K}\widehat{\otimes}\Pi^\infty(\Gamma_K)\widehat{\otimes}\Pi^\infty(\Gamma'),\\
\Pi_{\mathrm{an,con},K,\Pi^\infty(\Gamma_K)\widehat{\otimes}\Pi^\infty(\Gamma')}:=\Pi_{\mathrm{an,con},K}\widehat{\otimes}\Pi^\infty(\Gamma_K)\widehat{\otimes}\Pi^\infty(\Gamma').\\	
\end{align}
One also has the corresponding definitions for Robba rings with respect to some interval $[s,r]$. And be very careful that in our current development the corresponding Robba ring $\Pi^?_{\mathrm{an,con},K}$ is the ring $\mathrm{B}^?_{\mathrm{rig},K}$ where $?=\empty,r,[s,r]$, instead of the ring $\Pi^?_{\mathrm{an,con}}(\pi_K)$ considered in \cite[Definition 2.2.2]{KPX}.	
\end{definition}

\begin{setting}
Our motivic objects are still usual $(\varphi,\Gamma)$-modules over the Robba ring $\Pi_{\mathrm{an,con},K}$. Namely we consider those modules finite locally free over this big non-noetherian ring carrying commuting semilinear action from the $\varphi$ and $\Gamma$. 	
\end{setting}

\begin{remark}
The ideas we presented here actually are initiated in \cite{KP}. One replaces the corresponding $p$-adic Lie group $\Gamma$ by a pro-\'etale site, and considers the deformation of the sheaves (both the corresponding motivic objects and the corresponding Berger's $p$-adic differential equations attached in whatever reasonable context like Lubin-Tate setting or some else) by the corresponding Iwasawa deformations, and then considers the corresponding big exponential maps by using the connections and their duals.	
\end{remark}

\indent To study Iwasawa theory we look at those geometric $(\varphi,\Gamma)$-modules, namely those geometric ones in the sense of $p$-adic Hodge theory after Berger's celebrated thesis. We do not go beyond \cite{Nakamura1} on this, but we would like to recall the basic construction on the corresponding $p$-adic differential equation attached to a de Rham $(\varphi,\Gamma)$-module $M$.

\begin{setting}
Recall the basic definition of the corresponding $p$-adic differential equation attache to a de Rham $(\varphi,\Gamma)$-module $M$ after Berger. As in \cite[Section 3.2]{Nakamura1} (we will use some different notations from \cite[Section 3.2]{Nakamura1}), we consider the following constructions. Let $M$ be a $(\varphi,\Gamma)$-module $M$ over $\Pi_{\mathrm{an,con},K}$. In our situation we work with at least de Rham ones in the sense of Berger's celebrated paper \cite{Ber1}. Then we have the corresponding relation for any integer $k\geq l'$ for some $l'(K,M)\geq 0$ fixed which is assumed to be sufficiently large depending on $K,M$:
\begin{align}
F_{\mathrm{dif},k}(M)\overset{\sim}{\longrightarrow} F_{\mathrm{dR},K}(M)\otimes K_k((t))	
\end{align}
with the corresponding lattice $F_{\mathrm{dR},K}(M)\otimes K_k[[t]]$ inside with reasonable rank. Then we have the corresponding continuous differential:
\begin{displaymath}
\widehat{\Omega}_{\Pi_{\mathrm{an,con},K}/K_0^\mathrm{ur}}=	\widehat{\Omega}_{\Pi_{\mathrm{an,con},K}}dT.
\end{displaymath}
 
\end{setting}

The corresponding differential equation in the sense of Berger is the following construction which is denoted by $\mathrm{N}_{\mathrm{dif}}(M)$, we will denote this by $\Theta_{\mathrm{Ber,dif}}(M)$ which is actually defined in the following way. Recall as in \cite[Theorem 3.5]{Nakamura1} by \cite[5.10]{Ber1} and \cite[3.2.3]{Ber2}, first we have $\Theta_{\mathrm{Ber,dif},k}(M)$ which is defined to be the subset of all the elements in $M^{(k)}[1/t]$ such that the corresponding image of them under each map $\iota_k'$ for all $k'\geq k$ lives in $F_{\mathrm{dR},K}(M)\otimes K_k'[[t]]$. Recall from \cite[Section 3.2]{Nakamura1} we have then the corresponding set $\Theta_{\mathrm{Ber,dif}}(M)=\varinjlim_k \Theta_{\mathrm{Ber,dif},k}(M)$ which could be endowed with furthermore the structure of $(\varphi,\Gamma)$-module over the Robba ring $\Pi_{\mathrm{an,con},K}$. And furthermore we have the corresponding properties that $d_0(\Theta_{\mathrm{Ber,dif}}(M))\subset t\Theta_{\mathrm{Ber,dif}}(M)$ where the differential operator $d_0$ is defined to be $\frac{\mathrm{log}(\gamma)}{\mathrm{log}(\chi(\gamma))}$ living in in our notation $\Pi^\infty(\Gamma_K)$.\\

\indent Therefore recall from \cite[Section 3.2]{Nakamura1} then we can define the connection $\partial:=\frac{1}{t}d_0$ mapping the corresponding differential $(\varphi,\Gamma)$-module considered above to itself. The differential structures and the corresponding Frobenius structures and the corresponding Lie group action structures are compatible in the following sense:
\begin{align}
\partial\varphi-p\varphi \partial=0,\\
\partial\gamma-\chi(\gamma)\gamma \partial=0,	
\end{align}
and:
\begin{align}
\varphi(dT)-pdT=0,\\
\gamma(dT)-\chi(\gamma)dT=0.	
\end{align}

\begin{lemma}
Now we set $d_i:=d_0-i$ for $i\geq 0$ integer, we have for any $m$-th ($m\geq 0$) Hodge filtration of $M$ (which is assumed to be de Rham) which gives the full module:
\begin{displaymath}
d_{m-1}d_{m-2}d_{m-3}...d_1d_0(\Theta_{\mathrm{Ber,dif}}(M))\subset M.	
\end{displaymath}
	
\end{lemma}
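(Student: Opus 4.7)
The plan is to rewrite the iterated composition $d_{m-1}d_{m-2}\cdots d_1 d_0$ in the much cleaner closed form $t^m \partial^m$, then use that $\partial$ preserves $\Theta_{\mathrm{Ber,dif}}(M)$, and finally invoke the de Rham hypothesis to land inside $M$.

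To reach the closed form, I would first derive the commutation $d_i\circ t = t\circ d_{i-1}$ as operators on $\Theta_{\mathrm{Ber,dif}}(M)$. Since $d_0=\log(\gamma)/\log(\chi(\gamma))$ comes from the $\Gamma$-action by ring automorphisms it is a continuous derivation, and the identity $\gamma(t)=\chi(\gamma)t$ forces $d_0(t)=t$, so $[d_0,t]=t$; since scalar multiples of the identity commute with $t$ this also gives $[d_i,t]=t$, i.e.\ $d_i t = t(d_i+1) = t d_{i-1}$. An induction on $m$ then produces
\[
d_{m-1}d_{m-2}\cdots d_1 d_0 = t^m \partial^m,
\]
the base case $m=1$ being the definition $d_0 = t\partial$ and the inductive step sliding $t^m$ across $d_m$ by iterating the commutation $m$ times to get $d_m t^m = t^m d_0 = t^{m+1}\partial$, and hence $d_m(t^m\partial^m) = t^m d_0 \partial^m = t^{m+1}\partial^{m+1}$.

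Next, from $d_0(\Theta_{\mathrm{Ber,dif}}(M))\subset t\Theta_{\mathrm{Ber,dif}}(M)$ recalled just before the lemma, the operator $\partial = t^{-1}d_0$ preserves $\Theta_{\mathrm{Ber,dif}}(M)$; iterating $m$ times yields $\partial^m(\Theta_{\mathrm{Ber,dif}}(M))\subset \Theta_{\mathrm{Ber,dif}}(M)$, and combined with the closed form this already forces $d_{m-1}\cdots d_1 d_0(\Theta_{\mathrm{Ber,dif}}(M))\subset t^m\Theta_{\mathrm{Ber,dif}}(M)$.

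The only genuinely nontrivial step, and the main obstacle, is the remaining inclusion $t^m\Theta_{\mathrm{Ber,dif}}(M)\subset M$, which is where the de Rham hypothesis actually enters. At each level $k\geq l'(K,M)$ one has $\Theta_{\mathrm{Ber,dif}}(M)^{(k)} \simeq F_{\mathrm{dR},K}(M)\otimes_K K_k[[t]]$, while the lattice $M^{(k)}$ inside the same $F_{\mathrm{dif},k}(M) = F_{\mathrm{dR},K}(M)\otimes_K K_k((t))$ is identified by the de Rham comparison with $\sum_j t^{j}\mathrm{Fil}^{-j}F_{\mathrm{dR},K}(M)\otimes_K K_k[[t]]$. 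The assumption that the $m$-th Hodge filtration is the full module translates to $\mathrm{Fil}^{-m}F_{\mathrm{dR},K}(M) = F_{\mathrm{dR},K}(M)$, which immediately yields $t^m \Theta_{\mathrm{Ber,dif}}(M)^{(k)} \subset M^{(k)}$ for every such $k$; the compatibility of these inclusions across $k$ then descends to the desired inclusion $t^m\Theta_{\mathrm{Ber,dif}}(M)\subset M$ inside $M[1/t]$, completing the plan.
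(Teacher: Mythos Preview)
Your argument is correct and in fact supplies precisely the content that the paper omits: the paper's own proof is a one-line citation of \cite[Lemma 3.6]{Nakamura1}, whereas you have reproduced the standard computation behind that lemma. The closed form $d_{m-1}\cdots d_1 d_0 = t^m\partial^m$ via the commutation $d_i\circ t = t\circ d_{i-1}$ is exactly the mechanism Nakamura uses, so there is no genuine divergence in approach.

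One small remark on the final step: your passage from the levelwise inclusions $t^m\Theta_{\mathrm{Ber,dif}}(M)^{(k)}\subset M^{(k)}$ to the global inclusion over the Robba ring is slightly brisk. The cleanest justification is not literally to check all localizations $\iota_{k'}$ and then ``descend'', but rather to note that both $t^m\Theta_{\mathrm{Ber,dif}}(M)$ and $M$ are $\varphi$-stable finite projective submodules of $M[1/t]$ of the same rank, so the inclusion can be tested after base change to a single $\Pi^{(k)}_{\mathrm{an,con},K}$; there the identification of the $K_k[[t]]$-lattice $\iota_k(M^{(k)})\otimes K_k[[t]]$ with $\bigoplus_j t^{-h_j}K_k[[t]]\cdot e_j$ (in a basis adapted to the Hodge filtration, with $h_j$ the Hodge--Tate weights) makes $t^m\cdot K_k[[t]]^d\subset \iota_k(M^{(k)})\otimes K_k[[t]]$ immediate once all $h_j\geq -m$. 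This is a cosmetic tightening; the substance of your argument is sound.
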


\begin{proof}
This is just \cite[Lemma 3.6]{Nakamura1}.	
\end{proof}

\indent We now define the corresponding equivariant big exponential map after Perrin-Riou-Nakamura.

\begin{assumption}
Now we assume that there is a quotient $G_K\rightarrow \Gamma'$ in our situation among all the other requirements as above.	
\end{assumption}

\begin{definition}
Take any sufficiently large $m\geq 0$ as in the previous lemma. First we consider the differential equation $\Theta_{\mathrm{Ber,dif}}(M)$ attached to $M$ which is assumed to be de Rham. We consider the $\Gamma'$-deformation of $M$ over the algebra $\Pi^\infty(\Gamma')$. This happens by considering the corresponding quotient $G_K\rightarrow \Gamma'$	which defines a corresponding Galois representation with value in $\Pi^\infty(\Gamma')$, which by corresponding embedding fully faithful gives rise to a corresponding $(\varphi,\Gamma)$-module over the relative Robba ring $\Pi_{\mathrm{an,con},K}\widehat{\otimes}\Pi^\infty(\Gamma')$. Denote this by $\mathrm{Dfm}_{\Gamma'}$ which could be written as the projective limit of the families $\{\mathrm{Dfm}_{\Gamma',p}\}_{p\geq 0}$ over $\Pi_{\mathrm{an,con},K}\widehat{\otimes}\Pi^\infty(\Gamma')_p$ for all $p\geq 0$. Then we consider the product $\mathrm{Dfm}_{\Gamma',p}\widehat{\otimes} M$ then take the corresponding projective limit over all $p\geq 0$. Then we consider the following definition which is an equivariant version of \cite[Definition 3.7]{Nakamura1}:
\begin{displaymath}
\mathrm{Exp}^{\Pi^\infty(\Gamma')}_{M,m}: (\varprojlim \mathrm{Dfm}_{\Gamma',p}\widehat{\otimes} \Theta_{\mathrm{Ber,dif}}(M)	)^{\psi=1}\rightarrow (\varprojlim \mathrm{Dfm}_{\Gamma',p}\widehat{\otimes} M	)^{\psi=1}
\end{displaymath}
which is just defined by extension of $d_{m-1}d_{m-2}d_{m-3}...d_1d_0$. This is now $\Pi^\infty(\Gamma_K)\widehat{\otimes}\Pi^\infty(\Gamma')$-linear instead of just $\Pi^\infty(\Gamma_K)$. And the corresponding $(\varphi,\Gamma)$-modules here could be basically geometrized by the corresponding Hodge-Iwasawa construction above. We consider the corresponding family of vector bundle $\varprojlim\mathcal{F}(M)_p$ and $\varprojlim\mathcal{F}(\Theta_{\mathrm{Ber,dif}}(M))_p$ over the corresponding adic version of Fargues-Fontaine curve deformed by $\Pi^\infty(\Gamma_K)\widehat{\otimes}\Pi^\infty(\Gamma')$. We have now that the geometrized big exponential map by taking the corresponding sheaf cohomology:
\begin{displaymath}
\mathrm{Exp}^{\Pi^\infty(\Gamma')}_{\mathcal{F},m}: R^1\Gamma_\mathrm{sheaf}(\varprojlim\mathcal{F}(\Theta_{\mathrm{Ber,dif}}(M))_p) \rightarrow R^1\Gamma_\mathrm{sheaf}(\varprojlim\mathcal{F}(M)_p),
\end{displaymath}
and the corresponding derived version:
\begin{displaymath}
\mathrm{Exp}^{\Pi^\infty(\Gamma'),\bullet}_{\mathcal{F},m}: R^\bullet\Gamma_\mathrm{sheaf}(\varprojlim\mathcal{F}(\Theta_{\mathrm{Ber,dif}}(M))_p) \rightarrow R^\bullet\Gamma_\mathrm{sheaf}(\varprojlim\mathcal{F}(M)_p).
\end{displaymath}\\
\end{definition}

\subsection{Characteristic Ideal Sheaves} \label{section5.2}

\indent Now we follow the corresponding idea in \cite{Nakamura1} to consider the corresponding characteristic ideal and the corresponding determinant of a given derived maps with respect to the corresponding equivariant Iwasawa cohomology. For this we consider the corresponding map defined above, and we look at the corresponding equivariant Iwasawa cohomology, i.e. the corresponding more general commutative deformation in our current context.\\

\indent First in our situation we make the following assumption:

\begin{assumption}
We now assume that the corresponding deformation happens over Krull noetherian rings. This could basically allow us to define the corresponding characteristic ideal over the sheaf $\Pi^\infty(\Gamma_K)\widehat{\otimes} X_{\Pi^\infty(\Gamma')}$ of any torsion module $M$, which we will denote it by $\mathrm{char}_{\Pi^\infty(\Gamma_K)\widehat{\otimes}X_{\Pi^\infty(\Gamma')}}(M)$ which is again a sheaf of module over $\Pi^\infty(\Gamma_K)\widehat{\otimes} X_{\Pi^\infty(\Gamma')}$.	
\end{assumption}

\indent In the current generalized context, following Nakamura \cite[Section 3.4]{Nakamura1} we define the corresponding determinant of a general map $f:M\rightarrow N$ of coadmissible modules over the sheaf $\Pi^\infty(\Gamma_K)\widehat{\otimes}X_{\Pi^\infty(\Gamma')}$. First as above we have the corresponding characteristic ideals $\mathrm{char}_{\Pi^\infty(\Gamma_K)\widehat{\otimes}X_{\Pi^\infty(\Gamma')}}(M_\mathrm{torsion})$ and $\mathrm{char}_{\Pi^\infty(\Gamma_K)\widehat{\otimes}X_{\Pi^\infty(\Gamma')}}(N_\mathrm{torsion})$. Then quotienting out the corresponding torsion part we have:
\begin{align}
\overline{f}:M/M_{\mathrm{torsion}}\rightarrow 	N/N_{\mathrm{torsion}}.
\end{align}

We will regard this as a corresponding map on the sheaves over the the space $\Pi^\infty(\Gamma_K)\widehat{\otimes}X_{\Pi^\infty(\Gamma')}$, then for any $Y=\mathrm{Sp}(\Pi^\infty(\Gamma')_p)$ quasicompact of $X_{\Pi^\infty(\Gamma')}$ we consider the corresponding ring $\Pi^\infty(\Gamma_K)\widehat{\otimes}\Pi^\infty(\Gamma')_p$, then we have the notion of determinant of $\overline{f}$ which we will denote it by $\mathrm{det}_{\Pi^\infty(\Gamma_K)\widehat{\otimes}\Pi^\infty(\Gamma')_p}(\overline{f})$. We then organize this construction to get the determinant sheaf $\mathrm{det}_{\Pi^\infty(\Gamma_K)\widehat{\otimes}X_{\Pi^\infty(\Gamma')}}(\overline{f})$. Then now we define the determinant of $f$ as:
\begin{align}
\mathrm{det}_{\Pi^\infty(\Gamma_K)\widehat{\otimes}X_{\Pi^\infty(\Gamma')}}&(f)\\
&:=\mathrm{det}_{\Pi^\infty(\Gamma_K)\widehat{\otimes}X_{\Pi^\infty(\Gamma')}}(\overline{f})\mathrm{char}_{\Pi^\infty(\Gamma_K)\widehat{\otimes}X_{\Pi^\infty(\Gamma')}}(M_\mathrm{torsion})\mathrm{char}_{\Pi^\infty(\Gamma_K)\widehat{\otimes}X_{\Pi^\infty(\Gamma')}}(N_\mathrm{torsion})^{-1}.	
\end{align}

\indent On the corresponding derived level we consider after \cite[Section 3.4]{Nakamura1} the following construction for the big exponential map:
\begin{displaymath}
\mathrm{Exp}^{\Pi^\infty(\Gamma'),\bullet}_{\mathcal{F},m}: R^\bullet\Gamma_\mathrm{sheaf}(\varprojlim\mathcal{F}(\Theta_{\mathrm{Ber,dif}}(M))_p) \rightarrow R^\bullet\Gamma_\mathrm{sheaf}(\varprojlim\mathcal{F}(M)_p).
\end{displaymath}

For each $p$ we consider the localized map:

\begin{displaymath}
\mathrm{Exp}^{\Pi^\infty(\Gamma')_p,\bullet}_{\mathcal{F},m}: R^\bullet\Gamma_\mathrm{sheaf}(\mathcal{F}(\Theta_{\mathrm{Ber,dif}}(M))_p) \rightarrow R^\bullet\Gamma_\mathrm{sheaf}(\mathcal{F}(M)_p).
\end{displaymath}

Then what we can do is to define the corresponding determinant of the this by:
\begin{align}
\mathrm{det}_{\Pi^\infty(\Gamma_K)\widehat{\otimes}\Pi^\infty(\Gamma')_p}&(\mathrm{Exp}^{\Pi^\infty(\Gamma')_p,\bullet}_{\mathcal{F},m}):=	\\
& \mathrm{det}_{\Pi^\infty(\Gamma_K)\widehat{\otimes}\Pi^\infty(\Gamma')_p}(R^1\Gamma_\mathrm{sheaf}(\mathcal{F}(\Theta_{\mathrm{Ber,dif}}(M))_p) \rightarrow R^1\Gamma_\mathrm{sheaf}(\mathcal{F}(M)_p))\cdot\\
&\mathrm{det}_{\Pi^\infty(\Gamma_K)\widehat{\otimes}\Pi^\infty(\Gamma')_p}(R^2\Gamma_\mathrm{sheaf}(\mathcal{F}(\Theta_{\mathrm{Ber,dif}}(M))_p) \rightarrow R^2\Gamma_\mathrm{sheaf}(\mathcal{F}(M)_p))^{-1}.
\end{align}

Then we can organize this to be the corresponding determinant sheaf:

\begin{align}
\mathrm{det}_{\Pi^\infty(\Gamma_K)\widehat{\otimes}\Pi^\infty(\Gamma')}&(\mathrm{Exp}^{X_{\Pi^\infty(\Gamma')},\bullet}_{\mathcal{F},m})=	\\
& \mathrm{det}_{\Pi^\infty(\Gamma_K)\widehat{\otimes}\Pi^\infty(\Gamma')}(R^1\Gamma_\mathrm{sheaf}(\mathcal{F}(\{\Theta_{\mathrm{Ber,dif}}(M))_p\}_p) \rightarrow R^1\Gamma_\mathrm{sheaf}(\{\mathcal{F}(M)_p\}_p))\cdot\\
&\mathrm{det}_{\Pi^\infty(\Gamma_K)\widehat{\otimes}\Pi^\infty(\Gamma')}(R^2\Gamma_\mathrm{sheaf}(\mathcal{F}(\{\Theta_{\mathrm{Ber,dif}}(M))_p\}_p) \rightarrow R^2\Gamma_\mathrm{sheaf}(\{\mathcal{F}(M)_p\}_p))^{-1}\\
&= \mathrm{det}_{\Pi^\infty(\Gamma_K)\widehat{\otimes}\Pi^\infty(\Gamma')}(R^1\Gamma_\mathrm{sheaf}(\mathcal{F}(\{\Theta_{\mathrm{Ber,dif}}(M))_p\}_p) \rightarrow R^1\Gamma_\mathrm{sheaf}(\{\mathcal{F}(M)_p\}_p))\cdot\\
&\mathrm{char}_{\Pi^\infty(\Gamma_K)\widehat{\otimes}\Pi^\infty(\Gamma')}(R^2\Gamma_\mathrm{sheaf}(\mathcal{F}(\{\Theta_{\mathrm{Ber,dif}}(M))_p\}_p))^{-1}\cdot\\
&\mathrm{char}_{\Pi^\infty(\Gamma_K)\widehat{\otimes}\Pi^\infty(\Gamma')}(R^2\Gamma_\mathrm{sheaf}(\{\mathcal{F}(M)_p\}_p)). 
\end{align}\\

\indent The following conjecture reads that in our current situation we have the corresponding equivariant Iwasawa main conjecture. But we will only consider the situation where $K=\mathbb{Q}_p$ and the corresponding full admissible $p$-adic Lie extension after \cite{LZ1} namely we consider $K_\infty=\mathbb{Q}^\mathrm{ur}_p(\mu_{p^\infty})$. We first recall Nakamura's result in \cite{Nakamura1}:

\begin{theorem} \mbox{\bf{(Nakamura's Iwasawa Main Conjecture)}}
In our current notations, let $\Gamma'$ be trivial. Consider a de Rham $(\varphi,\Gamma)$ module $M$ over the usual Robba ring. We assume that this module has rank $g$ with $g$ Hodge-Tate weights	$a_1,...a_g$. Then we have the following formula linking the determinant of the Perrin-Riou-Nakamura big exponential map and the characteristic ideals in the equivariant way:
\begin{align}
\prod_{i}\prod^{g-a_i-1}_{b_i} \nabla_{a_i+b_i} = \mathrm{det}_{\Pi^\infty(\Gamma_K)\widehat{\otimes}\Pi^\infty(\Gamma')}&(R^1\Gamma_\mathrm{sheaf}(\mathcal{F}(\{\Theta_{\mathrm{Ber,dif}}(M))_p\}_p) \rightarrow R^1\Gamma_\mathrm{sheaf}(\{\mathcal{F}(M)_p\}_p))\cdot\\
&\mathrm{char}_{\Pi^\infty(\Gamma_K)\widehat{\otimes}\Pi^\infty(\Gamma')}(R^2\Gamma_\mathrm{sheaf}(\mathcal{F}(\{\Theta_{\mathrm{Ber,dif}}(M))_p\}_p))^{-1}\cdot\\
&\mathrm{char}_{\Pi^\infty(\Gamma_K)\widehat{\otimes}\Pi^\infty(\Gamma')}(R^2\Gamma_\mathrm{sheaf}(\{\mathcal{F}(M)_p\}_p)).	
\end{align}
\end{theorem}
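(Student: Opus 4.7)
The plan is to reduce this statement to the original cyclotomic Iwasawa main conjecture of Nakamura \cite{Nakamura1}, exploiting the fact that the hypothesis $\Gamma' = \{1\}$ collapses the equivariant deformation built in the previous subsection to the classical Perrin-Riou-Iwasawa tower, so the sheaf-theoretic and geometric packaging developed here becomes, after unwinding, a faithful rewriting of Nakamura's original data. First I would verify the dictionary between the two sides: when $\Pi^\infty(\Gamma')$ is trivial, the Fr\'echet-Stein base sheaf $\Pi^\infty(\Gamma_K)\widehat{\otimes} X_{\Pi^\infty(\Gamma')}$ reduces to $\Pi^\infty(\Gamma_K)$, and the projective systems $\{\mathcal{F}(M)_p\}_p$ and $\{\mathcal{F}(\Theta_{\mathrm{Ber,dif}}(M))_p\}_p$ become the two Iwasawa deformations of Nakamura, one for $M$ itself and one for its attached $p$-adic differential equation.

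Next I would use the comparison results of Section~3 to translate the derived sheaf cohomologies $R^\bullet\Gamma_\mathrm{sheaf}$ into Herr-style $(\varphi,\Gamma)$-cohomology. Concretely, the equivalence of categories among items $A$--$M$ for the pseudocoherent setting, combined with the quasi-isomorphisms $C^\bullet_{\varphi,\Gamma}(\mathcal{M}) \simeq C^\bullet_{\varphi,\Gamma}(\breve{\mathcal{M}}) \simeq C^\bullet_{\varphi,\Gamma}(\widetilde{\mathcal{M}})$ established in Section~3.2, identifies each $R^i\Gamma_\mathrm{sheaf}(\{\mathcal{F}(-)_p\}_p)$ with the Iwasawa $(\varphi,\Gamma)$-cohomology groups appearing in Nakamura's formulation. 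Under this dictionary the map $\mathrm{Exp}^{\Pi^\infty(\Gamma'),\bullet}_{\mathcal{F},m}$, constructed as the sheafification along the Iwasawa deformation of the differential operator $d_{m-1}d_{m-2}\cdots d_1 d_0$, coincides with Nakamura's big exponential map of \cite[Definition~3.7]{Nakamura1}, and the determinant and characteristic-ideal sheaves of Section~5.2 collapse to the Krull-noetherian characteristic ideals over $\Pi^\infty(\Gamma_K)$.

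At this point the left-hand side $\prod_i \prod_{b_i}^{g-a_i-1}\nabla_{a_i+b_i}$ is the explicit product over Hodge-Tate weights that intervenes in Nakamura's formula, and the right-hand side is its determinant-of-cohomology expression. I would then invoke \cite[Main Theorem]{Nakamura1} directly to conclude the stated equality. The principal obstacle, and the only genuine work, is the verification of the comparison in the previous paragraph at the level of the full Fr\'echet-Stein tower: one must check that taking the projective limit over $p$ commutes with $R^\bullet\Gamma_\mathrm{sheaf}$ and with the formation of characteristic ideals and determinants. This amounts to a Mittag-Leffler / coadmissibility argument, namely the vanishing of the derived inverse limit $\varprojlim{}^{(1)}$ of the Banach $(\varphi,\Gamma)$-cohomologies along $\{\Pi^\infty(\Gamma_K)_p\}_p$, which follows from the strictness of the transition maps guaranteed by Proposition~\ref{proposition2.20} (and its pseudocoherent companion) together with the coherence of $\breve{\Pi}^{[s,r]}_{H,A}$ used to prove that the relevant categories are abelian.
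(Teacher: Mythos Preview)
The paper does not actually prove this theorem: it is stated as a recall of Nakamura's result, introduced by the sentence ``We first recall Nakamura's result in \cite{Nakamura1}'' and left without any argument. So there is nothing in the paper to compare your proof against beyond the implicit claim that, once $\Gamma'$ is trivial, the statement is literally \cite[Theorem 3.21]{Nakamura1} transcribed into the sheaf/Fargues--Fontaine notation of Section~5.

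Your proposal is therefore more than the paper provides, and the reduction strategy you outline is the correct one. A couple of cautions: the comparison machinery you invoke from Section~3 (the equivalences among items $A$--$M$ and the quasi-isomorphisms of Herr complexes) is set up for towers $(H,H^+)$ over a base space, not for the one-variable cyclotomic Robba ring situation of \cite{Nakamura1}, so the dictionary you need is really the simpler one from \cite{KPX} and \cite[Section~2]{KP} identifying $(\varphi,\Gamma)$-cohomology with sheaf cohomology on the deformed Fargues--Fontaine curve; invoking \cref{proposition2.20} and the abelian-category results is overkill and not quite the right reference. Also, the Mittag--Leffler step you flag is already handled in Nakamura's original setting (his modules are coadmissible over $\Pi^\infty(\Gamma_K)$ by construction), so once $\Gamma'$ is trivial there is no additional $\varprojlim{}^{(1)}$ issue to resolve. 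In short: your instinct to reduce to \cite{Nakamura1} is exactly right, and the only content is the notational unwinding, which the paper simply takes for granted.
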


Then in this situation:

\begin{conjecture} \mbox{\bf{(Equivariant Iwasawa Main Conjecture, after Nakamura)}}
Consider a de Rham $(\varphi,\Gamma)$ module $M$ over the usual Robba ring. We assume that this module has rank $g$ with $g$ Hodge-Tate weights	$a_1,...a_g$. Then we conjecture we have the corresponding analog of the formula linking the determinant of the Perrin-Riou-Nakamura big exponential map and the characteristic ideals in the equivariant way.
\end{conjecture}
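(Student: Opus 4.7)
The plan is to reduce the equivariant conjecture to Nakamura's theorem recalled just above by specialization along characters of $\Gamma'$. First I would note that the Fr\'echet-Stein space $X_{\Pi^\infty(\Gamma')}$ attached to the abelian $p$-adic Lie group $\Gamma'$ contains a Zariski-dense locus of characters $\chi\colon \Gamma' \to \overline{\mathbb{Q}}_p^\times$, and at each such $\chi$ the twist $M(\chi)$ is again a de Rham $(\varphi,\Gamma)$-module over the usual Robba ring with the same rank $g$ and same Hodge-Tate weights $a_1,\ldots,a_g$. Nakamura's theorem applied to $M(\chi)$ gives the non-equivariant identity at $\chi$; the factor $\prod_i\prod^{g-a_i-1}_{b_i}\nabla_{a_i+b_i}$ on the left does not depend on $\chi$, while the right-hand determinant and characteristic ideals specialize naturally along the evaluation at $\chi$.

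Second, I would identify both sides of the conjectured equivariant formula as sections of sheaves over $\Pi^\infty(\Gamma_K)\widehat{\otimes}X_{\Pi^\infty(\Gamma')}$. The left-hand side is the pullback of the Perrin-Riou product via the projection to $\Pi^\infty(\Gamma_K)$ and is therefore already globally defined. For the right-hand side, the determinant sheaf formalism of \cref{section5.2} applies precisely because the cohomology groups $R^i\Gamma_\mathrm{sheaf}(\varprojlim\mathcal{F}(M)_p)$ and $R^i\Gamma_\mathrm{sheaf}(\varprojlim\mathcal{F}(\Theta_{\mathrm{Ber,dif}}(M))_p)$ are built from the abelian categories of pseudocoherent Hodge-Iwasawa modules established earlier, so coadmissibility, characteristic ideals, and determinants are all available in the Krull noetherian regime imposed above. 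Once one verifies that formation of these cohomologies commutes with derived base change along each $\chi$ — which is natural from the Beauville-Laszlo description of $B$-pairs in terms of Fargues-Fontaine bundles used in the previous section — Nakamura's theorem forces the two global sections to agree on a Zariski-dense set of rigid-analytic points of $X_{\Pi^\infty(\Gamma')}$. In the Krull noetherian setting characteristic ideals, and hence the full determinant, are determined by such specialization data, yielding the conjectured equality.

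The main obstacle will be establishing the required derived base-change compatibility together with control of the torsion behavior of the relevant cohomology groups under the equivariant deformation. Concretely, one needs that jumps in the rank of $R^i\Gamma_\mathrm{sheaf}$ across the deformation locus are exactly accounted for by the characteristic-ideal factors, so that no spurious non-generic torsion is invisible to the specializations at characters $\chi$; this is the analog in our setting of the classical control-theorem subtlety in Iwasawa main conjectures, and it must be verified using the coherence/projectivity results for $D^i_\mathrm{dR}$ and $E^i_\mathrm{dR}$ proved in \cref{proposition4.42} and \cref{proposition4.45}. A second substantial issue is to make the equivariant left-hand side canonical: while $\nabla_{a_i+b_i}$ has an unambiguous meaning inside $\Pi^\infty(\Gamma_K)$, one must interpret it inside $\Pi^\infty(\Gamma_K)\widehat{\otimes}\Pi^\infty(\Gamma')$ in a way compatible with the quotient $G_K\rightarrow \Gamma'$ used to build the deformation, and this interpretation will generally involve additional twist factors depending on the image of $\Gamma_K$ in $\Gamma'$ that must be tracked carefully to match the equivariant cohomological side.
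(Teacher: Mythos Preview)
The statement you are attempting to prove is explicitly labeled a \emph{Conjecture} in the paper and is not proved there; the paper offers no argument beyond a remark citing inspiration from \cite{LVZ}, \cite{LZ1}, and \cite{Nakamura1}. So there is no paper proof to compare against, and your proposal is a genuine attempt at an open problem rather than a reconstruction of an existing argument.

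As for the strategy itself, the specialization-to-characters-plus-Zariski-density idea is natural, but the obstacles you flag at the end are real and not merely technical. The most serious is that characteristic ideals of coadmissible sheaves over $\Pi^\infty(\Gamma_K)\widehat{\otimes}\Pi^\infty(\Gamma')$ are \emph{not} in general determined by their specializations at a Zariski-dense set of closed points: torsion supported on positive-codimension loci of $X_{\Pi^\infty(\Gamma')}$ can be invisible to every such specialization and yet contribute nontrivially to the global characteristic ideal. Your appeal to \cref{proposition4.42} and \cref{proposition4.45} does not address this, since those results concern coherence of the de Rham functors $D^i_\mathrm{dR}$ and $E^i_\mathrm{dR}$ over a rigid space $X$, not the Iwasawa-cohomological sheaves $R^i\Gamma_\mathrm{sheaf}$ over the deformation space that actually appear in the conjecture. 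A second gap is that the twist $M(\chi)$ need not have the same Hodge-Tate weights as $M$ once the quotient $G_K\to\Gamma'$ interacts with the cyclotomic character, so the Perrin-Riou product on the left may genuinely vary with $\chi$; you note this at the end but do not resolve it. In short, your outline is a plausible line of attack, but the hard steps remain unproved --- which is entirely consistent with the paper's own decision to state this as a conjecture rather than a theorem.
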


\begin{remark}
This is also inspired by the work of \cite{LVZ} and \cite{LZ1}, while we work in the generality after \cite{Nakamura1}. Although we have not mentioned explicitly but we definitely conjecture that one has the parallel statement for Lubin-Tate differential equations after \cite{Berger} in the equivariant setting. 	
\end{remark}

\newpage

\subsection*{Acknowledgements} 

The paper grew along our study on the Hodge-Iwasawa theory initiated closely after \cite{KP}, \cite{KL15} and \cite{KL16}, which benefits from different perspectives from the ideas and the inspiration from the essential innovations included in these previous works. We would like to thank Professor Kedlaya for the corresponding discussion along our preparation in different periods on deep foundations and various possible extensions of \cite{KP}, \cite{KL15} and \cite{KL16}.

\newpage

\bibliographystyle{ams}

\end{document}